\tikzset{node distance=1.5cm, auto}
\definecolor{darkgreen}{rgb}{0,0.45,0}
\def\ul{\underline}
\newcommand{\Hom}{{\sf Hom}}
\renewcommand{\P}{{\sf P}}
\newcommand{\DP}{{\sf DP}}
\newcommand{\LDP}{{\sf LDP}}
\newcommand{\RDP}{{\sf RDP}}
\newcommand{\End}{{\sf End}}
\newcommand{\Ker}{{\sf Ker}\,}
\newcommand{\im}{{\sf Im}\,}
\newcommand{\Mod}{{\sf Mod}}
\newcommand{\Vect}{{\sf Vect}}
\newcommand{\ev}{{\sf ev}}
\def\ot{\otimes}
\def\bul{\bullet}
\newcommand{\bk}[1]{\left<#1\right>}
\newcommand{\bkk}[1]{\left<\!\left< #1\right>\!\right>}
\newcommand{\bkkk}[1]{\left<\!\left<\!\left< #1\right>\!\right>\!\right>}
\newcommand{\ol}[1]{\overline{#1}}
\def\NN{{\mathbb N}}
\def\VV{{\mathbb V}}
\def\ZZ{{\mathbb Z}}
\newcommand{\Cc}{\mathcal{C}}
\newcommand{\Gg}{\mathcal{G}}
\newcommand{\Hh}{\mathcal{H}}
\newcommand{\Mm}{\mathcal{M}}
\newcommand{\Oo}{\mathcal{O}}
\newcommand{\thlabel}[1]{\label{th:#1}}
\newcommand{\thref}[1]{Theorem~\ref{th:#1}}
\newcommand{\selabel}[1]{\label{se:#1}}
\newcommand{\seref}[1]{Section~\ref{se:#1}}
\newcommand{\lelabel}[1]{\label{le:#1}}
\newcommand{\leref}[1]{Lemma~\ref{le:#1}}
\newcommand{\relabel}[1]{\label{re:#1}}
\newtheorem{prop}{Proposition}[section]
\newtheorem{proposition}[prop]{Proposition}
\newtheorem{lemma}[prop]{Lemma} 
\newtheorem{corollary}[prop]{Corollary} 
\newtheorem{thm}[prop]{Theorem}
\newtheorem{theorem}[prop]{Theorem}
\theoremstyle{definition}
\newtheorem{defi}[prop]{Definition}
\newtheorem{definition}[prop]{Definition}
\newtheorem{exmp}[prop]{Example}
\newtheorem{example}[prop]{Example}
\newtheorem{examples}[prop]{Examples}
\newtheorem{rmk}[prop]{Remark}
\newtheorem{remark}[prop]{Remark}
\newtheorem{remarks}[prop]{Remarks}
\newcommand{\cosmash}{>\!\!\blacktriangleleft}
\newcommand{\rcosmash}{\blacktriangleright\!\! <}
\newcommand{\um }{1_A}
\newcommand{\ovomega}{\overline{\omega}_0}
\newcommand{\ove}{\overline{E}}
\newcommand{\ovesigma}{\overline{E}\circ\sigma}
\newcommand{\benu}{\begin{enumerate}}
\newcommand{\enu}{\end{enumerate}}
\newcommand{\beqna}{\begin{eqnarray}}
\newcommand{\eqna}{\end{eqnarray}}
\newcommand{\beqnast}{\begin{eqnarray*}}
\newcommand{\eqnast}{\end{eqnarray*}}
\newcommand{\beqn}{\begin{equation}}
\newcommand{\eqn}{\end{equation}}
\newcommand{\beqnst}{\begin{equation*}}
\newcommand{\eqnst}{\end{equation*}}
\newcommand{\bema}{\left ( \begin{array}}
\newcommand{\ema}{\end{array} \right )}
\setlist{
	labelindent	=\parindent,
	leftmargin	=*,
	font		=\normalfont
}
\setlist[enumerate]{
	label		=(\arabic*),
	ref			=(\arabic*)
}
\begin{document}

\title[Partial corepresentations of Hopf Algebras]{Partial corepresentations of Hopf Algebras}

\author[M.M.S. Alves]{Marcelo \ Muniz \ S. \ Alves}
\address{Departamento de Matem\'atica, Universidade Federal do Paran{\'a}, Brazil}
\email{marcelo@mat.ufpr.br}
\author[E. Batista]{Eliezer Batista}
\address{Departamento de Matem\'atica, Universidade Federal de Santa Catarina, Brazil}
\email{ebatista@mtm.ufsc.br}
\author[F. Castro]{Felipe Castro}
\address{Departamento de Matem{\'a}tica, Universidade Federal de Santa Catarina, Brazil}
\email{f.castro@ufsc.br}
\author[G. Quadros]{Glauber Quadros}
\address{Departamento de matem{\'a}tica, Universidade Federal de Santa Maria, Brazil}
\email{glauber.quadros@ufsm.br}
\author[J. Vercruysse]{Joost Vercruysse}
\address{D\'epartement de Math\'ematiques, Universit\'e Libre de Bruxelles, Belgium}
\email{jvercruy@ulb.ac.be}
\thanks{\\ {\bf 2000 Mathematics Subject Classification}: Primary 16W30; Secondary 16S40, 16S35, 58E40.\\   {\bf Key words and phrases:} partial representation, partial corepresentation, partial modules, partial comodules, partial cosmash coproducts, bicoalgebroid, Hopf coalgebroid} 

\flushbottom

\begin{abstract} We introduce the notion of a partial corepresentation of a given Hopf algebra $H$ over a coalgebra $C$ and the closely related concept of a partial $H$-comodule. We prove that there exists a universal coalgebra $H^{par}$, associated to the original Hopf algebra $H$, such that the category of regular partial $H$-comodules is isomorphic to the category of $H^{par}$-comodules. We introduce the notion of a Hopf coalgebroid and show that the universal coalgebra $H^{par}$ has the structure of a Hopf coalgebroid over a suitable coalgebra.
\end{abstract}

\maketitle

\tableofcontents

\section{Introduction}

From the very beginning, the theory of Hopf algebras relied heavily on the notion of duality. 
Indeed, Hopf algebras originate from the study of the cohomology ring of group manifolds \cite{Hopf}. The group operations induce by duality an additional set of new operations on this ring, being the comultiplication, counit and antipode. 
As a consequence, the notion of a Hopf algebra becomes self-dual, meaning that a Hopf algebra object in the opposite category of the category of vector spaces is just a usual Hopf algebra. 

This dual behavior has shown to play an important role in the representation theory of Hopf algebras. Indeed, the coalgebra structure of a Hopf algebra $H$ induces a monoidal structure on its category of modules $\Mod_H$. In the same way, the algebra structure of a $H$ induces a monoidal structure on its category of comodules $\Mod^H$. The Tannaka-Krein principle moreover allows to reconstruct a Hopf algebra from a suitable (rigid) monoidal category with fibre functor. The Hopf algebra which is in this way reconstructed from the category of finite dimensional $H$-comodules $\Mod^H_{fd}$ is exactly the original Hopf algebra $H$. However, remark that the Hopf algebra which is reconstructed from the category of finite dimensional $H$-modules is exactly the Sweedler dual $H^\circ$, which is in general different from $H$, but which is such that the category of finite dimensional $H$-modules is isomorphic to the category of finite-dimensional $H^\circ$-comodules. 

The above motivates that any theory (of representation or actions) of Hopf algebras should be studied in correspondence with its dual theory (of corepresentations or coactions). A second reason why corepresentations or comodules for Hopf algebras play an important role, is that in case of a Hopf algebra $H=\Oo(G)$ that arises as the coordinate algebra of an algebraic group $G$, the category of $H$-comodules coincides with the category of representations of this algebraic group $G$.
Henceforth the aim of the current paper is to develop the theory of partial corepresentations of Hopf algebras.

Partial actions and partial representations of groups first appeared in the context of operator algebras in the effort of characterizing $\mathbb{Z}$-graded $C^\ast$-algebras in terms of an appropriated crossed product \cite{ruy, ruy2}. A fully algebraic theory of partial group actions and partial group representations was established in \cite{dok}, leading to further developments, such as the theory of partial representations of groups as developed in  \cite{dok0} and \cite{dok1}. For a more detailed survey of the state of art of partial actions and partial representations of groups, see  \cite{D}.

In the realm of Hopf algebras, partial actions (or partial module algebras) and partial coactions (or partial comodule algebras) were introduced in \cite{caen06}. The primordial motivation was to describe the Galois theory for partial group actions \cite{DFP} in terms of Galois corings \cite{CdG}. 
The duality between partial actions and partial coactions of Hopf algebras was explored in \cite{BV}. There, the authors introduced also the notions of partial module coalgebras and partial comodule coalgebras, 
defined several constructions of partial smash products and partial cosmash coproducts, and explored their dualities. 
The notion of a partial representation of a Hopf algebra was introduced first in \cite{AB} and developed in \cite{ABV}. A partial representation of a Hopf algebra $H$ in an algebra $A$ is a linear map $\pi:H\to A$ preserving the unit but with a relaxed version of preservation of multiplication. When a $H$ allows a partial representation in the endomorphism ring of a $k$-module $M$, we say that $M$ is a partial $H$-module. In \cite{ABV}, it was shown that given a Hopf algebra $H$, one can define a universal algebra $H_{par}$ which factorizes every partial representation of $H$ on a given unital algebra $B$ by an algebra morphism between $H_{par}$ and $B$. Therefore, the category of partial $H$-modules turns out to be isomorphic to the category of $H_{par}$-modules. Moreover, this universal algebra $H_{par}$ is a 
a Hopf algebroid whose base algebra $A$ is a suitable subalgebra of $H_{par}$ upon which $H$ partially acts \cite{ABV}. As a consequence, the category of partial $H$-modules becomes closed monoidal and such that the forgetful functor to the category of $A$-bimodules is strict monoidal and preserves internal homs. Moreover, the algebra objects in the category of partial $H$-modules correspond to partial actions of $H$ according to the definition given in \cite{caen06}. 
For a more detailed presentation of recent developments in the theory of partial actions and partial representations of Hopf algebras see \cite{Bat}.

In this paper, after some preliminary observations concerning dual pairings, cofree coalgebras and partial representation in \seref{preliminaries}, we introduce in \seref{parcomod} the notion of a partial comodule over a Hopf algebra $H$. We study the relation with the alternative notion of partial comodule studied in \cite{HV}, and provide an example of a partial comodule over Sweedler's four-dimensional Hopf algebra which contains no finite dimensional partial subcomodules. As a consequence we find that partial comodules do not satisfy a version of the fundamental theorem of comodules, which is a first indication that the theory of partial comodules is more involved than the global case. We call the partial comodules that do satisfy the fundamental theorem (i.e.\ those partial comodules that are equal to the sum of their finite-dimensional subcomodules) {\em regular}. Natural examples of partial comodules arise from partial representations of algebraic groups.

In \seref{parcorep} we define the notion of a partial corepresentation from a coalgebra $C$ to a Hopf algebra $H$, as a relaxed version of coalgebra morphisms and we study the relation between partial comodules and partial corepresentations. 
More precisely, for any finitely generated and projective (or even any $R$-firmly projective) $k$-module $M$, we show that the partial comodule structures on $M$ are in correspondence with partial representations from the comatrix coalgebra (associated to $M$) to $H$. However, in contrast to the theory of partial representations and partial modules, not every partial comodule is induced in this way by a partial corepresentation. An explicit example of this phenomenon is obtained from the same example mentioned before of a non-regular partial comodule over the Sweedler Hopf algebra mentioned above. These subtleties show that the theory of partial comodules and corepresentations is not just a formal dualization of the theory developed in \cite{ABV}.
In the same section we provide examples of partial representations obtained from the partial cosmash coproduct and provide various duality theorems relating the partial modules and representations of a Hopf algebra $H$ with the partial corepresentations and comodules of its Sweedler dual.

In \seref{universal}, we construct a universal coalgebra $H^{par}$ which is a subcoalgebra of the cofree coalgebra $C(H)$, and which factorizes partial corepresentations of $H$ by coalgebra morphisms. The category of comodules over $H^{par}$ coincides exactly with the category of regular partial comodules over $H$.
We show that there is a coalgebra $C^{par} \subseteq H^{par}$ and a coalgebra epimorphism $\overline{E}:H^{par} \rightarrow C^{par}$ such that $C^{par}$ is a left partial $H$-comodule coalgebra and there is a coalgebra isomorphism between $H^{par}$ and the partial cosmash coproduct $\underline{C^{par}\cosmash H}$.

In the last section, we introduce the notion of Hopf-coalgebroid over a coalgebra $C$, building upon the notion of a bicoalgebroid, introduced by Brzezinski and Militaru in \cite{BM}, and dualizing the notion of a Hopf algebroid, in the sense of Gabriella B\"{o}hm \cite{Bohm}.
We show that
the universal coalgebra $H^{par}$ has a structure of a Hopf-coalgebroid over the coalgebra $C^{par}$. As a consequence, we find that the category of regular partial comodules over $H$ is a closed monoidal category, with a strict monoidal functor to the category of $C^{par}$-bicomodules. \\

{\bf Notations.} Throughout the paper $k$ denotes a fixed associative and commutative ring with unit. It will be mentioned explicitly when $k$ is supposed to be a field (which will be the overall assumption from \seref{universal} onward).
Unadorned tensor products are tensor products over $k$, and $\Hom(M,N)$ denotes the set of $k$-linear maps between the $k$-modules $M$ and $N$. The identity morphism on an object $X$ is denoted by $I_X$ or $I$.
If $M$ is a $k$-module ($k$-vector space), then we denote the dual $k$-module ($k$-vector space) as $M^*=\Hom(M,k)$.

\section{Algebraic preliminaries}\selabel{preliminaries}

In this section, we recall some known facts from literature that will be used frequently throughout the paper, along with some new observations.

\subsection{Dual pairings}

\begin{definition}
We call a {\em pairing} a triple $(V,\hat V,\bk{-,-})$, where $V$ and $\hat V$ are $k$-modules and $\bk{-,-}:V\ot \hat V\to k$ is a $k$-linear map. Consider the following conditions.
\begin{enumerate}
\item if $\langle v,\hat v\rangle =0$ for every $\hat v\in \hat V$, then $v=0$;
\item if $\langle v,\hat v\rangle =0$ for every $v\in V$, then $\hat v=0$.
\end{enumerate}
We call the triple $(V,\hat V,\bk{-,-})$ a  {\em left dual pairing} and the map $\bk{-,-}$ {\em left non-degenerate} if condition (1) above is satisfied; it is called a {\em  right dual pairing} if condition (2) is satisfied. 
If both conditions  (1) and (2) above are satisfied, we say that $(V,\hat V,\bk{-,-})$ is a
{\em dual pairing} and the map $\bk{-,-}$ is {\em non-degenerate}.

If $(V,\hat V,\bk{-,-})$ and $(W,\hat W,\bk{-,-})$ are (dual) pairings, then a couple of $k$-linear maps $(f:V\to W,g:\hat W\to \hat V)$ is called a {\em morphism of (dual) pairings} if for all $v\in V$ and $\hat w\in \hat W$, the following identity holds:
$$\bk{v,g(\hat w)}=\bk{f(v),\hat w}.$$

The category of pairings will be denoted by $\P_k$. The full subcategories of left, right and two-sided dual pairings are denoted respectively by $\LDP_k$, $\RDP_k$ and $\DP_k$.

We say a $k$-module $V$ is {\em dualizable} by $\hat V$ if there exists a dual pairing $(V,\hat V,\bk{-,-})$. A $k$-linear map $f:V\to W$ is called {\em dualizable} by $\hat V$ and $\hat W$ if and only if both $V$ and $W$ are dualizable by respectively $\hat V$ and $\hat W$, and there exists a linear map $g:\hat W\to \hat V$ such that $(f,g)$ is a morphism in $\DP_k$. In this situation, we call $\hat V$ a dual of $V$ and $g$ a dual (or {\em adjoint}) of $f$.
\end{definition}

An immediate example of a left dual pairing is given by $(V^*,V,\ev)$, where $V$ is any $k$-module and $\ev:V^*\ot V\to k$ is the evaluation map. If $V$ is moreover (locally) projective, then $(V^*,V,\ev)$ is even a two-sided dual pairing.
In other words, any (locally) projective $k$-module, and any map between such modules, is dualizable. In general, a $k$-module can have several duals, unless $V$ is finitely generated and projective, then $V^*$ is the unique dual of $V$. However, as the next proposition shows, the dual of a given morphism $f:V\to W$ is unique with respect to fixed duals $\hat V$ and $\hat W$ for $V$ and $W$. From the definitions it is clear that if $(V,\hat V,\bk{-,-})$ is a left dual pairing, then $(\hat V,V,\bk{-,-}\circ \tau)$, where $\tau:\hat V\ot V\to V\ot \hat V$ is the twist map, is a right dual pairing. In particular, if  $(V,\hat V,\bk{-,-})$ is a dual pairing, then so is $(\hat V,V,\bk{-,-}\circ\tau)$.

Given a dual pairing $(V,\hat V,\bk{-,-})$, and subspaces $W\subset V$, $Z\subset \hat V$, we define
$$W^\perp=\{\hat v\in \hat V~|~\bk{w,\hat v}=0, \forall w\in W\},$$
$$Z^\perp=\{v\in V~|~\bk{v,z}=0, \forall z\in Z\}.$$

\begin{lemma}\lelabel{jmaps}
Let $(V,\hat V,\bk{-,-})$ be a right pairing and consider the map
\[
 j_{\hat V} : \hat V  \rightarrow  V^*, \ \ \ \ \hat v  \mapsto   \langle - , \hat v \rangle . 
\]
Then $(V,\hat V,\bk{-,-})$ is a right dual pairing if and only if $j_{\hat V}$ is injective if and only if $V^\perp=\{0\}$. 

A symmetric result holds for left dual pairings.
\end{lemma}

When $k$ is a field and we endow a dual space $V^*$ with the finite topology, then it is known (see \cite[Corollary 1.2.9]{romenos}) that a subspace $W\subset V^*$ is dense in this finite topology if and only if $W^\perp=\{0\}$. Hence, the previous lemma implies that for a (two-sided) dual pairing $(V,\hat V,\bk{-,-})$, $\hat V$ can be identified with a dense subspace of $V^*$ (and symmetrically $V$ can be identified with a dense subspace of $\hat V^*$).
Slightly more generally, one has the following result which follows now as an easy consequence.

\begin{proposition} \label{pairingandmorphisms} 
Let $k$ be a field 
and $(\phi,\psi): (V,\hat V,\bk{-,-})\to (W,\hat W,(-,-))$ be a morphism in $\DP_k$.
\begin{enumerate}
\item $\phi$ is injective if and only if the image of  $\psi$ is dense in $\hat V$ (or equivalently, in $V^*$). 
\item $\psi$ is injective if and only if the image of  $\phi$ is dense in $W$ (or equivalently, in $\hat W^*$). 
\end{enumerate}
\end{proposition}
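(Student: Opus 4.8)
The plan is to reduce the statement to \leref{jmaps} together with the density characterization just recalled from \cite{romenos}. Since the two items are symmetric (exchanging the roles of the two pairings via the right dual pairing $(\hat V, V, \bk{-,-}\circ\tau)$ noted in the text), I would prove item (1) in detail and obtain item (2) by this symmetry. The morphism condition gives, for all $v\in V$ and $\hat w\in \hat W$, the key compatibility
\[
\bk{v,\psi(\hat w)} = (\phi(v),\hat w).
\]

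For item (1), first I would show that $\phi$ is injective if and only if $(\im\psi)^\perp = \{0\}$, where the orthogonal is taken inside $V$ with respect to the pairing $\bk{-,-}$. Indeed, if $v\in(\im\psi)^\perp$ then $\bk{v,\psi(\hat w)}=0$ for all $\hat w$, hence $(\phi(v),\hat w)=0$ for all $\hat w$; since $(-,-)$ is right non-degenerate (as $(W,\hat W,(-,-))\in\DP_k$), this forces $\phi(v)=0$. Conversely, if $\phi(v)=0$ then the same identity shows $v\in(\im\psi)^\perp$. Thus $\Ker\phi = (\im\psi)^\perp$, and $\phi$ is injective precisely when $(\im\psi)^\perp=\{0\}$. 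Now the density characterization recalled above (Corollary 1.2.9 of \cite{romenos}), applied to the dual pairing $(V,\hat V,\bk{-,-})$ and the subspace $\im\psi\subseteq\hat V$, says exactly that $(\im\psi)^\perp=\{0\}$ if and only if $\im\psi$ is dense in $\hat V$ in the finite topology. Finally, since $\hat V$ is itself identified with a dense subspace of $V^*$ (again via \leref{jmaps} and the density result), density in $\hat V$ is equivalent to density in $V^*$, giving the parenthetical equivalence.

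I would then obtain item (2) by applying item (1) to the morphism of dual pairings obtained by twisting, namely $(\psi,\phi):(\hat W,W,(-,-)\circ\tau)\to(\hat V,V,\bk{-,-}\circ\tau)$, which is a morphism in $\DP_k$ by the observation in the text that the twist of a dual pairing is again a dual pairing and by a direct check of the morphism identity. Under this substitution the conclusion of item (1) reads: $\psi$ is injective if and only if $\im\phi$ is dense in $W$ (equivalently in $\hat W^*$), which is precisely item (2).

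I do not expect a serious obstacle here; the statement is labeled as an "easy consequence," and indeed the content is essentially bookkeeping around \leref{jmaps}. The one point requiring care is the clean identification of the two notions of density (\emph{in $\hat V$} versus \emph{in $V^*$}): one must check that under the embedding $j_{\hat V}:\hat V\hookrightarrow V^*$ the finite topology on $\hat V$ is the subspace topology induced from the finite topology on $V^*$, so that a subspace of $\hat V$ is dense in $\hat V$ if and only if it is dense in $V^*$ (using that $\hat V$ is itself dense in $V^*$). Once this compatibility of topologies is in place, both equivalences in each item follow immediately.
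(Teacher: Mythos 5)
Your proof is correct and follows exactly the route the paper intends (the paper gives no written proof, merely asserting the statement is an ``easy consequence'' of \leref{jmaps} and the density criterion from \cite{romenos}, which is precisely the reduction $\Ker\phi=(\im\psi)^\perp$ plus the twist symmetry that you carry out). The only quibble is terminological: concluding $\phi(v)=0$ from $(\phi(v),\hat w)=0$ for all $\hat w$ uses condition (1) of the target pairing, which the paper calls \emph{left} non-degeneracy rather than right, but this is harmless since the pairing lies in $\DP_k$ and is non-degenerate on both sides.
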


The following observations will be essential for this paper.

\begin{lemma}\lelabel{pairings}
\begin{enumerate}[(i)]
\item  The category $\P_k$ is a monoidal $k$-linear category and the functors
\[
\xymatrix{
& \P_k \ar[dl]_L \ar[dr]^R\\
\Mod_k && \Mod_k^{op}
}
\]
given by $L(V,\hat V,\bk{-,-})=V$, and $R(V,\hat V,\bk{-,-})= \hat V$ (and for a morphism $(f,g) :( V, \hat{V} , \bk{-,-}) \rightarrow ( W, \hat{W} , (-,-))$, $L(f,g)=f$ and $R(f,g)=g$) are both strict monoidal.
\item Let $(f,g):(V,\hat V,\bk{-,-})\to (W,\hat W,(-,-))$ be a morphism of pairings and suppose that $(V,\hat V,\bk{-,-})$ is a right dual pairing. Then $f=0$ implies $g=0$ (implies $(f,g)=0$).
\end{enumerate}
\end{lemma}

\begin{proof}
\ul{(i)}. The $k$-linearity is clear. Given two objects $(V,\hat V,\bk{-,-})$ and $(W,\hat W,\bk{-,-})$ in $\P_k$, we can define the tensor product as $(V\ot W,\hat V\ot \hat W,\bkk{-,-})$, where
$$\bkk{v\ot w,\hat v\ot \hat w}=\bk{v,\hat v}\bk{w,\hat w}.$$ 
The monoidal unit is given by $\ul k=(k,k,I_k)$. \\
\ul{(ii)}. We find
\begin{eqnarray*}
f=0 &{\Rightarrow} & f(v)=0, \forall v\in V\\
&{\Rightarrow} & \bk{f(v),\hat w}=0, \forall v\in V, \forall \hat w\in \hat W\\
&{\Rightarrow} & \bk{v,g(\hat w)}=0, \forall v\in V, \forall \hat w\in \hat W\\
&{\Rightarrow} & g(\hat w)=0,  \forall \hat w\in \hat W\\
&{\Rightarrow} & g=0. 
\end{eqnarray*}
\end{proof}

\begin{proposition}\label{dualpairing}
\begin{enumerate}[(i)]
\item The restriction of the functor $R$ to the full subcategory $\LDP_k$ creates kernels, is faithful and has a fully faithful right adjoint $G$. 
\item The restriction of the functor $L$ to the full subcategory $\RDP_k$ creates cokernels, is faithful and has a fully faithful left adjoint $F$.
\item The restrictions of $L$ and $R$ to the full subcategory for $\DP_k$ are both faithful. If $k$ is moreover a field, then $L$ has a right adjoint $F$ and $R$ has a left adjoint $G$.
\item If $k$ is a field then any of the categories $\LDP_k$, $\RDP_k$ and $\DP_k$  are monoidal subcategories of $\P_k$ (and the restrictions of the functors $R$ and $L$ remain strictly monoidal).
\end{enumerate}
\end{proposition}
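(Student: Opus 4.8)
The plan is to organize the four parts around the order-two ``swap'' operation $\dual$ sending $(V,\hat V,\bk{-,-})$ to $(\hat V,V,\bk{-,-}\circ\tau)$, recalled just before the statement: it is a contravariant involution that interchanges $\LDP_k$ and $\RDP_k$, sends $L$ to $R$ and $R$ to $L$, and hence trades kernels for cokernels and left adjoints for right adjoints. Thus it is enough to prove (i) completely, read off (ii) by applying $\dual$, deduce (iii) by restriction to the full subcategory $\DP_k$, and finally treat (iv) by checking that (non-)degeneracy is stable under the tensor product of $\P_k$.

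For (i) I would take $G(N)=(N^*,N,\ev)$, which is a left dual pairing for every $N$, and prove $R\dashv G$ via the bijection sending a morphism $g:N\to\hat V$ to the pair $(f,g)$ with $f(v)(n):=\bk{v,g(n)}$; this $f$ exists and is unique, and naturality is a direct check. Since $RG=\Id$ on the nose the counit is an identity, so $G$ is fully faithful. Faithfulness of $R$ on $\LDP_k$ follows from the symmetric form of \leref{pairings}(ii): two morphisms with the same $R$-component differ by some $(f',0)$, and as its target is a left dual pairing, $\bk{v,0}=(f'(v),\hat w)=0$ for all $v,\hat w$ forces $f'=0$. To see that $R$ creates kernels I would compute the kernel of $(f,g):(V,\hat V)\to(W,\hat W)$ explicitly as $(\Ker f,\ \hat V/\im g,\ \overline{\bk{-,-}})$, with induced pairing $\overline{\bk{v,\hat v+\im g}}=\bk{v,\hat v}$ --- well defined because $v\in\Ker f$ annihilates $\im g$ --- the inclusion $\Ker f\hookrightarrow V$ as $L$-component and the projection $\hat V\twoheadrightarrow\hat V/\im g$ as $R$-component. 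This pairing is forced by the requirement that the inclusion be a morphism, so the lift is unique, and it is left non-degenerate because $(V,\hat V)$ is; since $R$ sends this object to $\Coker g$, which is exactly the kernel of $R(f,g)$ formed in $\Mod_k^{op}$, creation of kernels follows.

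Applying $\dual$ converts all of this into (ii): $\LDP_k$ becomes $\RDP_k$, the kernel above becomes the cokernel $(W/\im f,\ \Ker g,\ \overline{(-,-)})$, and $G$ becomes the fully faithful left adjoint $F(M)=(M,M^*,\ev)$ of $L$. In (iii) the faithfulness assertions are automatic, as $\DP_k$ is full in both $\LDP_k$ and $\RDP_k$. The substance is the existence of the adjoints when $k$ is a field, and the field is used in a single clean place: over a field the functionals in $M^*$ separate the points of $M$, so the canonical pairings $(N^*,N,\ev)$ and $(M,M^*,\ev)$ now satisfy both non-degeneracy conditions and thus lie in $\DP_k$. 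Hence the functors $G$ of (i) and $F$ of (ii) corestrict to $\DP_k$, and by fullness the hom-set bijections of (i) and (ii) restrict verbatim, endowing $R$ and $L$ with the claimed adjoints; the density characterization recalled after \leref{jmaps} is what keeps the relevant adjoint morphisms inside the dense subspace $\hat V\subseteq V^*$ rather than in the full dual.

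For (iv) I would check that $\LDP_k$, $\RDP_k$ and $\DP_k$ are each closed under the tensor product $(V\ot W,\hat V\ot\hat W,\bkk{-,-})$ of \leref{pairings}(i) and contain the unit $\ul k=(k,k,I_k)$; strict monoidality of the restricted $L$ and $R$ is then inherited verbatim from \leref{pairings}(i). I expect the stability of non-degeneracy under tensoring to be the one genuinely nontrivial step, and the main obstacle, since it is exactly here that the field hypothesis is unavoidable. To verify condition (1) for the tensor, write an element of $V\ot W$ as $\sum_i v_i\ot w_i$ with the $w_i$ linearly independent over the field; left non-degeneracy of $W$ forces $\sum_i\bk{v_i,\hat v}w_i=0$ for every $\hat v$, linear independence of the $w_i$ then yields $\bk{v_i,\hat v}=0$, and non-degeneracy of $V$ finally gives every $v_i=0$. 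Condition (2) is symmetric. Over a general commutative ring neither the independent representation nor this cancellation need persist, which is precisely why the monoidal statement is restricted to fields.
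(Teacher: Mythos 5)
Your proof is correct and follows essentially the same route as the paper's: the same adjoints $G(N)=(N^*,N,\ev)$ and $F(N)=(N,N^*,\ev)$, the same construction of the kernel of $(f,g)$ as $(\Ker f,\ \hat V/\im g)$ with the induced pairing and the same well-definedness and non-degeneracy checks, symmetry for (ii), and two-sided non-degeneracy of the canonical and tensor-product pairings over a field for (iii) and (iv), where you additionally supply the linear-independence argument that the paper leaves as ``easily checked''. One small point in your favour: you consistently take $G$ as a \emph{right} adjoint of $R$ and $F$ as a \emph{left} adjoint of $L$, which is the mathematically correct handedness and matches parts (i) and (ii) of the statement, whereas the paper's own proof text (and the wording of part (iii)) has the handedness of these adjoints reversed.
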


\begin{proof}
\ul{(i)}. 
The faithfulness of the functor $R$ follows from (the symmetric version of) the preceding lemma. 
The left adjoint $G$ of $R$ is given by $G(V)=(V^*,V,\ev)$.

Let now $(f,\hat f): (V,\hat V)\to (W,\hat W)$ be a morphism in $\LDP_k$. Consider the kernel $g:Z\to V$ of $f$ in $\Mod_k$ and the cokernel $\hat g:\hat V\to \hat Z$ of $\hat f$ in $\Mod_k$ (which is the kernel of $\hat f$ in $\Mod_k^{op}$). We will show that $(Z,\hat Z)$ is a left dual pairing and $(g,\hat g)$ is a morphism of dual pairings.

Since $\hat g$ is a cokernel in $\Mod$, it is surjective. Hence for any $\hat z\in \hat Z$, we can write $\hat z=\hat g(\hat v)$ for a certain $\hat v\in\hat V$. With this notation we define for any $z\in Z$:
$$\bk{z,\hat z}=\bk{g(z),\hat v}.$$
Let us verify that this is independent of the choice of $\hat v$. Consider a second $\hat v'\in \hat V$ such that $\hat g(\hat v')=\hat z$. Then this means that $\hat v-\hat v'\in\Ker \hat g=\im \hat f$. So we find $\hat w\in W$ such that 
$\hat v-\hat v'=\hat f(\hat w)$. Then we have 
$$\bk{g(z),\hat v-\hat v'}=\bk{g(z),\hat f(\hat w)}=\bk{fg(z),\hat w}=0,$$
and therefore $\bk{g(z),\hat v}=\bk{g(z),\hat v'}$, hence $\bk{z,\hat z}$ is well-defined and $(g,\hat g)$ is a morphism of pairings by construction.

Let us now verify that the pairing $(Z,\hat Z)$ is left non-degenerate. Suppose that $z\in Z$ is such that 
$\bk{z,\hat z}=0$ for all $\hat z\in \hat Z$. Since $\hat g$ is surjective, this means that 
$\bk{g(z),\hat v}=0$ for all $\hat v\in\hat V$. By the left non-degeneracy of $(V,\hat V)$ we find that $g(z)=0$ and, since $g$ is injective, we conclude that $z=0$ as needed.\\
\ul{(ii)}.
This follows by symmetry from the previous part. In particular, the right adjoint $F$ of  $L$ is given by $F(V)=(V,V^*,\ev)$.\\
\ul{(iii)}. If $k$ is a field, then the pairings $F(V)$ and $G(V)$ defined above are non-degenerate on both sides.\\
\ul{(iv)}. Using the fact that $k$ is a field, the non-degeneracy of the pairing $(V\ot W,\hat W\ot \hat V,\bkk{-,-})$ defined in the previous lemma is easily checked. 
\end{proof}

\begin{remarks}\label{re:dualpairing}
The previous result will be very useful in this paper. It means that a diagram of dualizable morphisms in $\Mod_k$ commutes if and only if the diagram of their duals commutes in $\Mod_k^{op}$. Let us illustrate this by some examples.

If we have morphisms $(m,\Delta):(A,C,\bk{-,-})\ot (A,C,\bk{-,-})\to (A,C,\bk{-,-})$ and $(\eta,\epsilon):(A,C,\bk{-,-})\to \ul k$ in $\P_k$, and $(A,C,\bk{-,-})$ is left non-degenerate, then the observations from \leref{pairings} imply the classical result that $(A,m,\eta)$ is a $k$-algebra if $(C,\Delta,\epsilon)$ is a $k$-coalgebra. Conversely, if $k$ is a field and $(A,C,\bk{-,-})$ is right non-degenerate, then also $(A,C,\bk{-,-})\ot (A,C,\bk{-,-})$ is right non-degenerate by Proposition \ref{dualpairing}(iv), and we obtain that $(C,\Delta,\epsilon)$ is a $k$-coalgebra if  $(A,m,\eta)$ is a $k$-algebra. 

If all pairings are dual pairings, then we
find that $(A,m,\eta)$ is a $k$-algebra if and only if $(C,\Delta,\epsilon)$ is a $k$-coalgebra, if and only if $(A,C,\bk{-,-})$ is an algebra in $\DP_k$. We 
refer to this situation as {\em $\bk{-,-}$ is a dual pairing between the coalgebra $C$ and the algebra $A$}. 
Explicitly, this means that the following compatibility conditions between pairing and structure maps hold:
\begin{eqnarray*}
\langle c, 1_A \rangle &=& \epsilon_C (c), \\
\langle c,ab \rangle &=&\langle c_{(1)} ,a\rangle \langle c_{(2)} , b \rangle , 
\end{eqnarray*}
for all $c\in C$ and $a,b \in A$, in which $\Delta_C (c)=c_{(1)}\otimes c_{(2)}$.

In the same way, it follows that for a given morphism $(\phi,\psi): (A,C,\bk{-,-})\to (B,D,(-,-))$ in $\DP_k$, where $(A,C,\bk{-,-})$ and $(B,D,(-,-))$ are algebras in $\DP_k$, $\phi$ is an algebra morphism if and only if $\psi$ is a coalgebra morphism. Again, it is not necessary that all pairings are two-sided non-degenerate for such a result to be true.
For example, if $(\phi,\psi): (A,C,\bk{-,-})\to (B,D,(-,-))$ is a morphism in $\P_k$ and $(B,D,(-,-)$ is in $\LDP_k$, then  $\psi$ being a coalgebra morphism implies that $\phi$ is an algebra morphism. Conversely, if $k$ is a field and $(A,C,\bk{-,-})$ is in $\RDP_k$, then $\phi$ being an algebra morphism implies that $\psi$ is a coalgebra morphism. 

In case $k$ is a field,  we find that 
if $C$ is a $k$-coalgebra then $(C^*,C,\ev)$ is an algebra in $\DP_k$. On the other hand, if $A$ is a $k$-algebra, then 
$(A^\circ,A,\ev)$ is a coalgebra in $\LDP_k$, where $A^\circ$ denotes the finite dual of $A$. If moreover $A^\circ$ is dense in $A^*$, then $(A^\circ,A,\ev)$ is a coalgebra in $\DP_k$. 
Similarly, if $H$ is a Hopf algebra then $(H^\circ,H,\ev)$ is a Hopf algebra in $\LDP_k$. If moreover $H^\circ$ is dense in $H^*$ (this condition is known as ``$H$ is residually finite dimensional'' or ``$H^\circ$ separates points'') then $(H,H^\circ,\ev)$ and $(H^\circ,H,\ev)$ are Hopf algebras in $\DP_k$. We also say that in this case the evaluation map is a (non-degenerate) pairing of Hopf algebras between $H$ and $H^\circ$.
\end{remarks}

The following results will be useful in this work, a proof can be found for example in \cite{majid}.

\begin{thm} \label{reductionofpairing} Suppose that $k$ is a field and $(C,A,\bk{-,-})$ is a dual pairing between the algebra $A$ and the coalgebra $C$.
\begin{enumerate}
\item Let $J$ be an ideal of $A$. Then $J^\perp=\left\{ c \in C ~|~ \bk{c,a}=0, \forall a \in J\right\}$ is a subcoalgebra of $C$.  Moreover, if the pairing is non-degenerate, there is a well-defined non-degenerate pairing $(J^\perp, A/J,\bkk{-,-})$, such that $(\iota,\pi)$ is a morphism of pairings, where $\iota:J^\perp\to C$ is the canonical inclusion and $\pi:A\to A/J$ is the canonical surjection. In particular, 
$$\langle \!\langle c, a+J\rangle \! \rangle =\langle c,a\rangle,$$
for all $c\in C$ and $a\in A$.
\item Let $D$ be a subcoalgebra of $C$. Then $D^\perp=\left\{ a\in A ~|~\bk{d,a}=0, \forall d\in D\right\}$ is an ideal of $A$. Moreover, if the pairing is non-degenerate, there is a well-defined non-degenerate pairing $(D,A/D^\perp,\bkkk{-,-})$, such that $(\iota,\pi)$ is a morphism of pairings, where $\iota:D\to C$ is the cononical injection and $\pi:A\to A/D^\perp$ is the canonical surjection. In particular,
$$\langle \!\langle \! \langle d , a+D^\perp \rangle \! \rangle \! \rangle = \langle d,a\rangle,$$
for all $d\in D$ and $a\in A$.
\end{enumerate}
\end{thm}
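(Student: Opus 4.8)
The plan is to reduce everything to the two compatibility identities recorded in \reref{dualpairing}, namely $\langle c,1_A\rangle=\epsilon_C(c)$ and $\langle c,ab\rangle=\langle c_{(1)},a\rangle\langle c_{(2)},b\rangle$, which say that $\Delta,\epsilon$ are adjoint to $m,\eta$; by Proposition~\ref{dualpairing}(iv) the induced pairing between $C\otimes C$ and $A\otimes A$ is again non-degenerate. Every verification below is then a manipulation of these identities together with the non-degeneracy of $\langle-,-\rangle$.

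First I would show $J^\perp$ is a subcoalgebra. Fix $c\in J^\perp$ and write $\Delta(c)=\sum_{i=1}^n x_i\otimes y_i$ in reduced form, so that $\{x_i\}$ and $\{y_i\}$ are each linearly independent. For $a\in J$ and arbitrary $b\in A$ we have $ab\in J$, so $0=\langle c,ab\rangle=\sum_i\langle x_i,a\rangle\langle y_i,b\rangle=\langle\,\sum_i\langle x_i,a\rangle y_i,\,b\,\rangle$; left non-degeneracy forces $\sum_i\langle x_i,a\rangle y_i=0$, and linear independence of the $y_i$ gives $\langle x_i,a\rangle=0$ for all $i$ and all $a\in J$, i.e.\ $x_i\in J^\perp$. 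The symmetric computation with $b\in J$ gives $y_i\in J^\perp$, whence $\Delta(c)\in J^\perp\otimes J^\perp$. For part (2) I would run the dual computation: for $d$ in a subcoalgebra $D$ one has $\Delta(d)\in D\otimes D$, so for $a\in D^\perp$, $b\in A$ the identity $\langle d,ab\rangle=\langle d_{(1)},a\rangle\langle d_{(2)},b\rangle$ vanishes because $d_{(1)}\in D$ (and symmetrically $\langle d,ba\rangle=0$ because $d_{(2)}\in D$), so $D^\perp$ absorbs products on both sides and is a two-sided ideal. This half uses only the compatibility identities.

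For the induced quotient pairings, well-definedness is immediate: $\langle\langle c,a+J\rangle\rangle:=\langle c,a\rangle$ is independent of the representative because $c\in J^\perp$ annihilates $J$, and likewise $\langle\langle\langle d,a+D^\perp\rangle\rangle\rangle:=\langle d,a\rangle$ is independent of the representative by the very definition of $D^\perp$. That $(\iota,\pi)$ is a morphism of pairings then holds by construction, since $\langle\langle c,\pi(a)\rangle\rangle=\langle\iota(c),a\rangle$ is exactly the defining equation (and similarly in (2)). The left non-degeneracy of both quotient pairings is inherited directly from the left non-degeneracy of $\langle-,-\rangle$: if $\langle c,a\rangle=0$ for all $a\in A$ then $c=0$. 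In part (2) the right non-degeneracy is essentially definitional, since $\langle d,a\rangle=0$ for all $d\in D$ says precisely that $a\in D^\perp$, i.e.\ $a+D^\perp=0$.

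The genuinely delicate point, which I expect to be the \emph{main obstacle}, is the right non-degeneracy of $(J^\perp,A/J,\langle\langle-,-\rangle\rangle)$: it requires that $\langle c,a\rangle=0$ for all $c\in J^\perp$ force $a\in J$, that is, $(J^\perp)^\perp=J$. Only the inclusion $J\subseteq(J^\perp)^\perp$ is formal; the reverse is a closedness statement that can fail for a non-closed ideal. This is exactly where I would bring in the finite-topology machinery recalled after \leref{jmaps}: identifying $A$ (via $j_A$) with a dense subspace of $C^*$, the characterisation ``$W^\perp=\{0\}$ iff $W$ is dense'' together with Proposition~\ref{pairingandmorphisms} identifies $(J^\perp)^\perp$ with the closure $\overline{J}$ of $J$ in the finite topology. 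Thus $(J^\perp)^\perp=J$, and hence the claimed two-sided non-degeneracy, holds precisely when $J$ is closed (in particular whenever $C$ is finite-dimensional); securing this closedness is the real content, and I would state and invoke it in that form, as in the source \cite{majid}.
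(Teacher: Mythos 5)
The paper does not actually prove this theorem --- it is stated with a pointer to \cite{majid} --- so there is no in-text argument to compare against. Judged on its own terms, your proof is correct on every point it actually establishes: the reduced-form argument for $\Delta(c)\in J^\perp\ot J^\perp$ (using left non-degeneracy and an ideal hypothesis on both sides), the dual computation showing $D^\perp$ is a two-sided ideal (which, as you note, needs only the compatibility identities), the well-definedness of both quotient pairings, the morphism-of-pairings property, and the full two-sided non-degeneracy in part (2), where right non-degeneracy of $(D,A/D^\perp)$ is indeed definitional.

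Your flag on the remaining point is not a gap in your argument but a genuine caveat about the statement itself: right non-degeneracy of $(J^\perp,A/J,\bkk{-,-})$ is equivalent to $(J^\perp)^\perp=J$, i.e.\ to $J$ being closed in the finite topology on $A\subseteq C^*$, and this can fail for an ideal. A concrete instance: take $C=kG$ for an infinite group $G$ and $A=C^*=k^G$ with the evaluation pairing, and let $J\subseteq k^G$ be the ideal of finitely supported functions; then $J^\perp=0$ while $A/J\neq 0$, so the induced pairing cannot be right non-degenerate. So the theorem as stated needs the additional hypothesis that $J$ is closed (automatic when $C$ is finite dimensional, and the inclusion $J\subseteq(J^\perp)^\perp$ is the only formal part), exactly as you propose to state it. It would be worth checking, in the paper's own applications (Propositions \ref{pairingHpar} and \ref{pairingApar}, where $J$ is the defining ideal of $H^\circ_{par}$ resp.\ $A_{par}(H^\circ)$ inside $TH^\circ$), that this closedness is actually available, since the non-degeneracy asserted there relies on it.
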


\subsection{Cofree coalgebras}\selabel{cofree}

\begin{defi} Let $\mathbb{V}$ be a module over a commutative ring $k$. The cofree coalgebra over $\mathbb{V}$ is a pair $(C(\mathbb{V}), p)$, in which $C(\mathbb{V})$ is a $k$-coalgebra and $p:C(\mathbb{V})\rightarrow \mathbb{V}$ is a linear map such that, for any other coalgebra $C$ equipped with a linear map $f:C\rightarrow \mathbb{V}$, there is a unique coalgebra morphism $\overline{f}:C\rightarrow C(\mathbb{V})$ such that the following diagram commutes
\[
\xymatrix{C \ar[r]^-{f} \ar[rd]_-{\overline{f}} & \mathbb{V} \\ & C(\mathbb{V}) \ar[u]_-{p} }
\]
\end{defi}
Because of its definition as a universal object, the cofree coalgebra over a module is unique (up to isomorphism) whenever it exists.  Note that the cofree coalgebra provides a right adjoint for the forgetful functor from $k$-coalgebras to $k$-modules. The map $p:C(\VV)\to \VV$ is exactly the counit of the adjunction evaluated in $\VV$. 

It is well-known that that the cofree coalgebra over any module over a commutative ring indeed exists (an argument based on the special adjoint functor theorem is given in \cite[Theorem 4.1]{Barr}) and many explicit constructions have been obtained in the literature. Under the assumption that $k$ is a field, in \cite{BL} it is shown that the cofree coalgebra of a vector space $\VV$ can be viewed given as the space of $k$-linear functions $f:k[t]\to T(\VV)$ (where $k[t]$ denotes the polynomial algebra and $T\mathbb{V}=\bigoplus_{n=0}^\infty \mathbb{V}^{\ot n}$ the tensor algebra)  that are of degree zero (i.e. $f(t^n)\in \mathbb{V}^{\ot n}$) and representative (i.e. for which there is a finite family $g_i , h_i: k[t]\to T(\VV)$ such that, for every $n,m\in\NN$ we have 
$f(t^{nm})=\sum_{i=1}^n g_i (t^n) h_i (t^m)$). In this case the map $p$ is given by evaluation at $t$.

From the above construction it easily follows that if $k$ is a field, the cofree coalgebra $C(\VV)$ is {\em cogenerated as a coalgebra} by the vector space $\VV$ in the following sense. The family of morphisms 
$$p_n: \xymatrix{C(\VV) \ar[rr]^-{\Delta^{n}} && C(\VV)^{\ot n} \ar[rr]^-{p^{\ot n}} && \VV^{\ot n}}$$
for all $n\in \NN$ is jointly monic in $\Vect$.  (Here we denoted by $\Delta^n$ the $n$-fold comultiplication map, i.e.\ $\Delta^2=\Delta$,  $\Delta^n=(id\ot \Delta^{n-1})\circ \Delta$). The last property means that two linear maps $g,h:X\to C(\VV)$ are identical if and only if their composition with all $p_n$ is so. Indeed, from the description of the cofree coalgebra as above, we see that $p_n(f)=f(t^n)$ for any $f\in C(\VV)$. Hence $f=0$ if and only if $p_n(f)=0$ for all $n$.

The following result will be used frequently in this paper.
\begin{proposition} (see e.g.\ \cite{hazel})\label{pairingcofree}
If $k$ is a field, the linear map 
\[
\begin{array}{rccl} \langle \; , \; \rangle :& C(\mathbb{V}) \otimes T\mathbb{V}^* & \rightarrow  & k, \\ & x\otimes v^*_{i_1} \otimes \cdots \otimes v^*_{i_n} & \mapsto & 
v_{i_1}^*(p(x_{(1)}))\cdots v_{i_n}^*(p(x_{(n)}))
\end{array}
\]
defines a nondegenerate pairing between $C(\mathbb{V})$ and the tensor algebra $T\mathbb{V}^*$.
\end{proposition}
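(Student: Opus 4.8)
The plan is to first rewrite the pairing in a coordinate-free form, then record the compatibility with the algebra and coalgebra structures, and finally prove non-degeneracy on each side separately, the second of which is the crux. Throughout I write $p_n=p^{\ot n}\circ\Delta^n:C(\VV)\to\VV^{\ot n}$ for the cogenerating maps recalled just before the statement, so that $p_0=\epsilon$ and $p_1=p$. Since $k$ is a field, $(\VV^*)^{\ot n}$ embeds into $(\VV^{\ot n})^*$, and for a homogeneous $\omega=v^*_{i_1}\ot\cdots\ot v^*_{i_n}\in(\VV^*)^{\ot n}$ the defining formula becomes simply $\langle x,\omega\rangle=\omega(p_n(x))$, because $p_n(x)=p(x_{(1)})\ot\cdots\ot p(x_{(n)})$. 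As $T\VV^*=\bigoplus_n(\VV^*)^{\ot n}$, extending over the grading by linearity yields a well-defined bilinear map, which also settles well-definedness. To see that it is a pairing between the coalgebra $C(\VV)$ and the algebra $T\VV^*$ in the sense of Remarks~\ref{re:dualpairing}, I would verify the two compatibility identities: the counit condition $\langle x,1\rangle=\epsilon(x)$ is immediate from $p_0=\epsilon$, and for multiplicativity coassociativity gives $\Delta^{m+n}=(\Delta^m\ot\Delta^n)\circ\Delta$, hence $p_{m+n}=(p_m\ot p_n)\circ\Delta$; applying this to $\omega\in(\VV^*)^{\ot m}$, $\omega'\in(\VV^*)^{\ot n}$ produces $\langle x,\omega\omega'\rangle=\omega(p_m(x_{(1)}))\,\omega'(p_n(x_{(2)}))=\langle x_{(1)},\omega\rangle\langle x_{(2)},\omega'\rangle$.

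For non-degeneracy on the $C(\VV)$ side, suppose $\langle x,\omega\rangle=0$ for all $\omega$. Then $\omega(p_n(x))=0$ for every $n$ and every $\omega\in(\VV^*)^{\ot n}$. Using a basis of $\VV$ and the associated coordinate functionals one checks that $(\VV^*)^{\ot n}$ separates the points of $\VV^{\ot n}$, so $p_n(x)=0$ for every $n$; by the cogeneration property recalled above (the family $(p_n)_n$ is jointly monic, equivalently $f=0$ iff $p_n(f)=0$ for all $n$) this forces $x=0$.

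For non-degeneracy on the $T\VV^*$ side I would, for each $n$ and each simple tensor $w=v_1\ot\cdots\ot v_n$, construct an element $x_w\in C(\VV)$ concentrated in degree $n$, meaning $p_n(x_w)=w$ while $p_m(x_w)=0$ for $m\neq n$. To do so, let $D$ be the incidence (path) coalgebra of the chain $0<1<\cdots<n$, with basis $\{e_{ij}\}_{0\le i\le j\le n}$, comultiplication $\Delta(e_{ij})=\sum_{i\le l\le j}e_{il}\ot e_{lj}$ and counit $\epsilon(e_{ij})=\delta_{ij}$, and let $f:D\to\VV$ send $e_{i-1,i}\mapsto v_i$ and all other basis elements to $0$. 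The universal property of the cofree coalgebra yields a coalgebra morphism $\overline f:D\to C(\VV)$ with $p\circ\overline f=f$; set $x_w=\overline f(e_{0n})$. Since $\overline f$ is a coalgebra map, $p_m(x_w)=f^{\ot m}(\Delta^m_D(e_{0n}))$, and $\Delta^m_D(e_{0n})$ is the sum over chains $0=l_0\le\cdots\le l_m=n$ of $e_{l_0l_1}\ot\cdots\ot e_{l_{m-1}l_m}$; a summand survives $f^{\ot m}$ only if every step increases by exactly one, which is possible precisely when $m=n$, the unique surviving chain then giving $v_1\ot\cdots\ot v_n=w$. Thus $x_w$ has the required isolating property. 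Consequently, if $\langle x,\omega\rangle=0$ for all $x$ and we write $\omega=\sum_m\omega_m$, evaluation against $x_w$ gives $\omega_n(w)=0$ for every $n$ and every simple tensor $w$, so each $\omega_n=0$ and $\omega=0$.

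I expect the construction of the degree-isolating elements $x_w$ to be the main obstacle. Non-degeneracy on the coalgebra side is an immediate consequence of cogeneration, but on the algebra side one must actively produce enough elements of the rather inexplicit coalgebra $C(\VV)$; the chain-coalgebra device, used through the universal property rather than through the explicit functional model of $C(\VV)$, is what makes the separation of degrees transparent and avoids checking the representativity condition by hand.
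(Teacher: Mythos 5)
Your proof is correct. Note that the paper itself offers no proof of this proposition --- it is stated with a citation to \cite{hazel} --- so there is no internal argument to compare against; the standard treatment in the literature obtains non-degeneracy from the explicit model of $C(\VV)$ as degree-zero representative functions $k[t]\to T(\VV)$ (recalled in \seref{cofree}), where $p_n(f)=f(t^n)$ makes the degree-$n$ components, and hence elements isolating a prescribed $p_n$-value, visible by construction. Your argument replaces that explicit model with the universal property alone: left non-degeneracy is exactly the cogeneration statement $\bigcap_n\Ker p_n=0$, and for right non-degeneracy your path-coalgebra device is sound --- the incidence coalgebra of the chain $0<\cdots<n$ is coassociative and counital, $p_m(\overline f(e_{0n}))=f^{\ot m}(\Delta^m_D(e_{0n}))$ because $\overline f$ is a coalgebra map, and the only chain from $0$ to $n$ all of whose steps have size one occurs when $m=n$, so $p_n(x_w)=w$ and $p_m(x_w)=0$ otherwise (including the degenerate case $n=0$, where $x_w$ is grouplike with $p(x_w)=0$). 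The only point worth making explicit is that concluding $\omega_n=0$ from $\omega_n(w)=0$ for all simple tensors $w$ uses the injectivity of $(\VV^*)^{\ot n}\hookrightarrow(\VV^{\ot n})^*$, which is where the hypothesis that $k$ is a field enters; you invoke it implicitly and it is valid. What your route buys is independence from any particular construction of the cofree coalgebra; what it costs is having to manufacture enough elements of $C(\VV)$ by hand, which the explicit model gives for free.
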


In fact, the previous result can obviously be improved in the following way: any non-degenerate dual pairing  $(V,\hat V,\bk{-,-})$ induces a non-degenerate dual pairing between the cofree coalgebra $C(V)$ and the free algebra $T(\hat V)$.

\subsection{Partial representations and partial (co)actions of Hopf algebras}\label{partrep}\selabel{partrep}

The notion of a partial representation of a Hopf algebra was introduced in \cite{ABV}. The definition was motivated by the corresponding notion for groups \cite{dok0}, which can be recovered from the definition below by taking the group Hopf algebra $kG$. In fact, the original definition in \cite{ABV} stated 5 axioms. As it was observed by Paolo Saracco \cite{Paolo}, the other axioms can be obtained from the current ones (see Lemma~\ref{axiomsPRep}).

\begin{defi} 
Let $H$ be a Hopf $k$-algebra, and let $B$ be a unital $k$-algebra. A \emph{partial representation} of $H$ in $B$ is a linear map $\pi: H \rightarrow B$ such that 
\begin{enumerate}[label=(PR\arabic*),ref=PR\arabic*]
\item\label{def:PR1} $\pi (1_H)  =  1_B$; 
\item\label{def:PR2} $\pi (h) \pi (k_{(1)}) \pi (S(k_{(2)}))  =   \pi (hk_{(1)}) \pi (S(k_{(2)})) $, for every $h,k\in H$;
\item\label{def:PR3} $\pi (h_{(1)}) \pi (S(h_{(2)})) \pi (k)  =   \pi (h_{(1)}) \pi (S(h_{(2)})k)$, for every $h,k\in H$.
\end{enumerate}
If $(B,\pi)$ and $(B',\pi')$ are two partial representations of $H$, then we say that an algebra morphism $f:B\rightarrow B'$ is a morphism of partial representations if $\pi ' =f\circ \pi$. 

The category whose objects are partial representations of $H$ and whose morphisms are morphisms of partial representations is denoted by $\mbox{ParRep}_H$.
\end{defi}

\begin{lemma}\cite{Paolo}\label{axiomsPRep}
Let $\pi: H \rightarrow B$ be a partial representation. Then the following axioms are satisfied as well
\begin{enumerate}[label=(PR\arabic*),ref=PR\arabic*,start=4]
	\item\label{def:PR4} $\pi (h) \pi (S(k_{(1)})) \pi (k_{(2)}) = \pi (hS(k_{(1)})) \pi (k_{(2)})$, for every $h,k\in H$;
	\item\label{def:PR5} $\pi (S(h_{(1)}))\pi (h_{(2)}) \pi (k) = \pi (S(h_{(1)}))\pi (h_{(2)} k)$, for every $h,k\in H$.
\end{enumerate}
Conversely, if a linear map $\pi: H \rightarrow B$ satisfies (\ref{def:PR1}), (\ref{def:PR4}) and (\ref{def:PR5}), then it is a partial representation.
\end{lemma}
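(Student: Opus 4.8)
The plan is to first isolate two ``antipode--contraction'' identities that play, for a partial representation, the role of the Hopf identity $x_{(1)}S(x_{(2)})x_{(3)}=x$, and then to deduce (PR4) and (PR5) from them by short computations, leaving the converse to an opposite--algebra symmetry. So first I would establish, for every $x\in H$, the two identities
\[
\pi(x_{(1)})\pi(S(x_{(2)}))\pi(x_{(3)})=\pi(x),\qquad
\pi(S(x_{(1)}))\pi(x_{(2)})\pi(S(x_{(3)}))=\pi(S(x)).
\]
The first follows by applying (PR3) (with the first two legs of $\Delta^2(x)$ playing the role of $h$ and $x_{(3)}$ that of $k$), which rewrites the left-hand side as $\pi(x_{(1)})\pi(S(x_{(2)})x_{(3)})$; since $S(x_{(2)})x_{(3)}=\epsilon(x_{(2)})1_H$ this collapses to $\pi(x)\pi(1_H)=\pi(x)$ by (PR1). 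The second is entirely analogous but uses (PR2): it becomes $\pi(S(x_{(1)})x_{(2)})\pi(S(x_{(3)}))$, and $S(x_{(1)})x_{(2)}=\epsilon(x_{(1)})1_H$ collapses it to $\pi(1_H)\pi(S(x))=\pi(S(x))$.

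Next I would prove (PR4). Starting from its right-hand side $\pi(hS(k_{(1)}))\pi(k_{(2)})$, I expand the free factor $\pi(k_{(2)})$ by the first auxiliary identity into $\pi(k_{(2)})\pi(S(k_{(3)}))\pi(k_{(4)})$, obtaining $\pi(hS(k_{(1)}))\pi(k_{(2)})\pi(S(k_{(3)}))\pi(k_{(4)})$. Applying (PR2) to the first three factors (with $a=hS(k_{(1)})$ and split element $k_{(2)}$) merges them into $\pi(hS(k_{(1)})k_{(2)})\pi(S(k_{(3)}))$, and the contraction $S(k_{(1)})k_{(2)}=\epsilon(k_{(1)})1_H$ leaves $\pi(h)\pi(S(k_{(1)}))\pi(k_{(2)})$, which is exactly the left-hand side of (PR4). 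The proof of (PR5) is the mirror image: starting from $\pi(S(h_{(1)}))\pi(h_{(2)}k)$, I expand the left factor $\pi(S(h_{(1)}))$ via the second auxiliary identity, apply (PR3) to the last three factors, and contract $S(h_{(3)})h_{(4)}=\epsilon(h_{(3)})1_H$ to recover $\pi(S(h_{(1)}))\pi(h_{(2)})\pi(k)$.

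For the converse I would invoke the involution $H\mapsto H^{\mathrm{op,cop}}$, $B\mapsto B^{\mathrm{op}}$. Since $H^{\mathrm{op,cop}}$ is again a Hopf algebra with the same antipode $S$ (no invertibility of $S$ is needed), the same linear map $\pi$, now read as $\tilde\pi:H^{\mathrm{op,cop}}\to B^{\mathrm{op}}$, satisfies (PR1) automatically, while a direct check shows that axiom (PR2) for $\tilde\pi$ is precisely (PR5) for $\pi$ and axiom (PR3) for $\tilde\pi$ is precisely (PR4) for $\pi$ (reversing $\Delta$ interchanges the two antipode positions, and reversing the product of $B$ interchanges left and right). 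Hence a map satisfying (PR1), (PR4), (PR5) becomes, after passing to the opposite structures, a genuine partial representation of $H^{\mathrm{op,cop}}$; applying the already-proved implication to $\tilde\pi$ yields (PR4) and (PR5) for $\tilde\pi$, which unwind to (PR2) and (PR3) for $\pi$.

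The main obstacle to keep in mind is that the antipode is \emph{not} assumed bijective, so one cannot prove (PR4) from (PR2) by the tempting substitution $k\mapsto S(k)$: this produces $S^2$ in the arguments and never closes up. The decisive trick above is therefore to expand only the single ``free'' factor (the one not carrying an $S$) through the contraction identities, which introduces exactly one extra $S$ of the correct element and lets (PR2)/(PR3) perform the merging without any $S^2$ ever appearing. One should also check carefully that the $\mathrm{op,cop}$ dictionary matches the axioms as claimed, since an off-by-one in the Sweedler indices would swap the roles of (PR4) and (PR5).
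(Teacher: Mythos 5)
Your proof is correct and, for the two identities (\ref{def:PR4}) and (\ref{def:PR5}), it is essentially the paper's own argument: your auxiliary contractions $\pi(x_{(1)})\pi(S(x_{(2)}))\pi(x_{(3)})=\pi(x)$ and $\pi(S(x_{(1)}))\pi(x_{(2)})\pi(S(x_{(3)}))=\pi(S(x))$ package the (\ref{def:PR3})+(\ref{def:PR1}) resp.\ (\ref{def:PR2})+(\ref{def:PR1}) steps, so your chain of equalities is the paper's computation read from right to left. The only genuine difference is the converse, which the paper dispatches with ``proven in the same way'' while you derive it from the $(H^{\mathrm{op,cop}},B^{\mathrm{op}})$ symmetry; your dictionary ((PR2) for $\tilde\pi$ is (PR5) for $\pi$, (PR3) for $\tilde\pi$ is (PR4) for $\pi$) checks out, and this gives a cleaner, computation-free justification of the symmetry the paper implicitly invokes.
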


\begin{proof}
Suppose that (\ref{def:PR1}), (\ref{def:PR2}) and (\ref{def:PR3}) are satisfied. Then for any $h,k\in H$ we find that
\begin{eqnarray*}
\pi (h) \pi (S(k_{(1)})) \pi (k_{(2)})&=&\pi (h\epsilon(k_{(1)})) \pi (S(k_{(2)})) \pi (k_{(3)})\\
&=& \pi (h S(k_{(1)}) k_{(2)}) \pi (S(k_{(3)})) \pi (k_{(4)})\\
&\stackrel{(\ref{def:PR2})}{=}& \pi (h S(k_{(1)}))\pi(k_{(2)}) \pi (S(k_{(3)})) \pi (k_{(4)})\\
&\stackrel{(\ref{def:PR3})}{=}& \pi (h S(k_{(1)}))\pi(k_{(2)}) \pi (S(k_{(3)}) k_{(4)})\\
&\stackrel{(\ref{def:PR1})}{=}& \pi (h S(k_{(1)}))\pi(k_{(2)}).
\end{eqnarray*}
In a similar way, 
\begin{eqnarray*}
\pi (S(h_{(1)}))\pi (h_{(2)}) \pi (k) &=& \pi (S(h_{(1)}))\pi (h_{(2)}) \pi (S(h_{(3)})h_{(4)}k)\\
&\stackrel{(\ref{def:PR3})}{=}& \pi (S(h_{(1)}))\pi (h_{(2)}) \pi (S(h_{(3)}))\pi(h_{(4)}k)\\
&\stackrel{(\ref{def:PR2})}{=}& \pi (S(h_{(1)})h_{(2)}) \pi (S(h_{(3)}))\pi(h_{(4)}k)\\
&\stackrel{(\ref{def:PR1})}{=}& \pi (S(h_{(1)}))\pi (h_{(2)} k).
\end{eqnarray*}
The converse statement is proven in the same way.
\end{proof}

With this definition, one can define what is a partial $H$-module.

\begin{defi} Let $H$ be a Hopf $k$-algebra. A partial $H$-module is a pair $(M, \pi)$ in which $M$ is a $k$-module and $\pi :H \rightarrow \mbox{End}_k (M)$ is a partial representation of $H$. A morphism between two partial $H$-modules $(M, \pi)$ and $(N, \pi')$ is a $k$-linear map $f:M\rightarrow N$ such that, for each $h\in H$, we have $f\circ \pi (h)=\pi' (h) \circ f$. The category of partial $H$-modules will be denoted by ${}_H \Mod^{par}$.
\end{defi}

Examples of partial representations and partial modules, arise from partial representations of groups, from partial actions of groups and Hopf algebras (on algebras) and from global modules by means of a suitable projection (see \cite{ABV2}).

\begin{defi} \label{hpar}  Let $H$ be a Hopf algebra 
and let $T(H)$ be the tensor algebra of the $k$-module $H$. The {\em partial ``Hopf'' algebra} $H_{par}$ is the quotient
of $T(H)$ by the ideal $I$ generated by elements of the form 
\begin{enumerate}
\item $1_H - 1_{T(H)}$; 
\item $h \otimes k_{(1)} \otimes S(k_{(2)}) - hk_{(1)} \otimes S(k_{(2)})$, for all $h,k \in H$;
\item $h_{(1)} \otimes S(h_{(2)}) \otimes k - h_{(1)} \otimes S(h_{(2)})k$, for all $h,k \in H$.
\end{enumerate}
\end{defi}

\begin{remark}
In \cite{ABV}, the algebra $H_{par}$ was defined by means of 5 families of relations, rather than the 3 above. In view of Lemma~\ref{axiomsPRep}, the definition presented here is equivalent to the one of \cite{ABV}.
\end{remark}

Denoting the class of $h\in H$ in $H_{par}$ by $[h]$, it is easy to see that the map 
\[
 [\underline{\; }]:H  \rightarrow  H_{par},   \ \ \ \   h  \mapsto  [h]  
\]
is a partial representation of the Hopf algebra $H$ on $H_{par}$. Moreover, for every partial representation $\pi :H\rightarrow B$ there exists a unique morphism of algebras $\hat{\pi}:H_{par} \rightarrow B$ such that $\pi =\hat{\pi} \circ [\underline{\;} ]$. As a consequence, the category ${}_H \Mod^{par}$, of partial $H$-modules is isomorphic to the category ${}_{H_{par}}\Mod$ of $H_{par}$-modules. It should be remarked that despite its name, the partial ``Hopf'' algebra $H_{par}$ is not a Hopf algebra, but rather a Hopf algebroid (in the sense of B\"ohm) over the base algebra $A_{par}$ which is the subalgebra of $H_{par}$ generated by elements of the form $\varepsilon_h =[h_{(1)}][S(h_{(2)})]$. Consequently, the category of partial representations is a closed monoidal category with a strict monoidal forgetful functor to the category of $A_{par}$-bimodules. Finally, there is a partial action of the Hopf algebra $H$ on $A_{par}$ (i.e.\ $A_{par}$ is a partial module algebra over $H$) such that the partial Hopf algebra $H_{par}$ is isomorphic to the partial smash product $\underline{A_{par} \# H}$. In \cite{ABV}, it was also shown that the algebra $A_{par}$ is the quotient of the free algebra $T(H)$ by the ideal generated by elements of the form
\begin{enumerate}[label=(J\arabic*),ref=J\arabic*]
\item $1_H - 1_{T(H)}$;
\item $h-h_{(1)}\ot h_{(2)}$;
\item $h_{(1)}\ot h_{(2)}k-h_{(1)}k\ot h_{(2)}$;
\end{enumerate}
where $h,k\in H$.

Finally, let us also recall the notion of a symmetric right partial coaction (or partial comodule algebra) of a Hopf algebra $H$ on a unital algebra $A$, and of a symmetric right partial coaction (or partial comodule coalgebra) of a Hopf algebra $H$ on a  coalgebra $C$. 

\begin{defi} \cite{BV}, \cite{caen06}.  A symmetric right partial coaction of a bialgebra $H$ on a unital algebra $A$ is a linear map $\rho :A\rightarrow A\otimes H$, denoted as $\rho (a) =a^{[0]}\otimes a^{[1]}$, for every $a\in A$ such that the following identities are satisfied:
\begin{enumerate}[label=(PCA\arabic*),ref=PCA\arabic*]
\item\label{def:PCA1} $\rho (ab)=\rho (a) \rho (b)$, that is $(ab)^{[0]}\otimes (ab)^{[1]} =a^{[0]}b^{[0]} \otimes a^{[1]}b^{[1]}$ for every $a, b \in A$;
\item\label{def:PCA2} $(I\otimes \epsilon)\rho (a) =a$, that is $a^{[0]}\epsilon(a^{[1]})=a$ for every $a\in A$;
\item\label{def:PCA3} $(\rho \otimes I)\rho (a)=(\rho (\um )\otimes 1_H)[(I\otimes \Delta)\rho (a)]=[(I\otimes \Delta)\rho (a)](\rho (\um )\otimes 1_H)$, that is, \\ $ a^{[0][0]}\otimes a^{[0][1]}\otimes a^{[1]} =\um^{[0]}a^{[0]}\otimes \um^{[1]}{a^{[1]}}_{(1)}\otimes {a^{[1]}}_{(2)} =a^{[0]}\um^{[0]}\otimes {a^{[1]}}_{(1)}\um^{[1]}\otimes {a^{[1]}}_{(2)}$ .
\end{enumerate}
The second equality in axiom (\ref{def:PCA3}) means that the partial coaction is symmetric. The algebra $A$ is called a partial right $H$-comodule algebra.
\end{defi}

\begin{defi} \cite{BV} \label{comodule-coalgebra}
A symmetric left partial coaction of a bialgebra $H$ on a coalgebra $C$ is a linear map 
\[
 \lambda : C  \rightarrow  H\otimes C, \ \ \ \   c  \mapsto  c^{[-1]}\otimes c^{[0]}, 
\]
such that
\begin{enumerate}[label=(LPCC\arabic*),ref=LPCC\arabic*]
\item\label{def:LPCC1} $c^{[-1]}\ot {c^{[0]}}_{(1)}\ot {c^{[0]}}_{(2)} = {c_{(1)}}^{[-1]} {c_{(2)}}^{[-1]} \otimes {c_{(1)}}^{[0]} \otimes {c_{(2)}}^{[0]}$,
\item\label{def:LPCC2} $\epsilon_H(^{[-1]})c^{[0]} =c$,
\item\label{def:LPCC3}
$\begin{array}[t]{rcl}
	c^{[-1]}\ot c^{[0][-1]}\ot c^{[0][0]}
	&=&  {c_{(1)}}^{[-1]}
	\epsilon_C ({c_{(1)}}^{[0]}) {{c_{(2)}}^{[-1]}}_{(1)} \otimes {{c_{(2)}}^{[-1]}}_{(2)} \otimes {c_{(2)}}^{[0]}\\
	&=&  {{c_{(1)}}^{[-1]}}_{(1)} {c_{(2)}}^{[-1]} 
	\epsilon_C ({c_{(2)}}^{[0]})\otimes {{c_{(1)}}^{[-1]}}_{(2)} \otimes {c_{(1)}}^{[0]}
\end{array}$
\end{enumerate}
for all $c\in C$. We call $C$ a partial left $H$-comodule coalgebra.
\end{defi}

In \cite{BV}, the partial cosmash coproduct of a Hopf algebra $H$ with a left partial $H$-comodule coalgebra $C$, was defined as the subspace  $\underline{C\cosmash H}\subseteq C\otimes H$, spanned by elements of the form
\[
x\cosmash h =x_{(1)}\otimes {x_{(2)}}^{[-1]} \epsilon( {x_{(2)}}^{[0]}) h.
\]
It then follows that
\[
x\cosmash h  = x_{(1)}\cosmash {x_{(2)}}^{[-1]} \epsilon( {x_{(2)}}^{[0]}) h.
\]
The partial cosmash coproduct $\underline{C\cosmash H}$ has a structure of a coalgebra with the comultiplication given by 
\[
\underline{\Delta} (x\cosmash h) =x_{(1)}\cosmash {x_{(2)}}^{[-1]} h_{(1)}\otimes {x_{(2)}}^{[0]} \cosmash h_{(2)} ,
\]
and counit given by
\[
\underline{\epsilon} (x\cosmash h) =\epsilon_C (x) \epsilon_H (h) .
\]

Right partial coactions on a coalgebra are defined symmetrically. Since we will use both versions of this notion, we state the right-hand version explicitly for the reader's convenience.

\begin{defi}\label{rcomodule-coalgebra}
A symmetric right partial coaction of a bialgebra $H$ on a coalgebra $C$ is a linear map 
\[
 \rho :  C  \rightarrow  C\otimes H, \ \ \ \ 
 c  \mapsto  c^{[0]}\otimes c^{[1]}, 
\] 
such that for all $c\in C$:
\begin{enumerate}[label=(RPCC\arabic*),ref=RPCC\arabic*]
\item\label{def:RPCC1} ${c^{[0]}}_{(1)} \otimes {c^{[0]}}_{(2)} \otimes c^{[1]} ={c_{(1)}}^{[0]} \otimes {c_{(2)}}^{[0]} \otimes {c_{(1)}}^{[1]} {c_{(2)}}^{[1]}$;
\item\label{def:RPCC2} $c^{[0]}\epsilon_H (c^{[1]}) =c$;
\item\label{def:RPCC3}
$\begin{array}[t]{rcl}
	c^{[0][0]}\otimes c^{[0][1]} \otimes c^{[1]}
	& = & \epsilon_C ({c_{(1)}}^{[0]}){c_{(2)}}^{[0]} \otimes {{c_{(2)}}^{[1]}}_{(1)} \otimes {c_{(1)}}^{[1]}{{c_{(2)}}^{[1]}}_{(2)}\\
	& = & \epsilon_C ({c_{(2)}}^{[0]}){c_{(1)}}^{[0]} \otimes {{c_{(1)}}^{[1]}}_{(1)} \otimes {{c_{(1)}}^{[1]}}_{(2)}{c_{(2)}}^{[1]} \ .
\end{array}$
\end{enumerate}
We call $C$ a partial right $H$-comodule coalgebra.
\end{defi}

\section{Partial comodules}\selabel{parcomod}

The definition of a partial module over a Hopf algebra makes sense for Hopf algebras in an arbitrary braided monoidal category. Since furthermore the notion of a Hopf $k$-algebra is self-dual, one can consider a partial module over a Hopf algebra in the dual of the category of $k$-modules. Explicitly, this leads us to the following definition.

\begin{defi} Let $H$ be a Hopf $k$-algebra. An {\em (algebraic) partial right $H$-comodule} is a $k$-module $M$ endowed with a $k$-linear map $\rho:M\to M\ot H$, satisfying
\begin{enumerate}[label=(PCM\arabic*),ref=PCM\arabic*]
	\item\label{def:PCM1} $(I\otimes \epsilon )\rho = I$;
	\item\label{def:PCM2} $(I\otimes I\otimes \mu )(I\otimes I \otimes I \otimes S) (I\otimes \Delta \otimes I)(\rho \otimes I)\rho = (I\otimes I\otimes \mu )(I\otimes I \otimes I \otimes S)(\rho \otimes I \otimes I)(\rho \otimes I)\rho$;
	\item\label{def:PCM3} $(I\otimes \mu \otimes I )(I\otimes I \otimes S \otimes I) (I\otimes I \otimes \Delta)(\rho \otimes I)\rho = (I\otimes \mu \otimes I )(I\otimes I \otimes S \otimes I)(\rho \otimes I \otimes I)(\rho \otimes I)\rho$.
\end{enumerate}
A morphism between two partial $H$-comodules $(M, \rho)$ and $(N, \sigma)$ is a linear map $f:M\rightarrow N$ satisfying $\sigma \circ f=(f\otimes I)\circ \rho$. The category of partial right $H$-comodules is denoted by $\Mod^H_{par}$.
\end{defi}

\begin{remarks}
\begin{enumerate}[(1)]
\item Using Sweedler notation, 
\[
 \rho : M  \rightarrow  M\otimes H, \ \ \ \   m  \mapsto  m^{[0]} \otimes m^{[1]}, 
\]
the axioms of a partial $H$-comodule can be rephrased as 
\begin{enumerate}
	\item[(\ref*{def:PCM1})] $(I\otimes \epsilon )\rho (m)=m$, 
	\item[(\ref*{def:PCM2})] $m^{[0][0]}\ot {m^{[0][1]}}_{(1)} \ot {m^{[0][1]}}_{(2)} S(m^{[1]}) = m^{[0][0][0]}\ot m^{[0][0][1]}\ot m^{[0][1]}S(m^{[1]})$,
	\item[(\ref*{def:PCM3})] $m^{[0][0]}\ot {m^{[0][1]}} S({m^{[1]}}_{(1)})\ot {m^{[1]}}_{(2)} = m^{[0][0][0]}\ot m^{[0][0][1]} S(m^{[0][1]})\ot m^{[1]}$,
\end{enumerate}
which have to be satisfied for all $m\in M$.

\item We used the prefix ``algebraic'' to distinguish the introduced notion from the ``geometric'' and ``quasi'' partial comodules from \cite{HV}. We refer to Proposition \ref{algebraicisquasi} for the relation between these notions.
\end{enumerate}
\end{remarks}

\begin{lemma}\label{axiomsPCoRep}
Let $H$ be a Hopf $k$-algebra, $M$ a $k$-module and $\rho:M\to M\otimes H$ a $k$-linear map. Then $(M,\rho)$ is a partial right $H$-comodule if and only if \eqref{def:PCM1} and the following two axioms hold
\begin{enumerate}[label=(PCM\arabic*),ref=PCM\arabic*,start=4]
	\item\label{le:PCM4} $(I\otimes I\otimes \mu )(I\otimes I \otimes S \otimes I) (I\otimes \Delta \otimes I)(\rho \otimes I)\rho=
	(I\otimes I\otimes \mu )(I\otimes I \otimes S \otimes I)(\rho \otimes I \otimes I)(\rho \otimes I)\rho$;
	\item\label{le:PCM5} $(I\otimes \mu \otimes I )(I\otimes S \otimes I \otimes I) (I\otimes I \otimes \Delta)(\rho \otimes I)\rho=
	(I\otimes \mu \otimes I )(I\otimes S \otimes I \otimes I)(\rho \otimes I \otimes I)(\rho \otimes I)\rho$.
\end{enumerate}
Or in Sweedler notation
\begin{enumerate}
	\item[(\ref*{le:PCM4})] $m^{[0][0]}\ot {m^{[0][1]}}_{(1)} \ot S({m^{[0][1]}}_{(2)}) m^{[1]} = m^{[0][0][0]}\ot m^{[0][0][1]}\ot S(m^{[0][1]})m^{[1]}$,
	\item[(\ref*{le:PCM5})] $m^{[0][0]}\ot S({m^{[0][1]}}) {m^{[1]}}_{(1)}\ot {m^{[1]}}_{(2)} = m^{[0][0][0]}\ot S(m^{[0][0][1]}) m^{[0][1]}\ot m^{[1]}$,
\end{enumerate}
for all $m\in M$.
\end{lemma}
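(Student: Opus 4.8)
The statement is the exact formal dual of Lemma~\ref{axiomsPRep}, and the plan is to exploit this. To a linear map $\rho:M\to M\otimes H$ one may associate, for each $a\in H^*$, the operator $\pi_a:M\to M$, $m\mapsto m^{[0]}\langle a,m^{[1]}\rangle$, where $\langle\,,\,\rangle$ is the evaluation pairing. Using that the convolution product, the counit and (when available) the comultiplication and antipode of $H^*$ dualize respectively to the comultiplication $\Delta$, the unit $1_H$, the multiplication $\mu$ and the antipode $S$ of $H$, a direct check shows that \eqref{def:PCM1}--\eqref{def:PCM3} for $\rho$ translate into \eqref{def:PR1}--\eqref{def:PR3} for the assignment $a\mapsto\pi_a$, and that \eqref{le:PCM4}--\eqref{le:PCM5} translate into \eqref{def:PR4}--\eqref{def:PR5}. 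Conceptually, then, the lemma is Lemma~\ref{axiomsPRep} read through this dictionary. Since $k$ is only a commutative ring, I will not strip off the test functionals $a$ (which would require non-degeneracy) but instead transcribe the computation of Lemma~\ref{axiomsPRep} into a direct identity of maps $M\to M\otimes H\otimes H$ built from $\rho$ and the structure maps of $H$; this transcription is legitimate over any $k$ precisely because every use of the Hopf structure of $H^*$ in that computation becomes a use of the genuine Hopf structure of $H$.

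For the forward implication I would deduce \eqref{le:PCM4} from \eqref{def:PCM1}--\eqref{def:PCM3}, the derivation of \eqref{le:PCM5} being symmetric. Starting from the right-hand side of \eqref{le:PCM4}, namely $(\rho\otimes I\otimes I)(\rho\otimes I)\rho$ followed by $(I\otimes I\otimes\mu)(I\otimes I\otimes S\otimes I)$, I would carry out the four moves of the representation proof, each realized on a prescribed tensor leg: first expand $\epsilon(x)1_H=S(x_{(1)})x_{(2)}$ on the appropriate middle $H$-leg (this corresponds to the passage from $\pi(h)$ to $\pi(hS(k_{(1)})k_{(2)})$ and rests only on the antipode and counit axioms of $H$); then apply \eqref{def:PCM2} to the outer pair of coactions; then apply \eqref{def:PCM3} to the resulting configuration; and finally collapse the emerging factor $S(x_{(1)})x_{(2)}=\epsilon(x)1_H$ and remove the residual coaction by means of \eqref{def:PCM1}. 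Tracing the Sweedler indices through, this chain of equalities turns the three-fold coaction into $(I\otimes\Delta\otimes I)(\rho\otimes I)\rho$ followed by the same post-composition, which is exactly the left-hand side of \eqref{le:PCM4}.

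The converse, that \eqref{def:PCM1}, \eqref{le:PCM4} and \eqref{le:PCM5} force \eqref{def:PCM2} and \eqref{def:PCM3}, follows the same pattern with the two derived axioms now playing the role of the hypotheses, exactly as in the second half of Lemma~\ref{axiomsPRep}. The only real difficulty is the bookkeeping. In the representation proof the three operators $\pi(h)$, $\pi(S(k_{(1)}))$, $\pi(k_{(2)})$ are independent factors in an associative algebra, whereas here the iterated coaction $(\rho\otimes I)\rho$ entangles its legs: re-applying $\rho$ to the leg $m^{[0]}$ produces new legs $m^{[0][0]}$, $m^{[0][1]}$ whose interaction is itself constrained by the axioms. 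Each ``insert between two factors'' step must therefore be rewritten as a precomposition with a map of the form $\rho\otimes I^{\otimes j}$, $I^{\otimes i}\otimes\Delta\otimes I^{\otimes j}$ or $I^{\otimes i}\otimes S\otimes I^{\otimes j}$ acting on a specific leg, and one must keep track, through as many as four iterations of $\rho$, of exactly which leg each of \eqref{def:PCM2}, \eqref{def:PCM3}, and the (co)unit and antipode identities is applied to. The Hopf-theoretic content is entirely the counit and antipode axioms; this combinatorial transcription is the step I expect to be the main obstacle.
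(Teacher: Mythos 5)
Your plan is correct and coincides with the paper's own proof: the paper derives \eqref{le:PCM4} and \eqref{le:PCM5} by exactly the four-move Sweedler computation you describe (insert $S(h_{(2)})h_{(3)}=\epsilon(h_{(2)})1_H$ on the middle $H$-leg, apply \eqref{def:PCM2}, then \eqref{def:PCM3}, then collapse via the antipode/counit axiom and \eqref{def:PCM1}), which is the leg-by-leg transcription of the proof of Lemma~\ref{axiomsPRep}. The bookkeeping you flag as the main obstacle works out without incident, so the only thing separating your proposal from the paper's proof is actually writing out the chain of equalities.
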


\begin{proof}Suppose that $(M, \rho)$ is a partial right $H$-comodule and consider an element $m \in M$, then we find
	\begin{eqnarray*}
		&									& \hspace*{-2cm}
											  m^{[0][0][0]} \otimes m^{[0][0][1]} \otimes S(m^{[0][1]})m^{[1]} =\\
		&								=	& m^{[0][0][0]} \otimes {m^{[0][0][1]}}_{(1)} \otimes S({m^{[0][0][1]}}_{(2)}){m^{[0][0][1]}}_{(3)}S(m^{[0][1]})m^{[1]}\\
		&\stackrel{(\ref{def:PCM2})}{	=}	& m^{[0][0][0][0]} \otimes {m^{[0][0][0][1]}}_{(1)} \otimes S({m^{[0][0][0][1]}}_{(2)}){m^{[0][0][1]}}S(m^{[0][1]})m^{[1]}\\
		&\stackrel{(\ref{def:PCM3})}{	=}	& m^{[0][0][0]} \otimes {m^{[0][0][1]}}_{(1)} \otimes S({m^{[0][1]}}_{(2)}){m^{[0][1]}}S({m^{[1]}}_{(1)}){m^{[1]}}_{(2)}\\
		&\stackrel{(\ref{def:PCM1})}{	=}	& m^{[0][0]} \otimes {m^{[0][1]}}_{(1)} \otimes S({m^{[0][1]}}_{(2)}){m^{[1]}}.
	\end{eqnarray*}
So (\ref{le:PCM4}) is satisfied. Similarly, let us check (\ref{le:PCM5}):
	\begin{eqnarray*}
		&									& \hspace*{-2cm}
											  m^{[0][0][0]} \otimes S(m^{[0][0][1]}) m^{[0][1]} \otimes m^{[1]} =\\
		&								=	& m^{[0][0][0]} \otimes S(m^{[0][0][1]}) m^{[0][1]} S({m^{[1]}}_{(1)}) {m^{[1]}}_{(2)} \otimes {m^{[1]}}_{(3)}\\
		&\stackrel{(\ref{def:PCM3})}{	=}	&m^{[0][0][0][0]} \otimes S(m^{[0][0][0][1]}) m^{[0][0][1]} S({m^{[0][1]}}) {m^{[1]}}_{(1)} \otimes {m^{[1]}}_{(2)}\\
		&\stackrel{(\ref{def:PCM2})}{	=}	&m^{[0][0][0]} \otimes S({m^{[0][0][1]}}_{(1)}) {m^{[0][0][1]}}_{(2)} S({m^{[0][1]}}) {m^{[1]}}_{(1)} \otimes {m^{[1]}}_{(2)}\\
		&\stackrel{(\ref{def:PCM1})}{	=}	& m^{[0][0]} \otimes S(m^{[0][1]}) {m^{[1]}}_{(1)} \otimes {m^{[1]}}_{(2)}.
	\end{eqnarray*}
	The converse can be proven in a similar way.
\end{proof}

\begin{exmp} Every (global) right $H$-comodule is obviously a partial right $H$-comodule. It suffices to apply (higher) coassociativity to verify that axioms of a partial comodule are satisfied.
\end{exmp}

\begin{exmp} 
In case the Hopf $k$-algebra $H$ is finitely generated and projective as a $k$-module (i.e.\ finite dimensional if $k$ is a field), one can easily verify that partial comodules over $H$ are exactly partial modules over the dual Hopf algebra $H^*$ (for more details see \seref{dual}). In particular, if $G$ is a finite group, then  partial comodules over $kG$ coincide with partial modules over $kG^*$, that is, they are partially $G$-graded modules, as introduced in \cite{ABV}.
\end{exmp}

\begin{exmp} Let $G$ be an affine algebraic group and denote by $\mathcal{O}(G)$ the coordinate algebra of the variety $G$, which is a commutative Hopf algebra.
Let now $V$ be a finite dimensional vector space, then $\End_k (V)\cong M_n (k)$ can be regarded as the $n^2$-dimensional affine space, where $n=\dim_kV$ and we have that $\Oo(\End_k(V))$ is isomorphic to a full polynomial algebra in $n^2$ variables $x_{ij}$. There is a series of natural isomorphisms
$$\End_k(V,V \ot \Oo(G)) \cong  \Hom_k (\End(V), \Oo(G)) \cong \Hom_{\sf Alg}(\Oo(\End_k(V)), \Oo(G)) \cong \Hom_{\sf Aff}(G,\End_k(V))$$
where 
$\Hom_{\sf Aff}(G,\End_k(V))$ denotes the set of polynomial maps between the respective varieties.

Explicitly, a polynomial map $\pi :G \rightarrow \End_k (V)$ corresponds to a linear map  
\[
 \rho :  V  \rightarrow  V\otimes \mathcal{O}(G),  \ \ \ \ 
\,  e_i  \mapsto  \sum_{j=1}^n e_j \otimes a_{ji}
\]
where $a_{ij}=x_{ij}\circ \pi\in \Oo(G)$.

Then $\pi$ is a partial representation of the group $G$ if and only if it satisfies 
\begin{align}
	\pi(e)&=id_V, \\
	\label{partialgroup1}\pi (g)\pi (h) \pi (h^{-1})&=\pi (gh)\pi (h^{-1}),\\
	 \label{partialgroup2}
	\pi (g) \pi (g^{-1}) \pi (h) &=\pi (g) \pi (g^{-1}h),
\end{align}
for all $g,h\in G$ and where $e\in G$ is the unit element. By a classical calculation these equalities are respectively equivalent to the following identities for $\rho$
\begin{align}\label{algebraicgroup1}
\sum_{j=1}^n e_j a_{ji}(e) &= e_i,\\
\sum_{j,k,l=1}^{n} e_l \otimes a_{lk} \otimes a_{kj}  S(a_{ji}) &= \sum_{j,l=1}^n e_l \otimes {a_{lj}}_{(1)} \otimes {a_{lj}}_{(2)}  S(a_{ji}) ,\\
\label{algebraicgroup2}
\sum_{j,k,l=1}^n e_l \otimes a_{lk}  S(a_{kj}) \otimes a_{ji} &= \sum_{j,l=1}^n e_l \otimes a_{lj} S({a_{ji}}_{(1)}) \otimes {a_{ji}}_{(2)} ,
\end{align}
which are exactly (\ref{def:PCM1}), (\ref{def:PCM2}) and (\ref{def:PCM3}). 
\end{exmp}

The previous example leads to the following result
\begin{proposition}
Let $G$ be an affine algebraic group over a field $k$ and $\Oo(G)$ its coordinate Hopf algebra. The category of finite dimensional partial representations of an affine group $G$ is isomorphic to the category of finite dimensional partial comodules over the commutative Hopf algebra $\Oo(G)$.
\end{proposition}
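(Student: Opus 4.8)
The plan is to promote the object-level correspondence established in the preceding example to a full isomorphism of categories, keeping the underlying vector spaces and linear maps fixed throughout. For a finite-dimensional $V$ carrying a partial comodule structure $\rho:V\to V\otimes \Oo(G)$, the associated polynomial map is recovered by evaluation, $\pi_V(g)=(I_V\otimes \ev_g)\circ\rho$, where $\ev_g:\Oo(G)\to k$ denotes evaluation at the point $g\in G$; conversely $\rho$ is recovered from $\pi_V$ through the chain of natural isomorphisms $\End_k(V,V\otimes\Oo(G))\cong\cdots\cong\Hom_{\sf Aff}(G,\End_k(V))$ displayed in the example. The example shows exactly that, under this assignment, $\rho$ satisfies (\ref{def:PCM1})--(\ref{def:PCM3}) if and only if $\pi_V$ is a polynomial partial representation. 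Thus on objects I already have a bijection between finite-dimensional partial $\Oo(G)$-comodules and finite-dimensional partial representations of $G$ sharing the same underlying space. Both functors I want to build will act as the identity on underlying spaces and underlying linear maps, so the only thing left to verify is that they are well defined on morphisms.

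Concretely, for $(V,\rho)\leftrightarrow(V,\pi_V)$ and $(W,\sigma)\leftrightarrow(W,\pi_W)$ I must show that a linear map $f:V\to W$ is a morphism of partial comodules, $\sigma\circ f=(f\otimes I)\circ\rho$, precisely when it is a morphism of partial representations, $f\circ\pi_V(g)=\pi_W(g)\circ f$ for all $g\in G$. Using the evaluation formulas above together with the easy identity $f\circ(I_V\otimes\ev_g)=(I_W\otimes\ev_g)\circ(f\otimes I)$ on $V\otimes\Oo(G)$, one computes $f\circ\pi_V(g)=(I_W\otimes\ev_g)\circ(f\otimes I)\circ\rho$ and $\pi_W(g)\circ f=(I_W\otimes\ev_g)\circ\sigma\circ f$. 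Hence the intertwining condition is equivalent to $(I_W\otimes\ev_g)\bigl((f\otimes I)\rho-\sigma f\bigr)=0$ for every $g\in G$. Both $(f\otimes I)\rho$ and $\sigma f$ are linear maps $V\to W\otimes\Oo(G)$, and writing their difference in a fixed basis $(w_k)$ of the finite-dimensional space $W$ as $v\mapsto\sum_k w_k\otimes b_k^{(v)}$, this vanishing says that $b_k^{(v)}(g)=0$ for all $g,k,v$.

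The one genuinely nonformal step, which I expect to be the main obstacle, is passing from ``$b_k^{(v)}(g)=0$ for all $g$'' to ``$b_k^{(v)}=0$''. This is where the geometric hypothesis on $G$ enters rather than merely the Hopf algebra structure of $\Oo(G)$: because $G$ is an affine algebraic variety, the regular functions in $\Oo(G)$ are determined by their values at the points of $G$, equivalently the functionals $\{\ev_g\mid g\in G\}$ separate the points of $\Oo(G)$ (this is the analogue, on the geometric side, of the density/non-degeneracy statements collected in Proposition~\ref{dualpairing}). Granting this, one concludes $(f\otimes I)\rho=\sigma f$, so $f$ is a comodule morphism if and only if it is an intertwiner. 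Since both assignments are the identity on underlying linear maps, they automatically preserve identities and composition and are mutually inverse, giving the claimed isomorphism of categories. I would, however, be careful to make explicit at the start the standing assumptions on $k$ and $G$ (e.g.\ $k$ algebraically closed and $\Oo(G)$ reduced) under which the separation-of-points step is valid, since this is the only place the proof can fail.
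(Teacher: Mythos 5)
Your proposal is correct and follows the same route the paper intends: the paper offers no separate proof beyond the preceding example, which establishes exactly the object-level correspondence you start from, and your extension to morphisms (via the evaluation functionals $\ev_g$ separating points of $\Oo(G)$, which is the same classical fact underlying the paper's ``classical calculation'' at the object level) is the natural completion of that argument. Your closing caveat about needing $\Oo(G)$ to be an honest ring of functions on the points of $G$ is well placed, since the paper implicitly works in that classical setting.
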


\begin{exmp} Consider a symmetric right partial coaction $\rho :A\rightarrow A\otimes H$, of a Hopf algebra $H$ on a unital algebra $A$, then $(A, \rho )$ is a partial right $H$-comodule. Indeed, note first that the axiom (\ref{def:PCA2}) corresponds to (\ref{def:PCM1}). 

For the axiom (\ref{def:PCM2}), Take $a\in A$ and denote $\rho (a)=a^{[0]}\otimes a^{[1]}$, then
\begin{align*}
	a^{[0][0]}\ot {a^{[0][1]}}_{(1)} \ot {a^{[0][1]}}_{(2)} S(a^{[1]}) 
		& = 1^{[0]}a^{[0]} \otimes {1^{[1]}}_{(1)} {a^{[1]}}_{(1)} \otimes {1^{[1]}}_{(2)} {a^{[1]}}_{(2)} S({a^{[1]}}_{(3)})\\
		& = 1^{[0]}a^{[0]} \otimes {1^{[1]}}_{(1)} a^{[1]} \otimes {1^{[1]}}_{(2)}.
\end{align*}
On the other hand,
\begin{align*}
	a^{[0][0][0]}\ot a^{[0][0][1]}\ot a^{[0][1]}S(a^{[1]})
		&= 1^{[0][0]}a^{[0][0]} \otimes 1^{[0][1]}a^{[0][1]} \otimes 1^{[1]} {a^{[1]}}_{(1)}S({a^{[1]}}_{(2)}) \\
		&= 1^{[0][0]}a^{[0]} \otimes 1^{[0][1]}a^{[1]} \otimes 1^{[1]} \\
		&= 1^{[0]}1^{[0']}a^{[0]} \otimes {1^{[1]}}_{(1)} 1^{[1']} {a^{[1]}} \otimes {1^{[1]}}_{(2)} \\
		&= 1^{[0]}a^{[0]} \otimes {1^{[1]}}_{(1)} a^{[1]} \otimes {1^{[1]}}_{(2)} .
\end{align*}
Analogously, one can prove (\ref{def:PCM3}). 
Note that, in this proof, we strongly used the symmetry of the partial coaction in order to verify the axioms for partial comodule.
\end{exmp}

\begin{exmp} Consider a symmetric right partial comodule coalgebra with coaction $\rho:C\to C\ot H$. We will show that $\rho$ endows $C$ with a partial comodule structure.
Axiom (\ref{def:PCM1}) is exactly axiom (\ref{def:PCA2}). 
For (\ref{def:PCM2}), take any $c\in C$, then we find
\begin{eqnarray*}
	& & \hspace*{-2cm} c^{[0][0]}\otimes {c^{[0][1]}}_{(1)} \otimes {c^{[0][1]}}_{(2)}  S(c^{[1]}) =\\
	& \stackrel{(\ref{def:RPCC3})}{=} & \epsilon_C ({c_{(1)}}^{[0]}){c_{(2)}}^{[0]} \otimes {{c_{(2)}}^{[1]}}_{(1)} \otimes {{c_{(2)}}^{[1]}}_{(2)}S( {c_{(1)}}^{[1]} {{c_{(2)}}^{[1]}}_{(3)}) \\
	& = & \epsilon_C ({c_{(1)}}^{[0]}){c_{(2)}}^{[0]} \otimes {{c_{(2)}}^{[1]}}_{(1)} \otimes {{c_{(2)}}^{[1]}}_{(2)}S({{c_{(2)}}^{[1]}}_{(3)})  S( {c_{(1)}}^{[1]}) \\
	& = & \epsilon_C ({c_{(1)}}^{[0]}){c_{(2)}}^{[0]} \otimes {{c_{(2)}}^{[1]}}_{(1)} \otimes \epsilon_H ({{c_{(2)}}^{[1]}}_{(2)})  S( {c_{(1)}}^{[1]}) \\
	& = & \epsilon_C ({c_{(1)}}^{[0]}){c_{(2)}}^{[0]} \otimes {{c_{(2)}}^{[1]}} \otimes S( {c_{(1)}}^{[1]}) \; .
\end{eqnarray*}
On the other hand,
\begin{eqnarray*}
	& & \hspace*{-2cm} c^{[0][0][0]}\otimes {c^{[0][0][1]}} \otimes {c^{[0][1]}}  S(c^{[1]}) =\\
	& \stackrel{(\ref{def:RPCC3})}{=} & \epsilon_C ({c_{(1)}}^{[0]}){c_{(2)}}^{[0][0]} \otimes {{c_{(2)}}^{[0][1]}}\otimes {{c_{(2)}}^{[1]}}_{(1)}S( {c_{(1)}}^{[1]} {{c_{(2)}}^{[1]}}_{(2)}) \\
	& = & \epsilon_C ({c_{(1)}}^{[0]}){c_{(2)}}^{[0][0]} \otimes {{c_{(2)}}^{[0][1]}}\otimes {{c_{(2)}}^{[1]}}_{(1)} S({{c_{(2)}}^{[1]}}_{(2)}) 
	S( {c_{(2)}}^{[1]} ) \\
	& = & \epsilon_C ({c_{(1)}}^{[0]}){c_{(2)}}^{[0][0]} \otimes {{c_{(2)}}^{[0][1]}}\otimes \epsilon_H ({{c_{(2)}}^{[1]}}) 
	S( {c_{(1)}}^{[1]} ) \\
	& = & \epsilon_C ({c_{(1)}}^{[0]}){c_{(2)}}^{[0]} \otimes {{c_{(2)}}^{[1]}}\otimes S( {c_{(1)}}^{[1]} ).
\end{eqnarray*}
Axiom (\ref{def:PCM3}) is proven in the same way.
\end{exmp}

The previous examples lead to functors $F$ and $G$ as in the following proposition.
\begin{proposition}
Denote by ${\sf PCA^H}$ the category of partial comodule algebras over the Hopf algebra $H$ and by ${\sf PCC^H}$ the category of partial comodule coalgebra over $H$. Then there is a commutative diagram of functors
\[
\xymatrix{
{\sf PCA^H} \ar[drr]_{U'} \ar[rr]^F && \Mm^H_{par} \ar[d]^U && {\sf PCC^H} \ar[ll]_G \ar[lld]^{U''}\\
&& {\sf Mod}_k
}
\]
where $U$, $U'$ and $U''$ denote the obvious forgetful functors.
\end{proposition}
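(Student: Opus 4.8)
The plan is to observe that the two examples immediately preceding this proposition already furnish the object-level assignments of $F$ and $G$, so the only remaining work is to define these functors on morphisms and to check the (essentially trivial) commutativity of the triangle. There is no genuine obstacle; the substance of the proposition is carried entirely by those two computations.

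First I would define $F$ and $G$ on objects. Given a partial comodule algebra $(A,\rho)\in{\sf PCA^H}$, the first of the two examples above shows that the coaction $\rho:A\to A\ot H$ satisfies axioms (\ref{def:PCM1}), (\ref{def:PCM2}) and (\ref{def:PCM3}), so $(A,\rho)$ is an object of $\Mm^H_{par}$, and I set $F(A,\rho)=(A,\rho)$. Dually, given a partial comodule coalgebra $(C,\rho)\in{\sf PCC^H}$, the second example shows that $\rho:C\to C\ot H$ again satisfies the three axioms, and I set $G(C,\rho)=(C,\rho)$. Thus the whole content of well-definedness on objects is precisely the two verifications already performed.

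Next I would treat morphisms. A morphism $f:(A,\rho_A)\to(B,\rho_B)$ in ${\sf PCA^H}$ is an algebra map that is simultaneously compatible with the coactions, i.e.\ $(f\ot I)\circ\rho_A=\rho_B\circ f$; but this compatibility is exactly the defining condition for $f$ to be a morphism in $\Mm^H_{par}$. Hence I set $F(f)=f$, viewed now merely as a morphism of partial comodules. Analogously, a morphism in ${\sf PCC^H}$ is a coalgebra map intertwining the coactions, so it too satisfies the comodule-morphism condition, and $G$ sends it to the same underlying $k$-linear map. Since both $F$ and $G$ leave the underlying linear maps unchanged, they visibly preserve identities and composition and are therefore functors.

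Finally, commutativity of the diagram is immediate: each of $U$, $U'$, $U''$, $F$ and $G$ acts as the identity on the underlying $k$-module of an object and on the underlying $k$-linear map of a morphism, whence $U\circ F=U'$ and $U\circ G=U''$. The only point requiring any remark is the routine observation, made above, that the morphism condition in ${\sf PCA^H}$ and in ${\sf PCC^H}$ subsumes the comodule-morphism condition $\sigma\circ f=(f\ot I)\circ\rho$.
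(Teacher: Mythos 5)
Your proposal is correct and matches the paper exactly: the paper gives no separate proof of this proposition, relying entirely on the two preceding examples for the object-level verifications, with the morphism-level and commutativity claims left as the routine observations you spell out. Nothing is missing.
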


\begin{lemma} \lelabel{projection}
Given a partial $H$-comodule $M$, consider the linear map
\[
\begin{array}{rccl} \pi : & M\otimes H & \rightarrow & M\otimes H, \\
\, & m\otimes h & \mapsto & m^{[0][0]} \otimes m^{[0][1]} S(m^{[1] }) h . 
\end{array}
\]
\begin{enumerate}[(i)]
\item The map $\pi$ is a projection in the $k$-module $M\otimes H$, i.e.\ $\pi\circ\pi=\pi$.
\item Denoting by $M\bullet H =\pi (M\otimes H )$, we have that $\rho (M)\subseteq M\bullet H$.
\item For any $m \in M$, we have that $(\pi\ot I)(I\otimes \Delta)\rho (m)=(\rho \otimes I)\rho (m)$.
\item $(\rho \otimes I)\rho (m) -(I\otimes \Delta)\rho (m) \in \mbox{Ker} (\pi \otimes I )$.
\end{enumerate}  
\end{lemma}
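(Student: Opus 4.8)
The plan is to isolate a single master identity---that $\pi$ fixes the image of $\rho$, i.e.\ $\pi\circ\rho=\rho$---and to derive all four items from it. This master identity is essentially the content of (ii), since a single application suffices: if $\pi(\rho(m))=\rho(m)$ then $\rho(m)=\pi(\rho(m))\in\pi(M\otimes H)=M\bullet H$. To establish it I would compute, straight from the definition of $\pi$, that $\pi(\rho(m))=m^{[0][0][0]}\otimes m^{[0][0][1]}S(m^{[0][1]})m^{[1]}$, and then recognise the right-hand side as $(I\otimes\mu)$ applied to the right-hand side of axiom (\ref{le:PCM4}). Rewriting by (\ref{le:PCM4}) turns this into $(I\otimes\mu)$ of its left-hand side, namely $m^{[0][0]}\otimes {m^{[0][1]}}_{(1)}S({m^{[0][1]}}_{(2)})m^{[1]}$; the antipode relation ${m^{[0][1]}}_{(1)}S({m^{[0][1]}}_{(2)})=\epsilon(m^{[0][1]})1_H$ together with (\ref{def:PCM1}) applied to $m^{[0]}$ collapse this to $m^{[0]}\otimes m^{[1]}=\rho(m)$. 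This proves (ii).

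For (i) I would first record the elementary observation that $\pi$ is right $H$-linear for the obvious right $H$-action $(x\otimes y)\cdot h'=x\otimes yh'$ on $M\otimes H$: since $h'$ only ever appears multiplied on the far right, $\pi((x\otimes y)\cdot h')=\pi(x\otimes y)\cdot h'$. Writing $\rho(m)=\sum_i p_i\otimes q_i$, the definition of $\pi$ reads $\pi(m\otimes h)=\sum_i \rho(p_i)\cdot(S(q_i)h)$; applying $\pi$ once more and using right $H$-linearity yields $\pi(\pi(m\otimes h))=\sum_i \pi(\rho(p_i))\cdot(S(q_i)h)$. By the master identity each $\pi(\rho(p_i))=\rho(p_i)$, so the right-hand side is again $\sum_i\rho(p_i)\cdot(S(q_i)h)=\pi(m\otimes h)$, proving $\pi\circ\pi=\pi$.

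For (iii) I would apply the operator $T=(I\otimes\mu\otimes I)(I\otimes I\otimes\Delta)$ to both sides of axiom (\ref{def:PCM3}). On its left-hand side, $T$ produces $m^{[0][0]}\otimes m^{[0][1]}S({m^{[1]}}_{(1)}){m^{[1]}}_{(2)}\otimes {m^{[1]}}_{(3)}$, which collapses by the antipode and counit axioms to $m^{[0][0]}\otimes m^{[0][1]}\otimes m^{[1]}=(\rho\otimes I)\rho(m)$; on its right-hand side, $T$ produces exactly $m^{[0][0][0]}\otimes m^{[0][0][1]}S(m^{[0][1]}){m^{[1]}}_{(1)}\otimes {m^{[1]}}_{(2)}$, which is $(\pi\otimes I)(I\otimes\Delta)\rho(m)$. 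Equating the two images gives (iii).

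Finally, (iv) amounts to $(\pi\otimes I)\big[(\rho\otimes I)\rho(m)-(I\otimes\Delta)\rho(m)\big]=0$. By (iii) the second term satisfies $(\pi\otimes I)(I\otimes\Delta)\rho(m)=(\rho\otimes I)\rho(m)$, while for the first term I note that the two legs on which $\pi$ acts are $m^{[0][0]}\otimes m^{[0][1]}=\rho(m^{[0]})$, which the master identity (applied to $m^{[0]}$) keeps fixed, so $(\pi\otimes I)(\rho\otimes I)\rho(m)=(\rho\otimes I)\rho(m)$ as well; subtracting gives $0$. I expect the only genuine obstacle to be the bookkeeping in the master identity underlying (ii): one must carefully track the four iterated coactions and see precisely that (\ref{le:PCM4}) is exactly the statement needed after multiplying the last two tensor legs. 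Once that identity is secured, items (i), (iii) and (iv) follow formally---by right $H$-linearity, by applying $T$ to (\ref{def:PCM3}), and by re-applying the master identity to $m^{[0]}$, respectively.
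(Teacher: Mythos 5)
Your proposal is correct, and for items (ii), (iii) and (iv) it coincides with the paper's proof: (ii) is exactly the computation $\pi(\rho(m))=m^{[0][0][0]}\otimes m^{[0][0][1]}S(m^{[0][1]})m^{[1]}$ reduced via (\ref{le:PCM4}), the antipode axiom and (\ref{def:PCM1}); (iii) is the same use of (\ref{def:PCM3}) (the paper starts from $(\pi\otimes I)(I\otimes\Delta)\rho(m)$ and rewrites, which is just your ``apply $T$ to both sides'' read in one direction); and (iv) is deduced from (ii) and (iii) in both texts. Where you genuinely diverge is item (i): the paper proves idempotency of $\pi$ by a third direct Sweedler computation, expanding $\pi\circ\pi(m\otimes h)$ to four iterated coactions and invoking (\ref{def:PCM3}) and (\ref{def:PCM1}) again, whereas you derive it formally from the master identity $\pi\circ\rho=\rho$ together with the observation that $\pi$ is right $H$-linear for the regular right action on the second tensor factor, since $\pi(m\otimes h)=\sum_i\rho(p_i)\cdot(S(q_i)h)$ with $\rho(m)=\sum_i p_i\otimes q_i$. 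This is a valid and cleaner argument: it trades one axiom-chasing computation for a structural one and makes visible that (i), (iii) and (iv) are all consequences of the single fact that $\pi$ fixes the image of $\rho$ (plus one application of (\ref{def:PCM3})). The paper's version has the mild advantage of proving (i) independently of (ii), but nothing downstream relies on that independence, so your reorganization loses nothing.
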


\begin{proof} $\ul{(i)}$. Consider, $m\otimes h \in M\otimes H$, then
\begin{eqnarray*}
\pi \circ \pi (m\otimes h) 
& = & m^{[0][0][0][0]} \otimes m^{[0][0][0][1]} S(m^{[0][0][1]})m^{[0][1]}S(m^{[1]})h \\
& \stackrel{(\ref{def:PCM3})}{=} & m^{[0][0][0]} \otimes m^{[0][0][1]} S({m^{[0][1]}}_{(1)}){m^{[0][1]}}_{(2)} S(m^{[1]}) h \\
& = &  m^{[0][0][0]} \otimes m^{[0][0][1]} \epsilon (m^{[0][1]} )S(m^{[1]}) h \\
& \stackrel{(\ref{def:PCM1})}{=} & m^{[0][0]} \otimes m^{[0][1]} S(m^{[1]})h =
\pi (m\otimes h).
\end{eqnarray*}
Therefore, $\pi$ is a projection.\\
$\ul{(ii)}$. Take any element $m\in M$, then
\begin{eqnarray*}
\pi (\rho (m)) 
& = & m^{[0][0][0]} \otimes m^{[0][0][1]} S(m^{[0][1]})m^{[1]} \\
& \stackrel{(\ref{le:PCM4})}{=} & m^{[0][0]} \otimes {m^{[0][1]}}_{(1)} S({m^{[0][1]}}_{(2)}) m^{[1]} \\
& = &  m^{[0][0]} \otimes \epsilon (m^{[0][1]}) m^{[1]} \\
& \stackrel{(\ref{def:PCM1})}{=} & m^{[0]} \otimes m^{[1]} = 
\rho (m).
\end{eqnarray*}
This leads to $\rho (m) \in M\bullet H$.\\
$\ul{(iii)}$. Consider $m\in M$, then
\begin{eqnarray*}
(\pi\ot I)(I\otimes \Delta)\rho (m) 
&=& m^{[0][0][0]}\otimes m^{[0][0][1]} S(m^{[0][1]}){m^{[1]}}_{(1)}\otimes {m^{[1]}}_{(2)}\\
&\stackrel{(\ref{def:PCM3})}{=}& m^{[0][0]}\otimes m^{[0][1]} S(m^{[1]}_{(1)}){m^{[1]}}_{(2)}\otimes {m^{[1]}}_{(3)}\\
&=& m^{[0][0]}\otimes {m^{[0][1]}}\otimes {m^{[1]}}=(\rho \otimes I)\rho (m).
\end{eqnarray*}
$\ul{(iv)}$. Follows immediately from parts (ii) and (iii).
\end{proof}

\begin{proposition}\label{algebraicisquasi}
Every (algebraic) partial comodule is a quasi partial comodule in the sense of \cite{HV}. 
More precisely, the category of (algebraic) partial comodules is a full subcategory of the category of quasi partial comodules.
\end{proposition}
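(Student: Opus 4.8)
The plan is to show that the data already assembled in \leref{projection} is exactly a quasi partial comodule structure in the sense of \cite{HV}. Recall that such a structure on a $k$-module $M$ consists of a coaction $\rho:M\to M\ot H$ together with an idempotent $\pi$ on $M\ot H$ whose image $M\bullet H=\pi(M\ot H)$ contains $\rho(M)$, subject to the counit law $(I\ot\epsilon)\rho=I$ and the projected coassociativity $(\rho\ot I)\rho=(\pi\ot I)(I\ot\Delta)\rho$; morphisms are the linear maps intertwining the coactions and respecting the subobjects $M\bullet H$. So I would first fix, for an arbitrary algebraic partial comodule $(M,\rho)$, the endomorphism $\pi$ of $M\ot H$ given in \leref{projection}, and then read off each axiom directly from that lemma.

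Concretely, idempotency of $\pi$ is \leref{projection}(i), the inclusion $\rho(M)\subseteq M\bullet H$ is \leref{projection}(ii), the counit law is axiom (\ref{def:PCM1}), and the projected coassociativity is precisely \leref{projection}(iii). Equivalently, it is the membership statement \leref{projection}(iv), which coincides with (iii) after applying $(\pi\ot I)$ and using that $(\pi\ot I)$ fixes $(\rho\ot I)\rho(m)$ by (ii). Thus every algebraic partial comodule becomes a quasi partial comodule, and this assignment is the identity on underlying modules and coactions.

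It then remains to verify that the assignment is a fully faithful functor, i.e.\ that the two notions of morphism match. A morphism $f:(M,\rho)\to(N,\sigma)$ of algebraic partial comodules satisfies $\sigma\circ f=(f\ot I)\circ\rho$, and I must check that $f$ is then a morphism of quasi partial comodules, which amounts to the compatibility $\pi_N\circ(f\ot I)=(f\ot I)\circ\pi_M$ (so that $f\ot I$ sends $M\bullet H$ into $N\bullet H$). This follows by substituting the intertwining relation into the defining formula for $\pi$: iterating $\sigma f=(f\ot I)\rho$ yields $f(m)^{[0][0]}\ot f(m)^{[0][1]}\ot f(m)^{[1]}=f(m^{[0][0]})\ot m^{[0][1]}\ot m^{[1]}$, whence $\pi_N((f\ot I)(m\ot h))=(f\ot I)\pi_M(m\ot h)$. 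Since conversely any quasi partial comodule morphism intertwines the coactions, the two morphism conditions coincide, giving the full subcategory.

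Since all the computations are contained in or immediate from \leref{projection}, the only genuine point requiring care is this last morphism compatibility; I expect it to be the main (albeit minor) obstacle, together with the bookkeeping of matching the precise axiomatization of \cite{HV} to the projected coassociativity recorded in \leref{projection}(iii)--(iv).
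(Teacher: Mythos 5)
Your proposal is correct and follows essentially the same route as the paper: both proofs read the quasi partial comodule structure directly off Lemma~\ref{le:projection} (idempotency of $\pi$, $\rho(M)\subseteq M\bullet H$, and the projected coassociativity in parts (iii)--(iv)) and then check that the two morphism notions agree. Your explicit computation of $\pi_N\circ(f\ot I)=(f\ot I)\circ\pi_M$ fills in a step the paper dismisses with ``one can easily verify,'' but the argument is the same.
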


\begin{proof}
Let $M$ be a partial $H$-comodule. By \leref{projection}(ii), we know that $(M,M\bul H,\pi,\rho)$ is a partial comodule datum in the sense of \cite{HV}. Furthermore, from \leref{projection} (ii) it follows that the coassociativity holds in $(M\bul H)\ot H$, and therefore as well in any quotient of $(M\bul H)\ot H$, so $M$ is a quasi partial comodule.

One can easily verify that any morphism of quasi partial comodules between algebraic partial comodules is a morphism of (algebraic) partial comodules, and this concludes the proof. 
\end{proof}

In \cite{HV}, a fundamental theorem for (quasi and geometric) partial comodules was proven. Since we already observed earlier that any algebraic partial comodule is also a quasi partial comodule, we immediately obtain the following result

\begin{corollary}\label{fundamentaltheorem} Let $H$ be a Hopf algebra over a field $k$ and $M$ be a partial right $H$-comodule. Then, any element $m\in M$ is contained in a finite dimensional quasi partial $H$-subcomodule $M_m \subseteq M$. 
\end{corollary}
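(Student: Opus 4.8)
The plan is to reduce the statement about algebraic partial comodules to the fundamental theorem that has already been established for quasi partial comodules in \cite{HV}. The key bridge has essentially been built in \prref{algebraicisquasi}: every algebraic partial $H$-comodule $M$ is, via the projection $\pi$ from \leref{projection} and the datum $(M,M\bul H,\pi,\rho)$, a quasi partial comodule in the sense of \cite{HV}.

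First I would invoke \prref{algebraicisquasi} to regard the given partial right $H$-comodule $(M,\rho)$ as a quasi partial comodule. Since the ground ring $k$ is now assumed to be a field, the fundamental theorem for quasi partial comodules proved in \cite{HV} applies directly: it guarantees that every element $m\in M$ lies in a finite-dimensional quasi partial $H$-subcomodule. Thus the essential work is simply to transport the cited theorem across the inclusion of categories established in \prref{algebraicisquasi}, and the statement follows immediately.

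The only point that deserves a moment of care — and the place I expect the main (though minor) obstacle to lie — is making sure that the finite-dimensional \emph{quasi} partial subcomodule $M_m$ produced by the theorem in \cite{HV} is exactly of the type claimed, i.e.\ a quasi partial subcomodule of $M$, rather than something that must be re-identified as an algebraic partial comodule. The corollary as stated only asserts the existence of a finite-dimensional quasi partial subcomodule, so no upgrade to an algebraic structure is required; one must only confirm that the subobject construction in \cite{HV} respects the inclusion and yields a genuine subcomodule of $M$. Granting that, the proof is a one-line appeal.

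\begin{proof}
By \prref{algebraicisquasi}, the partial right $H$-comodule $M$ is in particular a quasi partial $H$-comodule in the sense of \cite{HV}. Since $k$ is a field, the fundamental theorem for quasi partial comodules proved in \cite{HV} applies, yielding for each $m\in M$ a finite-dimensional quasi partial $H$-subcomodule $M_m\subseteq M$ containing $m$.
\end{proof}
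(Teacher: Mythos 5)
Your proposal is correct and follows exactly the route the paper takes: the corollary is stated as an immediate consequence of \prref{algebraicisquasi} (every algebraic partial comodule is a quasi partial comodule) combined with the fundamental theorem for quasi partial comodules from \cite{HV}. Your additional remark that no upgrade back to an algebraic subcomodule is needed is consistent with the paper, which explicitly cautions afterwards that $M_m$ need not be algebraic again.
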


Unfortunately, there is no reason why the quasi partial $H$-subcomodule $M_m$ should be algebraic again. In fact, the following example shows that a fundamental theorem for (algebraic) partial comodules does not hold.

\begin{example}\label{irregular}
Let $k$ be a field and consider Sweedler's four dimensional Hopf algebra $H_4$, which is generated as a vector space by the unit $1$, the grouplike element $g$, and the skew primitive elements $x$ and $y$ satisfying 
\begin{eqnarray*}
g^2=1; &\qquad& xg=y=-gx;\\
\Delta(x)=g\ot x+ x\ot 1; && \Delta(y)=1\ot y + y\ot g;\\
S(x)=-y; && S(y)=x.
\end{eqnarray*}
Let $k[z]$ be the polynomial algebra in the variable $z$. We define the map
\begin{eqnarray*}
&&\rho: k[z]\to k[z]\ot H_4, \\
&&\rho(z^n)=z^{n+1}\ot y + {1\over 2}z^n\ot 1 + {1\over 2}z^n\ot g . 
\end{eqnarray*}
Then one can verify that $(k[z],\rho)$ is a right partial $H_4$-comodule. Moreover, from the formula of the definition, it is clear that the subcomodule generated by any (positive) power of $z$ is infinite dimensional.
\end{example}

The previous example motivates the following definition.

\begin{definition}
Let $M$ be partial $H$-comodule. The sum of all subcomodules of $M$ that are finitely generated as $k$-module is called the {\em regular} part of $M$. If $M$ equals its regular part, we say that $M$ is a {\em regular} partial comodule. A partial comodule that is not regular is called {\em irregular}.
\end{definition}

\section{Partial corepresentations and partial comodules of Hopf algebras}
\selabel{parcorep}

\subsection{Partial corepresentations}

We will now introduce the dual notion of a partial representation of a Hopf algebra. 
Again, this definition can be obtained by simply transferring the definition of a partial representation to the setting of monoidal categories and applying this general case to the opposite of the category of $k$-modules.
The basic idea is that, in the same way that partial representations measure how a $k$-linear map $\pi$ from a Hopf algebra $H$ to an algebra $B$ fails to be multiplicative, the partial corepresentations will measure how a $k$-linear map $\omega$ between a coalgebra $C$ and a Hopf algebra $H$ fails to be a morphism of coalgebras.

\begin{defi} Let $H$ be a Hopf algebra. A partial corepresentation of $H$ over a coalgebra $C$ is a linear map $\omega :C\rightarrow H$ satisfying the following conditions for all $c\in C$:
\begin{enumerate}[label=(PC\arabic*),ref=PC\arabic*]
	\item\label{def:PC1} $\epsilon_H \circ \omega(c) =\epsilon_C(c)$.
	\item\label{def:PC2} $\omega (c_{(1)})_{(1)} \otimes \omega (c_{(1)})_{(2)} S(\omega (c_{(2)})) = \omega (c_{(1)}) \otimes \omega (c_{(2)})S(\omega (c_{(3)}))$.
	\item\label{def:PC3} $\omega (c_{(1)}) S(\omega (c_{(2)})_{(1)}) \otimes \omega (c_{(2)})_{(2)} = \omega (c_{(1)}) S(\omega (c_{(2)})) \otimes \omega (c_{(3)})$.
\end{enumerate}

If $(C, \omega )$ and $(C' , \omega' )$ are two partial corepresentations of $H$, we say that a coalgebra morphism $f:C\rightarrow C'$ is a morphism of partial corepresentations if $\omega =\omega' \circ f$.

The category whose objects are partial corepresentations of $H$ and whose morphisms are morphisms of partial corepresentations is denoted by $\mbox{PCorep}^H$.
\end{defi}

\begin{lemma}
	Let $\omega\colon C \to H$ be a partial corepresentation. Then the following axioms are satisfied as well: 
	\begin{enumerate}[label=(PC\arabic*),ref=PC\arabic*,start=4]
		\item\label{le:PC4} $\omega (c_{(1)})_{(1)} \otimes S(\omega (c_{(1)})_{(2)}) \omega (c_{(2)}) = \omega (c_{(1)}) \otimes S(\omega (c_{(2)}))\omega (c_{(3)})$.
		\item\label{le:PC5} $S(\omega (c_{(1)})) \omega (c_{(2)})_{(1)} \otimes \omega (c_{(2)})_{(2)} = S(\omega (c_{(1)})) \omega (c_{(2)}) \otimes \omega (c_{(3)})$.
	\end{enumerate}
	Conversely, if a linear map $\omega\colon C \to H$ satisfies (\ref{def:PC1}), (\ref{le:PC4}) and (\ref{le:PC5}), then it is a partial corepresentation.
\end{lemma}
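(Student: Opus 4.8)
The plan is to exploit the complete formal duality between this statement and
Lemma~\ref{axiomsPRep}, whose proof is already written out in the excerpt. The
axioms (\ref{def:PC1})--(\ref{def:PC3}) for a partial corepresentation
$\omega\colon C\to H$ are obtained from the axioms
(\ref{def:PR1})--(\ref{def:PR3}) for a partial representation
$\pi\colon H\to B$ by passing to the opposite category: a coalgebra $C$ plays
the role of the algebra $B$, the map $\omega$ plays the role of $\pi$, the
comultiplication $\Delta_C$ replaces the multiplication on $B$, the counit
$\epsilon_C$ replaces the unit of $B$, and the order of composition is
reversed. Under this dictionary, axioms (\ref{le:PC4}) and (\ref{le:PC5})
correspond exactly to (\ref{def:PR4}) and (\ref{def:PR5}). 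So in principle one
could simply invoke the monoidal-category formulation of
Lemma~\ref{axiomsPRep} applied to $\Mod_k^{op}$. However, since the statement
here is phrased concretely in Sweedler notation, I would present the argument
directly, mirroring the four displayed Sweedler computations in the proof of
Lemma~\ref{axiomsPRep}.

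First I would establish (\ref{le:PC4}). Starting from the right-hand side
$\omega(c_{(1)})\otimes S(\omega(c_{(2)}))\,\omega(c_{(3)})$, I would insert a
counit-expansion $\epsilon_H\circ\omega(c_{(2)})=\epsilon_C(c_{(2)})$ via
(\ref{def:PC1}) to split the middle tensor factor into a three-fold product
$S(\omega(c_{(2)}))\,\omega(c_{(3)})\,S(\omega(c_{(4)}))\,\omega(c_{(5)})$ after
suitable re-indexing, then apply (\ref{def:PC3}) followed by (\ref{def:PC2}),
and finally collapse using (\ref{def:PC1}) again. This is the dual of the
first chain of equalities in the proof of Lemma~\ref{axiomsPRep}, read from
bottom to top. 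The computation for (\ref{le:PC5}) is entirely symmetric and
dualizes the second displayed chain there, using (\ref{def:PC2}) and
(\ref{def:PC3}) in the appropriate order.

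For the converse I would assume (\ref{def:PC1}), (\ref{le:PC4}),
(\ref{le:PC5}) and recover (\ref{def:PC2}) and (\ref{def:PC3}) by running the
same manipulations in reverse, again exactly as the one-line ``the converse is
proven in the same way'' remark in Lemma~\ref{axiomsPRep} does on the algebra
side. The only genuine care needed is to verify that the antipode $S$ appears
on the correct side and with the correct convolution identity
$S(h_{(1)})h_{(2)}=\epsilon(h)1=h_{(1)}S(h_{(2)})$ at each insertion point,
since in the dualized setting the roles of ``first'' and ``second'' Sweedler
legs and of left versus right multiplication are swapped.

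The main obstacle I anticipate is purely bookkeeping rather than conceptual:
keeping the Sweedler indices straight when comultiplying $\omega(c)$ both
externally (via $\Delta_C$ on $c$) and internally (via $\Delta_H$ on the
output $\omega(c_{(i)})$), since each application of (\ref{def:PC2}) or
(\ref{def:PC3}) shifts how many legs of $\Delta_C^{(n)}(c)$ are in play. The
cleanest way to avoid errors is to align each step of the calculation with the
corresponding step of the already-verified Lemma~\ref{axiomsPRep} under the
duality dictionary above, which guarantees that the axioms used and the
resulting identity are correct.
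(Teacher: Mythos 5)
Your proposal is correct and matches the paper's own proof: the paper derives (\ref{le:PC4}) and (\ref{le:PC5}) by exactly the direct Sweedler computations you describe — inserting a convolution-trivial factor via (\ref{def:PC1}), applying (\ref{def:PC3}) and (\ref{def:PC2}), and collapsing with the antipode identity — and dismisses the converse as analogous, just as you do. The only cosmetic difference is that the paper runs each chain from the left-hand side to the right-hand side rather than in the reverse direction you propose.
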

\begin{proof}Suppose that (\ref{def:PC1}), (\ref{def:PC2}) and (\ref{def:PC3}) are satisfied. Then for any $c\in C$ we have that
	\begin{eqnarray*}
		&								& \hspace*{-2cm}
										  \omega(c_{(1)})_{(1)} \otimes S(\omega(c_{(1)})_{(2)}) \omega(c_{(2)})\\
		&\stackrel{(\ref{def:PC1})}{=}	& \omega(c_{(1)})_{(1)} \otimes S(\omega(c_{(1)})_{(2)}) \omega(c_{(2)})S(\omega(c_{(3)})_{(1)})\omega(c_{(3)})_{(2)}\\
		&\stackrel{(\ref{def:PC3})}{=}	& \omega(c_{(1)})_{(1)} \otimes S(\omega(c_{(1)})_{(2)}) \omega(c_{(2)})S(\omega(c_{(3)}))\omega(c_{(4)})\\
		&\stackrel{(\ref{def:PC2})}{=}	& \omega(c_{(1)})_{(1)} \otimes S(\omega(c_{(1)})_{(2)}) \omega(c_{(1)})_{(3)}S(\omega(c_{(2)})\omega(c_{(3)})\\
		&							=	& \omega(c_{(1)}) \otimes S(\omega(c_{(2)})\omega(c_{(3)}).
	\end{eqnarray*}
	In a similar way
	\begin{eqnarray*}
		&								& \hspace*{-2cm}
										  S(\omega(c_{(1)})) \omega(c_{(2)})_{(1)} \otimes \omega(c_{(2)})_{(2)}\\
		&\stackrel{(\ref{def:PC1})}{=}	& S(\omega(c_{(1)})) \omega(c_{(2)})_{(1)} S(\omega(c_{(2)})_{(2)}) \omega(c_{(3)})_{(1)} \otimes \omega(c_{(3)})_{(2)}\\
		&\stackrel{(\ref{def:PC2})}{=}	& S(\omega(c_{(1)})) \omega(c_{(2)}) S(\omega(c_{(3)})) \omega(c_{(4)})_{(1)} \otimes \omega(c_{(4)})_{(2)}\\
		&\stackrel{(\ref{def:PC3})}{=}	& S(\omega(c_{(1)})) \omega(c_{(2)}) S(\omega(c_{(3)})_{(1)}) \omega(c_{(3)})_{(2)} \otimes \omega(c_{(3)})_{(3)}\\
		&							=	& S(\omega(c_{(1)})) \omega(c_{(2)}) \otimes \omega(c_{(3)}).
	\end{eqnarray*}
	The proof of the converse statement is analogous. 
\end{proof}

We will now show that there is a close relationship between partial comodules and partial corepresentations, which will allow us to provide some explicit examples of partial corepresentations. 

Recall that a (possibly non-unital) ring $R$ is called {\em firm} (also known as tensor-idempotent) if the multiplication map induces an isomorphism $R\ot_R R\to R$. Similarly, a left $R$-module is called {\em firm} if the action induces an isomorphism $R\ot_RM\cong M$, whose inverse we denote as $\varpi:M\to R\ot_RM,\ \varpi(m)=r\ot_R m^r$ (summation understood).
Furthermore, a right $A$-module $M$ is called {\em $R$-firmly projective} (see \cite{V}), if there is a ring morphism $\phi:R\to M\ot_A M^*,\ \phi(r)=e_r\ot f_r$ (summation understood) such that $M$ becomes a firm left $R$-module under the induced action $R\ot M\to M,\ r\cdot m=e_rf_r(m)$. Hence for an $R$-firmly projective $A$-module $M$, we have that $e_rf_r(m^r)=m$ for all $m\in M$.
In case $R$ has a unit $1$, then an $R$-firmly projective module is exactly a finitely generated and projective module, whose finite dual base is given by $e_1\ot f_1$ (where $1$ is the unit of $R$ and a summation is understood).

As follows from the results of \cite{V}, if $M$ is $R$-firmly projective as a right $A$-module then the functor $-\ot_RM:\Mod_R\to \Mod_A$ has a right adjoint given by $-\ot_AM^*$, and symmetrically, the functor $M^*\ot_R-:{}_R\Mod\to {}_A\Mod$ has a right adjoint $M\ot_A-$.

If $M$ is an $R$-firmly projective module, then we can construct (see \cite{GTV}) its {\em infinite comatrix coring} $C=M^*\ot_RM$, which is an $A$-coring with counit given by the evaluation map and comultiplication given by 
$$\Delta:M^*\ot_RM\to M^*\ot_RM\ot_A M^*\ot_RM,\ \ \ \ \Delta(f\ot_R u)=f\ot_R e_r\ot_A f_r\ot_R u^r.$$

\begin{theorem}\thlabel{comodcorep}
Let $H$ be a Hopf algebra over a commutative ring $k$, $M$ an $R$-firmly projective $k$-module and consider the associated infinite comatrix coalgebra $M^* \otimes_R M$.
Then there is a bijective correspondence between partial corepresentations $\omega :M^* \otimes_R M \rightarrow H$ and 
left $R$-linear maps $\rho :M\rightarrow M\otimes H, \rho(m)=m^{[0]}\ot m^{[1]}$ endowing $M$ with the structure of a partial $H$-comodule.
\end{theorem}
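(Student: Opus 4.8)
The plan is to realize both kinds of data as two avatars of a single linear map, obtained through the canonical comatrix comodule structure on $M$, and then to match the three axioms on each side one at a time. Recall from \cite{GTV} that the comatrix coalgebra $C=M^*\ot_R M$ carries a counital, coassociative right $C$-comodule structure on $M$, namely $\delta:M\to M\ot C$, $\delta(m)=e_r\ot(f_r\ot_R m^r)=:m^{\langle0\rangle}\ot m^{\langle1\rangle}$. By the adjunction attached to the $R$-firmly projective module $M$ (the functor $M^*\ot_R-$ with right adjoint $M\ot_k-$, as recalled above following \cite{V}), the assignment $\omega\mapsto\rho_\omega:=(I\ot\omega)\circ\delta$ is a bijection between $k$-linear maps $\omega:C\to H$ and left $R$-linear maps $\rho:M\to M\ot H$, with inverse $\rho\mapsto\omega_\rho$, $\omega_\rho(f\ot_R u)=f(u^{[0]})u^{[1]}$; here left $R$-linearity of $\rho$ is exactly what makes $\omega_\rho$ well defined on the balanced tensor product. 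It then remains to show that this bijection carries the corepresentation axioms \eqref{def:PC1}--\eqref{def:PC3} precisely onto the comodule axioms \eqref{def:PCM1}--\eqref{def:PCM3}.

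To do this I would first rewrite the iterated coactions occurring in the comodule axioms in terms of $\omega$ and the iterated comultiplication of $C$. Writing $\rho=(I\ot\omega)\circ\delta$ and using coassociativity of $\delta$, one gets $(\rho\ot I)\rho(m)=m^{\langle0\rangle}\ot\omega(m^{\langle1\rangle})\ot\omega(m^{\langle2\rangle})$ and $(\rho\ot I\ot I)(\rho\ot I)\rho(m)=m^{\langle0\rangle}\ot\omega(m^{\langle1\rangle})\ot\omega(m^{\langle2\rangle})\ot\omega(m^{\langle3\rangle})$, where $m^{\langle0\rangle}\ot m^{\langle1\rangle}\ot\cdots$ denotes the $n$-fold coaction. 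Substituting these into the Sweedler forms of \eqref{def:PCM1}--\eqref{def:PCM3}, each axiom becomes the assertion that $(I_M\ot X)\circ\delta=0$, where $X(c)$ is exactly the difference of the two sides of the corresponding axiom \eqref{def:PC1}, \eqref{def:PC2}, \eqref{def:PC3} evaluated at $c$; the matching is immediate after one application of coassociativity $m^{\langle0\rangle}\ot m^{\langle1\rangle}{}_{(1)}\ot m^{\langle1\rangle}{}_{(2)}=m^{\langle0\rangle}\ot m^{\langle1\rangle}\ot m^{\langle2\rangle}$. This already settles the implication ``$\omega$ a partial corepresentation $\Rightarrow$ $\rho_\omega$ a partial comodule'': one simply evaluates \eqref{def:PC1}--\eqref{def:PC3} at $c=m^{\langle1\rangle}$ and tensors with $m^{\langle0\rangle}$.

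For the converse I must deduce $X=0$ on all of $C$ from $(I_M\ot X)\circ\delta=0$. The decisive computation, and the place where $R$-firm projectivity is genuinely used, is the stripping identity $g(m^{\langle0\rangle})\,m^{\langle1\rangle}=g\ot_R m$ in $C$, valid for every $g\in M^*$ and $m\in M$: writing $m=r\cdot m^r$ via $\varpi$ and $g\cdot r=g(e_r)f_r$ one obtains $g\ot_R m=(g\cdot r)\ot_R m^r=g(e_r)f_r\ot_R m^r=g(m^{\langle0\rangle})m^{\langle1\rangle}$. Applying $g\ot I\ot\cdots$ to $(I_M\ot X)\delta(m)=0$ and using $k$-linearity of $X$ then gives $X(g\ot_R m)=0$; since the elements $g\ot_R m$ span $C$, we conclude $X=0$, i.e.\ the relevant axiom among \eqref{def:PC1}--\eqref{def:PC3} holds. (The same stripping identity also directly verifies $\omega_{\rho_\omega}=\omega$, reconfirming the bijection.)

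I expect this stripping-and-spanning step to be the only genuinely delicate point: it is where one passes from an identity ``smeared over $M$'' back to a pointwise identity on the coalgebra $C$, and it is the precise reflection of the cogeneration/faithfulness of the comatrix comodule. Everything else is a bookkeeping of Sweedler indices that runs in parallel for the three axioms; the counit case \eqref{def:PCM1}$\leftrightarrow$\eqref{def:PC1} is the same argument with $X$ taking values in $k$ rather than $H\ot H$. If the direct index-chasing for \eqref{def:PCM2}--\eqref{def:PCM3} proves cumbersome, one may instead run it through the equivalent reformulations \eqref{le:PCM4}--\eqref{le:PCM5} and \eqref{le:PC4}--\eqref{le:PC5}, which are matched by the identical mechanism.
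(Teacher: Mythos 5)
Your proposal is correct and follows essentially the same route as the paper: both rest on the adjunction $(M^*\ot_R-,\,M\ot-)$ and use the identical pair of formulas $\rho(m)=e_r\ot\omega(f_r\ot_R m^r)$ and $\omega(f\ot_R m)=f(m^{[0]})m^{[1]}$, after which the comodule axioms are matched to the corepresentation axioms. The paper simply asserts that this matching "can be verified", whereas you supply the verification — in particular the stripping identity $g(m^{\langle0\rangle})m^{\langle1\rangle}=g\ot_R m$ needed to pass from identities smeared over $M$ back to pointwise identities on $M^*\ot_R M$ — so your write-up is a correct, more detailed rendering of the same argument.
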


\begin{proof}
This is an immediate consequence of the adjunction $(M^*\ot_R-,M\ot-)$ described above.
More explicitly, given any linear map $\omega :M^* \otimes_R M \rightarrow H$, we define a linear map $\rho :M\rightarrow M\otimes H$ as
$$\rho(m)=e_r\omega(f_r\ot m^r).$$
Conversely, given $\rho:M\to M\ot H$, we define $\omega:M^* \otimes_R M \rightarrow H$ as
$$\omega(f\ot_R m)=f(m^{[0]})\ot m^{[1]}.$$
One can easily check that these constructions yield an isomorphism
$${_R\Hom}(M,M\ot H)\cong \Hom_k(M^*\ot_R M,H).$$
One can now verify that that under this correspondence, the axioms for a partial comodule translate exactly to the axioms for a partial corepresentation. 
\end{proof}

If the equivalent conditions of \thref{comodcorep} are satisfied for a $k$-module $M$, then we say that $M$ is firmly projective as partial right $H$-comodule. This means that $M$ is a partial $H$-comodule that is $R$-firmly projective as $k$-module for a firm ring $R\subset \End^H(M)$.

\begin{remark}\relabel{firm}
\begin{enumerate}
\item
The condition on the module $M$ to be $R$-firmly projective is not as strong as it might appear. Indeed, if $k$ is a field and $M$ is a regular partial $H$-comodule (e.g.\ $M$ is a global $H$-comodule), then $M$ is the sum of its finite dimensional subcomodules $M=\sum_{N\subset_{fd} M} N$. If we consider now $\Sigma=\oplus_{N\subset_{fd} M} N$ and $R=\oplus_{N\subset_{fd} M}\End^C(N)$, then $R$ is a ring with enough idempotents, in particular $R$ is a firm ring, and $\Sigma$ is $R$-firmly projective as $k$-module. Hence the previous theorem applies to this situation, and (partial) comodule structures on $M$ for which all $N$ are (partial) subcomodules correspond exactly to (partial) corepresentations from $\Sigma^*\ot_R\Sigma$ to $H$. This is exactly the viewpoint of the infinite comatrix corings developed in \cite{EGT}. 
\item 
\thref{comodcorep} can in particular be applied to finitely generated and projective modules $M$. It hence follows that there is a bijective correspondence between partial $C$-comodule structures on a finitely generated and projective $k$-module $M$ and partial corepresentations from the (finite) comatrix coalgebra $M^*\ot M$ into $C$.
\end{enumerate}
\end{remark}

Combining the previous result with the examples of partial comodules in the previous section, we immediately obtain the following examples of partial corepresentations.

\begin{examples}
\begin{enumerate}
\item
Let $H$ be a Hopf algebra and $\rho :A\rightarrow A\otimes H$ a symmetric partial coaction of $H$ on a unital algebra $A$ which is finitely generated and projective as a $k$-module. Then this partial coaction defines a partial corepresentation of $H$ relative to the coalgebra $A^*\otimes A$, given by $\omega(f\ot a)=f(a^{[0]})a^{[1]}$.
\item
Let $H$ be a Hopf algebra and $C$ be a partial right comodule coalgebra with coaction $\rho : C\rightarrow C\ot H$ written as $\lambda (c) =c^{[0]}\otimes c^{[1]}$. Suppose also that $C$ is finitely generated and projective as a $k$-module. Then there is a partial corepresentation of $H$ with relation to the coalgebra $C^* \otimes C$, given by $\omega (f \otimes c)=f(c^{[0]})c^{[1]}$.
\end{enumerate}
\end{examples}

The following result characterizes global corepresentations (i.e.\ coalgebra maps) amongst partial ones.

\begin{proposition}
Let $H$ be a Hopf algebra and $C$ a coalgebra.
A $k$-linear map $\omega :C\rightarrow H$ is a morphism of coalgebras if and only if it is a partial corepresentation of $H$ over $C$ that satisfies moreover 
\begin{equation}\label{PC6}
	S(\omega (c_{(1)})) \omega (c_{(2)}) =\epsilon (c)1_H ,
\end{equation}
for every $c\in C$.
\end{proposition}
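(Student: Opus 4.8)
The plan is to prove the two implications separately. The forward implication is a routine verification, while the converse carries the real content and rests on combining axiom (PC2) with the extra hypothesis (PC6).

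For the forward direction, I would suppose that $\omega$ is a morphism of coalgebras, so that $\omega(c)_{(1)} \otimes \omega(c)_{(2)} = \omega(c_{(1)}) \otimes \omega(c_{(2)})$ and $\epsilon_H \circ \omega = \epsilon_C$. Axiom (PC1) is then immediate. For (PC2) and (PC3) I would substitute comultiplicativity into the left-hand side and reindex by coassociativity: in each case the nested coproduct $\omega(c_{(1)})_{(1)} \otimes \omega(c_{(1)})_{(2)}$ unfolds directly into $\omega(c_{(1)}) \otimes \omega(c_{(2)})$, collapsing the left-hand side onto the right-hand side. Finally, (PC6) follows from the chain $S(\omega(c_{(1)})) \omega(c_{(2)}) = S(\omega(c)_{(1)}) \omega(c)_{(2)} = \epsilon_H(\omega(c)) 1_H = \epsilon_C(c) 1_H$, using comultiplicativity, the antipode axiom, and (PC1).

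For the converse, assume that $\omega$ is a partial corepresentation satisfying (PC6). Counit preservation is exactly (PC1), so it remains to prove comultiplicativity, $\omega(c)_{(1)} \otimes \omega(c)_{(2)} = \omega(c_{(1)}) \otimes \omega(c_{(2)})$. The key device is the auxiliary element
$$X = \omega(c_{(1)})_{(1)} \otimes \omega(c_{(1)})_{(2)} S(\omega(c_{(2)})) \omega(c_{(3)}),$$
obtained by right-multiplying the second tensor leg of each side of (PC2) by one further $\omega$-factor produced by an extra application of the coproduct; this is legitimate since (PC2) is an identity of linear maps $C \to H \otimes H$. Evaluating $X$ directly, coassociativity lets me read $S(\omega(c_{(2)})) \omega(c_{(3)})$ as an instance of the left-hand side of (PC6), so that this factor reduces to $\epsilon(c_{(2)}) 1_H$ and hence $X = \omega(c)_{(1)} \otimes \omega(c)_{(2)}$. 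On the other hand, rewriting the front of $X$ by (PC2) turns it into $\omega(c_{(1)}) \otimes \omega(c_{(2)}) S(\omega(c_{(3)})) \omega(c_{(4)})$, and a second use of (PC6) on the last two factors collapses it to $\omega(c_{(1)}) \otimes \omega(c_{(2)})$. Equating the two evaluations of $X$ yields comultiplicativity, and together with (PC1) this shows that $\omega$ is a coalgebra morphism.

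I expect the main obstacle to be the index bookkeeping in the converse. The crucial insight is to append the extra $\omega$-factor precisely so that the antipode term $S(\omega)$ occurring in (PC2) can be annihilated by (PC6) on both sides, and one must track the Sweedler indices carefully through the repeated uses of coassociativity to confirm that the two reductions of $X$ genuinely produce the two halves $\omega(c)_{(1)} \otimes \omega(c)_{(2)}$ and $\omega(c_{(1)}) \otimes \omega(c_{(2)})$ of the desired identity. Beyond this, no estimates or structural results are needed.
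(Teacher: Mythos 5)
Your proof is correct and follows essentially the same route as the paper: your auxiliary element $X=\omega(c_{(1)})_{(1)}\otimes\omega(c_{(1)})_{(2)}S(\omega(c_{(2)}))\omega(c_{(3)})$ is exactly the middle term of the paper's single chain of equalities, and your two evaluations of $X$ (one via \eqref{PC6} directly, one via (\ref{def:PC2}) followed by \eqref{PC6}) are the two halves of that chain. The forward direction, which the paper dismisses as obvious, is handled correctly by your substitution of comultiplicativity into the axioms.
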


\begin{proof}
Obviously, if $\omega :C\rightarrow H$ is a morphism of coalgebras, then it is a partial corepresentation of $H$ over $C$ and it satisfies \eqref{PC6}. Conversely, if $\omega:C\to H$ is a partial corepresentation satisfying \eqref{PC6}, then for any $c\in C$ we find
\beqnast
\omega (c_{(1)}) \otimes \omega (c_{(2)}) & = & \omega (c_{(1)}) \otimes \omega (c_{(2)}) \epsilon (c_{(3)})\\
& \stackrel{\eqref{PC6}}{=} & \omega (c_{(1)}) \otimes \omega (c_{(2)}) S(\omega (c_{(3)})) \omega (c_{(4)})\\
& \stackrel{(\ref{def:PC2})}{=} & \omega (c_{(1)})_{(1)} \otimes \omega (c_{(1)})_{(2)} S(\omega (c_{(2)})) \omega (c_{(3)})\\
& \stackrel{\eqref{PC6}}{=} & \omega (c_{(1)})_{(1)} \otimes \omega (c_{(1)})_{(2)} \epsilon (c_{(2)})\\
& = & \omega (c)_{(1)} \otimes \omega (c)_{(2)} 
\eqnast
and hence $\omega$ is a coalgebra morphism.
\end{proof}

The next result will be of use later.

\begin{prop} Let $H$ be a Hopf algebra, $C$ a coalgebra and $\omega :C\rightarrow H$ a partial corepresentation of $H$ over $C$. Then, for every $c\in C$ we have
\begin{eqnarray}
\omega (c_{(1)}) S(\omega (c_{(2)} ) ) \omega (c_{(3)}) & = & \omega (c), \label{ISI}\\
S(\omega (c_{(1)})) \omega( c_{(2)}) S(\omega (c_{(3)})) & = & S(\omega (c)). \label{SIS}
\end{eqnarray}
\end{prop}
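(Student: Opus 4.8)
The plan is to obtain each identity by collapsing one of the defining axioms under a suitable multiplication map, using only the antipode relation $S(h_{(1)})h_{(2)} = \epsilon_H(h)1_H = h_{(1)}S(h_{(2)})$ together with (\ref{def:PC1}) and the counit axiom of $C$. No induction or auxiliary construction is needed; the only ``cleverness'' is choosing the right axiom and the right map in each case, and the two cases turn out to be almost symmetric.

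For \eqref{ISI} I would apply the plain multiplication $\mu:H\ot H\to H$ to both sides of axiom (\ref{def:PC3}). On the left-hand side this yields $\omega(c_{(1)})\,S(\omega(c_{(2)})_{(1)})\,\omega(c_{(2)})_{(2)}$; grouping the last two factors and invoking the antipode identity $S(h_{(1)})h_{(2)}=\epsilon_H(h)1_H$ with $h=\omega(c_{(2)})$ collapses this to $\omega(c_{(1)})\,\epsilon_H(\omega(c_{(2)}))$, which by (\ref{def:PC1}) equals $\omega(c_{(1)})\epsilon_C(c_{(2)})=\omega(c)$ after the counit axiom of $C$. Applying $\mu$ to the right-hand side of (\ref{def:PC3}) gives directly $\omega(c_{(1)})\,S(\omega(c_{(2)}))\,\omega(c_{(3)})$, so the two sides produce exactly \eqref{ISI}.

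For \eqref{SIS} I would instead apply the map $\mu\circ(S\ot I):H\ot H\to H$, $x\ot y\mapsto S(x)y$, to both sides of axiom (\ref{def:PC2}). Its left-hand side $\omega(c_{(1)})_{(1)}\ot \omega(c_{(1)})_{(2)}S(\omega(c_{(2)}))$ is sent to $S(\omega(c_{(1)})_{(1)})\,\omega(c_{(1)})_{(2)}\,S(\omega(c_{(2)}))$; here $S(\omega(c_{(1)})_{(1)})\,\omega(c_{(1)})_{(2)}=\epsilon_H(\omega(c_{(1)}))1_H=\epsilon_C(c_{(1)})1_H$ by the same antipode relation and (\ref{def:PC1}), leaving $\epsilon_C(c_{(1)})S(\omega(c_{(2)}))=S(\omega(c))$. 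The right-hand side $\omega(c_{(1)})\ot\omega(c_{(2)})S(\omega(c_{(3)}))$ is sent to $S(\omega(c_{(1)}))\,\omega(c_{(2)})\,S(\omega(c_{(3)}))$, and equating the two images gives \eqref{SIS}.

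I do not anticipate a genuine obstacle: each identity is a one-line consequence once the correct axiom and collapsing map are identified. The only point requiring care is that one should resist the temptation to derive \eqref{SIS} by applying $S$ to \eqref{ISI}, since $S$ is an anti-endomorphism and that route introduces $S^2$, which need not be the identity for a general Hopf algebra $H$; the direct argument via $\mu\circ(S\ot I)$ on (\ref{def:PC2}) avoids this issue entirely.
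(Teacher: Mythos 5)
Your proof is correct. Both identities do follow exactly as you describe: applying $\mu$ to (\ref{def:PC3}) collapses its left-hand side to $\omega(c_{(1)})\,\epsilon_H(\omega(c_{(2)}))=\omega(c)$ via $S(h_{(1)})h_{(2)}=\epsilon_H(h)1_H$ and (\ref{def:PC1}), and its right-hand side to $\omega(c_{(1)})S(\omega(c_{(2)}))\omega(c_{(3)})$, giving \eqref{ISI}; applying $\mu\circ(S\ot I)$ to (\ref{def:PC2}) gives \eqref{SIS} in the same way. The only difference from the paper's proof is the choice of which axiom to collapse: the paper reads the derived axioms (\ref{le:PC4}) and (\ref{le:PC5}) from right to left and collapses them with $h_{(1)}S(h_{(2)})=\epsilon_H(h)1_H$, whereas you collapse the defining axioms (\ref{def:PC3}) and (\ref{def:PC2}) directly. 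Your route has the minor advantage of not depending on the lemma that establishes (\ref{le:PC4}) and (\ref{le:PC5}) from the definition, so it is self-contained; the paper's version is otherwise the same one-line mechanism. Your closing remark is also well taken: deriving \eqref{SIS} by applying $S$ to \eqref{ISI} would introduce $S^2$, which need not be the identity, so the direct argument is indeed the right one.
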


\begin{proof} For identity (\ref{ISI}), we have
\begin{eqnarray*}
\omega (c_{(1)}) S(\omega (c_{(2)} ) ) \omega (c_{(3)}) & \stackrel{(\ref{le:PC4})}{=} &\omega (c_{(1)})_{(1)}  S(\omega (c_{(1)})_{(2)}) \omega (c_{(2)}) \\
& = &\epsilon (\omega (c_{(1)}))\omega (c_{(2)}) \stackrel{(\ref{def:PC1})}{=}\epsilon (c_{(1)})\omega (c_{(2)})=\omega (c).
\end{eqnarray*}
And for identity (\ref{SIS}), 
\begin{eqnarray*}
S(\omega (c_{(1)})) \omega( c_{(2)}) S(\omega (c_{(3)})) & \stackrel{(\ref{le:PC5})}{=} & S(\omega (c_{(1)})) \omega (c_{(2)})_{(1)}  S(\omega (c_{(2)})_{(2)} )\\
& = & S(\omega (c_{(1)})) \epsilon (\omega (c_{(2)})) \stackrel{(\ref{def:PC1})}{=} S(\omega (c_{(1)}))\epsilon (c_{(2)}) = S(\omega (c)).
\end{eqnarray*}
\end{proof}

\subsection{The partial cosmash coproduct}

All examples of partial corepresentations we have encountered so far are partial corepresentations over a comatrix coalgebra. 
Recall from \cite[Section 6]{BV}, that for a left partial $H$-comodule coalgebra $C$ with coaction $\lambda$, one can define a new coalgebra, namely the partial cosmash coproduct 
\[
\underline{C\cosmash H} =\text{span} \{ x\cosmash h = x_{(1)} \otimes {x_{(2)}}^{[-1]} \epsilon ({x_{(2)}}^{[0]}) \in C\otimes H \; | \; x\in C , \quad h\in H \} ,
\]
whose comultiplication $\underline{\Delta}: \underline{C\cosmash H} \rightarrow \underline{C\cosmash H} \otimes \underline{C\cosmash H}$ is given by
\[
\underline{\Delta} (x\cosmash h)=x_{(1)} \cosmash {x_{(2)}}^{[-1]} h_{(1)} \otimes {x_{(2)}}^{[0]} \cosmash h_{(2)} 
\]
and the counit $\underline{\epsilon} :\underline{C\cosmash H} \rightarrow k$ is given by $\epsilon (x\cosmash h)=\epsilon_C (x) \epsilon_H(h)$. The partial cosmash coproduct gives rise to another example of partial corepresentation of a Hopf algebra. Recall that the map $\nabla : C\rightarrow H$ given by $\nabla (c) =c^{[-1]} \epsilon (c^{[0]})$ is an idempotent in the convolution algebra $\Hom_k (C,H)$.

\begin{prop} \label{cosmash} Let $H$ be a Hopf algebra and $C$ be a left partial $H$-comodule coalgebra. Then the map
\[
\begin{array}{rccc} \omega_0 : & \underline{C\cosmash H} & \rightarrow & H \\
\, & x\cosmash h & \mapsto & \nabla (x) h 
\end{array}
\]
in which $\nabla (x)=x^{[-1]} \epsilon (x^{[0]})$, is a partial corepresentation of the Hopf algebra $H$ relative to the partial cosmash coproduct $\underline{C\cosmash H}$.
\end{prop}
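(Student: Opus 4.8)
The plan is to verify the three defining axioms (\ref{def:PC1}), (\ref{def:PC2}) and (\ref{def:PC3}) of a partial corepresentation directly for $\omega_0$, after first settling that $\omega_0$ is well defined. For well-definedness I would observe that $\omega_0$ is nothing but the restriction to $\underline{C\cosmash H}$ of the manifestly well-defined linear map $\tilde\omega:C\ot H\to H$, $c\ot h\mapsto \nabla(c)h$: since $x\cosmash h=x_{(1)}\ot \nabla(x_{(2)})h$ and $\nabla$ is convolution-idempotent, one has $\tilde\omega(x\cosmash h)=\nabla(x_{(1)})\nabla(x_{(2)})h=\nabla(x)h$, matching the stated formula. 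Axiom (\ref{def:PC1}) is then immediate, since $\epsilon_H(\nabla(x)h)=\epsilon_C(x)\epsilon_H(h)=\underline\epsilon(x\cosmash h)$, using $\epsilon_H\circ\nabla=\epsilon_C$, which itself follows from (\ref{def:LPCC2}).

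The core of the argument is the verification of (\ref{def:PC2}) and (\ref{def:PC3}), and my strategy is to isolate a small collection of identities for $\nabla$ that let both sides collapse to a common expression. The key identities I would establish first, all consequences of applying $\epsilon_C$ to the coassociativity axiom (\ref{def:LPCC3}) together with the idempotency of $\nabla$ and the antipode axioms, are
\[
\nabla(c_{(1)})\,{c_{(2)}}^{[-1]}\ot \nabla({c_{(2)}}^{[0]})=c^{[-1]}\ot\nabla(c^{[0]}),
\]
and the ``$Q$-identity''
\[
{c^{[-1]}}_{(1)}\ot {c^{[-1]}}_{(2)}S(\nabla(c^{[0]}))=c^{[-1]}\ot\nabla(c^{[0]}),
\]
both sides of the latter being equal to $\nabla(c_{(1)})_{(1)}\nabla(c_{(2)})\ot\nabla(c_{(1)})_{(2)}$ once the two displayed equalities of (\ref{def:LPCC3}) are used in turn. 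With these in hand, a single application of the first identity shows that $\omega_0(X_{(1)})\ot\omega_0(X_{(2)})=x^{[-1]}h_{(1)}\ot\nabla(x^{[0]})h_{(2)}$ for $X=x\cosmash h$, which is the computational backbone of the proof.

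To finish (\ref{def:PC2}) I would feed this two-fold formula into the left-hand side: applying $\Delta_H$ in the first tensor leg and $S$ in the second and multiplying, the factors $h_{(2)}S(h_{(3)})=\epsilon(h_{(2)})1_H$ cancel and the $Q$-identity turns the result into $x^{[-1]}h\ot\nabla(x^{[0]})$. For the right-hand side I would use coassociativity of $\underline\Delta$ to write $X_{(2)}\ot X_{(3)}=\underline\Delta({x_{(2)}}^{[0]}\cosmash h_{(2)})$, apply the two-fold formula again to this inner comultiplication so that $\omega_0(X_{(2)})S(\omega_0(X_{(3)}))$ collapses via the antipode, and then reduce the remaining $x$-dependence with (\ref{def:LPCC3}), idempotency of $\nabla$, the $Q$-identity and finally the first displayed identity; both computations land on the same element $x^{[-1]}h\ot\nabla(x^{[0]})$. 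Axiom (\ref{def:PC3}) I would treat in the symmetric fashion, relying on the left-handed analogues of the above identities (those obtained from the second equality of (\ref{def:LPCC3}) and from axiom (\ref{le:PC5})).

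The step I expect to be the main obstacle is establishing the $Q$-identity cleanly, since it is exactly the point where the two separate branches of the partial coassociativity axiom (\ref{def:LPCC3}) must be reconciled with the antipode. The naive expansion of the three-fold comultiplication of $\underline{C\cosmash H}$ produces nested coactions such as ${{x_{(3)}}^{[-1]}}_{(1)}$ spread across different tensor legs, and it is only after the $Q$-identity collapses these that the bookkeeping becomes manageable. Getting that identity, and its left-handed twin, right is what makes the rest of the verification routine.
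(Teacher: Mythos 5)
Your proposal is correct and takes essentially the same route as the paper: both proofs verify (\ref{def:PC1})--(\ref{def:PC3}) directly, using the well-definedness computation via convolution-idempotency of $\nabla$, the identity $\nabla(x_{(1)}){x_{(2)}}^{[-1]}\ot {x_{(2)}}^{[0]}=x^{[-1]}\ot x^{[0]}$ (the paper's (\ref{nablaid}), which yields your first displayed identity after applying $I\ot\nabla$) and the two branches of (\ref{def:LPCC3}); your ``$Q$-identity'' is valid and is precisely the manipulation the paper carries out inline, both sides of (\ref{def:PC2}) collapsing to the same element, written by you as $x^{[-1]}h\ot\nabla(x^{[0]})$ and by the paper in the equal form ${{x_{(1)}}^{[-1]}}_{(1)}\nabla(x_{(2)})h\ot{{x_{(1)}}^{[-1]}}_{(2)}\epsilon({x_{(1)}}^{[0]})$. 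The one point to fix is the parenthetical appeal to (\ref{le:PC5}) in your sketch of (\ref{def:PC3}): that is an axiom of partial corepresentations and hence not available while you are still proving that $\omega_0$ is one; what the symmetric computation actually uses is the other branch of (\ref{def:LPCC3}) together with the ordinary antipode identities in $H$.
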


\begin{proof} First, it is easy to see that $\omega_0$ is a well defined linear map, indeed
\begin{eqnarray*}
\omega_0 (x_{(1)}\cosmash {x_{(2)}}^{[-1]} \epsilon( {x_{(2)}}^{[0]}) h)& = & \omega_0 (x_{(1)}\cosmash \nabla (x_{(2)})h) = \nabla (x_{(1)})\nabla (x_{(2)})h \\
& = & \nabla (x) h=\omega_0 (x\cosmash h).
\end{eqnarray*}
Axiom (\ref{def:PC1}) is easily verified. Indeed, consider $x\cosmash h \in \underline{C\cosmash H}$, then
\[
\epsilon_H (\omega_0 (x\cosmash h)) =\epsilon_H (\nabla (x)h) =\epsilon_H (x^{[-1]}) \epsilon_C (x^{[0]}) \epsilon (h)\stackrel{(\ref{def:LPCC2})}{=} \epsilon_C (x) \epsilon_H (h) =\underline{\epsilon}(x\cosmash h).
\]
In order to check the other axioms of partial copreresentations, it will be useful to write 
\[
\underline{\Delta}^2 (x\cosmash h)= (\underline{\Delta} \otimes I)\circ \underline{\Delta}(x\cosmash h) =(I\otimes \underline{\Delta})\circ \underline{\Delta} (x\cosmash h) 
\]
in two different ways:
\begin{eqnarray*}
\underline{\Delta}^2 (x\cosmash h) & = & x_{(1)} \cosmash {x_{(2)}}^{[-1]} {{x_{(3)}}^{[-1]}}_{(1)} h_{(1)} \otimes {x_{(2)}}^{[0]} \cosmash {{x_{(3)}}^{[-1]}}_{(2)} h_{(2)} \otimes {x_{(3)}}^{[0]} \cosmash h_{(3)} \\
& = & x_{(1)} \cosmash {x_{(2)}}^{[-1]}{x_{(3)}}^{[-1]} h_{(1)} \otimes {x_{(2)}}^{[0]} \cosmash {x_{(3)}}^{[0][-1]} h_{(2)} \otimes {x_{(3)}}^{[0][0]} \cosmash h_{(3)} .
\end{eqnarray*}
Furthermore, let us recall the identity of  \cite[Lemma 6.3]{BV}
\begin{equation} \label{nablaid}
\nabla (x_{(1)}) {x_{(2)}}^{[-1]} \otimes {x_{(2)}}^{[0]} ={x_{(1)}}^{[-1]} \nabla (x_{(2)}) \otimes {x_{(1)}}^{[0]} = x^{[-1]}\otimes x^{[0]} .
\end{equation}
Now we are ready to verify axiom (\ref{def:PC2}). For $x\cosmash h \in \underline{C\cosmash H}$ we have
\begin{eqnarray*}
& \,  & \omega_0 (x_{(1)}\cosmash {x_{(2)}}^{[-1]} h_{(1)})_{(1)} \otimes \omega_0 (x_{(1)}\cosmash {x_{(2)}}^{[-1]} h_{(1)})_{(2)} S(\omega_0 ({x_{(2)}}^{[0]} \cosmash h_{(2)})) \\
& = & (\nabla (x_{(1)}) {x_{(2)}}^{[-1]} h_{(1)})_{(1)} \otimes (\nabla (x_{(1)}) {x_{(2)}}^{[-1]} h_{(1)})_{(2)} S(\nabla ({x_{(2)}}^{[0]}) h_{(2)})\\
& = & {{x_{(1)}}^{[-1]}}_{(1)} {{x_{(2)}}^{[-1]}}_{(1)} \epsilon ({x_{(1)}}^{[0]}) h_{(1)} \otimes {{x_{(1)}}^{[-1]}}_{(2)} {{x_{(2)}}^{[-1]}}_{(2)} h_{(2)} S(h_{(3)}) S(\nabla ({x_{(2)}}^{[0]}))\\
& \stackrel{(\ref{def:LPCC1})}{=} & {x^{[-1]}}_{(1)} h \otimes {x^{[-1]}}_{(2)} S(\nabla (x^{[0]}))\\
& = & {x^{[-1]}}_{(1)} h \otimes {x^{[-1]}}_{(2)} S(x^{[0][-1]}) \epsilon (x^{[0][0]})\\
& \stackrel{(\ref{def:LPCC3})}{=} & {{x_{(1)}}^{[-1]}}_{(1)} \epsilon ({x_{(1)}}^{[0]}){{x_{(2)}}^{[-1]}}_{(1)} h\otimes {{x_{(1)}}^{[-1]}}_{(2)} {{x_{(2)}}^{[-1]}}_{(2)} S({{x_{(2)}}^{[-1]}}_{(3)}) \epsilon ({x_{(2)}}^{[0]})\\
& = & {{x_{(1)}}^{[-1]}}_{(1)} \nabla (x_{(2)}) h \otimes {{x_{(1)}}^{[-1]}}_{(2)} \epsilon ({x_{(1)}}^{[0]}).
\end{eqnarray*}
On the other hand,
\begin{eqnarray*}
& \, & \omega_0 (x_{(1)} \cosmash {x_{(2)}}^{[-1]}{x_{(3)}}^{[-1]} h_{(1)}) \otimes \omega_0 ({x_{(2)}}^{[0]} \cosmash {x_{(3)}}^{[0][-1]} h_{(2)}) S(\omega_0 ( {x_{(3)}}^{[0][0]} \cosmash h_{(3)})) \\
& = & \nabla (x_{(1)}) {x_{(2)}}^{[-1]}{x_{(3)}}^{[-1]} h_{(1)} \otimes \nabla ( {x_{(2)}}^{[0]}) {x_{(3)}}^{[0][-1]} h_{(2)} S(h_{(3)}) S(\nabla ({x_{(3)}}^{[0][0]}))\\
&\stackrel{(\ref{nablaid})}{=} & {x_{(1)}}^{[-1]}{x_{(2)}}^{[-1]} h \otimes \nabla ({x_{(1)}}^{[0]}) {x_{(2)}}^{[0][-1]} S(\nabla ({x_{(2)}}^{[0][0]}))\\
& \stackrel{(\ref{def:LPCC1})}{=} & x^{[-1]}h \otimes \nabla ({x^{[0]}}_{(1)}){{x^{[0]}}_{(2)}}^{[-1]} S(\nabla ({{x^{[0]}}_{(2)}}^{[0]}))\\
& = &  x^{[-1]}h \otimes {{x^{[0]}}_{(1)}}^{[-1]} {{x^{[0]}}_{(2)}}^{[-1]} \epsilon ({{x^{[0]}}_{(1)}}^{[0]}) S(\nabla ({{x^{[0]}}_{(2)}}^{[0]}))\\
& \stackrel{(\ref{def:LPCC1})}{=} & x^{[-1]}h \otimes x^{[0][-1]} \epsilon ({x^{[0][0]}}_{(1)})S(\nabla ({x^{[0][0]}}_{(2)}))\\
& = & x^{[-1]}h \otimes x^{[0][-1]} S(\nabla (x^{[0][0]}))\\
& \stackrel{(\ref{def:LPCC3})}{=} & {{x_{(1)}}^{[-1]}}_{(1)} \nabla (x_{(2)})h\otimes {{x_{(1)}}^{[-1]}}_{(2)} S(\nabla ({x_{(1)}}^{[0]}))\\
& = & {{x_{(1)}}^{[-1]}}_{(1)} \nabla (x_{(2)})h\otimes {{x_{(1)}}^{[-1]}}_{(2)} S({x_{(1)}}^{[0][-1]}) \epsilon ({x_{(1)}}^{[0][0]}) \\
& \stackrel{(\ref{def:LPCC3})}{=} & {{x_{(1)}}^{[-1]}}_{(1)} \epsilon ({x_{(1)}}^{[0]}) {{x_{(2)}}^{[-1]}}_{(1)} \nabla (x_{(3)})h \otimes {{x_{(1)}}^{[-1]}}_{(2)} {{x_{(2)}}^{[-1]}}_{(2)} S({{x_{(2)}}^{[-1]}}_{(3)}) \epsilon ({x_{(2)}}^{[0]})\\
& = & {{x_{(1)}}^{[-1]}}_{(1)} \epsilon ({x_{(1)}}^{[0]}) \nabla (x_{(2)}) \nabla (x_{(3)})h\otimes {{x_{(1)}}^{[-1]}}_{(2)}\\
& = & {{x_{(1)}}^{[-1]}}_{(1)} \nabla (x_{(2)}) h \otimes {{x_{(1)}}^{[-1]}}_{(2)} \epsilon ({x_{(1)}}^{[0]}).
\end{eqnarray*}
The other axiom of partial corepresentations can be verified by similar computations. 
\end{proof}

\begin{prop} Let $H$ be a Hopf algebra and $C$ be a partial left $H$-comodule coalgebra. Then the map $\phi_0 : \underline{C\cosmash H} \rightarrow C$ given by $\phi_0 (x\cosmash h)=x\epsilon_H (h)$ is a coalgebra morphism.
\end{prop}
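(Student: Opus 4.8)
The plan is to recognise $\phi_0$ as the restriction to $\underline{C\cosmash H}$ of the ambient linear map $I_C \otimes \epsilon_H : C \otimes H \to C$. This single observation disposes of the only genuinely delicate point, namely well-definedness: since $\underline{C\cosmash H}$ is spanned by generators $x\cosmash h$ subject to the linear relations $x\cosmash h = x_{(1)}\cosmash \nabla(x_{(2)})h$, the formula $\phi_0(x\cosmash h)=x\epsilon_H(h)$ is only meaningful once it is shown compatible with these relations, and exhibiting $\phi_0$ as the restriction of an honest map on $C\otimes H$ guarantees this automatically. This is the main obstacle, and it is the step I would settle first.

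To carry out the identification I would use $x\cosmash h = x_{(1)}\otimes \nabla(x_{(2)})h$ with $\nabla(c)=c^{[-1]}\epsilon_C(c^{[0]})$, so that $(I_C\otimes \epsilon_H)(x\cosmash h) = x_{(1)}\,\epsilon_H(\nabla(x_{(2)}))\,\epsilon_H(h)$. Applying $\epsilon_C$ to axiom (\ref{def:LPCC2}) gives $\epsilon_H(c^{[-1]})\epsilon_C(c^{[0]}) = \epsilon_C(c)$, hence $\epsilon_H(\nabla(x_{(2)})) = \epsilon_C(x_{(2)})$, and the right-hand side collapses to $x_{(1)}\epsilon_C(x_{(2)})\epsilon_H(h) = x\epsilon_H(h) = \phi_0(x\cosmash h)$. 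Thus $\phi_0 = (I_C\otimes \epsilon_H)|_{\underline{C\cosmash H}}$ is a well-defined linear map, and counit preservation is then immediate: $\epsilon_C(\phi_0(x\cosmash h)) = \epsilon_C(x)\epsilon_H(h) = \underline{\epsilon}(x\cosmash h)$.

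Finally I would check comultiplicativity by evaluating $\Delta_C\circ \phi_0$ and $(\phi_0\otimes\phi_0)\circ\underline{\Delta}$ on a generator; since $\phi_0$ is linear and the generators span $\underline{C\cosmash H}$, this suffices. The left-hand side is $\Delta_C(x\epsilon_H(h)) = x_{(1)}\otimes x_{(2)}\epsilon_H(h)$. For the right-hand side, applying $\phi_0\otimes\phi_0$ to $\underline{\Delta}(x\cosmash h) = x_{(1)}\cosmash {x_{(2)}}^{[-1]}h_{(1)}\otimes {x_{(2)}}^{[0]}\cosmash h_{(2)}$ yields $x_{(1)}\,\epsilon_H({x_{(2)}}^{[-1]})\,\epsilon_H(h_{(1)})\otimes {x_{(2)}}^{[0]}\,\epsilon_H(h_{(2)})$. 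Moving the scalar $\epsilon_H({x_{(2)}}^{[-1]})$ onto the second leg and invoking (\ref{def:LPCC2}) in the form $\epsilon_H({x_{(2)}}^{[-1]}){x_{(2)}}^{[0]} = x_{(2)}$, together with the counit identity $\epsilon_H(h_{(1)})\epsilon_H(h_{(2)})=\epsilon_H(h)$, reduces the right-hand side to $x_{(1)}\otimes x_{(2)}\epsilon_H(h)$, matching the left-hand side. As all three coalgebra-morphism conditions then hold, $\phi_0$ is a coalgebra morphism. I expect no further difficulty beyond the well-definedness observation, the remaining verifications being short Sweedler manipulations driven entirely by (\ref{def:LPCC2}) and the counit axioms.
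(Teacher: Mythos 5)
Your proof is correct and follows essentially the same route as the paper's: the well-definedness computation (via $\epsilon_H(\nabla(x_{(2)}))=\epsilon_C(x_{(2)})$, a consequence of (\ref{def:LPCC2})), the counit check, and the comultiplicativity check on generators using $\epsilon_H({x_{(2)}}^{[-1]}){x_{(2)}}^{[0]}=x_{(2)}$ are all the same calculations. Framing $\phi_0$ explicitly as the restriction of $I_C\otimes\epsilon_H$ is a slightly cleaner way to package the well-definedness step, but it is not a different argument.
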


\begin{proof} First, to verify that the map $\phi_0$ is well defined, take an element $x\cosmash h \in \underline{C\cosmash H}$, then
\begin{eqnarray*}
\phi_0 (x_{(1)} \cosmash {x_{(2)}}^{[-1]} \epsilon_C ({x_{(2)}}^{[0]})h) & = & x_{(1)} \epsilon_H ({x_{(2)}}^{[-1]}) \epsilon_C ({x_{(2)}}^{[0]}) \epsilon_H (h)\\
& = & x_{(1)} \epsilon_C (x_{(2)})\epsilon_H (h)=x\epsilon_H (h) =\phi_0 (x\cosmash h).
\end{eqnarray*}
Let us now check that $\phi_0$ preserves the counit.
\[
\epsilon_C (\phi_0 (x\cosmash h))=\epsilon_C (x)\epsilon_H (h)=\underline{\epsilon} (x\cosmash h) .
\]
Finally, let us check that $\phi_0$ is comultiplicative.
\begin{eqnarray*}
(\phi_0 \otimes \phi_0 )\circ \underline{\Delta}(x\cosmash h) & = & \phi_0 (x_{(1)} \cosmash {x_{(2)}}^{[-1]} h_{(1)}) \otimes \phi_0 ( {x_{(2)}}^{[0]} \cosmash h_{(2)}) \\
& = & x_{(1)} \epsilon_H ({x_{(2)}}^{[-1]}) \epsilon_H (h_{(1)}) \otimes {x_{(2)}}^{[0]} \epsilon_H (h_{(2)})\\
& = &  x_{(1)} \otimes  x_{(2)} \epsilon_H (h)\\
& = & \Delta_C (x\epsilon_H (h)) = \Delta_C \circ \phi_0 (x\cosmash h).
\end{eqnarray*}
This concludes the proof.
\end{proof}

\begin{defi} \label{contravariant-pair} Let $H$ be a Hopf algebra, $C$  be a partial left $H$-comodule coalgebra and $D$ a coalgebra over a commutative ring $k$. We say that the pair of $k$-linear maps $(\phi , \omega )$ is a left contravariant pair relative to the coalgebra $D$ if
\begin{enumerate}[label=(CP\arabic*),ref=CP\arabic*]
\item\label{def:CP1} $\phi :D\rightarrow C$ is a coalgebra morphism and $\omega :D \rightarrow H$ is a partial corepresentation of $H$ relative to $D$.
\item\label{def:CP2} For every $x\in D$ we have 
\[
 \phi (x)^{[-1]}\otimes \phi (x)^{[0]} =\omega (x_{(1)}) S(\omega (x_{(3)})) \otimes \phi (x_{(2)}).
 \]
\item\label{def:CP3} For every $x\in D$, 
\[
\phi (x_{(1)}) \otimes  S(\omega (x_{(2)})) \omega (x_{(3)}) =\phi (x_{(3)}) \otimes  S(\omega (x_{(1)})) \omega (x_{(2)}) .
\]
\end{enumerate}
\end{defi}

The partial cosmash coproduct has a universal property with respect to left contravariant pairs as shown in the next theorem.

\begin{thm} Let $H$ be a Hopf algebra and  $C$  be a partial left $H$-comodule coalgebra. If $(\phi , \omega )$ is a left contravariant pair relative to a coalgebra $D$, then there exists a unique morphism of coalgebras $\Phi :D\rightarrow \underline{C\cosmash H}$ such that $\phi = \phi_0 \circ \Phi$ and $ \omega =\omega_0 \circ \Phi$.
\end{thm}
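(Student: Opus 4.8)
The plan is to define $\Phi$ by the evident formula $\Phi(x)=\phi(x_{(1)})\cosmash\omega(x_{(2)})$, which by construction takes values in $\underline{C\cosmash H}$, and then to verify the three requirements. The computational backbone is a single reformulation. Applying $(I\otimes\epsilon_C)$ to the contravariant-pair axiom (CP2), and using $\epsilon_C\circ\phi=\epsilon_D$ together with (PC1), yields $\nabla(\phi(y))=\omega(y_{(1)})S(\omega(y_{(2)}))$ for all $y\in D$. Feeding this into $\phi(x_{(1)})\cosmash\omega(x_{(2)})=\phi(x_{(1)})_{(1)}\otimes\nabla(\phi(x_{(1)})_{(2)})\omega(x_{(2)})$, transporting $\Delta_C$ through the coalgebra map $\phi$, and collapsing the resulting factor $\omega(x_{(2)})S(\omega(x_{(3)}))\omega(x_{(4)})$ by \eqref{ISI}, I obtain the clean identity $\Phi(x)=\phi(x_{(1)})\otimes\omega(x_{(2)})$ inside $C\otimes H$, which I will use throughout.

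Next I would dispatch the easy requirements. Compatibility with $\phi_0$ is immediate from $\phi_0(c\cosmash h)=c\,\epsilon_H(h)$ and (PC1): $\phi_0\Phi(x)=\phi(x_{(1)})\epsilon_H(\omega(x_{(2)}))=\phi(x_{(1)})\epsilon_D(x_{(2)})=\phi(x)$. Compatibility with $\omega_0$ follows from $\omega_0(c\cosmash h)=\nabla(c)h$, the identity $\nabla(\phi(y))=\omega(y_{(1)})S(\omega(y_{(2)}))$ above, and once more \eqref{ISI}: $\omega_0\Phi(x)=\nabla(\phi(x_{(1)}))\omega(x_{(2)})=\omega(x_{(1)})S(\omega(x_{(2)}))\omega(x_{(3)})=\omega(x)$. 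The counit is preserved since $\underline\epsilon(c\cosmash h)=\epsilon_C(c)\epsilon_H(h)$, so $\underline\epsilon\Phi(x)=\epsilon_D(x_{(1)})\epsilon_D(x_{(2)})=\epsilon_D(x)$.

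The heart of the proof is comultiplicativity, $\underline\Delta\circ\Phi=(\Phi\otimes\Phi)\circ\Delta_D$. Using the clean formula, the right-hand side equals $\phi(x_{(1)})\otimes\omega(x_{(2)})\otimes\phi(x_{(3)})\otimes\omega(x_{(4)})$. For the left-hand side I would expand $\underline\Delta(\phi(x_{(1)})\cosmash\omega(x_{(2)}))$ via the explicit comultiplication of the cosmash coproduct, push $\Delta_C$ through $\phi$, and eliminate the partial coaction $\phi(\cdot)^{[-1]}\otimes\phi(\cdot)^{[0]}$ by (CP2). What remains is a Sweedler expression in the $\omega(x_{(i)})$, $S(\omega(x_{(j)}))$ and $\phi(x_{(k)})$, which I would then reorganize using the coassociativity axioms (LPCC1) and (LPCC3) of the partial comodule coalgebra, the exchange relation (CP3), and the telescoping identities \eqref{ISI} and \eqref{SIS} (with (PC1)), until it matches the target. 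This bookkeeping is where essentially all the work lies, and arranging the antipode terms so that they cancel in the correct order is the main obstacle; the axioms (CP2) and (CP3) are precisely what make the cancellation possible, as one already sees in the degenerate case of trivial coaction, where they reduce to exactly the relations needed to commute $\phi$ past a comultiplied $\omega$.

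Finally, uniqueness comes for free from a separating identity. A short computation using (PC1) and the counit axiom (LPCC2) shows that $(\phi_0\otimes\omega_0)\circ\underline\Delta$ coincides with the canonical inclusion $\underline{C\cosmash H}\hookrightarrow C\otimes H$. Hence, if $\Psi:D\to\underline{C\cosmash H}$ is any coalgebra morphism with $\phi_0\circ\Psi=\phi$ and $\omega_0\circ\Psi=\omega$, then $\Psi(x)=(\phi_0\otimes\omega_0)\underline\Delta\Psi(x)=(\phi_0\otimes\omega_0)(\Psi\otimes\Psi)\Delta_D(x)=(\phi\otimes\omega)\Delta_D(x)=\phi(x_{(1)})\otimes\omega(x_{(2)})=\Phi(x)$, so $\Psi=\Phi$. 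This same identity simultaneously re-derives the clean formula and confirms that $\Phi$ is the only admissible candidate.
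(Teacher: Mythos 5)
Your proposal is correct and takes essentially the same route as the paper's proof: the same candidate $\Phi(x)=\phi(x_{(1)})\cosmash\omega(x_{(2)})$, the same use of (\ref{def:CP2}), (\ref{def:CP3}) and the telescoping identity \eqref{ISI} (together with the derived corepresentation axiom (\ref{le:PC5}), which is the tool actually needed to de-comultiply the $\omega$-leg and which your list of auxiliary identities does not name explicitly) to establish comultiplicativity, and the same uniqueness argument based on recovering any candidate $\Psi$ from $(\phi_0\otimes\omega_0)\circ\underline{\Delta}$. Your ``clean identity'' $\Phi(x)=\phi(x_{(1)})\otimes\omega(x_{(2)})$ inside $C\otimes H$ is a mild streamlining not made explicit in the paper, but the substance of the argument is identical.
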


\begin{proof} Consider an element $x\in D$ then define
\[
\Phi (x)= \phi (x_{(1)}) \cosmash  \omega (x_{(2)}).
\]
Let us verify that this map is indeed a morphism of coalgebras. First, we have,
\[
\underline{\epsilon}(\Phi (x))=\epsilon_C (\phi (x_{(1)}))\epsilon_H (\omega (x_{(2)}))=\epsilon_D (x_{(1)})\epsilon_D (x_{(2)})=\epsilon_D (x).
\]
And, also
\begin{eqnarray*}
\underline{\Delta }(\Phi (x)) & = & \underline{\Delta}(\phi (x_{(1)}) \cosmash \omega (x_{(2)}))\\
& = & \phi (x_{(1)})_{(1)} \cosmash {\phi (x_{(1)})_{(2)}}^{[-1]} \omega (x_{(2)})_{(1)} \otimes {\phi (x_{(1)})_{(2)}}^{[0]} \cosmash \omega (x_{(2)})_{(2)}\\
& \stackrel{(\ref{def:CP1})}{=} & \phi (x_{(1)}) \cosmash {\phi (x_{(2)})}^{[-1]} \omega (x_{(3)})_{(1)} \otimes {\phi (x_{(2)})}^{[0]} \cosmash \omega (x_{(3)})_{(2)}\\
& \stackrel{(\ref{def:CP2})}{=} & \phi (x_{(1)}) \cosmash \omega (x_{(2)}) S(\omega (x_{(4)}))\omega (x_{(5)})_{(1)} \otimes \phi (x_{(3)}) \cosmash \omega (x_{(5)})_{(2)}\\
& \stackrel{(\ref{le:PC5})}{=} & \phi (x_{(1)}) \cosmash \omega (x_{(2)}) S(\omega (x_{(4)}))\omega (x_{(5)}) \otimes \phi (x_{(3)}) \cosmash \omega (x_{(6)})\\
& \stackrel{(\ref{def:CP3})}{=} & \phi (x_{(1)}) \cosmash \omega (x_{(2)}) S(\omega (x_{(3)}))\omega (x_{(4)}) \otimes \phi (x_{(5)}) \cosmash \omega (x_{(6)})\\
& \stackrel{(\ref{ISI})}{=} & \phi (x_{(1)}) \cosmash \omega (x_{(2)}) \otimes \phi (x_{(3)}) \cosmash \omega (x_{(4)})\\
& = & \Phi (x_{(1)}) \otimes \Phi (x_{(2)}) 
 =  (\Phi \otimes \Phi )\circ \Delta_D (x) .
\end{eqnarray*}
Therefore, the map $\Phi$ is a morphism of coalgebras.

For the uniqueness, consider a coalgebra map $\Psi : D \rightarrow \underline{C\cosmash H}$ such that $\phi =\phi_0 \circ \Psi$ and $\omega =\omega_0 \circ \Psi$. Denoting by $\pi : C\otimes H \rightarrow \underline{C\cosmash H}$ the canonical projection, we have for an element $x\in D$.
\[
\pi \circ (\phi_0 \otimes \omega_0 ) \circ ( \Psi \otimes \Psi )\circ \Delta_D (x) =\phi (x_{(1)}) \cosmash \omega (x_{(2)}) =\Phi (x) .
\]
On the other hand, denoting $\Psi (x) =\sum_i c_i \cosmash h_i$, we have
\begin{eqnarray*}
\pi \circ (\phi_0 \otimes \omega_0 ) \circ ( \Psi \otimes \Psi )\circ \Delta_D (x)
& = & \pi \circ (\phi_0 \otimes \omega_0 ) \circ \underline{\Delta}( \sum_i c_i \cosmash h_i ) \\
& = & \sum_i \phi_0 ({c_i}_{(1)}  \cosmash {{c_i}_{(2)}}^{[-1]} {h_i}_{(1)} ) \cosmash \omega_0 ( {{c_i}_{(2)}}^{[0]} \cosmash {h_i}_{(2)} )\\
& = & \sum_i {c_i}_{(1)} \epsilon_H ({{c_i}_{(2)}}^{[-1]}) \epsilon_H ( {h_i}_{(1)}) \cosmash \nabla ({{c_i}_{(2)}}^{[0]}) {h_i}_{(2)}\\
& = & \sum_i {c_i}_{(1)} \cosmash \nabla ({c_i}_{(2)}) h_i 
 =  \sum_i c_i \cosmash h_i = \Psi (x).
\end{eqnarray*}
Therefore, $\Psi (x) =\Phi (x)$ for every $x\in D$. This finishes the proof.
\end{proof}

Throughout this paper, we will need the versions of these constructions and results for partial right comodule coalgebras and right cosmash coproducts. Let $C$ be a partial right $H$-comodule coalgebra, with right coaction denoted by $\rho (c)=c^{[0]}\otimes c^{[1]}$, then we can construct the right cosmash coproduct $\underline{H\rcosmash C} \subseteq H\otimes C$ spanned by elements of the form
\[
h\rcosmash x =h\widetilde{\nabla} (x_{(1)}) \otimes x_{(2)}= h\epsilon ({x_{(1)}}^{[0]}){x_{(1)}}^{[1]} \otimes x_{(2)} , 
\]
in which $\widetilde{\nabla}: C\rightarrow H$ is defined by $\widetilde{\nabla}(x) =\epsilon(x^{[0]}) x^{[1]}$. This right cosmash coproduct is a coalgebra with comultiplication
\[
\underline{\Delta} (h\rcosmash x)=h_{(1)}\rcosmash {x_{(1)}}^{[0]} \otimes h_{(2)} {x_{(1)}}^{[1]} \rcosmash x_{(2)} ,
\]
and counit $\underline{\epsilon} (h\rcosmash x )=\epsilon_H (h)\epsilon_C (x)$. There is a morphism of coalgebras $\widetilde{\phi}_0 :\underline{H\rcosmash C} \rightarrow C$, given by ${\widetilde{\phi}_0 (h\rcosmash x)=\epsilon_H (h)x}$, and a partial corepresentation $\widetilde{\omega}_0 :\underline{H\rcosmash C} \rightarrow H$, given by $\widetilde{\omega}_0 (h\rcosmash x)=h\widetilde{\nabla}(x)$.
These two maps form a right contravariant pair, that is, for every $(h\rcosmash  x)\in \underline{H\rcosmash C}$ we have
\[
\widetilde{\phi}_0 (h\rcosmash x)^{[0]} \otimes  \widetilde{\phi}_0 (h\rcosmash x)^{[1]} = \widetilde{\phi}_0 ((h\rcosmash x)_{(2)}) \otimes S (\widetilde{\omega}_0 ((h\rcosmash x)_{(1)}) ) \widetilde{\omega}_0 ((h\rcosmash x)_{(3)})  
\]
and 
\[
\widetilde{\phi}_0 ((h\rcosmash x)_{(1)}) \otimes \widetilde{\omega}_0 ((h\rcosmash x)_{(2)}) S (\widetilde{\omega}_0 ((h\rcosmash x)_{(3)}) )  =\widetilde{\phi}_0 ((h\rcosmash x)_{(3)}) \otimes  \widetilde{\omega}_0 ((h\rcosmash x)_{(1)}) S (\widetilde{\omega}_0 ((h\rcosmash x)_{(2)}) ).
\]
Moreover, for any coalgebra $D$ and any right contravariant pair $(\phi :D\rightarrow C , \omega :D\rightarrow H )$, there is a unique morphism of coalgebras $\widetilde{\Phi} :D\rightarrow \underline{H\rcosmash C}$ factoring both $\phi$ and $ \omega$.

\subsection{Dualities}\selabel{dual}

\begin{proposition}
Let $(H,K,\bk{-,-})$ be a pairing of Hopf algebras, and $(C,A,\bk{-,-})$ a pairing between a coalgebra $C$ and an algebra $A$.
Consider a morphism of pairings $(\omega,\pi):(C,A,\bk{-,-})\to (H,K,\bk{-,-})$
\begin{enumerate}
\item If the pairing $(C,A,\bk{-,-})$ is right non-degenerate and $\omega$ is a partial corepresentation, then $\pi$ is a partial representation.
\item If $k$ is a field, the pairing $(H,K,\bk{-,-})$ is left non-degenerate and $\pi$ is a partial representation, then $\omega$ is a partial corepresentation.
\end{enumerate}
\end{proposition}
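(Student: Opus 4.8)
The plan is to translate both lists of axioms into equalities of $k$-linear maps assembled from structure morphisms, and then to move between the two lists using the duality of \leref{pairings} together with the fact that $(\omega,\pi)$ is a morphism of pairings. Recall that for the pairing of Hopf algebras $(H,K,\bk{-,-})$ the structure maps are mutually adjoint, so that $(\Delta_H,m_K)$, $(m_H,\Delta_K)$, $(\epsilon_H,\eta_K)$, $(\eta_H,\epsilon_K)$ and $(S_H,S_K)$ are all morphisms of pairings, and likewise $(\Delta_C,m_A)$, $(\epsilon_C,\eta_A)$ and, by hypothesis, $(\omega,\pi)$. Since $\P_k$ is a $k$-linear monoidal category by \leref{pairings}(i), every composite and tensor product of such morphisms is again a morphism of pairings; in particular each map built from $\Delta_C,\omega,\Delta_H,m_H,S_H,\dots$ occurs as the $L$-part of a morphism of pairings whose $R$-part is the order-reversed composite built from the adjoint maps $m_A,\pi,m_K,\Delta_K,S_K,\dots$.

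Carrying this out, one checks that the two sides of axiom (\ref{def:PC2}), viewed as maps $C\to H\ot H$, are the $L$-parts of two morphisms of pairings $(C,A,\bk{-,-})\to (H\ot H,K\ot K,\bkk{-,-})$ whose $R$-parts are exactly the two sides of (\ref{def:PR2}), viewed as maps $K\ot K\to A$; for instance the $L$-part $(I\ot m_H)(\Delta_H\ot S_H)(\omega\ot\omega)\Delta_C$ is adjoint to $m_A(\pi\ot\pi)(m_K\ot S_K)(I\ot\Delta_K)$, which sends $h\ot k$ to $\pi(hk_{(1)})\pi(S(k_{(2)}))$. In the same manner (\ref{def:PC3}) is matched with (\ref{def:PR3}), while (\ref{def:PC1}), i.e.\ $\epsilon_H\circ\omega=\epsilon_C$, is matched with $\pi\circ\eta_K=\eta_A$, i.e.\ (\ref{def:PR1}). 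Hence, for each index, the difference of the two sides of the corepresentation axiom is the $L$-part of a morphism of pairings whose $R$-part is (up to an irrelevant reordering and sign) the difference of the two sides of the corresponding representation axiom; the $L$-part vanishes precisely when the corepresentation axiom holds, and the $R$-part vanishes precisely when the representation axiom holds.

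For part (1) we are given that $\omega$ satisfies the corepresentation axioms, so each of these $L$-parts vanishes; as the domain pairing $(C,A,\bk{-,-})$ is right non-degenerate, \leref{pairings}(ii) forces the matching $R$-part to vanish, which is exactly the corresponding representation axiom for $\pi$. For part (2) we reverse the implication: now the $R$-parts vanish because $\pi$ is a partial representation, and we want the $L$-parts to vanish. This is the symmetric form of \leref{pairings}(ii): when the codomain pairing is left non-degenerate, vanishing of the $R$-part forces vanishing of the $L$-part. The codomain here is a tensor power $(H^{\ot n},K^{\ot n})$, so we must know that these powers inherit left non-degeneracy from $(H,K,\bk{-,-})$; this is exactly where the assumption that $k$ is a field is used, through Proposition~\ref{dualpairing}(iv), which makes $\LDP_k$ closed under tensor products. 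The only genuine work, and the step most prone to slips, is the bookkeeping check that the adjoint of each composite occurring in a PC-axiom is the intended side of the corresponding PR-axiom — that is, keeping straight the reversal of order and the interchange of $(\Delta,m,\epsilon,\eta)$ with $(m,\Delta,\eta,\epsilon)$ under dualization. Once this dictionary is pinned down, both implications follow at once.
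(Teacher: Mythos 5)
Your proposal is correct and is essentially the paper's own argument made explicit: the paper likewise reduces both parts to \leref{pairings}(ii) (and its symmetric form) applied to the morphisms of pairings built by composing the adjoint pairs of structure maps, exactly your ``dictionary'' matching each PC-axiom with the corresponding PR-axiom. You also place the field hypothesis precisely where the paper does — part (2) needs the codomain $(H\ot H, K\ot K)$ to remain left non-degenerate, which is Proposition~\ref{dualpairing}(iv).
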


\begin{proof}
As we have observed before, the notions of partial representation and partial corepresentation are categorical duals. Hence the result follows directly from the observations made in \leref{pairings} and Proposition \ref{dualpairing}, following the same reasoning as in Remark \ref{re:dualpairing} in the global case. In part (2), observe that since the axioms for $\omega$ being a partial corepresentation need to be verified in the tensor product $H\ot H$, the condition that $k$ is a field is needed in order to obtain that a tensor product of non-degenerate pairings is again non-degenerate (see Proposition \ref{dualpairing}). 
\end{proof}

Considering the pairings $(C,C^*)$, $(A^\circ,A)$ and $(H,H^\circ)$, we immediately obtain the following.

\begin{corollary}
\begin{enumerate}
\item Let $H$ be a Hopf algebra, $C$ a coalgebra. Let $\omega:C\rightarrow H$ be a linear map and $\omega^*:H^\circ\to C^*$ its dual. 
\begin{enumerate}[(i)]
\item If $\omega :C\rightarrow H$ is a partial corepresentation, then $\omega^*$ is a partial representation.
\item If $k$ is a field, $H^\circ$ is dense in $H^*$ and $\omega^*$ is a partial representation, then $\omega$ is a partial corepresentation.
\end{enumerate}
\item Let $H$ be a Hopf algebra and $A$ be a algebra. Let $\pi: H\to A$ be a linear map and suppose that the image of the dual map $\pi^*:A^\circ\to H^*$ lies in $H^\circ$, then
\begin{enumerate}[(i)]
\item if $k$ is a field, $H^\circ$ is dense in $H^*$ and $\pi$ is a partial representation, then $\pi^*:A^\circ\to H^\circ$ is a partial corepresentation.
\item if $A^\circ$ is dense in $A^*$ and $\pi^*:A^\circ\to H^\circ$ is a partial corepresentation, then $\pi$ is a partial representation.
\end{enumerate}
\end{enumerate}
\end{corollary}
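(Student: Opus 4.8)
The plan is to obtain the entire corollary as a direct specialisation of the preceding proposition to the canonical evaluation pairings, exactly as the sentence ``Considering the pairings $(C,C^*)$, $(A^\circ,A)$ and $(H,H^\circ)$'' suggests. No new manipulation of the axioms is needed; the only work is the bookkeeping that matches the four implications against the two parts of the proposition and the two sides of the relevant pairings. In each case I first fix a coalgebra--algebra pairing and a pairing of Hopf algebras, then check that the prescribed couple of maps is a morphism of pairings, and finally translate the density hypotheses into the non-degeneracy hypotheses through \leref{jmaps} together with the finite-topology characterisation recalled just after it.

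For statement (1) I would work with the evaluation pairing $(C,C^*,\ev)$ between the coalgebra $C$ and the algebra $C^*$, and with the pairing of Hopf algebras $(H,H^\circ,\ev)$ (a Hopf algebra object in $\P_k$, as recalled in \reref{dualpairing}). The couple $(\omega,\omega^*)$ is a morphism of pairings, since for $c\in C$ and $\psi\in H^\circ$ one has $\omega^*(\psi)=\psi\circ\omega\in C^*$ and
\[
\bk{c,\omega^*(\psi)}=\psi(\omega(c))=\bk{\omega(c),\psi}.
\]
Now $(C,C^*,\ev)$ is right non-degenerate (a functional vanishing on all of $C$ is zero), so part~(1) of the preceding proposition yields (i) with no further hypothesis. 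For (ii), the dictionary of \leref{jmaps} says that $H^\circ$ is dense in $H^*$ precisely when $(H,H^\circ,\ev)$ is left non-degenerate; hence, with $k$ a field, part~(2) of the proposition applies and turns the partial representation $\omega^*$ into the partial corepresentation $\omega$.

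For statement (2) I would symmetrically use the evaluation pairing $(A^\circ,A,\ev)$ between the coalgebra $A^\circ$ and the algebra $A$, together with the pairing of Hopf algebras $(H^\circ,H,\ev)$. The standing assumption that $\pi^*$ sends $A^\circ$ into $H^\circ$ guarantees that $(\pi^*,\pi)\colon(A^\circ,A,\ev)\to(H^\circ,H,\ev)$ is a well-defined morphism of pairings, because
\[
\bk{\phi,\pi(h)}=\phi(\pi(h))=\pi^*(\phi)(h)=\bk{\pi^*(\phi),h}
\]
for $\phi\in A^\circ$ and $h\in H$. For (i), density of $H^\circ$ in $H^*$ together with $k$ a field makes $(H^\circ,H,\ev)$ a (two-sided) non-degenerate pairing of Hopf algebras, in particular left non-degenerate, so part~(2) of the proposition converts the partial representation $\pi$ into the partial corepresentation $\pi^*$. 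For (ii), density of $A^\circ$ in $A^*$ is exactly right non-degeneracy of $(A^\circ,A,\ev)$, so part~(1) of the proposition recovers the partial representation $\pi$ from the partial corepresentation $\pi^*$.

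I do not expect a genuine obstacle here: all the mathematical content is already carried by the preceding proposition, and the corollary is merely its translation through the evaluation pairings. The two points that deserve attention are, first, the correspondence ``$H^\circ$ (resp.\ $A^\circ$) dense in $H^*$ (resp.\ $A^*$)'' $\Longleftrightarrow$ ``the evaluation pairing is left (resp.\ right) non-degenerate'', furnished by \leref{jmaps}; and second, the observation that the two implications which \emph{produce} a corepresentation must be verified in the tensor square (of $H^\circ$, resp.\ of $H$), which is why $k$ is required to be a field in those cases, so that by Proposition~\ref{dualpairing} the tensor product of the evaluation pairing remains non-degenerate.
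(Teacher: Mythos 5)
Your proposal is correct and is exactly the argument the paper intends: the corollary is stated as an immediate specialisation of the preceding proposition to the evaluation pairings $(C,C^*)$, $(A^\circ,A)$, $(H,H^\circ)$ and $(H^\circ,H)$, with the density hypotheses translating into the required one-sided non-degeneracies via \leref{jmaps}. Your bookkeeping of which part of the proposition applies to which implication, and of which non-degeneracy is automatic versus which needs density, matches the paper's (unwritten) proof.
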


\begin{thm} \label{characterizationpcomod1} Let $H$ be a 
Hopf algebra over a field $k$ such that $H^\circ$ is dense in $H^*$, $M$ a $k$-vector space and $\rho :M\rightarrow M\otimes H$ a linear map . Then the following statements are equivalent.
\begin{enumerate}[(i)]
\item $(M, \rho)$ is a right partial $H$-comodule;
\item $(M,\lambda)$ is a left partial $H^\circ$-module, where $\lambda:H^\circ\ot M\to M$ is defined by $\lambda(h^*\ot m)=(I\otimes h^* )\rho(m)$;
\item the linear map $\pi :H^\circ \rightarrow \mbox{End}_k (M)$ given by $\pi (h^*)(m)=(I\otimes h^* )\rho (m)$ defines a partial representation of $H^\circ$.
\end{enumerate}
If moreover $H$ is finite dimensional, then the existence of a map $\lambda:H^*\ot M\to M$, which endows $M$ with a structure of a partial $H^*$-module, implies the existence of a map $\rho:M\to M\ot H$ such that $(M,\rho)$ is a partial right $H$-comodule and $\lambda$ is related to $\rho$ as in (ii) above. Consequently, over a finite dimensional Hopf algebra $H$, the category of partial $H^*$-modules and the category of partial $H$-comodules are isomorphic.
\end{thm}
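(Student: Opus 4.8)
The plan is to regard (ii) and (iii) as two packagings of one datum and to prove the substantive equivalence (i)$\Leftrightarrow$(iii) by dualizing the partial-module axioms across the evaluation pairing of $H$ and $H^\circ$. Giving $\lambda:H^\circ\ot M\to M$ is the same as giving $\pi:H^\circ\to\End_k(M)$ by hom-tensor adjunction, and under this identification ``$(M,\lambda)$ is a left partial $H^\circ$-module'' means exactly that $\pi$ satisfies (\ref{def:PR1})--(\ref{def:PR3}); so (ii)$\Leftrightarrow$(iii) is immediate and the work lies in relating these to (i). I would first record, following Remark~\ref{re:dualpairing}, that since $k$ is a field and $H^\circ$ is dense in $H^*$ the triple $(H,H^\circ,\ev)$ is a non-degenerate pairing of Hopf algebras: the convolution product of $H^\circ$ is dual to $\Delta_H$, the comultiplication of $H^\circ$ is dual to $m_H$, $S_{H^\circ}$ is dual to $S_H$, and $1_{H^\circ}=\epsilon_H$. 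Writing the correspondence as $\pi(h^*)(m)=(I\ot h^*)\rho(m)=m^{[0]}\bk{m^{[1]},h^*}$, iterating gives $\pi(a)\pi(b)\pi(c)(m)=m^{[0][0][0]}\bk{m^{[0][0][1]},a}\bk{m^{[0][1]},b}\bk{m^{[1]},c}$ for $a,b,c\in H^\circ$.

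Next I would dualize each axiom. Condition (\ref{def:PR1}) is $\pi(\epsilon_H)=I_M$, i.e.\ $(I\ot\epsilon_H)\rho=I$, which is (\ref{def:PCM1}). For (\ref{def:PR2}) I substitute $a=h^*,\ b={k^*}_{(1)},\ c=S({k^*}_{(2)})$ into the iterated formula and collapse $\bk{m^{[0][1]},{k^*}_{(1)}}\bk{m^{[1]},S({k^*}_{(2)})}=\bk{m^{[0][1]}S(m^{[1]}),k^*}$, while on the right-hand side of (\ref{def:PR2}) I use $\bk{m^{[0][1]},h^*{k^*}_{(1)}}=\bk{{m^{[0][1]}}_{(1)},h^*}\bk{{m^{[0][1]}}_{(2)},{k^*}_{(1)}}$. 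The two sides of (\ref{def:PR2}), holding for all $h^*,k^*$, then say precisely that the two sides of (\ref{def:PCM2}), viewed in $M\ot H\ot H$, pair identically with every $h^*\ot k^*\in H^\circ\ot H^\circ$. A completely parallel computation, substituting $a={h^*}_{(1)},\ b=S({h^*}_{(2)}),\ c=k^*$, turns (\ref{def:PR3}) into the pairing form of (\ref{def:PCM3}).

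The remaining step is to pass from these ``pairing'' equalities to genuine tensor identities, and this is the only point using the hypotheses on $k$ and $H^\circ$. Since $k$ is a field and $(H,H^\circ,\ev)$ is non-degenerate, Proposition~\ref{dualpairing}(iv) yields that $(H\ot H,H^\circ\ot H^\circ,\ev\ot\ev)$ is non-degenerate as well. Writing an element of $M\ot H\ot H$ as $\sum_\alpha m_\alpha\ot t_\alpha$ with $\{m_\alpha\}$ linearly independent, its image under $I\ot h^*\ot k^*$ is $\sum_\alpha m_\alpha\bk{t_\alpha,h^*\ot k^*}$, so vanishing for all $h^*,k^*$ forces every $t_\alpha=0$. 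Hence the pairing equalities of the previous step are equivalent to the literal identities (\ref{def:PCM2}) and (\ref{def:PCM3}), and (i)$\Leftrightarrow$(iii) follows.

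For the finite-dimensional statement, $H^\circ=H^*$ and $\rho\mapsto\pi$ is a bijection $\Hom_k(M,M\ot H)\cong\Hom_k(H^*,\End_k(M))$ whose inverse is $\rho(m)=\sum_i\pi(e^i)(m)\ot e_i$ for a basis $\{e_i\}$ of $H$ with dual basis $\{e^i\}$; one checks $(I\ot h^*)\rho(m)=\pi(h^*)(m)$ directly. Thus every partial $H^*$-module $(M,\pi)$ arises from a unique such $\rho$, which is a partial $H$-comodule by the equivalence just proved. Finally a linear $f:M\to N$ satisfies $f\circ\pi(h^*)=\pi'(h^*)\circ f$ for all $h^*$ iff $(I\ot h^*)\big((f\ot I)\rho-\sigma f\big)(m)=0$ for all $h^*$ and $m$, which, as $H^*$ separates points of $H$, is exactly $(f\ot I)\rho=\sigma f$; so the object bijection is compatible with morphisms in both directions and gives the asserted isomorphism of categories. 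I expect the main difficulty to be the bookkeeping of the second step---keeping the dualities of product, coproduct and antipode and the placements of $S$ consistent---rather than anything conceptual, with the density hypothesis intervening only through the non-degeneracy bridge of the third step.
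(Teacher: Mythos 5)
Your proposal is correct and follows essentially the same route as the paper: apply $I\otimes h^*\otimes k^*$ to the partial comodule axioms, identify the resulting expressions with the two sides of the partial representation axioms via the Hopf pairing $(H,H^\circ,\ev)$, and conclude by non-degeneracy; in the finite-dimensional case, recover $\rho$ from $\pi$ via a dual basis. Your treatment is in fact slightly more explicit than the paper's at two points — justifying that non-degeneracy of $(H,H^\circ)$ passes to $H\ot H$ via Proposition~\ref{dualpairing}(iv) before cancelling it against $M\ot H\ot H$, and checking that the object bijection respects morphisms — but these are elaborations of the same argument, not a different one.
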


\begin{proof} 
We check the equivalence between (i) and (iii), the equivalence between (ii) and (iii) follows from the definitions (see \cite{ABV}).

For any $m\in M$, denote $\rho (m)=m^{[0]}\otimes m^{[1]}$, and then the map $\pi$ can be written as $\pi (h^* )(m)=m^{[0]}h^* (m^{[1]})$. First, we have
\[
\pi (1_{H^\circ })(m) =m^{[0]} 1_{H^\circ} (m^{[1]}) =m^{[0]} \epsilon_H (m^{[1]}) .
\]
Then $\pi (1_{H^\circ })=\mbox{Id}_M$ if, and only if, the map $\rho$ satisfies (\ref{def:PCM1})

For any $m\in M$ and $h^*, k^* \in H^\circ$ we have
\begin{eqnarray*}
 (I\otimes h^* \otimes k^* )(m^{[0][0]}\otimes {m^{[0][1]}}_{(1)} \otimes {m^{[0][1]}}_{(2)} S(m^{[1]}))
& = & m^{[0][0]} h^* ({m^{[0][1]}}_{(1)}) k^* ({m^{[0][1]}}_{(2)} S(m^{[1]}))\\
& = & m^{[0][0]} h^* ({m^{[0][1]}}_{(1)}) k^*_{(1)} ({m^{[0][1]}}_{(2)}) k^*_{(2)}( S(m^{[1]}))\\
& = & m^{[0][0]} (h^* k^*_{(1)}) ({m^{[0][1]}}) S(k^*_{(2)})( m^{[1]})\\
& = & \pi (h^* k^*_{(1)})(m^{[0]} S(k^*_{(2)})( m^{[1]}))\\
& = & \pi (h^* k^*_{(1)}) \pi (S(k^*_{(2)}))(m).
\end{eqnarray*}
On the other hand, 
\begin{eqnarray*}
 (I\otimes h^* \otimes k^* )(m^{[0][0][0]}\otimes m^{[0][0][1]} \otimes m^{[0][1]} S(m^{[1]}))
& = & m^{[0][0][0]} h^* (m^{[0][0][1]}) k^* (m^{[0][1]} S(m^{[1]}))\\
& = & m^{[0][0][0]} h^* (m^{[0][0][1]}) k^*_{(1)} (m^{[0][1]} ) k^*_{(2)} (S(m^{[1]}))\\
& = & \pi (h^*) (m^{[0][0]} k^*_{(1)} (m^{[0][1]} ) S(k^*_{(2)}) (m^{[1]}))\\
& = & \pi (h^*) \pi (k^*_{(1)}) (m^{[0]} S(k^*_{(2)}) (m^{[1]}))\\
& = & \pi (h^*) \pi (k^*_{(1)}) \pi (S(k^*_{(2)}))(m).
\end{eqnarray*}
By the non-degeneracy of the pairing $(H,H^\circ)$, it then follows that the linear map $\rho$ satisfies (\ref{def:PCM2}) if and only if the linear map $\pi$ satisfies (\ref{def:PR2}). The other axiom is checked in the same way, leading to the conclusion that $(M,\rho)$ is a partial right $H$-comodule, if and only if the map $\pi :H^\circ \rightarrow \mbox{End}_k (M)$ is a partial representation of $H^\circ$.

For the last statement, given the linear map $\lambda$, we define $\rho(m)=\sum_i \lambda(h^*_i\ot m)\ot h_i$, where $\{(h_i,h^*_i)\}\subset H\times H^*$ is a finite dual base for $H$.
\end{proof}

\begin{example}
In \cite[Example 4.13]{ABV}, it was shown that Sweedler's four-dimensional Hopf algebra $H_4$ acts partially on the finitely generated algebra $k[x,z]/(2x^2-x,2xz-z)$, which contains the full polynomial algebra $k[z]$. Applying Theorem \ref{characterizationpcomod1} and using the fact that $H_4$ is self-dual, we find that $k[z]$ is a partial $H_4$-comodule. This comodule is exactly the irregular partial comodule from Example \ref{irregular}. Moreover, since this partial comodule is irregular it also follows from the results in this section that this partial comodule is not induced by a partial corepresentation.
\end{example}

\section{Universal coalgebras}\selabel{universal}

For the sake of simplicity we suppose from now on that $k$ is a field. However, many constructions still hold if $k$ is a general commutative ring and the considered $k$-modules are supposed to be (locally) projective.

\subsection{The universal corep-coalgebra}

Let $H$ be a Hopf algebra over a field $k$ and $(C(H),p)$ the cofree coalgebra over the vector space $H$.
Consider the set $\Lambda$  of all subcoalgebras $X$ of $C(H)$ such that $p|_{_X}:X \rightarrow H$ is a partial corepresentation. Then we define
\[
H^{par} = \sum_{X\in \Lambda} X,
\]
which is by construction the biggest sub-coalgebra of $C(H)$, such that $p|_{_{H^{par}}}$ is a partial corepresentation. We call $H^{par}$ the {\em universal corep-coalgebra}. By construction $H^{par}$ is cogenerated (as a coalgebra) by $H$.

\begin{lemma}\lelabel{parcorepcoalgmap}
With notation as above, the following assertions hold.
\begin{enumerate}[(i)]
\item
Given a coalgebra $C$ and a partial corepresentation $\omega :C \rightarrow H$, there exists a unique coalgebra map $\bar{\omega}: X \rightarrow H^{par}$ such that $\omega = p \circ \bar{\omega}$.
\item
$H$ is a subcoalgebra and a $k$-linear direct summand of $H^{par}$.
\end{enumerate}
\end{lemma}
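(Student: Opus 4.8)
The plan is to deduce both parts from the universal property of the cofree coalgebra $(C(H),p)$ together with the defining maximality of $H^{par}$ inside $C(H)$. For part (i), I would first discard the partial-corepresentation structure and apply the cofree universal property to the underlying linear map $\omega:C\to H$: this produces a \emph{unique} coalgebra morphism $\bar\omega:C\to C(H)$ with $p\circ\bar\omega=\omega$. Everything then reduces to showing that the image of $\bar\omega$ is contained in $H^{par}$. Since the image of a coalgebra morphism is a subcoalgebra, $D:=\bar\omega(C)$ is a subcoalgebra of $C(H)$ and $\bar\omega:C\twoheadrightarrow D$ is a surjective coalgebra morphism. If I can show that $q:=p|_{D}:D\to H$ is a partial corepresentation, then $D\in\Lambda$ by definition of $H^{par}$, whence $D\subseteq H^{par}$ and the corestriction of $\bar\omega$ is the required map $\bar\omega:C\to H^{par}$ with $p\circ\bar\omega=\omega$.

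To verify that $q$ is a partial corepresentation I would use that every $d\in D$ has the form $d=\bar\omega(c)$ and that $\bar\omega$ is comultiplicative, so that $\Delta_D(\bar\omega(c))=(\bar\omega\otimes\bar\omega)\Delta_C(c)$ and, upon iterating, $q(d_{(1)})\otimes q(d_{(2)})=\omega(c_{(1)})\otimes\omega(c_{(2)})$, and similarly for higher comultiplications. Each of the identities (\ref{def:PC1})--(\ref{def:PC3}) for $q$ at $d$ then becomes, after substituting $d=\bar\omega(c)$, exactly the corresponding identity for $\omega$ at $c$, which holds by hypothesis; surjectivity of $\bar\omega$ covers all $d\in D$. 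For the uniqueness claim, I would compose any two factorizations with the inclusion $\iota:H^{par}\hookrightarrow C(H)$: both $\iota\circ\bar\omega'$ and $\iota\circ\bar\omega''$ are coalgebra morphisms $C\to C(H)$ lifting $\omega$ through $p$, hence equal by the uniqueness clause of the cofree universal property, and injectivity of $\iota$ gives $\bar\omega'=\bar\omega''$.

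For part (ii), I would apply part (i) to the identity map $I_H:H\to H$. A coalgebra morphism is in particular a partial corepresentation (by the earlier characterization of global corepresentations), so $I_H$ is a partial corepresentation of $H$ over the coalgebra $H$; part (i) then yields a coalgebra morphism $j:H\to H^{par}$ with $p|_{H^{par}}\circ j=I_H$. This single identity does all the work. It exhibits $j$ as a split monomorphism of $k$-modules with retraction $p|_{H^{par}}$, so $e:=j\circ p|_{H^{par}}$ is an idempotent endomorphism of $H^{par}$; since $p|_{H^{par}}$ is surjective (it hits all of $H$ through $j$) and $j$ is injective, $\im(e)=j(H)$ and $\Ker(e)=\Ker(p|_{H^{par}})$, giving the $k$-linear splitting $H^{par}=j(H)\oplus\Ker(p|_{H^{par}})$. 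Being an injective coalgebra morphism, $j$ identifies $H$ with the subcoalgebra $j(H)\subseteq H^{par}$, so $H$ is at once a subcoalgebra and a $k$-linear direct summand.

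The only delicate point I anticipate is the pushforward step in part (i), namely that the partial-corepresentation axioms descend along the surjection $\bar\omega:C\twoheadrightarrow D$. This turns out not to be an obstacle at all: because the axioms are expressed entirely through $q$ precomposed with (iterated) comultiplication, and $\bar\omega$ is comultiplicative and surjective, the descent is immediate and requires no non-degeneracy or finiteness hypothesis on $H$ or $C$.
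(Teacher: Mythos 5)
Your proposal is correct and follows essentially the same route as the paper: factor $\omega$ through the cofree coalgebra $C(H)$, observe that $p$ restricted to the subcoalgebra $\mathrm{Im}(\bar\omega)$ is still a partial corepresentation (so this image lies in $H^{par}$ by maximality), and obtain (ii) by applying (i) to $I_H$. The only difference is that you spell out the details the paper labels as ``clearly'' (the descent of the axioms along the surjection onto the image, and the uniqueness via the inclusion into $C(H)$), which is fine.
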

	
\begin{proof} 
\ul{(i)}. By the universal property of the cofree coalgebra, there exists a unique coalgebra map $\bar{\omega}: C \rightarrow  C(H)$ such that $\omega = p \circ \bar{\omega}$. Since $\bar{\omega}$ is a coalgebra map, $\mbox{Im}(\bar{\omega})$ is a subcoalgebra of $C(H)$ and clearly $p|_{_{\mbox{Im}(\bar{\omega})}}$ is a partial corepresentation. Therefore, $\mbox{Im}(\bar{\omega})$ is an element of $\Lambda$ and we can take the corestriction $\bar{\omega}:C \to H^{par}$, which is the unique coalgebra morphism such that $\omega = p \circ \bar{\omega}$.\\
\ul{(ii)}. Since the identity map in $H$ is a partial corepresentation, there exists a coalgebra  map $\imath :H \rightarrow H^{par}$ such that $\mbox{Id}_H =p\circ \imath$. 
\end{proof}

\begin{thm} \thlabel{partcomodarecomod1}
Let $H$ be a Hopf algebra over a field $k$. 
\begin{enumerate}[(i)]
\item For any $H^{par}$-comodule $(M,\hat\rho)$ the pair $(M,\rho)$, where $\rho=(I\ot p)\circ \hat\rho$, is a partial $H$-comodule. This construction yields a fully faithful functor $\Gamma:\Mod^{H^{par}}\to \Mod^H_{par}$. 
\item The functor $\Gamma$ restricts to an isomorphism of categories $\Gamma_f:\Mod^{H^{par}}_f\to \Mod^H_{par,f}$, between the categories of firmly projective $H^{par}$-comodules and firmly projective partial $H$-comodules.
\item In particular, $\Gamma$ restricts to an isomorphism of categories $\Gamma_{fd}:\Mod^{H^{par}}_{fd}\to \Mod^H_{par,fd}$ between the categories of finite dimensional $H^{par}$-comodules and finite dimensional partial $H$-comodules. Hence, $H^{par}$ is the coalgebra that can be reconstructed from the fibre functor $U:\Mod^H_{par,fd}\to \Vect$ by means of the Tannakian formalism. 
\end{enumerate}
\end{thm}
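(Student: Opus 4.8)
The plan is to prove the three parts in order, reducing (ii) and (iii) to part (i) together with the universal property of $H^{par}$ (\leref{parcorepcoalgmap}) and the comatrix-coalgebra correspondence of \thref{comodcorep}. For part (i), let $(M,\hat\rho)$ be an $H^{par}$-comodule and write $\hat\rho(m)=m^{\langle 0\rangle}\otimes m^{\langle 1\rangle}$ with $m^{\langle 1\rangle}\in H^{par}$, so that $\rho=(I\otimes p)\circ\hat\rho$ reads $m^{[0]}=m^{\langle 0\rangle}$, $m^{[1]}=p(m^{\langle 1\rangle})$. Axiom (\ref{def:PCM1}) is immediate from counitality of $\hat\rho$ together with (\ref{def:PC1}) for $p$. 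For (\ref{def:PCM2}) and (\ref{def:PCM3}) I would use coassociativity of $\hat\rho$ to rewrite each iterated coaction $m^{[0][0]\cdots}$ as a single coaction followed by the iterated comultiplication $\Delta^n$ of $H^{par}$; both sides of (\ref{def:PCM2}) then collapse to $m^{\langle 0\rangle}$ tensored with the two sides of (\ref{def:PC2}) evaluated at the element $x=m^{\langle 1\rangle}\in H^{par}$, and likewise (\ref{def:PCM3}) reduces to (\ref{def:PC3}). Since $p|_{H^{par}}$ is a partial corepresentation by the very construction of $H^{par}$, these identities hold, so $(M,\rho)$ is a partial $H$-comodule. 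As $\Gamma$ is the identity on underlying spaces and maps, functoriality and faithfulness are clear.

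The crux is fullness of $\Gamma$, and this is where I expect the main obstacle. Given a morphism $f:(M,\rho)\to(N,\sigma)$ of partial comodules, one cannot lift the intertwining relation to $H^{par}$ pointwise; the argument succeeds only because $H^{par}$ is cogenerated as a coalgebra by $H$, i.e.\ the maps $p_n=p^{\otimes n}\circ\Delta^n:H^{par}\to H^{\otimes n}$ are jointly monic. Over a field $\{I_N\otimes p_n\}$ is jointly monic as well, so to prove $\hat\sigma\circ f=(f\otimes I)\circ\hat\rho$ it suffices to check equality after applying each $I_N\otimes p_n$. Using coassociativity, $(I_N\otimes p_n)\circ\hat\sigma$ equals the $n$-fold iterated partial coaction $\sigma_n$ (and similarly for $M$), so the two sides become $\sigma_n\circ f$ and $(f\otimes I)\circ\rho_n$; a short induction on $n$ shows that the hypothesis $\sigma\circ f=(f\otimes I)\circ\rho$ propagates to $\sigma_n\circ f=(f\otimes I)\circ\rho_n$ for all $n$. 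This yields the desired equality and hence fullness.

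For part (ii), full faithfulness gives a ring isomorphism between the endomorphisms of $M$ in $\Mod^{H^{par}}$ and in $\Mod^H_{par}$; since $R$-firm projectivity is a condition on the underlying module relative to a firm subring of this endomorphism ring, $\Gamma$ both preserves and reflects it, and so restricts to $\Gamma_f$. To see $\Gamma_f$ is bijective on objects, I start from a firmly projective partial $H$-comodule $(M,\rho)$ with $R\subseteq\End^H(M)$ firm: by \thref{comodcorep}, $\rho$ corresponds to a partial corepresentation $\omega:M^*\otimes_R M\to H$; by \leref{parcorepcoalgmap}(i) it factors uniquely as $\omega=p\circ\bar\omega$ through a coalgebra map $\bar\omega:M^*\otimes_R M\to H^{par}$; and by the universal property of the infinite comatrix coalgebra (\cite{GTV}) such a coalgebra map is precisely an $H^{par}$-comodule structure $\hat\rho$ on $M$. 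A direct check gives $(I\otimes p)\circ\hat\rho=\rho$, so $\Gamma_f(M,\hat\rho)=(M,\rho)$; injectivity on objects follows by applying fullness to $\mathrm{id}_M$. Hence $\Gamma_f$ is an isomorphism of categories.

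Part (iii) is the restriction of (ii) to finite dimensions: a finite-dimensional space is $k$-firmly projective (taking $R=k$, unital), and $\Gamma$ preserves dimension, so $\Gamma_{fd}:\Mod^{H^{par}}_{fd}\to\Mod^H_{par,fd}$ is again an isomorphism of categories. Finally, since $\Gamma_{fd}$ is compatible with the underlying-space functors, i.e.\ $U\circ\Gamma_{fd}$ is the forgetful functor on $\Mod^{H^{par}}_{fd}$, the uniqueness in Tannaka--Krein reconstruction for coalgebras (the fact that the finite-dimensional comodule category together with its fibre functor determines the coalgebra up to isomorphism) identifies $H^{par}$ with the coalgebra $\mathrm{coend}(U)$ reconstructed from the fibre functor $U:\Mod^H_{par,fd}\to\Vect$.
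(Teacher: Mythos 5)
Your proposal is correct and follows essentially the same route as the paper's proof: a direct verification of the partial comodule axioms via coassociativity and the fact that $p|_{H^{par}}$ is a partial corepresentation, fullness via the joint monicity of the maps $p_n=p^{\otimes n}\circ\Delta^n$ (cogeneration of $H^{par}$ by $H$), and surjectivity on objects in (ii) via \thref{comodcorep} combined with \leref{parcorepcoalgmap}. The only additions are explicit remarks (the transfer of firm projectivity along the endomorphism-ring isomorphism, injectivity on objects) that the paper leaves implicit.
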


\begin{proof} 
\ul{(i)}.
Consider a right $H^{par}$-comodule $(M, \hat{\rho})$, where we denote $\hat{\rho} (m)=m^{[0]} \otimes m^{[1]} \in M\otimes H^{par}$ for all $m\in M$. Let us verify that  $\rho :M\rightarrow M\otimes H, \rho (m)=m^{[0]}\otimes p(m^{[1]})$ endows $M$ with the structure of a partial $H$-comodule.
For any $m\in M$, we find that
$$
m^{[0]} \epsilon_H (p(m^{[1]})) =m^{[0]} \epsilon (m^{[1]})=m$$
hence axiom (\ref{def:PCM1}) holds.
To check axiom (\ref{def:PCM2}), we compute
\begin{eqnarray*}
m^{[0][0]} \otimes p( m^{[0][1]})_{(1)}\otimes  p( m^{[0][1]})_{(2)} S(p(m^{[1]})) 
& = & m^{[0]} \otimes p( {m^{[1]}}_{[1]})_{(1)}\otimes  p( {m^{[1]}}_{(1)})_{(2)} S(p({m^{[1]}}_{(2)})) \\
& \stackrel{(\ref{def:PC2})}{=} & m^{[0]} \otimes p( {m^{[1]}}_{(1)}) \otimes  p( {m^{[1]}}_{(2)}) S(p({m^{[1]}}_{(3)})) \\
& = & m^{[0][0][0]} \otimes p( m^{[0][0][1]})\otimes p( m^{[0][1]}) S( p(m^{[1]})) 
\end{eqnarray*}
where we used the coassociativity of the global comodule $(M,\hat\rho)$ in the first and last equalities.
In the same way, one can check axiom (\ref{def:PCM3}) 
so that we can conclude that $(M, \rho)$ is indeed a partial right $H$-comodule. 

If $f:M\to N$ is a morphism of $H^{par}$-comodules, then the left hand side of the diagram below commutes (and the right hand side commutes by naturality of the tensor product).
\[
\xymatrix{
M \ar[d]_f \ar[rr]^-{\hat\rho} && M\ot H^{par} \ar[rr]^-{id\ot p} \ar[d]_{f\ot id} && M\ot H \ar[d]_{f\ot id}\\
N \ar[rr]^-{\hat\rho} && N\ot H^{par} \ar[rr]^-{id\ot p}  && M\ot H
}
\]
This shows that $\Gamma$ is functorial by means of $\Gamma(f)=f$ and so clearly faithful. To prove that $\Gamma$ is also full,
let $(M,\hat\rho_M)$ and $(N,\hat\rho_N)$ be two right $H^{par}$-comodules and consider a $k$-linear map $f:M\to N$ that is $H$-colinear for the induced partial $H$-comodules $\Gamma(M)$ and $\Gamma(N)$. 
Recall that $H^{par}$ is cogenerated by $H$, hence the morphisms $p_n=p^{\ot n}\circ \Delta^n_{H^{par}}:H^{par}\to H^{\ot n}$ are jointly monic.  Therefore $f$ is $H^{par}$-colinear if and only if $p_n\circ (f\ot id)\circ \hat\rho_M = p_n\circ \hat\rho_N\circ f$ for all $n$. As one can see from the following diagram, using coassociativity of the $H_{par}$-coactions and the definition of the induced partial $H$-coactions, we find that the left hand side of the equation equals $(f\ot id)\circ \rho_M^{n+1}$ and the right hand side equals $\rho_N^{n+1}\circ f$. Hence both are equal as $f$ is a morphism of partial $H$-comodules.
\[
\xymatrix{
M \ar[d]_f 
\ar@/^{2pc}/[rrrrrr]^-{\rho^{n+1}_M}
\ar[rr]^-{\hat\rho_M} && M\ot H^{par} \ar[d]_{f\ot id} \ar@<-.5ex>[rr]_-{id\ot\Delta^n} 
\ar@<.5ex>[rr]^-{\hat\rho^n_M\ot 1} 
&& M\ot (H^{par})^{\ot n} 
\ar[d]_{f\ot id} \ar[rr]^{id\ot p^{\ot n}} \ar[d]_{f\ot id} && M\ot H^{\ot n} \ar[d]_{f\ot id}\\
N \ar[rr]^-{\hat\rho_N} \ar@/_{2pc}/[rrrrrr]_-{\rho^{n+1}_N} && N\ot H^{par} \ar@<.5ex>[rr]^-{id\ot\Delta^n} \ar@<-.5ex>[rr]_-{\hat\rho^n_N\ot 1} && N\ot (H^{par})^{\ot n}\ar[rr]^{id\ot p^{\ot n}} && N\ot H^{\ot n}
}
\]
\ul{(ii)}. 
Since we already know from part (i) that the functor $\Gamma$ is fully faithful, it suffice to prove that its restriction to firmly projective $H^{par}$-comodules is surjective on objects.
Therefore, consider an $R$-firmly projective partial right $H$-comodule $(M, \rho)$, where $R$ is a firm subring of $\End^H(M)$. By \thref{comodcorep}, $\rho$ induces a partial corepresentation $\omega:M^*\ot_R M\to H$, given by $\omega(f\ot m)=f(m^{[0]})m^{[1]}$. In turn, $\omega$ gives rise to a coalgebra map $\bar\omega:M^*\ot_R M\to H^{par}$ by \leref{parcorepcoalgmap}. Finally, $\omega$ endows $M$ with the structure of a $H^{par}$-comodule by defining $\hat\rho(m)=\sum_{i} e_r\ot \bar\omega(f_r\ot m^r)$ (see \thref{comodcorep}). 
\\ \ul{(iii)}. Is clear from (ii) and the Tannaka duality.
\end{proof}

\begin{remark}
Since $H^{par}$ is a usual coalgebra, its category of comodules satisfies the fundamental theorem, hence the image of the functor $\Gamma$ lies in the full subcategory of regular partial $H$-comodules. 
Hence it follows that if a Hopf algebra $H$ allows irregular partial modules (such as the Sweedler Hopf algebra $H_4$, see Example \ref{irregular}), then the category of partial $H$-modules cannot be equivalent to the category of comodules over $H^{par}$. 
\end{remark}

\begin{corollary}
Let $H$ be a Hopf algebra over a field $k$, such that $H^\circ$ is dense in $H^*$. Then
$$H^{par}\cong ((H^\circ)_{par})^\circ.$$
\end{corollary}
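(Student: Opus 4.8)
The plan is to identify both coalgebras through the Tannakian reconstruction already used in \thref{partcomodarecomod1}(iii), rather than to build an explicit isomorphism by hand (which, as I explain at the end, runs into delicate double-dual identifications). Recall that \thref{partcomodarecomod1}(iii) exhibits $H^{par}$ as the coalgebra reconstructed from the fibre functor $U\colon\Mod^H_{par,fd}\to\Vect$. Thus it suffices to produce an isomorphism of categories between $\Mod^H_{par,fd}$ and the category of finite-dimensional comodules over $((H^\circ)_{par})^\circ$ that commutes with the forgetful functors to $\Vect$, and then to invoke the fact that a coalgebra over a field is determined (up to isomorphism) by its category of finite-dimensional comodules together with the underlying-space functor.

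First I would assemble a chain of isomorphisms of categories, each of which is literally the identity on underlying vector spaces:
\[
\Mod^H_{par,fd}\;\cong\;{}_{H^\circ}\Mod^{par}_{fd}\;\cong\;{}_{(H^\circ)_{par}}\Mod_{fd}\;\cong\;\Mod^{((H^\circ)_{par})^\circ}_{fd}.
\]
The first isomorphism is the finite-dimensional restriction of Theorem~\ref{characterizationpcomod1} (this is exactly where the hypothesis that $H^\circ$ is dense in $H^*$ enters): it sends a partial comodule $\rho$ to the partial module structure $\lambda(h^*\ot m)=(I\ot h^*)\rho(m)$ on the same space $M$. The second is the restriction to finite-dimensional objects of the isomorphism ${}_{H^\circ}\Mod^{par}\cong{}_{(H^\circ)_{par}}\Mod$ coming from the universal property of $(H^\circ)_{par}$ recalled in \seref{partrep}; it merely re-reads a partial representation $H^\circ\to\End_k(M)$ as the algebra map $(H^\circ)_{par}\to\End_k(M)$ that factors it, so it too fixes the underlying space. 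The third is the standard finite-dual correspondence: for any algebra $B$ a finite-dimensional left $B$-module $M$ becomes a finite-dimensional right $B^\circ$-comodule by dualizing the structure map $B\to\End_k(M)$ and observing that the associated comatrix coalgebra $M^*\ot M$ has image inside $B^\circ$, and this is an equivalence over $\Vect$; I would apply it with $B=(H^\circ)_{par}$.

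It then remains to recognize the two ends of the chain as reconstruction data for $H^{par}$ and for $((H^\circ)_{par})^\circ$. By \thref{partcomodarecomod1}(iii) the left end reconstructs $H^{par}$. For the right end I would use that every coalgebra $D$ is the colimit of its finite-dimensional subcoalgebras, equivalently the coend $\int^{N}N^*\ot N$ over its finite-dimensional comodules; applied to $D=((H^\circ)_{par})^\circ$ this recovers $((H^\circ)_{par})^\circ$ from $\Mod^{((H^\circ)_{par})^\circ}_{fd}$. Since the composite equivalence commutes with the fibre functors and respects duals, the two coends are canonically isomorphic as coalgebras, and we conclude $H^{par}\cong((H^\circ)_{par})^\circ$.

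The main obstacle is bookkeeping rather than a single hard computation: one must check carefully that each isomorphism in the chain commutes (up to a coherent natural isomorphism) with the underlying-space functor and is compatible with dualization, so that the coend reconstruction genuinely produces matching coalgebras; this is what upgrades a pointwise bijection of objects to an isomorphism of the reconstructed coalgebras. I considered the more direct route of dualizing the universal partial representation $\iota\colon H^\circ\to(H^\circ)_{par}$ and using the duality between partial representations and partial corepresentations from \seref{dual} to turn $\iota$ into a partial corepresentation $((H^\circ)_{par})^\circ\to H$, whence \leref{parcorepcoalgmap} would supply a coalgebra map into $H^{par}$; the sticking point there is showing that $c\circ\iota$ lands in $H\subseteq(H^\circ)^*$ (and not merely in $(H^\circ)^*$) for $c\in((H^\circ)_{par})^\circ$. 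This can be forced by factoring $c$ through a finite-dimensional quotient algebra of $(H^\circ)_{par}$ and invoking the density of $H^\circ$ in $H^*$, but it is appreciably more delicate than the categorical argument, so I would present the reconstruction proof as the main one.
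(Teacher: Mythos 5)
Your proposal is correct and follows essentially the same route as the paper: the paper's proof also observes that finite-dimensional partial $H$-comodules coincide with finite-dimensional partial $H^\circ$-modules (Theorem~\ref{characterizationpcomod1}), hence with finite-dimensional $(H^\circ)_{par}$-modules, and concludes via Theorem~\ref{th:partcomodarecomod1}(iii) that $H^{par}$ and $((H^\circ)_{par})^\circ$ are reconstructed from the same fibre functor by Tannaka--Krein duality. Your write-up merely spells out the chain of category isomorphisms and the coend reconstruction in more detail than the paper does.
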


\begin{proof}
From Theorem \ref{characterizationpcomod1} we know that a (finite dimensional) partial $H$-comodule is the same as a (finite dimensional) partial $H^\circ$-module, so also a finite dimensional $(H^\circ)_{par}$-module. Hence (see \thref{partcomodarecomod1}(iii)) both $H^{par}$ and $((H^\circ)_{par})^\circ$ arise from the same fibre functor by means of the Tannaka-Krein duality, so they are isomorphic.
\end{proof}

From the previous corollary, we immediately find that there exists a pairing between $H^{par}$ and $H^\circ_{par}$.
We will now give an explicit description of this pairing, and show that it is non-degenerate.

Recall from Proposition \ref{pairingcofree}, that there is a non-degenerate dual pairing between the cofree coalgebra $C(H)$ and the tensor algebra $T(H^*)$. Consider the Sweedler dual $H^\circ\subset H^*$, we clearly have that $T(H^\circ)\subset T(H^*)$ and hence we obtain by restriction a pairing
\begin{eqnarray*}
&\bk{-,-}: C(H)\ot T(H^\circ) \to k,
&\langle x, h_{i_1}^* \otimes \cdots \otimes h_{i_n}^* \rangle =h_{i_1}^*(p(x_{(1)})) \cdots  h_{i_n}^*(p(x_{(n)}))
\end{eqnarray*}
where $p:C(H)\to H$ is the projection of the cofree coalgebra.
As already explained at the end of \seref{cofree}, if $H^\circ$ is dense in $H^*$, this pairing is still non-degenerate.

\begin{prop} \label{pairingHpar} 
Let $H$ be a Hopf algebra such that $H^\circ$ is dense in $H^*$.
The subcoalgebra $H^{par}\subseteq C(H)$ is the annihilator space $I^{\perp}$ of the ideal $I\trianglelefteq TH^\circ$ which defines the relations of $H^\circ_{par}$ (see Definition \ref{hpar}). As a consequence, there is a nondegenerate dual pairing
\begin{equation}\label{defpairingHpar}
\begin{array}{rccl} \left( \; , \; \right) : & H^{par} \otimes H^\circ_{par} & \rightarrow & k \\ 
& x \ot [k^{1*}]\ldots [k^{n*}] &\mapsto  &
k^{1*}(p(x_{(1)})) \ldots k^{n*}(p(x_{(n)}))
\end{array} .
\end{equation}
\end{prop}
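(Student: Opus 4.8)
The plan is to deduce everything from the non-degenerate dual pairing between the coalgebra $C(H)$ and the algebra $T(H^\circ)$ obtained by restricting the pairing of Proposition~\ref{pairingcofree} along $T(H^\circ)\subseteq T(H^*)$; as explained at the end of \seref{cofree}, the density of $H^\circ$ in $H^*$ guarantees that this restricted pairing is still non-degenerate, and by the computation in Remark~\ref{re:dualpairing} it is a pairing between the coalgebra $C(H)$ and the algebra $T(H^\circ)$, so that $\langle x,uv\rangle=\langle x_{(1)},u\rangle\langle x_{(2)},v\rangle$ and $\langle x,1\rangle=\epsilon(x)$. Since $I$ is an ideal of $T(H^\circ)$, Theorem~\ref{reductionofpairing}(1) immediately produces a subcoalgebra $I^\perp\subseteq C(H)$ together with a non-degenerate pairing $(I^\perp,T(H^\circ)/I)=(I^\perp,H^\circ_{par})$ whose transported formula $\langle\!\langle x,u+I\rangle\!\rangle=\langle x,u\rangle$ is exactly \eqref{defpairingHpar}. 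Thus the whole statement reduces to the identification $H^{par}=I^\perp$.

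The heart of the argument is the claim that, for a subcoalgebra $X\subseteq C(H)$, the restriction $p|_X$ is a partial corepresentation if and only if $X\subseteq I^\perp$. To prove it I would dualize the three families of generators of $I$ (Definition~\ref{hpar}, written for the Hopf algebra $H^\circ$) against an element $x\in X$, using that the product of $H^\circ$ is dual to $\Delta_H$, that $S_{H^\circ}(k^*)=k^*\circ S_H$, and the explicit pairing formula. A direct computation shows that pairing $x$ with $1_{H^\circ}-1_{T(H^\circ)}$ returns $\epsilon_H(p(x))-\epsilon_X(x)$, while pairing $x$ against the two cubic generators and then applying an arbitrary $h^*\otimes k^*\in H^\circ\otimes H^\circ$ returns precisely the difference of the two sides of axioms (\ref{def:PC2}) and (\ref{def:PC3}) for $\omega=p|_X$. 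Because $k$ is a field and $H^\circ$ is dense in $H^*$, the pairing $(H\otimes H,H^\circ\otimes H^\circ)$ is again non-degenerate (Proposition~\ref{dualpairing}(iv)), so vanishing of all these evaluations is equivalent to (\ref{def:PC1}), (\ref{def:PC2}) and (\ref{def:PC3}) holding on $X$.

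This settles the generator-level equivalence; to upgrade it to the full ideal I would exploit the algebra/coalgebra compatibility of the pairing, writing $\langle x,a\,g\,b\rangle=\langle x_{(1)},a\rangle\langle x_{(2)},g\rangle\langle x_{(3)},b\rangle$ for $a,b\in T(H^\circ)$ and $g$ a generator. If $p|_X$ is a partial corepresentation, then $y\mapsto\langle y,g\rangle$ vanishes identically on $X$; since $X$ is a subcoalgebra, $\Delta^3(x)\in X\otimes X\otimes X$, so the middle factor annihilates the whole expression and $x\in I^\perp$, whence $X\subseteq I^\perp$. Conversely, $X\subseteq I^\perp$ forces $\langle x,g\rangle=0$ for every generator $g$ (take $a=b=1$), which by the previous paragraph recovers the three axioms on $X$. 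This proves the claim.

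Finally, $H^{par}=\sum_{X\in\Lambda}X$ is the sum of all subcoalgebras on which $p$ restricts to a partial corepresentation; by the claim each such $X$ lies in $I^\perp$, so $H^{par}\subseteq I^\perp$. On the other hand $I^\perp$ is itself a subcoalgebra (Theorem~\ref{reductionofpairing}(1)) with $I^\perp\subseteq I^\perp$, so $p|_{I^\perp}$ is a partial corepresentation and $I^\perp\in\Lambda$, giving $I^\perp\subseteq H^{par}$. Hence $H^{par}=I^\perp$, and the induced non-degenerate pairing from Theorem~\ref{reductionofpairing}(1) is the one asserted. The main obstacle I anticipate is not the dualization of the individual axioms but the passage from annihilating the few generating families to annihilating the entire ideal $I$: this is exactly the step where the coalgebra structure of $X$ together with the algebra/coalgebra compatibility of the pairing is indispensable, and it must be handled carefully to keep the Sweedler bookkeeping correct.
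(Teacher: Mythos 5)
Your proposal is correct and follows essentially the same route as the paper's proof: restrict the cofree pairing to $T(H^\circ)$, dualize the three families of generators of $I$ against the partial corepresentation axioms for $p|_{_X}$ to identify $H^{par}$ with $I^{\perp}$, and then invoke Theorem~\ref{reductionofpairing}(1) to obtain the reduced non-degenerate pairing. Your third paragraph merely makes explicit the passage from annihilating the generators to annihilating the whole ideal (via multiplicativity of the pairing and the subcoalgebra property of $X$), a step the paper's argument leaves implicit.
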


\begin{proof} First, let us verify that the subcoalgebra $I^{\perp}$ of the cofree coalgebra $C(H)$ coincides with the subcoalgebra $H^{par} \subseteq C(H)$. 
Let $X$ be any subcoalgebra of $C(H)$.
Then we find for any $y\in X$, 
\[
\langle y, 1_{H^\circ} -1_{TH^\circ } \rangle = \langle y,1_{H^\circ } \rangle -\langle y,1_{TH^\circ }  \rangle =\epsilon_H (p(y)) -\epsilon_{C(H)} (y) =
(\epsilon_H\circ p -\epsilon_{C(H)})(y).
\]
Hence $p|_{_X}$ satisfies axiom (\ref{def:PC1}) if and only if $X\subset (1_{H^\circ} -1_{TH^\circ })^\perp$.

Furthermore, for any $y\in X$ as before and any $h^* , k^* \in H^\circ$, we find
\begin{eqnarray*}
&& \hspace*{-2cm} \langle y, h^*\otimes k^*_{(1)} \otimes S_{H^\circ}(k^*_{(2)}) - h^* k^*_{(1)} \otimes S_{H^\circ}(k^*_{(2)})\rangle =\\
& = & h^* (p(y_{(1)})) k^*_{(1)} (p(y_{(2)})) S_{H^\circ}(k^*_{(2)}) (p(y_{(3)})) -h^* k^*_{(1)} (p(y_{(1)})) S_{H^\circ}(k^*_{(2)}) (p(y_{(2)}))\\
& = & h^* (p(y_{(1)})) k^*_{(1)} (p(y_{(2)})) k^*_{(2)} (S_H(p(y_{(3)}))) -h^* (p(y_{(1)})_{(1)}) k^*_{(1)} (p(y_{(1)})_{(2)}) k^*_{(2)} (S_H(p(y_{(2)})))\\
& = & h^* (p(y_{(1)})) k^* (p(y_{(2)}) S_H(p(y_{(3)}))) -h^* (p(y_{(1)})_{(1)}) k^*(p(y_{(1)})_{(2)} S_H(p(y_{(2)})))\\
& = & (h^* \otimes k^* )\left( p(y_{(1)})\otimes p(y_{(2)}) S_H(p(y_{(3)})) -p(y_{(1)})_{(1)} \otimes p(y_{(1)})_{(2)} S_H(p(y_{(2)}))\right)
\end{eqnarray*}
and therefore, $p|_{_X}$ satisfies axiom (\ref{def:PC2}) if and only if $X\subset (h^*\otimes k^*_{(1)} \otimes S_{H^\circ}(k^*_{(2)}) - h^* k^*_{(1)} \otimes S_{H^\circ}(k^*_{(2)}))^\perp$ for all $h^*, k^*\in H^\circ$.

In the same way, we see that $p|_{_X}$ satisfies axiom (\ref{def:PC3}) if and only if $X$ lies in the annihilator of the generators of $I$ of type (3) from Definition \ref{hpar}. Therefore, we can conclude that the biggest subcoalgebra $C(H)$ for which the restriction of $p$ is a corepresentation equals the biggest subcoalgebra of $C(H)$ which annihilates $I$. In other words, $H^{par}=I^\perp$.

By Theorem \ref{reductionofpairing}, we conclude that there is the reduced pairing 
\[
\begin{array}{rccl} \left( \; , \; \right) : & H^{par} \otimes H^\circ _{par} & \rightarrow & k \\ & x\otimes a+I & \mapsto & \langle x, a \rangle \end{array}
\]
which is well-defined and nondegenerate. One easily checks that this is exactly the pairing given in \eqref{defpairingHpar}.
\end{proof}

\begin{exmp} \label{examplekc2} Take $H=kC_2$ the group algebra of the cyclic group with two elements. Since this algebra is finite dimensional, $H^\circ=H^*$. Denote by $\{u_e,u_g\}$ the base of grouplike elements of $H$, and by $\{p_e,p_g\}$ the dual base of $H^*$.
In \cite[example 4.5]{ABV} it was shown that $(kC_2)^*_{par} \cong k[x]/\langle x(x-1)(2x-1) \rangle$,
where $x=[p_e]$ and $1-x=[p_g]$. As $(kC_2)^*_{par}$ is again finite dimensional (in fact, it is three dimensional), we can conclude that 
$(kC_2)^{par} \cong \left( (kC_2)^*_{par}  \right)^*$ is again three dimensional. 

Take the canonical base $\{1,x,x^2\}$ for $(kC_2)^*_{par}$, and denote by $\{u,v,w\}$ the corresponding dual base of $(kC_2)^{par}$. Then we find that the counit and comultiplication of $(kC_2)^{par}$ are given on this base by
\begin{eqnarray*}
\epsilon(u)=1 && \Delta(u)= u\ot u\\
\epsilon(v)=0 & & \Delta(v)= u\ot v + v\ot u -1/2(v\ot w+w\ot v) -3/4w\ot w\\
\epsilon(w)=0& & \Delta(w)=u\ot w+w\ot u+v\ot v+3/2(v\ot w+w\ot v)+7/4 w\ot w  
\end{eqnarray*}
The map $p:(kC_2)^{par}\to kC_2$ is then given by 
$$p(u)=u_g, p(v)=u_e-u_g, p(w)=0$$
Take now $y=u+v+w$ and $z=u+1/2v+1/4w$, then we can check that $y$ and $z$ are a grouplike elements and moreover $p(y)=u_e$ and $p(z)=1/2u_e+1/2u_g$. It follows that $\{u,y,z\}$ is a base of grouplike elements for the universal coalgebra $kC_2^{par}$. 

Recall that (global) comodules over a coalgebra with a base of grouplike elements coincide with graded vector spaces. This is why comodules over the coalgebra $kC_2$ can be understood as $\ZZ_2$-graded vector spaces. In the same way, it follows from the above reasoning that a (finite dimensional) partial comodule $(M,\rho)$ over the Hopf algebra $kC_2$ has not 2 but 3 components. Beside the homogeneous components of degree $0$ and $1$ as in the global case, there is also a `mixed' component of degree $1/2$, which corresponds to the subspace of elements $m\in M$ such that $\rho(m)=m\ot z$. The same observation was also made in \cite{ABV} using different arguments.
\end{exmp}

\subsection{The universal copar coalgebra}

As before, let $H$ be a Hopf algebra and $(C(H),p)$ the cofree coalgebra over $H$.
We define the following $k$-linear maps:
\begin{eqnarray*}
E:  H^{par}  \rightarrow  H &&  x  \mapsto  p(x_{(1)})S(p(x_{(2)}))\\
\widetilde{E}:  H^{par}  \rightarrow H && x  \mapsto  S(p(x_{(1)}))p(x_{(2)}) 
\end{eqnarray*}
From the universal property of the cofree coalgebra, we then obtain unique 
coalgebra morphisms $\overline{E}: H^{par}\rightarrow C(H)$  and $\overline{\widetilde{E}}:H^{par}\rightarrow C(H)$, such that $E=p\circ \overline{E}$ and $\widetilde{E}=p\circ \overline{\widetilde{E}}$. Denote by $C=\mbox{Im}(\overline{E})$ and $\widetilde{C}=\mbox{Im}(\overline{\widetilde{E}})$. 

\begin{rmk}From now on $C$ and $\widetilde{C}$ will denote only these two particular subcoalgebras of $C(H)$; the coalgebra $C$ will be called the base coalgebra of $H^{par}$. 
\end{rmk}

\begin{lemma} \label{lemadosE} Let $H$ be a Hopf algebra with invertible antipode, then the following identities hold for all $x\in H^{par}$:
\begin{enumerate}[(i)]
\item $\epsilon_H \circ E(x) =\epsilon_H \circ  \widetilde{E}(x)=\epsilon_{H^{par}}(x)$.
\item $E(x_{(1)})\otimes p(x_{(2)}) =E(x_{(2)})_{(2)}\otimes S^{-1}( E(x_{(2)})_{(1)}) p(x_{(1)})$.
\item $p(x_{(1)})\otimes E(x_{(2)})=E(x_{(1)})_{(1)} p(x_{(2)}) \otimes E(x_{(1)})_{(2)}$.
\item $E(x_{(1)})E(x_{(2)})=E(x)$.
\item $\widetilde{E}(x_{(1)})\otimes p(x_{(2)}) =\widetilde{E}(x_{(2)})_{(1)} \otimes p(x_{(1)})\widetilde{E}(x_{(2)})_{(2)} $.
\item $p(x_{(1)})\otimes \widetilde{E}(x_{(2)})=p(x_{(2)}) S^{-1}(\widetilde{E}(x_{(1)})_{(2)})\otimes \widetilde{E}(x_{(1)})_{(1)}$.
\item $\widetilde{E}(x_{(1)})\widetilde{E}(x_{(2)})=\widetilde{E}(x)$.
\item $\widetilde{E}(x_{(1)})\otimes E(x_{(2)})=\widetilde{E}(x_{(2)})\otimes E(x_{(1)})$.
\item $E(x_{(1)})_{(1)}E(x_{(2)}) \otimes E(x_{(1)})_{(2)} =E(x_{(1)})E(x_{(2)})_{(1)} \otimes E(x_{(2)})_{(2)}$
\item $\widetilde{E}(x_{(1)})\otimes \ol E(x_{(2)})=\widetilde{E}(x_{(2)})\otimes \ol E(x_{(1)})$.
\item $\ol{\widetilde{E}}(x_{(1)})\otimes E(x_{(2)})=\ol{\widetilde{E}}(x_{(2)})\otimes E(x_{(1)})$.
\item $\ol{\widetilde{E}}(x_{(1)})\otimes \ol E(x_{(2)})=\ol{\widetilde{E}}(x_{(2)})\otimes \ol E(x_{(1)})$.
\item $S(p(x_{(1)})) E(x_{(2)})_{(1)} \otimes E(x_{(2)})_{(2)} = S(p(x_{(2)})) \otimes E(x_{(1)})$.
\item
$\begin{array}[t]{rcl}
	S(p(x_{(1)})) E(x_{(2)})_{(1)} E(x_{(3)})_{(1)} \dots E(x_{(n)})_{(1)}
	&\otimes& E(x_{(2)})_{(2)} \otimes E(x_{(3)})_{(2)} \otimes \dots \otimes E(x_{(n)})_{(2)} =\\
	&=&S(p(x_{(n)})) \otimes E(x_{(1)}) \otimes E(x_{(2)}) \otimes \dots \otimes E(x_{(n-1)}).
\end{array}$
\end{enumerate}
\end{lemma}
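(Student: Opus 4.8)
The plan is to prove the identity by induction on $n$, taking the previously established identity~(xiii) as the base case and as the only substantial ingredient of the inductive step; the remaining work is purely a matter of organizing the iterated comultiplications by coassociativity. For $n=2$ the asserted equality is literally identity~(xiii),
\[
S(p(x_{(1)})) E(x_{(2)})_{(1)} \otimes E(x_{(2)})_{(2)} = S(p(x_{(2)})) \otimes E(x_{(1)}),
\]
while for $n=1$ both sides reduce to $S(p(x_{(1)}))$; this settles the base case.

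Assume now the statement for $n-1$. Starting from the left-hand side for $n$, I would first act on the two tensor legs carrying the Sweedler components $x_{(1)}$ and $x_{(2)}$. Writing $\Delta^{n}=(\Delta\otimes I^{\otimes(n-2)})\circ\Delta^{n-1}$, these two components arise by comultiplying the first factor of $\Delta^{n-1}(x)$, while $x_{(3)},\dots,x_{(n)}$ are left untouched. Identity~(xiii) may then be applied to these two legs with everything else held fixed: multiplying the first leg on the right by $E(x_{(3)})_{(1)}\cdots E(x_{(n)})_{(1)}$ and tensoring with the fixed legs $E(x_{(3)})_{(2)},\dots,E(x_{(n)})_{(2)}$ are linear operations, so the equality of~(xiii) is preserved. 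This rewrites the left-hand side as
\[
S(p(x_{(2)})) E(x_{(3)})_{(1)}\cdots E(x_{(n)})_{(1)} \otimes E(x_{(1)}) \otimes E(x_{(3)})_{(2)} \otimes \cdots \otimes E(x_{(n)})_{(2)}.
\]
Discarding the second leg $E(x_{(1)})$, what remains is exactly the left-hand side of the $(n-1)$-version of the identity evaluated on the components $x_{(2)},\dots,x_{(n)}$. To invoke the induction hypothesis legitimately I would now regroup by coassociativity as $\Delta^{n}=(I\otimes\Delta^{n-1})\circ\Delta$, writing $\Delta(x)=x_{\langle 1\rangle}\otimes x_{\langle 2\rangle}$, so that $x_{(1)}=x_{\langle 1\rangle}$ becomes genuinely independent of the block $x_{(2)}\otimes\cdots\otimes x_{(n)}=\Delta^{n-1}(x_{\langle 2\rangle})$. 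The leg $E(x_{\langle 1\rangle})$ is then a true spectator, and the induction hypothesis turns the remaining $n-1$ legs into $S(p(x_{(n)}))\otimes E(x_{(2)})\otimes\cdots\otimes E(x_{(n-1)})$. Reinserting the spectator leg yields
\[
S(p(x_{(n)})) \otimes E(x_{(1)}) \otimes E(x_{(2)}) \otimes \cdots \otimes E(x_{(n-1)}),
\]
which is precisely the right-hand side for $n$, completing the induction.

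The main obstacle is not any hard computation but the Sweedler-index bookkeeping: one must arrange, via coassociativity, that at each stage the legs transformed by~(xiii) (respectively by the induction hypothesis) are genuinely decoupled from the legs held fixed. The point needing care is the legitimacy of applying~(xiii) \emph{locally} to two legs sitting inside an $n$-fold tensor product, which is secured by first isolating the relevant component through the appropriate factorization of $\Delta^{n}$; once this is set up, the result follows formally from~(xiii) and coassociativity.
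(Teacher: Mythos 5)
Your induction for item~(xiv) is correct: taking (xiii) as the base case, applying it to the first two Sweedler legs (after isolating them via the factorization $\Delta^{n}=(\Delta\otimes I^{\otimes(n-2)})\circ\Delta^{n-1}$), and then invoking the inductive hypothesis on the block $x_{(2)}\otimes\cdots\otimes x_{(n)}$ with $E(x_{(1)})$ as a spectator is exactly the argument the paper compresses into the single sentence ``Item (xiv) can be proven applying (xiii) $n$-times.'' Your care about when a two-leg identity may be applied locally inside a larger tensor product is well placed and correctly resolved.

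The genuine gap is one of scope: the statement is the full fourteen-item lemma, and your proposal establishes only item~(xiv), taking (xiii) --- and implicitly everything it rests on --- as ``previously established.'' The real content of the lemma lies in items (i)--(xiii), none of which is addressed. In the paper these require quite different tools: (ii), (iii), (v), (vi) are nontrivial computations using the partial corepresentation axioms (PC2)--(PC5) for $p|_{H^{par}}$ together with the invertibility of the antipode; (iv) and (vii) use the identity $\omega(c_{(1)})S(\omega(c_{(2)}))\omega(c_{(3)})=\omega(c)$; (viii) and (ix) combine these; items (x)--(xii), which replace $E$ or $\widetilde{E}$ by the lifted coalgebra maps $\ol{E}$, $\ol{\widetilde{E}}$, need the additional structural fact that $C(H)$ is cogenerated by $H$ (the maps $p_n=p^{\ot n}\circ\Delta^n$ are jointly monic), so that an identity can be checked after composing with every $p_n$; and (xiii) itself is derived from (ii) by applying the antipode. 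A complete proof of the lemma must supply all of this; as it stands, your argument is a correct treatment of the easiest item only.
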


\begin{proof} 
\ul{(i)}. We only check the identity for $E$, the identity for $\widetilde{E}$ follows in a similar way.
\[
\epsilon_H (E(x))=\epsilon_H (p(x_{(1)})S(p(x_{(2)})))=\epsilon_H (p(x_{(1)}))\epsilon_H (p(x_{(2)})) \epsilon_H (p(x))=\epsilon_{H^{par}} (x) .
\]
\ul{(ii)}.
First note that since $p$ is a partial representation, we have
$p(x)=p(x_{(3)})S^{-1} (p(x_{(2)}))p(x_{(1)})$.
Then, 
\begin{eqnarray*}
&&\hspace{-2cm} E(x_{(1)})\otimes p(x_{(2)})  =  p(x_{(1)})S(p(x_{(2)})) \otimes p(x_{(3)}) =\\
& \stackrel{(\ref{def:PC3})}{=} & p(x_{(1)})S(p(x_{(2)})_{(1)}) \otimes p(x_{(2)})_{(2)} \\
& = & p(x_{(1)}) S( p(x_{(4)})_{(1)} S^{-1} (p(x_{(3)}))_{(1)} p(x_{(2)})_{(1)} ) \otimes p(x_{(4)})_{(2)} S^{-1} (p(x_{(3)}))_{(2)} p(x_{(2)})_{(2)}\\
& = & p(x_{(1)}) S( p(x_{(4)})_{(1)} S^{-1} (p(x_{(3)})_{(2)}) p(x_{(2)})_{(1)} ) \otimes p(x_{(4)})_{(2)} S^{-1} (p(x_{(3)})_{(1)}) p(x_{(2)})_{(2)}\\
& = & p(x_{(1)}) S(p(x_{(2)})_{(1)}) p(x_{(3)})_{(2)} S( p(x_{(4)})_{(1)}) \otimes S^{-1}(S(p(x_{(2)})_{(2)}) p(x_{(3)})_{(1)} S( p(x_{(4)})_{(2)}) )\\
& \stackrel{(\ref{le:PC4})}{=} & p(x_{(1)})_{(1)} S(p(x_{(1)})_{(2)}) p(x_{(2)})_{(2)} S( p(x_{(3)})_{(1)}) \otimes S^{-1}(S(p(x_{(1)})_{(3)}) p(x_{(2)})_{(1)} S( p(x_{(3)})_{(2)}) )\\
& = & p(x_{(2)})_{(2)} S( p(x_{(3)}))_{(2)} \otimes S^{-1}(S(p(x_{(1)})) p(x_{(2)})_{(1)} S( p(x_{(3)}))_{(1)} )\\
& = & p(x_{(2)})_{(2)} S( p(x_{(3)}))_{(2)} \otimes S^{-1} (p(x_{(2)})_{(1)} S( p(x_{(3)}))_{(1)} )p(x_{(1)})\\
& = & E(x_{(2)})_{(2)} \otimes S^{-1} (E(x_{(2)})_{(1)})p(x_{(1)}).
\end{eqnarray*}
\ul{(iii)}. Again we simply compute 
\begin{eqnarray*}
p(x_{(1)})\otimes E(x_{(2)})  &=&  p(x_{(1)})\otimes p(x_{(2)})S(p(x_{(3)})) \\
& \stackrel{(\ref{def:PC2})}{=} & p(x_{(1)})_{(1)} \otimes p(x_{(1)})_{(2)} S(p(x_{(2)})) \\
& \stackrel{(\ref{ISI})}{=} &  p(x_{(1)})_{(1)} S(p(x_{(2)}))_{(1)} p(x_{(3)})_{(1)}\otimes p(x_{(1)})_{(2)} S(p(x_{(2)}))_{(2)} p(x_{(3)})_{(2)} S(p(x_{(4)})) \\
& \stackrel{(\ref{le:PC5})}{=} &   p(x_{(1)})_{(1)} S(p(x_{(2)}))_{(1)} p(x_{(3)})_{(1)}\otimes p(x_{(1)})_{(2)} S(p(x_{(2)}))_{(2)} p(x_{(3)})_{(2)} S(p(x_{(3)})_{(3)})\\
& = & p(x_{(1)})_{(1)} S(p(x_{(2)}))_{(1)} p(x_{(3)}) \otimes p(x_{(1)})_{(2)} S(p(x_{(2)}))_{(2)}\\
& = & E(x_{(1)})_{(1)} p(x_{(2)}) \otimes E(x_{(1)})_{(2)} .
\end{eqnarray*}
\ul{(iv)}.
$E(x_{(1)})E(x_{(2)}) = p(x_{(1)})S(p(x_{(2)}))p(x_{(3)})S(p(x_{(4)})) \stackrel{(\ref{ISI})}{=} p(x_{(1)})S(p(x_{(2)})) =E(x)$.\\
Items \ul{(v)}, \ul{(vi)} and \ul{(vii)} are analogous, respectively, to items \ul{(ii)}, \ul{(iii)} and \ul{(iv)}.\\
\ul{(viii)}. Applying $S\otimes \mbox{Id}$ on identity (iii) we find
\[
S(p(x_{(1)}))\otimes E(x_{(2)})=S(E(x_{(1)})_{(1)} p(x_{(2)})) \otimes E(x_{(1)})_{(2)} .
\]
Using this, we obtain
\begin{eqnarray*}
\widetilde{E}(x_{(1)})\otimes E(x_{(2)}) & = & S(p(x_{(1)}))p(x_{(2)}) \otimes E(x_{(3)}) 
 \stackrel{(c)}{=}  S(p(x_{(1)})) E(x_{(2)})_{(1)} p(x_{(3)}) \otimes E(x_{(2)})_{(2)}\\
& = & S( E(x_{(1)})_{(1)} p(x_{(2)})) E(x_{(1)})_{(2)} p(x_{(3)}) \otimes E(x_{(1)})_{(3)}\\
& = & S(p(x_{(2)}))p(x_{(3)}) \otimes \epsilon (E(x_{(1)})_{(1)}) E(x_{(1)})_{(2)}
 =  \widetilde{E}(x_{(2)})\otimes E(x_{(1)}).
\end{eqnarray*}
\ul{(ix)} On one hand we have
\begin{eqnarray*}
 E(x_{(1)})_{(1)}E(x_{(2)}) \otimes E(x_{(1)})_{(2)} 
& = & p(x_{(1)})_{(1)} S(p(x_{(2)}))_{(1)}  p(x_{(3)})S(p(x_{(4)})) \otimes p(x_{(1)})_{(2)} S(p(x_{(2)}))_{(2)}\\
& \stackrel{(\ref{def:PC3})}{=} &   p(x_{(1)})_{(1)} S(p(x_{(2)})_{(1)})_{(1)}  p(x_{(2)})_{(2)}S(p(x_{(3)})) \otimes p(x_{(1)})_{(2)} S(p(x_{(2)})_{(1)})_{(2)}\\
& = &  p(x_{(1)})_{(1)} S(p(x_{(2)})_{(2)})  p(x_{(2)})_{(3)} S(p(x_{(3)})) \otimes p(x_{(1)})_{(2)} S(p(x_{(2)})_{(1)})\\
& = &  p(x_{(1)})_{(1)} S(p(x_{(3)})) \otimes p(x_{(1)})_{(2)} S(p(x_{(2)}))\\
& \stackrel{(\ref{def:PC2})}{=} &  p(x_{(1)}) S(p(x_{(4)})) \otimes p(x_{(2)}) S(p(x_{(3)})).
\end{eqnarray*}
On the other hand,
\begin{eqnarray*}
 E(x_{(1)})E(x_{(2)})_{(1)} \otimes E(x_{(2)})_{(2)} 
& = & p(x_{(1)})S(p(x_{(2)})) p(x_{(3)})_{(1)}S(p(x_{(4)}))_{(1)} \otimes  p(x_{(3)})_{(2)}S(p(x_{(4)}))_{(2)}\\
& \stackrel{(\ref{le:PC4})}{=} & p(x_{(1)})_{(1)} S(p(x_{(1)})_{(2)}) p(x_{(2)})_{(1)}S(p(x_{(3)}))_{(1)} \otimes  p(x_{(2)})_{(2)}S(p(x_{(3)}))_{(2)}\\
& = & p(x_{(1)})_{(1)}S(p(x_{(2)}))_{(1)} \otimes  p(x_{(1)})_{(2)}S(p(x_{(2)}))_{(2)}\\
& \stackrel{(\ref{def:PC2})}{=} &  p(x_{(1)})S(p(x_{(3)}))_{(1)} \otimes  p(x_{(2)})S(p(x_{(3)}))_{(2)}\\
& = & p(x_{(1)})S(p(x_{(3)})_{(2)}) \otimes  p(x_{(2)})S(p(x_{(3)})_{(1)})\\
& \stackrel{(\ref{def:PC3})}{=} & p(x_{(1)}) S(p(x_{(4)})) \otimes p(x_{(2)}) S(p(x_{(3)})).
\end{eqnarray*}
\ul{(x)}. Recall that $C$ is cogenerated by $H$, which means that the maps $p_n:\xymatrix{C\ar[r]^-{\Delta^n} & C^{\ot n} \ar[r]^-{p^{\ot n}} & H^{\ot n}}$ are jointly injective. Hence (x) holds if and only if the identity holds after applying the map $id\ot p_n$ for any $n\in \NN$. Using the fact that $E$ is a coalgebra map, we find indeed	
\begin{eqnarray*}
\widetilde{E}(x_{(1)})\otimes p_n(\ol E(x_{(2)}))&=& \widetilde{E}(x_{(1)})\otimes E(x_{(2)}) \ot \cdots \ot E(x_{(n+1)})
= \widetilde{E}(x_{(n+1)})\otimes E(x_{(1)}) \ot \cdots \ot E(x_{(n)})\\
&=& \widetilde{E}(x_{(2)})\otimes p_n(\ol E(x_{(1)}))
\end{eqnarray*}
where we used (viii) $n$ times in the second equality.\\
Items (xi) and (xii) are proven in the same way.\\
(\ul{xiii})
Let $x \in H^{\rm par}$, so
\begin{eqnarray*}
S(p(x_{(1)})) E(x_{(2)})_{(1)} \otimes E(x_{(2)})_{(1)}
&									=	& S(p(x_{(1)})) S(S^{-1}(E(x_{(2)})_{(1)})) \otimes E(x_{(2)})_{(1)}\\
&									=	& S(S^{-1}(E(x_{(2)})_{(1)})p(x_{(1)})) \otimes E(x_{(2)})_{(1)}\\
&\stackrel{(\ref{lemadosE}~(ii))}{	=}	& S(p(x_{(2)})) \otimes E(x_{(1)})
\end{eqnarray*}
Item (xiv) can be proven applying (xiii) $n$-times.
This ends the proof.
\end{proof}

\begin{defi} Let $H$ be a Hopf algebra over a field $k$ and $C$ be a coalgebra. A linear map $f:C\rightarrow H$ is said to be {\em copar} relative to $H$ (or {\em $H$-copar} for short) if it satisfies
\begin{enumerate}[label=(PQ\arabic*),ref=PQ\arabic*]
\item\label{def:PQ1} $\epsilon_H \circ f =\epsilon_C$.
\item\label{def:PQ2} $f(c)=f(c_{(1)})f(c_{(2)})$, for every $c\in C$.
\item\label{def:PQ3} $f(c_{(1)})f(c_{(2)})_{(1)} \otimes f(c_{(2)})_{(2)} =f(c_{(1)})_{(1)} f(c_{(2)}) \otimes f(c_{(1)})_{(2)}$, for every $c\in C$.
\end{enumerate}

A {\em universal copar coalgebra} relative to $H$ is a pair $(C^{par} (H) , \beta)$ in which $C^{par} (H)$ is a coalgebra and ${\beta :C^{par}(H) \rightarrow H}$ is a copar map such that for any coalgebra $D$ and any copar map $f:D\rightarrow H$ there exists a unique morphism of coalgebras $\bar{f}:D\rightarrow C^{par}$  such that $f=\beta \circ \bar{f}$.
\end{defi}

From Lemma \ref{lemadosE} it immediately follows that the map $E$ is $H$-copar.
As any universal object, the universal copar coalgebra, if it exists, is unique up to isomorphism of coalgebras. Our next aim is to show that this universal copar coalgebra indeed exists and is non-trivial.

\begin{prop} Let $\mathcal{C}_H$ be the sum of subcoalgebras $X\subseteq C(H)$ such that $p|_{_X}$ is an H-copar map. Then the pair $(\mathcal{C}_H , p)$ is a universal copar coalgebra relative to $H$.
\end{prop}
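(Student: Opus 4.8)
The plan is to mimic the construction of the universal corep-coalgebra $H^{par}$ and the proof of its universal property in \leref{parcorepcoalgmap}, replacing ``partial corepresentation'' throughout by ``$H$-copar map''. Write $\beta := p|_{\mathcal{C}_H}$.

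First I would check that $(\mathcal{C}_H,\beta)$ is a legitimate copar pair, i.e.\ that $\mathcal{C}_H$ is a subcoalgebra of $C(H)$ and that $\beta$ satisfies (\ref{def:PQ1})--(\ref{def:PQ3}). That $\mathcal{C}_H$ is a subcoalgebra is standard, since a sum of subcoalgebras is again a subcoalgebra. For the copar axioms the key observation is that each of the three conditions is a \emph{linear} condition in $c$: axiom (\ref{def:PQ2}) reads $\beta = m_H\circ(\beta\ot\beta)\circ\Delta$ and axiom (\ref{def:PQ3}) is similarly an equality of two linear maps applied to $c$, while (\ref{def:PQ1}) is linear trivially. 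Now any $c\in\mathcal{C}_H$ is a finite sum $c=\sum_i c_i$ with each $c_i$ lying in a subcoalgebra $X_i\subseteq C(H)$ on which $p$ is copar; since $\Delta(c_i)\in X_i\ot X_i$, the Sweedler components of $c_i$ computed in $C(H)$ agree with those computed in $X_i$, so each axiom holds at $c_i$ and hence, by linearity, at $c$. Thus $\beta$ is copar, and $\mathcal{C}_H$ is by construction the largest subcoalgebra of $C(H)$ on which $p$ restricts to a copar map.

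Next I would establish the universal property. Given any copar map $f:D\to H$, the universal property of the cofree coalgebra yields a unique coalgebra map $\bar f:D\to C(H)$ with $f=p\circ\bar f$. The heart of the argument is to show that $p$ restricts to a copar map on the subcoalgebra $\mbox{Im}(\bar f)\subseteq C(H)$. For $y=\bar f(d)\in\mbox{Im}(\bar f)$ one uses that $\bar f$ is a coalgebra morphism, so $\Delta(y)=\bar f(d_{(1)})\ot\bar f(d_{(2)})$ and $\epsilon(y)=\epsilon_D(d)$, together with $p\circ\bar f=f$, to see that each copar axiom for $p|_{\mbox{Im}(\bar f)}$ evaluated at $y$ translates verbatim into the corresponding copar axiom for $f$ evaluated at $d$, which holds by hypothesis. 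Hence $\mbox{Im}(\bar f)$ is one of the subcoalgebras summed in $\mathcal{C}_H$, so $\mbox{Im}(\bar f)\subseteq\mathcal{C}_H$, and $\bar f$ corestricts to a coalgebra map $\bar f:D\to\mathcal{C}_H$ with $f=\beta\circ\bar f$. Uniqueness then descends from the uniqueness clause of the cofree coalgebra: any coalgebra map $g:D\to\mathcal{C}_H$ with $f=\beta\circ g$, composed with the inclusion $\iota:\mathcal{C}_H\hookrightarrow C(H)$, gives a coalgebra map $\iota\circ g$ with $p\circ(\iota\circ g)=f$, whence $\iota\circ g=\bar f$ by the cofree universal property; since $\iota$ is injective, $g$ coincides with the corestricted $\bar f$.

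The argument is essentially formal once the transfer of the copar axioms across the surjective coalgebra map $\bar f:D\to\mbox{Im}(\bar f)$ is carried out. I expect the only point requiring genuine (if routine) care to be this transfer of axioms (\ref{def:PQ2}) and (\ref{def:PQ3}), where one must correctly match the Sweedler legs of $\Delta(y)$ with those of $\Delta(d)$; no idea beyond those already used for $H^{par}$ is needed.
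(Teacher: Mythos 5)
Your proof is correct and follows essentially the same route as the paper: factor the copar map through the cofree coalgebra, observe that $p$ restricted to the image of the induced coalgebra map is again copar so the image lands in $\mathcal{C}_H$, and corestrict. The extra details you supply (that $\mathcal{C}_H$ itself is a subcoalgebra on which $p$ is copar, and the explicit uniqueness argument via the cofree universal property) are points the paper leaves implicit, but they are routine and correctly handled.
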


\begin{proof} Consider a coalgebra $D$ and a copar map $\theta :D\rightarrow H$.  There is a unique morphism of coalgebras $\overline{\theta}:D \rightarrow C(H)$ such that 
$p\circ \overline{\theta}=\theta$. It is easy to see that the map $p$ restricted to the subcoalgebra $\overline{\theta} (D)\subseteq C(H)$ satisfies the conditions (\ref{def:PQ1}), (\ref{def:PQ2}) and (\ref{def:PQ3}), hence it is a copar map. Therefore $\overline{\theta} (D)\subseteq \mathcal{C}_H$, and the co-restriction $\overline{\theta}:D\rightarrow \mathcal{C}_H$ is the required coalgebra morphism factorizing the copar map $\theta$. Therefore the pair $(\mathcal{C}_H , p)$ is isomorphic to the universal copar coalgebra associated to $H$. 
\end{proof}

\begin{prop}\label{pairingApar} The subcoalgebra $\mathcal{C}_H \subseteq C(H)$ is the annihilator space $J^{\perp}$ of the ideal $J\trianglelefteq TH^\circ$ which defines the relations of $A_{par}(H^\circ )$ (see \seref{partrep}). As a consequence, there is a nondegenerate dual pairing 
	\begin{equation}\label{defpairingApar}
	\begin{array}{rccl} \langle \! \langle \; , \; \rangle \! \rangle : & \mathcal{C}_H \otimes A_{par} (H^\circ ) & \rightarrow & k \\ & x\otimes a+J & \mapsto & \langle x, a \rangle \end{array} .
	\end{equation}
\end{prop}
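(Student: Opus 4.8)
The plan is to follow verbatim the strategy already used in Proposition~\ref{pairingHpar}, simply replacing the partial-corepresentation axioms (\ref{def:PC1})--(\ref{def:PC3}) by the copar axioms (\ref{def:PQ1})--(\ref{def:PQ3}) and the ideal $I$ by the ideal $J\trianglelefteq TH^\circ$ cutting out $A_{par}(H^\circ)$. Concretely, I would work inside the non-degenerate pairing between the cofree coalgebra $C(H)$ and $TH^\circ$ of Proposition~\ref{pairingcofree} (which remains non-degenerate precisely because $H^\circ$ is dense in $H^*$), and show that for a subcoalgebra $X\subseteq C(H)$ the restriction $p|_X$ is an $H$-copar map if and only if $X\subseteq J^\perp$. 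Since $J^\perp$ is itself a subcoalgebra of $C(H)$ by Theorem~\ref{reductionofpairing}(1), and then $p|_{J^\perp}$ is copar, while $\mathcal{C}_H$ is by definition the largest subcoalgebra of $C(H)$ on which $p$ restricts to a copar map, this double inclusion forces $\mathcal{C}_H=J^\perp$.

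The heart of the argument is a family of three bookkeeping evaluations, pairing a fixed $y\in X$ against the three types of generators of $J$. Axiom (\ref{def:PQ1}) matches the generator $1_{H^\circ}-1_{TH^\circ}$ exactly as in Proposition~\ref{pairingHpar}, since (\ref{def:PQ1}) coincides with (\ref{def:PC1}). For (\ref{def:PQ2}) I would pair $y$ with $h^*-{h^*}_{(1)}\ot{h^*}_{(2)}$; using that the comultiplication of $H^\circ$ is dual to the multiplication of $H$, the second term evaluates to $h^*(p(y_{(1)})p(y_{(2)}))$, so the whole expression collapses to $h^*(p(y)-p(y_{(1)})p(y_{(2)}))$. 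For (\ref{def:PQ3}) I would pair $y$ with ${h^*}_{(1)}\ot{h^*}_{(2)}k^*-{h^*}_{(1)}k^*\ot{h^*}_{(2)}$; expanding each summand through the mutually dual product/coproduct of $H^\circ$ and $H$ yields $(h^*\ot k^*)$ applied to the difference $p(y_{(1)})p(y_{(2)})_{(1)}\ot p(y_{(2)})_{(2)}-p(y_{(1)})_{(1)}p(y_{(2)})\ot p(y_{(1)})_{(2)}$, i.e.\ exactly the defect measured by (\ref{def:PQ3}).

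To turn these evaluations into the stated equivalences, I would invoke point-separation: since $H^\circ$ is dense in $H^*$ it separates the points of $H$, so vanishing of $h^*(p(y)-p(y_{(1)})p(y_{(2)}))$ for all $h^*$ is equivalent to (\ref{def:PQ2}) holding for $p|_X$; likewise $H^\circ\ot H^\circ$ separates the points of $H\ot H$, giving the equivalence for (\ref{def:PQ3}). Collecting the three equivalences shows that $p|_X$ is copar iff $X$ annihilates every generator of $J$, that is $X\subseteq J^\perp$, and summing over all such $X$ yields $\mathcal{C}_H=J^\perp$. Finally, Theorem~\ref{reductionofpairing}(1) applied to $J\trianglelefteq TH^\circ$ produces the reduced non-degenerate pairing on $J^\perp\ot(TH^\circ/J)=\mathcal{C}_H\ot A_{par}(H^\circ)$ determined by $\langle\!\langle x,a+J\rangle\!\rangle=\langle x,a\rangle$, which unwinds to the explicit formula \eqref{defpairingApar}.

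I do not expect any single computation to be the real obstacle; each is a routine transport of structure across the dual pairing. The genuine subtlety lies in the two places where the field hypothesis is unavoidable: the point-separation of the tensor square $H\ot H$ needed to convert the type-(3) generators into axiom (\ref{def:PQ3}), and the non-degeneracy of the ambient $C(H)$--$TH^\circ$ pairing needed for Theorem~\ref{reductionofpairing} to yield a \emph{non-degenerate} reduced pairing. Both rest on Proposition~\ref{dualpairing}(iv) together with the density of $H^\circ$ in $H^*$; without these, annihilating the generators of $J$ would no longer be equivalent to the copar axioms, and the conclusion would fail.
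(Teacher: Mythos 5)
Your proposal is correct and takes essentially the same approach as the paper, whose entire proof of this proposition is the remark that it follows the same steps as Proposition~\ref{pairingHpar}; your pairing of the generators $h^*-h^*_{(1)}\ot h^*_{(2)}$ and $h^*_{(1)}\ot h^*_{(2)}k^*-h^*_{(1)}k^*\ot h^*_{(2)}$ against axioms (\ref{def:PQ2}) and (\ref{def:PQ3}) is exactly the intended adaptation. You have in fact supplied more detail than the paper does, and your identification of where the density of $H^\circ$ in $H^*$ enters is accurate.
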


\begin{proof} The proof follows the same steps of Proposition \ref{pairingHpar}.
\end{proof}

Let us write this pairing more explicitly. 
Consider an element $x\in \mathcal{C}_H$ and a monomial $\varepsilon_{k^{1*}} \ldots \varepsilon_{k^{n*}} \in A_{par}(H^\circ  )$ then
\begin{eqnarray}
\label{pairingCA}
\langle \! \langle x , \varepsilon_{k^{1*}} \ldots \varepsilon_{k^{n*}} \rangle \! \rangle & = & \langle x, k^{1*}\otimes 
\cdots k^{n*}\otimes \rangle \nonumber \\ 
& = & k^{1*}(p(x_{(1)})) \ldots k^{n*}(p(x_{(n)})) .
\end{eqnarray}
\begin{rmk}
	It is important to be careful here with the expression of this pairing, because the map $p$ restricted to $\mathcal{C}_H \subseteq C(H)$ is a copar map, while the same map $p$ restricted to $H^{par} \subseteq C(H)$ is a partial co-representation.
\end{rmk}
One can relate this pairing with the pairing $\left( \; , \; \right):H^{par} \otimes H^{\circ}_{par} \rightarrow k$ by using the coalgebra map $\overline{E} :H^{par} \rightarrow \mathcal{C}_H$. Indeed, the map $E:H^{par} \rightarrow H$ is $H$-copar and $E=p\circ \overline{E}$, then we immediately obtain that $C=\mbox{Im}(\overline{E})$ is a subcoalgebra of $\mathcal{C}_H$. The map $\overline{E}$ is adjoint to the source map $s: A_{par}(H^{\circ}) \rightarrow H^{\circ}_{par}$. More explicitly, this means that for each $x\in H^{par}$ and $a=\varepsilon_{h^{1*}} \ldots  \varepsilon_{h^{n*}} \in A_{par}(H^{\circ})$, we have
\begin{equation}
\label{sourcedual}
\langle \! \langle \overline{E}(x) , a \rangle \! \rangle =\left( x, s (a) \right) =\left( x , [h^{1*}_{(1)}][S(h^{1*}_{(2)})] \ldots [h^{n*}_{(1)}][S(h^{n*}_{(2)})] \right) .
\end{equation}

In \cite{ABV} it was proved that the subalgebra $A_{par}(H) \subseteq H_{par}$ was itself the universal algebra factorizing the conditions dual to (\ref{def:PQ1}), (\ref{def:PQ2}) and (\ref{def:PQ3}); in the coalgebra case, the situation is slightly different. Indeed, it is easy to see that the subcoalgebra $C=\overline{E}(H^{par})$ is dense in the universal copar coalgebra $\mathcal{C}_H$ relative to the finite topology but, in general, one cannot prove that these two coalgebras coincide. As the source map is injective, one can conclude by Proposition \ref{pairingandmorphisms} that the subspace
\[
C^{\perp}=\{ a\in A_{par} (H^{\circ})\; | \; \langle \! \langle \overline{E}(x) , a \rangle \! \rangle =0 , \; \; \forall x\in H^{par} \}.
\]
is the null space. 
This density of $C=\overline{E}(H^{par})$ in the universal coalgebra $\mathcal{C}_H$ is essential in what follows, since it guarantees that the elements of $C$ separate points in $A_{par} (H^{\circ})$ and therefore 
 the pairing \eqref{pairingCA} remains  nondegenerate when restricted to $C\otimes A_{par} (H^{\circ})$. This fact will be used in the proofs of all the results in the following sections.

One needs also to mention the analogous properties of the coalgebra $\widetilde{C} =\overline{\widetilde{E}}(H^{par}) \subseteq C(H)$. The linear map $p|_{\widetilde{C}}$ is what we will call an anti-copar map. Generally speaking, an anti-copar map from a coalgebra $C$ to the Hopf algebra $H$ is a linear map $f:C\rightarrow H$, satisfying
\begin{enumerate}[label=(APQ\arabic*),ref=APQ\arabic*]
\item\label{def:APQ1} $\epsilon_H \circ f =\epsilon_C$.
\item\label{def:APQ2} $f(c)=f(c_{(1)})f(c_{(2)})$, for every $c\in C$.
\item\label{def:APQ3} $f(c_{(2)})_{(1)} \otimes f(c_{(1)}) f(c_{(2)})_{(2)} =f(c_{(1)})_{(1)}\otimes f(c_{(1)})_{(2)} f(c_{(2)}) $, for every $c\in C$.
\end{enumerate}
In fact,the coalgebra $\widetilde{C}$ is a dense subcoalgebra of the universal coalgebra $\widetilde{\mathcal{C}}_H \subseteq C(H)$, which factorizes anti-copar maps by coalgebra morphisms. There is also a nondegenerate dual pairing between the coalgebra $\widetilde{C}$ and the subalgebra $\widetilde{A}(H^\circ )\subseteq H^\circ_{par}$, generated by elements $\widetilde{\varepsilon}_{h^*} =[S(h^*_{(1)})][h^*_{(2)}]$, with $h^* \in H^\circ$. 

\begin{proposition}\label{CisotildeC}
The coalgebras $C=\ol E(H^{par})$ and $\widetilde{C}=\ol{\widetilde E}(H^{par})$ presented above are anti-isomorphic.
\end{proposition}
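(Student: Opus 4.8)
The plan is to realise the claimed anti-isomorphism as the restriction to $C$ of a single anti-coalgebra automorphism of the cofree coalgebra $C(H)$ built from the antipode, and then to identify its image with $\widetilde C$ by twisting $H^{par}$ by an auxiliary automorphism. First I would set up the ambient automorphism. Viewing $S\colon H\to H$ merely as a linear map, the universal property of the cofree coalgebra produces a unique \emph{anti}-coalgebra morphism $\Theta\colon C(H)\to C(H)$ (equivalently a coalgebra morphism $C(H)^{cop}\to C(H)$) with $p\circ\Theta=S\circ p$, and likewise a $\Psi$ with $p\circ\Psi=S^{-1}\circ p$. Since two coalgebra morphisms into $C(H)$ agreeing after composition with $p$ coincide, the composites $\Psi\circ\Theta$ and $\Theta\circ\Psi$ both equal $\mathrm{Id}_{C(H)}$; hence $\Theta$ is an anti-coalgebra automorphism, with the explicit description $p(\Theta(z)_{(1)})\otimes\cdots\otimes p(\Theta(z)_{(n)})=S(p(z_{(n)}))\otimes\cdots\otimes S(p(z_{(1)}))$ coming from the fact that $\Theta$ reverses comultiplication.

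The key step is to produce a twisting automorphism $\kappa$ of $H^{par}$. I would first record the general fact that if $\omega\colon D\to H$ is a partial corepresentation, then so is $S\circ\omega\colon D^{cop}\to H$: a short Sweedler computation shows that axiom (\ref{def:PC2}) for $S\circ\omega$ over $D^{cop}$, after applying $S^{-1}\otimes S^{-1}$ and interchanging the two tensor legs, collapses to exactly axiom (\ref{le:PC5}) for $\omega$, and dually (\ref{def:PC3}) collapses to (\ref{le:PC4}); axiom (\ref{def:PC1}) is immediate. Applying this to $\omega=p\colon H^{par}\to H$ and invoking the universal property of $H^{par}$ (\leref{parcorepcoalgmap}) yields a unique anti-coalgebra endomorphism $\kappa\colon H^{par}\to H^{par}$ with $p\circ\kappa=S\circ p$. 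Running the same construction with $S^{-1}$ in place of $S$ gives a $\kappa'$ with $p\circ\kappa'=S^{-1}\circ p$, and by uniqueness of liftings $\kappa\circ\kappa'=\kappa'\circ\kappa=\mathrm{Id}_{H^{par}}$, so $\kappa$ is bijective.

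Finally I would match images. A direct computation using $p\circ\kappa=S\circ p$, the anti-multiplicativity of $S$, and that $\kappa$ reverses comultiplication gives $\widetilde E\circ\kappa=S\circ E$ (recall $E(x)=p(x_{(1)})S(p(x_{(2)}))$ and $\widetilde E(x)=S(p(x_{(1)}))p(x_{(2)})$). Consequently $\Theta\circ\ol E$ and $\ol{\widetilde E}\circ\kappa$ are two anti-coalgebra morphisms $H^{par}\to C(H)$ whose composites with $p$ both equal $S\circ E$; by the cofree universal property they are equal. Passing to images and using surjectivity of $\kappa$, I obtain
\[
\Theta(C)=\Theta(\ol E(H^{par}))=\ol{\widetilde E}(\kappa(H^{par}))=\ol{\widetilde E}(H^{par})=\widetilde C .
\]
Thus $\Theta$ restricts to a bijective anti-coalgebra morphism $C\to\widetilde C$, which is precisely an anti-isomorphism $C\cong\widetilde C^{cop}$, as claimed.

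I expect the main obstacle to be exactly this last identification. Because $\widetilde C$ is only a \emph{dense} subcoalgebra of the universal anti-copar coalgebra $\widetilde{\mathcal C}_H$, one cannot simply feed the anti-copar map $S\circ p|_C$ into a universal property of $\widetilde C$ and hope to land in $\widetilde C$; a naive argument would only place $\Theta(C)$ inside $\widetilde{\mathcal C}_H$. The whole role of the twisting automorphism $\kappa$ is to pin the image of $C$ under $\Theta$ down to exactly $\widetilde C=\ol{\widetilde E}(H^{par})$ rather than some strictly larger subcoalgebra, and verifying $\widetilde E\circ\kappa=S\circ E$ together with the bijectivity of $\kappa$ is where the argument really has to be paid for.
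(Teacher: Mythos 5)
Your proof is correct, and its engine is the same as the paper's: everything reduces to an antipode-induced anti-coalgebra endomorphism of $H^{par}$ (your $\kappa$ with $p\circ\kappa=S\circ p$; the paper's $\sigma$ with $p\circ\sigma=S^{-1}\circ p$) together with an intertwining identity between $\ol E$, $\ol{\widetilde E}$ and this twist (your $\Theta\circ\ol E=\ol{\widetilde E}\circ\kappa$, obtained from $\widetilde E\circ\kappa=S\circ E$; the paper's $\widehat\sigma\circ\ol{\widetilde E}=\ol E\circ\sigma$, obtained from $S^{-1}\circ\widetilde E=E\circ\sigma$). Where you genuinely diverge is in how the anti-isomorphism is packaged and how bijectivity is secured. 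The paper builds two maps $\widehat\sigma:\widetilde C\to\Cc_H$ and $\widetilde\sigma:C\to\widetilde{\Cc}_H$ from the universal properties of the universal (anti-)copar coalgebras, uses the intertwining identities to show each lands in the correct subcoalgebra, and then asserts without detail that $\widehat\sigma$ and $\widetilde\sigma$ are mutually inverse. You instead realize the map as the restriction of a single anti-coalgebra automorphism $\Theta$ of the ambient cofree coalgebra $C(H)$, so injectivity is automatic, and you reduce surjectivity onto $\widetilde C$ to the surjectivity of $\kappa$, which you establish cleanly from uniqueness of liftings ($\kappa\circ\kappa'=\kappa'\circ\kappa=\mathrm{Id}$). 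This makes the bijectivity step --- the least explicit part of the paper's proof --- completely transparent, at the modest price of first verifying that $S\circ\omega:D^{cop}\to H$ is a partial corepresentation whenever $\omega$ is one (your reduction of (\ref{def:PC2})/(\ref{def:PC3}) for $S\circ\omega$ to (\ref{le:PC5})/(\ref{le:PC4}) for $\omega$ is correct); the paper uses the analogous fact for $S^{-1}\circ p$ without spelling it out. Both arguments use the bijectivity of the antipode, consistent with the standing hypotheses around Lemma \ref{lemadosE}.
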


\begin{proof} 
Consider the linear map $S^{-1}\circ p : C(H)\to H$. Since $S^{-1}$ is an anti-Hopf algebra morphism, the restriction of this linear map to $(H^{par})^{cop}$ is a corepresentation and hence there exists a unique anti-coalgebra map $\sigma:H^{par}\to H^{par}$ such that $p\circ \sigma=S^{-1}\circ p$. In the same way, the restriction of $S^{-1}\circ p$ to $\widetilde C^{cop}$ is a copar map, and hence there exists a unique anti-coalgebra map $\hat\sigma:\widetilde{C} \rightarrow \Cc_H$, such that $p\circ \widehat{\sigma}=S^{-1} \circ p$ and therefore
$$
p\circ \widehat\sigma \circ \ol{\widetilde E} = S^{-1}\circ p\circ \ol{\widetilde E} = S^{-1}\circ \widetilde E
$$
On the other hand, for any $x\in H^{par}$ we find that
\begin{eqnarray*}
S^{-1}\circ \widetilde E(x)&=& S^{-1}(p(x_{(2)})) p(x_{(1)}) = S^{-1}(p(x_{(2)})) S\circ S^{-1}(p(x_{(1)}))\\
&=& p(\sigma(x_{(2)})) S(p(\sigma(x_{(1)}))) = p(\sigma(x)_{(1)}) S(p(\sigma(x)_{(2)})) \\
&=& E\circ p\circ \sigma (x) = p\circ \ol E\circ \sigma(x).
\end{eqnarray*}
Hence we conclude that 
$p\circ \widehat\sigma \circ \ol{\widetilde E}=S^{-1}\circ \widetilde E=p\circ \ol E\circ \sigma$
are the same partial anti-copar map. Therefore $\widehat\sigma \circ \ol{\widetilde E}= \ol E\circ \sigma:H^{par}\to \Cc_H$ and the image of $\widehat\sigma$ lies in $C$.

In the same way, the anti-copar map $S\circ p :C^{cop}\rightarrow H$ induces a coalgebra morphism $\widetilde{\sigma}:C^{cop}\rightarrow \widetilde\Cc_H$ such that $p\circ \widetilde{\sigma} =S\circ p$, and whose image lies in $\widetilde C$. It is easy to verify that the anti-morphisms of coalgebras $\widehat{\sigma}: \widetilde{C}\rightarrow C$ and $\widetilde{\sigma} :C\rightarrow \widetilde{C}$ are mutually inverse, proving our assertion.
\end{proof}

\begin{thm} \label{coaction} Let $H$ be a Hopf algebra over a field $k$
and $C$ be the base 
coalgebra of $H^{par}$. Then the $k$-linear map
\[
\begin{array}{rccl} \lambda : & C & \rightarrow & H\otimes C \\ \, & \overline{E}(x) & \mapsto & p(x_{(1)})S(p(x_{(3)}))\otimes \overline{E}(x_{(2)}) \end{array}
\]
defines in $C$ a structure of a partial left $H$-comodule coalgebra.
\end{thm}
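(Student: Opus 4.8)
The plan is to verify three things in turn: that $\lambda$ is well defined on $C=\overline{E}(H^{par})$, and that it satisfies the three axioms (\ref{def:LPCC1})--(\ref{def:LPCC3}) of a partial left $H$-comodule coalgebra. Throughout I will work in Sweedler notation for iterated coproducts and use repeatedly that $\overline{E}$ is a coalgebra morphism (so $p\circ\overline{E}=E$, $\epsilon_C\circ\overline{E}=\epsilon_{H^{par}}$ and $\Delta_C\circ\overline{E}=(\overline{E}\otimes\overline{E})\circ\Delta$), together with the identities collected in Lemma~\ref{lemadosE} and the partial-corepresentation axioms for $p|_{H^{par}}$, in particular \eqref{ISI}. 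The counit axiom (\ref{def:LPCC2}) is immediate: since $\epsilon_H\circ p=\epsilon_{H^{par}}$ by (\ref{def:PC1}), one gets $\epsilon_H\big(p(x_{(1)})S(p(x_{(3)}))\big)\overline{E}(x_{(2)})=\epsilon(x_{(1)})\epsilon(x_{(3)})\overline{E}(x_{(2)})=\overline{E}(x)$.

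For well-definedness the point is that the first tensor leg $p(x_{(1)})S(p(x_{(3)}))$ contains the bare maps $p(x_{(i)})$, which individually do not descend along $\overline{E}$ (only the combinations $E(x_{(i)})=p(x_{(i)})S(p(x_{(i+1)}))$ do). I would therefore show that the whole expression $p(x_{(1)})S(p(x_{(3)}))\otimes\overline{E}(x_{(2)})$ can be rewritten using only the $E(x_{(i)})$'s. Since $C$ is cogenerated by $H$, the family $p_n=p^{\otimes n}\circ\Delta^n\colon C\to H^{\otimes n}$ is jointly monic, so it suffices to check this after applying $I\otimes p_n$; this turns the statement into an identity in $H^{\otimes(n+1)}$ in which the inner leg becomes $E(x_{(2)})\otimes\cdots\otimes E(x_{(n+1)})$, and the boundary terms $p(x_{(1)})$ and $S(p(x_{(n+2)}))$ are traded for $E$'s by the structural identities of Lemma~\ref{lemadosE} (the relations (iii), (iv) and \eqref{ISI} are exactly designed for this, pushing a bare $p$ past an $E$). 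The same joint-monicity reduction is the engine for axiom (\ref{def:LPCC1}): expanding both sides with $\Delta_C\circ\overline{E}=(\overline{E}\otimes\overline{E})\circ\Delta$, the equality to be shown lands in $H\otimes C\otimes C$, and applying $I\otimes p_m\otimes p_n$ reduces it to an identity among $E$'s and $p$'s in tensor powers of $H$, which follows from the coalgebra-map property of $E$ together with $E(x_{(1)})E(x_{(2)})=E(x)$ (Lemma~\ref{lemadosE}(iv)) and the ``$E$-commutation'' relation (ix).

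The main obstacle is the partial-coassociativity axiom (\ref{def:LPCC3}). Here the helpful phenomenon is that the factor $\epsilon_C\big(c_{(i)}^{[0]}\big)$ occurring on the right-hand sides collapses an inner $\overline{E}$ and fuses two adjacent $p$'s: concretely $\epsilon_C\big(\overline{E}(x_{(1)})^{[0]}\big)\,\overline{E}(x_{(1)})^{[-1]}=p(x_{(1)(1)})S(p(x_{(1)(2)}))=E(x_{(1)})$, so each $\epsilon_C$-term simply contributes one $E(x_{(i)})$. After performing these collapses on both right-hand expressions, iterating $\lambda$ on the left, and once more using joint monicity to pass to tensor powers of $H$, the three members of (\ref{def:LPCC3}) become identities purely in the $E(x_{(i)})$'s and $p(x_{(i)})$'s, which I would settle with Lemma~\ref{lemadosE} (notably (iii), (iv), (viii)--(ix) and (xiii)), exactly as in the computations proving that lemma. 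I expect this to be the longest and most delicate part, both because of the index bookkeeping across three tensor legs and because one must be careful to use only the $S$-identities (the coaction involves $S$ but not $S^{-1}$), so that no invertibility of the antipode is needed.

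Finally, as a conceptual check on the whole statement, the structure should be the exact dual of a known fact: under the nondegenerate pairing $\langle\!\langle-,-\rangle\!\rangle\colon C\otimes A_{par}(H^\circ)\to k$ of Proposition~\ref{pairingApar} (which stays nondegenerate on tensor powers because $k$ is a field, by Proposition~\ref{dualpairing}), the map $\lambda$ should be the transpose of the partial $H^\circ$-action making $A_{par}(H^\circ)$ a partial module algebra (from \cite{ABV}); the axioms (\ref{def:LPCC1})--(\ref{def:LPCC3}) are then the duals of the partial-module-algebra axioms via \leref{pairings} and \reref{dualpairing}. I would use this either as the proof proper or as a sanity check that the explicit $\lambda$ is indeed the correct coaction.
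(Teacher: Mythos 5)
Your proposal is correct and follows essentially the same route as the paper: well-definedness via the joint monicity of the maps $p_n$ together with Lemma~\ref{lemadosE}(iii), the counit axiom trivially, and the observation $\nabla(\overline{E}(x))=E(x)$ as the key to collapsing the $\epsilon_C$-terms in (\ref{def:LPCC3}). The only real divergence is tactical: the paper verifies (\ref{def:LPCC1}) and (\ref{def:LPCC3}) by direct Sweedler computation in $H\otimes C$ and $H\otimes H\otimes C$, using the commutation identity \eqref{auxiliar} (Lemma~\ref{lemadosE}(x)) and \eqref{ISI} rather than pushing everything down to $H^{\otimes n}$ via joint monicity as you do, so it is that identity (x) rather than (ix) that does the essential work. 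One caution on your closing alternative: taking the duality with the partial $H^\circ$-module algebra $A_{par}(H^\circ)$ as \emph{the} proof would require $H^\circ$ to be dense in $H^*$ (otherwise the pairing of Proposition~\ref{pairingApar} is not available in the needed nondegenerate form), which is an extra hypothesis not present in the statement; it is fine as a sanity check but not as a substitute for the direct argument.
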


\begin{proof} 
First, one needs to prove that the map $\lambda$ is well defined, that is, for $x\in \mbox{Ker} (\overline{E})$, then 
\[
p(x_{(1)})S(p(x_{(3)}))\otimes \overline{E}(x_{(2)})=0 .
\]
Since we know that $C$ is cogenerated by $H$, it is enough to check that this identity holds after applying the maps $id\ot p_n$ for all $n$ (see \seref{cofree}). 
\begin{eqnarray*}
&& \hspace*{-2cm} p(x_{(1)})S(p(x_{(3)}))\ot p_n(\ove (x_{(2)})) =\\
& = & p(x_{(1)})S(p(x_{(3)})) \ot (E(x_{(2)})) \ot p_{n-1}(\ove (x_{(3)}))\\
& \stackrel{\ref{lemadosE}(iii)}{=} & E(x_{(1)})_{(1)} p(x_{(2)}) S(p(x_{(4)})) \ot (E(x_{(1)})_{(2)}) \ot p_{n-1}( \ove (x_{(3)}))\\
&\vdots\\
&=& E(x_{(1)})_{(1)} E(x_{(2)})_{(1)}\cdots E(x_{(n)})_{(1)}E(x_{(n+1)}) \ot E(x_{(1)})_{(2)}\ot E(x_{(2)})_{(2)}\ot \cdots\ot E(x_{(n)})_{(2)}
\end{eqnarray*}
which is zero since $p_n(\ol E(x))=0$ for all $n$.
Therefore $\lambda :C\rightarrow H\otimes C$ is a well defined linear map.

We need now to verify the axioms (\ref{def:LPCC1}), (\ref{def:LPCC2}) and (\ref{def:LPCC3}) from Definition \ref{comodule-coalgebra}.
For this, we will need Lemma \ref{lemadosE}(x), which can be written in the following form:
\begin{equation}\label{auxiliar}
S(p (x_{(1)})) p(x_{(2)}) \otimes \overline{E}(x_{(3)})  =S(p (x_{(2)})) p(x_{(3)}) \otimes   \overline{E}(x_{(1)}) .
\end{equation}

(\ref{def:LPCC1}) Consider an element $\overline{E}(x)\in C$, for $x\in H^{par}$, then
\begin{eqnarray*}
 {\overline{E}(x_{(1)})}^{[-1]} {\overline{E}(x_{(2)})}^{[-1]} \otimes {\overline{E}(x_{(1)})}^{[0]} \otimes {\overline{E}(x_{(2)})}^{[0]} 
& =&  p(x_{(1)})S(p(x_{(3)}))  p(x_{(4)})S(p(x_{(6)})) \otimes \overline{E}(x_{(2)}) \otimes \overline{E}(x_{(5)})\\
& \stackrel{(\ref{auxiliar})}{=} & p(x_{(1)})S(p(x_{(2)}))  p(x_{(3)})S(p(x_{(6)})) \otimes \overline{E}(x_{(4)}) \otimes \overline{E}(x_{(5)})\\
& \stackrel{(\ref{ISI})}{=} & p(x_{(1)})S(p(x_{(4)}))\otimes \overline{E}(x_{(2)}) \otimes \overline{E}(x_{(3)})\\
& = & (I\otimes \Delta_C )(p(x_{(1)})S(p(x_{(3)}))\otimes \overline{E}(x_{(2)}) )\\
& = & (I\otimes \Delta_C )\circ \lambda (\overline{E}(x)).
\end{eqnarray*}

(\ref{def:LPCC2}) Take $\overline{E}(x)\in C$, for $x\in H^{par}$, then
\[
(\epsilon_H \otimes I)\circ \lambda (\overline{E}(x))= \epsilon_H (p(x_{(1)})S(p(x_{(3)}))) \overline{E}(x_{(2)})= \epsilon_H (x_{(1)})\epsilon_H (x_{(3)}) \overline{E}(x_{(2)})=\overline{E}(x).
\]

In order to prove (\ref{def:LPCC3}), first note that $\nabla (\overline{E}(x))=E(x)$. Indeed,
\begin{eqnarray*}
\nabla (\overline{E}(x)) & = & \overline{E}(x)^{[-1]} \epsilon_C (\overline{E}(x)^{[0]})
 =  p(x_{(1)})S(p(x_{(3)})) \epsilon_C (\overline{E}(x_{(2)})) \\
& = & p(x_{(1)})S(p(x_{(3)})) \underline{\epsilon}(x_{(2)})
 =  p(x_{(1)})S(p(x_{(2)})) =E(x).
\end{eqnarray*}

(\ref{def:LPCC3}) For $\overline{E}(x)\in C$, with $x\in H^{par}$, we have
\begin{eqnarray*}
& \,  & \nabla (\overline{E}(x_{(1)})) {\overline{E}(x_{(2)})^{[-1]}}_{(1)} \otimes {\overline{E}(x_{(2)})^{[-1]}}_{(2)} \otimes \overline{E}(x_{(2)})^{[0]} \\
& = & E(x_{(1)})p(x_{(2)})_{(1)} S(p(x_{(4)}))_{(1)} \otimes p(x_{(2)})_{(2)} S(p(x_{(4)}))_{(2)} \otimes \overline{E}(x_{(3)})\\
& = & E(x_{(1)})p(x_{(2)}) S(p(x_{(5)}))_{(1)} \otimes p(x_{(3)}) S(p(x_{(5)}))_{(2)} \otimes \overline{E}(x_{(4)})\\
& = & p(x_{(1)}) S(p(x_{(4)}))_{(1)} \otimes p(x_{(2)}) S(p(x_{(4)}))_{(2)} \otimes \overline{E}(x_{(3)})\\
& = & p(x_{(1)}) S(p(x_{(5)})_{(2)}) \otimes p(x_{(2)}) \widetilde{E}(x_{(3)})S(p(x_{(5)})_{(1)}) \otimes \overline{E}(x_{(4)})\\
& \stackrel{\ref{lemadosE}(x)}{=} & p(x_{(1)}) S(p(x_{(5)})_{(2)}) \otimes p(x_{(2)}) \widetilde{E}(x_{(4)})S(p(x_{(5)})_{(1)}) \otimes \overline{E}(x_{(3)})\\
& = & p(x_{(1)}) S(p(x_{(6)})) \otimes p(x_{(2)}) \widetilde{E}(x_{(4)})S(p(x_{(5)})) \otimes \overline{E}(x_{(3)})\\
& \stackrel{\ref{lemadosE}(x)}{=} & p(x_{(1)}) S(p(x_{(6)})) \otimes p(x_{(2)}) \widetilde{E}(x_{(3)})S(p(x_{(5)})) \otimes \overline{E}(x_{(4)})\\
& = & p(x_{(1)}) S(p(x_{(5)})) \otimes p(x_{(2)}) S(p(x_{(4)})) \otimes \overline{E}(x_{(3)})\\
& = & p(x_{(1)})S(p(x_{(3)})) \otimes \lambda (\overline{E}(x_{(2)}))
 =  (I \otimes \lambda ) \circ \lambda (\overline{E}(x)),
\end{eqnarray*}
and
\begin{eqnarray*}
& \,  &  {\overline{E}(x_{(1)})^{[-1]}}_{(1)} \nabla (\overline{E}(x_{(2)})) \otimes {\overline{E}(x_{(1)})^{[-1]}}_{(2)} \otimes \overline{E}(x_{(1)})^{[0]} =\\
& = & p(x_{(1)})_{(1)} S(p(x_{(3)}))_{(1)} E(x_{(4)}) \otimes p(x_{(1)})_{(2)} S(p(x_{(3)}))_{(2)} \otimes \overline{E}(x_{(2)}) \\
& = & p(x_{(1)})_{(1)} S(p(x_{(3)})_{(2)}) E(x_{(4)}) \otimes p(x_{(1)})_{(2)} S(p(x_{(3)})_{(1)}) \otimes \overline{E}(x_{(2)}) \\
& = & p(x_{(1)})_{(1)} S(p(x_{(4)})) E(x_{(5)}) \otimes p(x_{(1)})_{(2)} S(p(x_{(3)})) \otimes \overline{E}(x_{(2)}) \\
& = & p(x_{(1)})_{(1)} S(p(x_{(4)})) \otimes p(x_{(1)})_{(2)} S(p(x_{(3)})) \otimes \overline{E}(x_{(2)}) \\
& = & p(x_{(1)})_{(1)} S(p(x_{(5)})) \otimes p(x_{(1)})_{(2)} \widetilde{E} (x_{(3)})S(p(x_{(4)})) \otimes \overline{E}(x_{(2)}) \\
& \stackrel{\ref{lemadosE}(x)}{=} & p(x_{(1)})_{(1)} S(p(x_{(5)})) \otimes p(x_{(1)})_{(2)} \widetilde{E} (x_{(2)})S(p(x_{(4)})) \otimes \overline{E}(x_{(3)}) \\
& = & p(x_{(1)}) S(p(x_{(6)})) \otimes p(x_{(2)}) \widetilde{E} (x_{(3)})S(p(x_{(5)})) \otimes \overline{E}(x_{(4)}) \\
& = & p(x_{(1)}) S(p(x_{(5)})) \otimes p(x_{(2)}) S(p(x_{(4)})) \otimes \overline{E}(x_{(3)}) \\
& = & p(x_{(1)})S(p(x_{(3)})) \otimes \lambda (\overline{E}(x_{(2)}))
 =  (I \otimes \lambda ) \circ \lambda (\overline{E}(x)).
\end{eqnarray*}
Therefore, $C$ is a partial left $H$-comodule coalgebra.
\end{proof}

\begin{rmk} 
We know from the theory of partial representations of Hopf algebras that the subalgebra ${A=A_{par}(H^\circ)}$ of the partial ``Hopf'' algebra $(H^\circ)_{par}$ generated by the elements $\epsilon_k=[k_{(1)}][S(k_{(2)})]$ for all $k\in H^\circ$, is a partial $H^\circ$-module algebra. This structure on $A$ is in duality with the structure on $C$ defined in the previous Theorem, in the sense of \cite[Theorem 6.8]{BV}, which means that the equality 
$$\bkk{c,h\cdot a}=\bk{c^{[-1]},h }\bkk{c^{[0]},a}$$
holds for all $c\in C$, $h\in H^\circ$ and $a\in A$, where $\bk{-,-}:H\ot H^\circ\to k, \bk{x,k}=k(x)$ is the obvious non-degenerate pairing between $H$ and $H^\circ$, induced by evaluation.
Let us write $c=\ol E(x)$ for some $x\in H^{par}$ and $a=\epsilon_{k^1}\cdots \epsilon_{k^n}$ for $k^1,\ldots,k^n\in H^\circ$. Then we find indeed
\begin{eqnarray*}
&&\hspace*{-1cm}\bk{p(x_{(1)})S(p(x_{(3)})),h}\bkk{\overline{E}(x_{(2)}, \varepsilon_{k^1}\cdots \varepsilon_{k^n}}=\\
&=& h_{(1)}(p(x_{(1)})) h_{(2)}(S(p(x_{(n+2)}))) k^1(\ol E(x_{(2)})) \cdots k^n(\ol E(x_{(n+1)}))\\
&=& h_{(1)}(p(x_{(1)})) k^1_{(1)}(p(x_{(2)}))S(k^1_{(2)})(p(x_{(3)})) \cdots k^n_{(1)}(p(x_{2n}))S(k^n_{(2)})(p(x_{2n+1}))S(h_{(2)})(p(x_{(2n+2)}))\\
&=& \left(x,[h_{(1)}][k^1_{(1)}][S(k^1_{(2)})][k^2_{(1)}][S(k^2_{(2)})]\cdots [k^n_{(1)}][S(k^n_{(2)})][S(h_{(2)})] \right)\\
&=& \left(x,[h_{(1)}k^1_{(1)}][S(k^1_{(2)})][k^2_{(1)}][S(k^2_{(2)})]\cdots [k^n_{(1)}][S(k^n_{(2)})][S(h_{(2)})] \right)\\
&=& \left(x,[h_{(1)}k^1_{(1)}][S(h_{(2)}k^1_{(2)})][h_{(3)}k^1_{(3)}][S(k^1_{(4)})][k^2_{(1)}][S(k^2_{(2)})]\cdots [k^n_{(1)}][S(k^n_{(2)})][S(h_{(2)})] \right)\\
&=& \left(x,[h_{(1)}k^1_{(1)}][S(h_{(2)}k^1_{(2)})][h_{(3)}k^1_{(3)}S(k^1_{(4)})][k^2_{(1)}][S(k^2_{(2)})]\cdots [k^n_{(1)}][S(k^n_{(2)})][S(h_{(2)})] \right)\\
&=& \left(x,[h_{(1)}k^1_{(1)}][S(h_{(2)}k^1_{(2)})][h_{(3)}][k^2_{(1)}][S(k^2_{(2)})]\cdots [k^n_{(1)}][S(k^n_{(2)})][S(h_{(2)})] \right)\\
&\vdots&\\
&=& \left(x,[h_{(1)}k^1_{(1)}][S(h_{(2)}k^1_{(2)})][h_{(3)}k^2_{(1)}][S(h_{(4)}k^2_{(2)})]\cdots [h_{(1n-1)}k^n_{(1)}][S(h_{(2n)}k^n_{(2)})][h_{(2n+1)}][S(h_{(2n+2)})] \right)\\
&=& \bkk{E(x),\varepsilon_{h_{(1)k^1}}\varepsilon_{h_{(2)}k^2}\cdots\varepsilon_{h_{(n)}k^n}\varepsilon_{h_{(n+1)}}}\\
&=& \bkk{E(x),h\cdot \varepsilon_{k^1}\varepsilon_{k^2}\cdots\varepsilon_{k^n}}
\end{eqnarray*}
\end{rmk}

\subsection{The cosmash structure of the universal corep coalgebra}

Throughout this section, we suppose that the antipode of $H$ is bijective.

\begin{lemma} \label{phi0} Let $H$ be a Hopf algebra, $H^{par}$ be the universal coalgebra previously defined and $\overline{E}:H^{par} \rightarrow C$ the coalgebra map which factorizes $E:H^{par} \rightarrow H$. If $\overline{\omega}_0 :\underline{C\cosmash H} \rightarrow H^{par}$ is the coalgebra map which factorizes the standard partial corepresentation $\omega_0$, then $\overline{E} \circ \overline{\omega}_0 =\phi_0$ as coalgebra morphisms between $\underline{C \cosmash H}$ and $C$.
\end{lemma}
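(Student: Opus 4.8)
The plan is to exploit the universal property of the cofree coalgebra to reduce an identity of coalgebra maps to an identity of linear maps into $H$. Both $\overline{E}\circ\overline{\omega}_0$ and $\phi_0$ are morphisms of coalgebras from $\underline{C\cosmash H}$ into the subcoalgebra $C\subseteq C(H)$ (the first as a composite of coalgebra maps, the second being a coalgebra morphism as established earlier for the map $x\cosmash h\mapsto x\epsilon_H(h)$), hence, via the inclusion $C\hookrightarrow C(H)$, into $C(H)$ itself. By the universal property of $C(H)$, a coalgebra morphism into $C(H)$ is uniquely determined by its composition with the projection $p$; therefore it suffices to prove $p\circ\overline{E}\circ\overline{\omega}_0=p\circ\phi_0$. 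The right-hand side is immediate: $p(\phi_0(x\cosmash h))=p(x)\epsilon_H(h)$. For the left-hand side I use $p\circ\overline{E}=E$, so that $p\circ\overline{E}\circ\overline{\omega}_0=E\circ\overline{\omega}_0$.

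First I would compute $E\circ\overline{\omega}_0$ on a generator $x\cosmash h$. Since $\overline{\omega}_0$ is a coalgebra map with $p\circ\overline{\omega}_0=\omega_0$ and $\omega_0(x\cosmash h)=\nabla(x)h$ (Proposition~\ref{cosmash}), applying $\underline{\Delta}$ and the definition of $E$ gives
\[
E(\overline{\omega}_0(x\cosmash h))=\nabla(x_{(1)})\,{x_{(2)}}^{[-1]}h_{(1)}\,S\!\big(\nabla({x_{(2)}}^{[0]})h_{(2)}\big).
\]
Anti-multiplicativity of $S$ together with $h_{(1)}S(h_{(2)})=\epsilon_H(h)1_H$ collapses the $h$-factors, leaving $\epsilon_H(h)\,\nabla(x_{(1)})\,{x_{(2)}}^{[-1]}S(\nabla({x_{(2)}}^{[0]}))$. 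Applying the identity \eqref{nablaid} (valid since $C$ is a left partial $H$-comodule coalgebra by Theorem~\ref{coaction}) in the form $\nabla(x_{(1)}){x_{(2)}}^{[-1]}\otimes {x_{(2)}}^{[0]}=x^{[-1]}\otimes x^{[0]}$ reduces this to $\epsilon_H(h)\,x^{[-1]}S(\nabla(x^{[0]}))$.

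The crux is then the identity $x^{[-1]}S(\nabla(x^{[0]}))=p(x)$ for all $x\in C$. Writing $x=\overline{E}(y)$, I substitute $x^{[-1]}=p(y_{(1)})S(p(y_{(3)}))$ and $x^{[0]}=\overline{E}(y_{(2)})$ from Theorem~\ref{coaction}, and use $\nabla\circ\overline{E}=E$ (shown in the proof of that theorem) to get $\nabla(x^{[0]})=E(y_{(2)})$. Pulling $S(p(y_{(3)}))S(E(y_{(2)}))=S(E(y_{(2)})p(y_{(3)}))$ inside a single antipode, the inner factor $E(y_{(2)})p(y_{(3)})$ is, after regrouping the Sweedler legs, of the shape $p(w_{(1)})S(p(w_{(2)}))p(w_{(3)})$ with $w$ the second leg $y_{(2)}$ of $\Delta(y)$, which equals $p(w)=p(y_{(2)})$ by identity \eqref{ISI} for the partial corepresentation $p|_{H^{par}}$. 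Hence $x^{[-1]}S(\nabla(x^{[0]}))=p(y_{(1)})S(p(y_{(2)}))=E(y)=p(\overline{E}(y))=p(x)$, so that $E(\overline{\omega}_0(x\cosmash h))=p(x)\epsilon_H(h)=p(\phi_0(x\cosmash h))$ and the claim follows from the universal property.

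The main obstacle I anticipate is the careful bookkeeping of Sweedler indices in the crux identity, in particular tracking how the three legs of the coaction interact with the further comultiplications hidden inside $\nabla$ and $E$; making the application of \eqref{ISI} line up requires correctly identifying the sub-element $w=y_{(2)}$ and refining its coproduct. By comparison, the universal-property reduction and the collapse of the $h$-factors are routine.
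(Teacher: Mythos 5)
Your proof is correct, and its overall strategy coincides with the paper's: compute $p\circ\overline{E}\circ\overline{\omega}_0$ on a generator $x\cosmash h$, collapse the $h$-legs, apply \eqref{nablaid}, and reduce everything to the identity $x^{[-1]}S(\nabla(x^{[0]}))=\nabla(x)=p(x)$ on $C$. You differ in two sub-steps, both legitimately. First, for the crux identity the paper stays inside the partial comodule coalgebra formalism and uses axiom (\ref{def:LPCC3}) for the coaction $\lambda$, whereas you unwind $\lambda$ via its explicit formula from Theorem~\ref{coaction} and invoke \eqref{ISI} for the partial corepresentation $p|_{H^{par}}$; your index bookkeeping (refining $y_{(2)}$ so that $E(y_{(2)})p(y_{(3)})$ becomes $p(w_{(1)})S(p(w_{(2)}))p(w_{(3)})=p(w)$) checks out. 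Second, and more substantively, to conclude that the two \emph{coalgebra} maps agree you invoke only the uniqueness clause of the cofree coalgebra's universal property, using that both maps land in the subcoalgebra $C\subseteq C(H)$; the paper instead verifies explicitly that the common composite $\theta=p\circ\phi_0$ is a copar map and then appeals to the universal property of the universal copar coalgebra $\mathcal{C}_H=C$. Your route makes that extra verification unnecessary and is the leaner of the two; the paper's route has the side benefit of exhibiting $\theta$ as a copar map, consistent with the role $C$ plays elsewhere, but it is not needed for this lemma.
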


\begin{proof} Recall that $C=\overline{E}(H^{par})=\mathcal{C}_H \subseteq C(H)$ is the universal copar coalgebra associated to $H$. 

Consider now the following linear maps:
\begin{enumerate}
\item $p\circ \phi_0 :\underline{C\cosmash H}\rightarrow H$.
\item $p\circ \overline{E} \circ \overline{\omega}_0 :\underline{C\cosmash H}\rightarrow H$.
\end{enumerate}

Taking an element $\sum_i x_i \cosmash h_i \in  \underline{C\cosmash H}$, we have
\begin{eqnarray*}
 p\circ \overline{E} \circ \overline{\omega}_0 (\sum_i x_i \cosmash h_i )  &=&  E\circ \overline{\omega}_0 (\sum_i x_i \cosmash h_i )
 =  \sum_i  p(\overline{\omega}_0  ( {x_i}_{(1)} \cosmash {{x_i}_{(2)}}^{[-1]} {h_i}_{(1)})) S(p(\overline{\omega}_0 ({{x_i}_{(2)}}^{[0]} \cosmash {h_i}_{(2)}))) \\
&&\hspace{-4cm} =  \sum_i \omega_0  ( {x_i}_{(1)} \cosmash {{x_i}_{(2)}}^{[-1]} {h_i}_{(1)})S(\omega_0  ({{x_i}_{(2)}}^{[0]} \cosmash {h_i}_{(2)}))
 =  \sum_i \nabla ({x_i}_{(1)}) {{x_i}_{(2)}}^{[-1]} {h_i}_{(1)} S({h_i}_{(2)}) S(\nabla ({{x_i}_{(2)}}^{[0]})) \\
&&\hspace{-4cm} =  \sum_i {x_i}^{[-1]}S(\nabla ({x_i}^{[0]} \epsilon (h_i )
 =  \sum_i {x_i}^{[-1]} S({x_i}^{[0][-1]}) \epsilon ({x_i}^{[0][0]})\epsilon (h_i )\\
&&\hspace{-4cm} = \sum_i \nabla ({x_i}_{(1)}) {{{x_i}_{(2)}}^{[-1]}}_{(1)} S( {{{x_i}_{(2)}}^{[-1]}}_{(2)}) \epsilon ({{x_i}_{(2)}}^{[0]})\epsilon (h_i )
 =  \sum_i \nabla ({x_i}_{(1)}) \epsilon ({{x_i}_{(2)}}^{[-1]}) \epsilon ({{x_i}_{(2)}}^{[0]})\epsilon (h_i )\\
&&\hspace{-4cm} = \sum_i \nabla ({x_i}_{(1)}) \epsilon ({{x_i}_{(2)}}) \epsilon (h_i )
 =  \sum_i \nabla ({x_i}) \epsilon (h_i ),
\end{eqnarray*}
and remembering that $C=\mbox{Im}(\overline{E})$, then for every $x_i \in C$ there exists an element $y_i \in H^{par}$ such that $x_i =\overline{E}(y_i )$, this leads to 
\begin{eqnarray*}
p\circ \overline{E} \circ \overline{\omega}_0 (\sum_i x_i \cosmash h_i ) & = & \sum_i \nabla ({x_i}) \epsilon (h_i ) 
 =  \sum_i \nabla (\overline{E}({y_i})) \epsilon (h_i ) \\
& = & \sum_i E(y_i )\epsilon (h_i ) =p( \sum_i \overline{E} (y_i )\epsilon (h_i ))  \\
& = & p\circ \phi_0 (\sum_i \overline{E} (y_i)\cosmash h_i )
 =   p\circ \phi_0 (\sum_i x_i \cosmash h_i ).
\end{eqnarray*}
Therefore, $p\circ \phi_0 = p\circ \overline{E} \circ \overline{\omega}_0$ as linear maps from $\underline{C\cosmash H}$ to $H$. Let us call this map $\theta$, which evaluated in an element $\xi= \sum_i \overline{E} (y_i ) \cosmash h_i \in  \underline{C\cosmash H}$ gives $\theta (\xi )=\sum _i E(y_i )\epsilon (h_i )$. It is easy to see that $\theta$ is a copar map. Indeed for axiom (\ref{def:PQ1}), 
\[
\epsilon_H (\theta (\sum_i \overline{E} (y_i ) \cosmash h_i )) =\sum_i \epsilon_H (E(y_i ))\epsilon_H (h_i ) \sum_i \epsilon (y_i )\epsilon (h_i ) =\underline{\epsilon} (\sum_i \overline{E} (y_i ) \cosmash h_i ) .
\]
For axiom (\ref{def:PQ2}),
\begin{eqnarray*}
\theta (\xi_{(1)}) \theta (\xi_{(2)}) & = & \sum_i \theta (  \overline{E} ({y_i}_{(1)} ) \cosmash \overline{E} ({y_i}_{(2)} )^{[-1]} {h_i}_{(1)}) \theta ( \overline{E} ({y_i}_{(2)} )^{[0]} \cosmash  {h_i}_{(2)}) \\
& = & \sum_i \theta (  \overline{E} ({y_i}_{(1)} ) \cosmash p({y_i}_{(2)})S(p({y_i}_{(4)})) {h_i}_{(1)}) \theta (\overline{E} ({y_i}_{(3)} ) \cosmash  {h_i}_{(2)}) \\
& = & \sum_i E({y_i}_{(1)} ) \epsilon (p({y_i}_{(2)})S(p({y_i}_{(4)})) {h_i}_{(1)}) E({y_i}_{(3)} ) \epsilon ({h_i}_{(2)}) \\
& = & \sum_i E({y_i}_{(1)} ) E({y_i}_{(3)} ) \epsilon ({y_i}_{(1)} )\epsilon ({y_i}_{(4)} ) \epsilon ({h_i})
 =  \sum_i E({y_i}_{(1)} ) E({y_i}_{(2)} ) \epsilon ({h_i})  \\
& = & \sum_i E({y_i}) \epsilon ({h_i}) 
 =  \theta (\sum_i \overline{E} (y_i ) \cosmash h_i ).
\end{eqnarray*}
And for axiom (\ref{def:PQ3}),
\begin{eqnarray*}
\theta (\xi_{(1)})_{(1)} \theta (\xi_{(2)}) \otimes \theta (\xi_{(1)})_{(2)} & = &  \sum_i E({y_i}_{(1)} )_{(1)} E({y_i}_{(2)} ) \epsilon ({h_i}) \otimes  E({y_i}_{(1)} )_{(2)} \\
& = & \sum_i  E({y_i}_{(1)} ) E({y_i}_{(2)} )_{(1)} \epsilon ({h_i}) \otimes  E({y_i}_{(2)} )_{(2)} \\
& = & \theta (\xi_{(1)}) \theta (\xi_{(2)})_{(1)} \otimes \theta (\xi_{(2)})_{(2)}.
\end{eqnarray*}
As the images of $\phi_0$ and $\overline{E} \circ \overline{\omega}_0$ are contained in $C$, then both coalgebra maps can be viewed as coalgebra maps from $\underline{C\cosmash H}$ to $C$. Moreover, $\theta$ is the result of composing $p$ with both maps and then, by the universal property of $C$, we have that  $\phi_0 =\overline{E} \circ \overline{\omega}_0$.
\end{proof}

\begin{thm}\label{isoHparCosmash}
	Let $H$ be a Hopf algebra and $C$ be the base coalgebra of $H^{par}$. Then $H^{par} \cong \underline{C\cosmash H}$, in which the structure of a partial left $H$-comodule coalgebra over $C$ is given by the Theorem \ref{coaction}.
\end{thm}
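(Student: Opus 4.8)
The plan is to exhibit two explicit, mutually inverse coalgebra morphisms between $H^{par}$ and $\underline{C\cosmash H}$. One direction is already at hand: by \leref{parcorepcoalgmap} the standard partial corepresentation $\omega_0$ of Proposition~\ref{cosmash} factorizes through $p$ as $\omega_0=p\circ\overline{\omega}_0$ for a coalgebra map $\overline{\omega}_0:\underline{C\cosmash H}\to H^{par}$, and by Lemma~\ref{phi0} this map moreover satisfies $\overline{E}\circ\overline{\omega}_0=\phi_0$. For the reverse map, I would observe that the pair $(\overline{E},p)$ is a \emph{left contravariant pair relative to $H^{par}$} in the sense of Definition~\ref{contravariant-pair}, and then invoke the universal property of the partial cosmash coproduct (the theorem following Definition~\ref{contravariant-pair}) to obtain a coalgebra morphism $\Phi:H^{par}\to\underline{C\cosmash H}$, given by $\Phi(x)=\overline{E}(x_{(1)})\cosmash p(x_{(2)})$ and characterized by $\phi_0\circ\Phi=\overline{E}$ and $\omega_0\circ\Phi=p$.

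Verifying that $(\overline{E},p)$ is a contravariant pair is immediate from the machinery already assembled. Axiom (\ref{def:CP1}) holds because $\overline{E}$ is a coalgebra map and $p$ is a partial corepresentation. Axiom (\ref{def:CP2}) is literally the formula defining the coaction $\lambda$ on $C=\overline{E}(H^{par})$ in Theorem~\ref{coaction}, namely $\overline{E}(x)^{[-1]}\otimes\overline{E}(x)^{[0]}=p(x_{(1)})S(p(x_{(3)}))\otimes\overline{E}(x_{(2)})$. Finally, writing $\widetilde{E}(x)=S(p(x_{(1)}))p(x_{(2)})$, axiom (\ref{def:CP3}) reads $\overline{E}(x_{(1)})\otimes\widetilde{E}(x_{(2)})=\overline{E}(x_{(2)})\otimes\widetilde{E}(x_{(1)})$, which is exactly Lemma~\ref{lemadosE}(x) after interchanging the two tensor legs.

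It then remains to check that $\Phi$ and $\overline{\omega}_0$ are mutually inverse. For $\overline{\omega}_0\circ\Phi=\mathrm{Id}_{H^{par}}$ I would use that $H^{par}$ is cogenerated by $H$: since $p\circ(\overline{\omega}_0\circ\Phi)=\omega_0\circ\Phi=p=p\circ\mathrm{Id}$ and the maps $p_n=p^{\otimes n}\circ\Delta^n$ are jointly monic, any two coalgebra endomorphisms of $H^{par}$ agreeing after post-composition with $p$ must coincide, so the identity follows. For $\Phi\circ\overline{\omega}_0=\mathrm{Id}_{\underline{C\cosmash H}}$ I would compute directly: since $\overline{\omega}_0$ is a coalgebra map, Lemma~\ref{phi0} gives $\overline{E}\circ\overline{\omega}_0=\phi_0$ and $p\circ\overline{\omega}_0=\omega_0$, so that $\Phi(\overline{\omega}_0(x\cosmash h))=\phi_0((x\cosmash h)_{(1)})\cosmash\omega_0((x\cosmash h)_{(2)})$; expanding $\underline{\Delta}(x\cosmash h)$ this becomes $x_{(1)}\epsilon_H({x_{(2)}}^{[-1]})\cosmash\nabla({x_{(2)}}^{[0]})h$, and (\ref{def:LPCC2}) together with the linearity of $\nabla$ collapses $\epsilon_H({x_{(2)}}^{[-1]})\nabla({x_{(2)}}^{[0]})=\nabla(x_{(2)})$, yielding $x_{(1)}\cosmash\nabla(x_{(2)})h=x\cosmash h$ by the defining relation of the cosmash coproduct.

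The main obstacle is not a single hard step but the Sweedler-index bookkeeping in two places: matching axiom (\ref{def:CP3}) precisely to the index pattern of Lemma~\ref{lemadosE}(x), and carrying out the final collapse in $\Phi\circ\overline{\omega}_0$ correctly via (\ref{def:LPCC2}). Everything else is formal, resting on the universal property of the partial cosmash coproduct and on the cogeneration of $H^{par}$ by $H$.
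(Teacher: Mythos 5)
Your proof is correct and follows essentially the same route as the paper: both construct $\Phi$ from the universal property of the cosmash coproduct by checking that $(\overline{E},p)$ is a contravariant pair (with (CP2) given by the coaction of Theorem \ref{coaction} and (CP3) by Lemma \ref{lemadosE}(x)), and both invert it against $\overline{\omega}_0$ using Lemma \ref{phi0}. The only cosmetic difference is that for $\overline{\omega}_0\circ\Phi=\mathrm{Id}$ the paper invokes the uniqueness clause of the universal property of $H^{par}$ applied to the partial corepresentation $p$, whereas you appeal directly to the joint monicity of the maps $p_n$; these are interchangeable justifications of the same fact.
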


\begin{proof} First note that the pair $(\overline{E}, p)$ is a contravariant pair relative to $H^{par}$: the definition of the coaction $\lambda :C\rightarrow H\otimes C$ in Theorem \ref{coaction} is exactly the condition (CP2) of Definition \ref{contravariant-pair} and the identity \ref{auxiliar} is exactly the condition (CP3). Then there exists, by the universal property of the partial cosmash coproduct, a unique coalgebra morphism
\[
\begin{array}{rccl} \Phi : & H^{par} & \rightarrow & \underline{C\cosmash H} \\ \, & x & \mapsto & \overline{E}(x_{(1)}) \cosmash p(x_{(2)}) \end{array}
\]
such that $\omega_0 \circ \Phi =p$ and $\phi_0 \circ \Phi =\overline{E}$.

On the other hand, the map 
\[
\begin{array}{rccl} \omega_0 : & \underline{C\cosmash H} & \rightarrow & H \\ \, & \overline{E}(x) \cosmash h & \mapsto & E(x)h \end{array}
\]
is a partial representation of $H$ relative to $\underline{C\cosmash H}$, then there exists a unique coalgebra morphism $\overline{\omega}_0 :\underline{C\cosmash H} \rightarrow  H^{par}$ such that $\omega_0 =p\circ \overline{\omega}_0$. 

Consider $x\in H^{par}$, note that 
\[
\omega_0 (\Phi (x))=\omega_0 (\overline{E}(x_{(1)}) \cosmash p(x_{(2)})) =E(x_{(1)}) p(x_{(2)})=p(x).
\]
Then, $p\circ \overline{\omega}_0 \circ \Phi =p$. On the other hand $p:H^{par} \rightarrow H$ is a partial corepresentation of $H$ relative to $H^{par}$, then there is a unique morphism $\overline{p}: H^{par} \rightarrow H^{par}$ such that $p=p\circ \overline{p}$. By the uniqueness and because $p=p\circ \mbox{Id}_{H^{par}}$ we have that $\overline{\omega}_0 \circ \Phi =\mbox{Id}_{H^{par}}$.

Consider now an element $y=\sum_i \overline{E}(x_i)\cosmash h_i \in \underline{C\cosmash H}$, and denote $x=\overline{\omega}_0 (y)$, then 
\[
x_{(1)} \otimes x_{(2)} =\sum_i \overline{\omega}_0 (\overline{E}({x_i}_{(1)}) \cosmash \overline{E}({x_i}_{(2)})^{[-1]} {h_i}_{(1)} ) \otimes  \overline{\omega}_0 (\overline{E}({x_i}_{(2)})^{[0]} \cosmash{h_i}_{(2)}) .
\]
Therefore, 
\begin{eqnarray*}
\Phi \circ \overline{\omega}_0 (y) & = & \sum_i \overline{E}( \overline{\omega}_0 (\overline{E}({x_i}_{(1)}) \cosmash \overline{E}({x_i}_{(2)})^{[-1]} {h_i}_{(1)} )) \cosmash  p(\overline{\omega}_0 (\overline{E}({x_i}_{(2)})^{[0]} \cosmash{h_i}_{(2)})) \\
& = & \sum_i \phi_0 (\overline{E}({x_i}_{(1)}) \cosmash p({x_i}_{(2)}) p({x_i}_{(4)}){h_i}_{(1)} ) \cosmash \omega_0 (\overline{E}({x_i}_{(3)}) \cosmash{h_i}_{(2)})\\
& = & \sum_i \overline{E}({x_i}_{(1)}) \epsilon_H (p({x_i}_{(2)}) S(p({x_i}_{(4)})){h_i}_{(1)}) \cosmash E({x_i}_{(3)}) {h_i}_{(2)} \\
& = & \sum_i \overline{E}({x_i}_{(1)}) \epsilon_H ({x_i}_{(2)}) \epsilon_H ({x_i}_{(4)}) \epsilon_H ({h_i}_{(1)}) \cosmash E({x_i}_{(3)}) {h_i}_{(2)} \\
& = & \sum_i \overline{E}({x_i}_{(1)}) \cosmash E({x_i}_{(2)}) {h_i}
 =  \sum_i \overline{E}({x_i}_{(1)}) \cosmash \nabla (\overline{E}({x_i}_{(2)})) {h_i}\\
& = & \sum_i \overline{E}(x_i)\cosmash h_i =y,
\end{eqnarray*}
in which the second equality comes from Lemma~\ref{phi0}. This concludes the proof.
\end{proof}

Again, it is important to mention how these constructions and these results fit within the context in which one takes the base coalgebra $\widetilde{C}$ instead of $C$. First there is a partial right coaction $\rho :\widetilde{C}\rightarrow \widetilde{C}\otimes H$, given by
\[
\rho (\overline{\widetilde{E}}(x)) = \overline{\widetilde{E}}(x_{(2)}) \otimes S(p(x_{(1)}))p(x_{(3)} ),
\]
making the coalgebra $\widetilde{C}$ into a partial right $H$-comodule coalgebra. This allows one to construct the right partial cosmash coproduct $\underline{H\rcosmash \widetilde{C}}$, and then we can prove that this partial cosmash coproduct is isomorphic to the coalgebra $H^{par}$ via the coalgebra isomorphisms $\widetilde{\Phi} :H^{par}\rightarrow \underline{H\rcosmash \widetilde{C}}$ , given by $\widetilde{\Phi}(x)=p(x_{(1)})\rcosmash \overline{\widetilde{E}}(x_{(2)})$, and its inverse, $\overline{\widetilde{\omega}}_0 : \underline{H\rcosmash \widetilde{C}}\rightarrow H^{par}$, given by the universal property of $H^{par}$, such that  $p\circ \overline{\widetilde{\omega}}_0 (h\rcosmash \overline{\widetilde{E}}(x))=h\widetilde{E}(x)$. 

\section{The universal coalgebra $H^{par}$ as a Hopf-coalgebroid}\selabel{universals}

In \cite{ABV} (see Section \ref{partrep}), it was shown that the universal algebra $H_{par}$ 
has the structure of a Hopf algebroid. 
In this section, we will show that $H^{par}$ has the structure of a Hopf-coalgebroid that is dual to $(H^\circ)_{par}$.

\subsection{Hopf-coalgebroids}

The notion of a (left) bicoalgebroid was introduced in \cite{BM} and studied further in \cite{Bal}. We review this notion here and also introduce Hopf-coalgebroid as a dual notion of a Hopf algebroid \cite{Bohm}. Throughout this section, we suppose that $k$ is a field. However, as explained in \cite{Bal}, the definitions also make sense supposing that all considered $k$-modules are (locally) projective.

Let $C$ and $\Hh$ be two $k$-coalgebras, $\alpha:\Hh\to C$ a coalgebra morphism and $\beta:\Hh\to C$ an anti-coalgebra morphism.
Then $\Hh$ can be endowed with a $C$-bicomodule structure defining left and right coactions by
\begin{equation} \label{coactionsH}
\begin{array}{rlrl} \lambda : & \mathcal{H} & \rightarrow & C\otimes \mathcal{H}\\
\, & x & \mapsto & \alpha (x_{(1)}) \otimes x_{(2)} ,
\end{array}\qquad
\begin{array}{rlrl} \rho : & \mathcal{H} & \rightarrow &  \mathcal{H}\otimes C \\
\, & x & \mapsto &   x_{(2)} \otimes \beta (x_{(1)}).
\end{array}
\end{equation}
Now consider the cotensor product 
\[
\mathcal{H} \square^C \mathcal{H} = \mbox{Ker} ( \rho \otimes \mbox{Id}_{\mathcal{H}} -\mbox{Id}_{\mathcal{H}}\otimes \lambda). \]
In general, $\Hh\square^C\Hh$ is not a coalgebra, therefore in \cite{Bal} the `cocenter' of a bicomodule was introduced, as a dual version of the Takeuchi product. In this note, we will use the term (left) {\em B\'alint-product} for this object, which is defined as the coequalizer
	\[
	\xymatrix{\mathcal{H} \square^C \mathcal{H} \otimes C^* \ar@<.5ex>[rr]^-{\Phi_1} \ar@<-.5ex>[rr]_-{\Phi_2} && \mathcal{H} \square^C \mathcal{H} \ar[rr]^-{\pi_L} && \mathcal{H} \boxtimes^C_l \mathcal{H}} , 
	\]
	in which $\Phi_1 (\sum_i x^i \otimes y^i \otimes \varphi) =\sum_i x^i_{(1)} \otimes y^i \varphi (\beta (x^i_{(2)}))$ and $\Phi_2 (\sum_i x^i \otimes y^i \otimes \varphi) =\sum_i x^i \otimes y^i_{(1)} \varphi (\alpha (y^i_{(2)}))$. 
 Remark that using the dual base property, for any class $\sum_i x^i \boxtimes y^i\in \Hh\boxtimes^C_l \Hh$ we have an identity
	$$\sum_i x^i \boxtimes y^i_{(1)}\ot  \alpha(y^i_{(2)})=\sum_i x^i_{(1)} \boxtimes y^i\ot \beta(x^i_{(2)})$$
It can be proved that the B\'alint product is indeed a coalgebra with comultiplication $\Delta(\sum_i x^i \boxtimes y^i)=\sum_i x^i_{(1)} \boxtimes y^i_{(1)}\ot x^i_{(2)} \boxtimes y^i_{(2)}$ and counit $\epsilon(\sum_i x^i \boxtimes y^i)=\sum_i \epsilon_\Hh(x^i)\epsilon_\Hh(y^i)$.
\begin{defi} \label{definitioncoalgebroid} 
Let $C$ be a $k$-coalgebra. A left bicoalgebroid over $C$ is a sextuple $(\mathcal{H}, C, \alpha, \beta, \ol\mu_L, \eta_L )$ where
	\begin{enumerate}[label=(BC\arabic*),ref=BC\arabic*]
		\item\label{def:BC1} $\mathcal{H}$ is a coalgebra, $\alpha :\mathcal{H}\rightarrow C$ is a coalgebra morphism, $\beta :\mathcal{H} \rightarrow C$ is an anti-coalgebra morphism, such that 
\[
\alpha (x_{(1)})\otimes \beta (x_{(2)}) =\alpha (x_{(2)}) \otimes \beta (x_{(1)}),
\]
endowing $\Hh$ with a $C$-bicomodule structure as in \eqref{coactionsH}.
\item\label{def:BC2} The map $\ol\mu_L:\Hh\boxtimes^C_l\Hh\to \Hh$ is a coalgebra morphism and the restriction $\mu_L=\ol\mu_L\circ\pi_L:\Hh\square^C\Hh\to \Hh$ is associative, i.e. $\mu_L \circ (\mu_L \square^C \mbox{Id}_{\mathcal{H}}) =\mu_L \circ (\mbox{Id}_{\mathcal{H}} \square^C \mu_L)$.
\item
\label{def:BC3} The map $\eta_L :C \rightarrow \mathcal{H}$ is a $C$-bicomodule map and it is the unit map relative to the multiplication $\mu_L$ (resp. $\mu_R$), that is
		\begin{equation}\label{unitHcoid}
		\mu_L \circ (\eta_L \square^C \mbox{Id}_{\mathcal{H}})\circ \lambda =\mbox{Id}_{\mathcal{H}} =\mu_L \circ  (\mbox{Id}_{\mathcal{H}} \square^C \eta_L )\circ \rho .
		\end{equation}
		Moreover, the unit map also satisfies $\epsilon_{\mathcal{H}} \circ \eta_L =\epsilon_C$ and
		\[
		\Delta (\eta_L (c))=\eta_L (c)_{(1)} \otimes \eta_L (\alpha (\eta_L (c)_{(2)}))=\eta_L (c)_{(1)} \otimes \eta_L (\beta (\eta_L (c)_{(2)})) 
		\]
		for every $c\in C$.
\end{enumerate}
\end{defi}

In a similar way, a right bicoalgebroid over a coalgebra $\widetilde{C}$ is a sextuple $(\mathcal{H}, \widetilde{C}, \widetilde{\alpha}, \widetilde{\beta}, \mu_R , \eta_R )$. Here $\widetilde{\alpha} :\mathcal{H}\rightarrow \widetilde{C}$ is a coalgebra morphism and $\widetilde{\beta} :\mathcal{H} \rightarrow \widetilde{C}$ is an anti-coalgebra morphism such that $\widetilde{\alpha} (x_{(1)})\otimes \widetilde{\beta} (x_{(2)}) =\widetilde{\alpha} (x_{(2)}) \otimes \widetilde{\beta} (x_{(1)})$. One considers $\Hh$ as a $\widetilde{C}$-bicomodule structure with left and right $\widetilde{C}$-coactions given by
$\widetilde\lambda(x)=\widetilde{\beta} (x_{(2)}) \otimes x_{(1)}$ and $\widetilde\rho(x)=x_{(1)} \otimes \widetilde{\alpha} (x_{(2)})$. 
The right Balint coproduct is defined by the coequalizer 
	\[
		\xymatrix{\mathcal{H} \square^{\widetilde{C}} \mathcal{H} \otimes \widetilde{C}^* \ar@<.5ex>[rr]^-{\Psi_1} \ar@<-.5ex>[rr]_-{\Psi_2} && \mathcal{H} \square^{\widetilde{C}} \mathcal{H} \ar[rr]^-{\pi_R} && \mathcal{H} \boxtimes^{\widetilde{C}}_r \mathcal{H}
	},
	\]
		for $\Psi_1 (\sum_i x^i \otimes y^i \otimes \varphi) =\sum_i x^i_{(2)} \otimes y^i \varphi (\widetilde{\alpha} (x^i_{(1)}))$ and $\Psi_2 (\sum_i x^i \otimes y^i \otimes \varphi) =\sum_i x^i \otimes y^i_{(2)} \varphi (\beta (y^i_{(1)}))$.
		The map $\ol\mu_R:\Hh\boxtimes^{\widetilde C}_r\Hh\to \Hh$ is a coalgebra morphism and $\Hh$ is an associative algebra in the monoidal category of $\widetilde C$-bicomodules with multiplication $\mu_R=\ol\mu_R\circ\pi_R=\Hh\square^{\widetilde C}\Hh\to \Hh$ and unit $\eta_R:C\to \Hh$. Moreover, we have identities $\epsilon_{\mathcal{H}} \circ \eta_R =\epsilon_{\widetilde{C}}$) and
		$
		\Delta (\eta_R (c))=  \eta_R (\widetilde{\alpha} (\eta_R (c)_{(1)})) \otimes \eta_R (c)_{(2)} = \eta_R (\widetilde{\beta} (\eta_R (c)_{(1)})) \otimes \eta_R (c)_{(2)}$,
		for every $c\in \widetilde{C}$.

\begin{definition}\label{defHcoid}
	Let $\mathcal{H}$ be a left bicoalgebroid over $C$ and a right bicoalgebroid over $\widetilde{C}$, where $\widetilde{C}\cong C^{cop}$ are anti-isomorphic coalgebras. We say that $\Hh$ is a Hopf-coalgebroid if there exists an anti-coalgebra morphism $\mathcal{S}: \mathcal{H}\rightarrow \mathcal{H}$ and the following compatibility conditions hold: 
	\begin{enumerate}[(a)]
		\item Compatibility between left and right comodule structures, 
		\begin{eqnarray*}
		\widetilde{\beta}\circ \eta_L \circ \alpha =\widetilde{\beta} , \qquad & \, & \qquad \widetilde{\alpha}\circ \eta_L \circ \beta =\widetilde{\alpha}, \\
		{\beta}\circ \eta_R \circ \widetilde{\alpha} ={\beta} , \qquad & \, & \qquad {\alpha}\circ \eta_R \circ \widetilde{\beta} ={\alpha} .
		\end{eqnarray*}
		\item Mixed associativity between left and right multiplication, 
		\[
		\mu_R \circ ( \mu_L \square^{\widetilde{C}} \mbox{Id}_{\mathcal{H}} )=\mu_L \circ ( \mbox{Id}_{\mathcal{H}} \square^C \mu_R ), \quad  \mbox{ and }  \quad \mu_R \circ ( \mbox{Id}_{\mathcal{H}} \square^{\widetilde{C}} \mu_L  )=\mu_L \circ (  \mu_R  \square^C 
		\mbox{Id}_{\mathcal{H}} ).
		\]
		\item Compatibility between the antipode and the structural maps, that is, for each $x\in \mathcal{H}$, 
		\[
		\beta (\mathcal{S} (x)_{(1)}) \otimes \mathcal{S} (x)_{(2)} \otimes \widetilde{\beta} (\mathcal{S} (x)_{(3)}) =\alpha (x_{(3)})\otimes \mathcal{S}(x_{(2)}) \otimes \widetilde{\alpha}(x_{(1)}) .
		\]
		\item Antipode axiom, that is 
		\[
		\mu_L \circ (\mathcal{S} \otimes \mbox{Id}_{\mathcal{H}}) \circ \Delta = \eta_R \circ \widetilde{\alpha} , \quad \mbox{ and } \quad \mu_R \circ (\mbox{Id}_{\mathcal{H}} \otimes \mathcal{S}) \circ \Delta = \eta_L \circ {\alpha}.
		\]
	\end{enumerate}
\end{definition}

\begin{rmk}
\begin{enumerate}
\item The definitions of a bicoalgebroid and of a Hopf coalgebroid can be formulated entirely in terms of the multiplication $\mu_L:\Hh\square^C\Hh\to \Hh$ rather than the map $\ol\mu_L$. In this case, one needs to impose the condition
\[
		\sum_i \mu_L (x_{i} \otimes y_{i(1)}) \otimes \alpha (y_{i(2)}) =\sum_i \mu_L (x_{i(1)} \otimes y_{i}) \otimes \beta (x_{i(2)})
		\]
in order to assure that $\mu_L$ factors to the B\'alint product.
\item 
Remark that in the unit condition of the left coalgebroid \eqref{unitHcoid} it is implicitly assumed that 
the images of $(\eta_L \square^C \mbox{Id}_{\mathcal{H}}) \circ \lambda$ and $(\mbox{Id}_{\mathcal{H}} \square^{C} \eta_L )\circ \rho$ 
 are in $\mathcal{H} \square^{C} \mathcal{H}$. 
But this is automatically satisfied, because $\lambda (\mathcal{H} ) \subseteq C\square^C \mathcal{H}$ and $\rho (\mathcal{H}) \subseteq \mathcal{H} \square^{C} C$ 
and $\eta_L$ 
is a morphism of $C$-bicomodules. 
\item In a similar way, note that in the antipode axiom (item (d) in Definition \ref{defHcoid}) it is implicitly expected that for any $x\in \mathcal{H}$ we have
		\[
		\mathcal{S} (x_{(1)}) \otimes x_{(2)} \in \mathcal{H} \square^{C} \mathcal{H} ,
		\]
		and
		\[
		x_{(1)} \otimes \mathcal{S} (x_{(2)})\in \mathcal{H} \square^{\widetilde{C}} \mathcal{H} .
		\]
		One can easily prove these facts. For the first one, we have
		\begin{eqnarray*}
		\mathcal{S} (x_{(1)})_{(2)} \otimes \beta ( \mathcal{S} (x_{(1)})_{(1)} ) \otimes x_{(2)} & = & (\epsilon_{\widetilde{C}} \otimes \mbox{Id} \otimes \mbox{Id}) \left( \widetilde{\beta} ( \mathcal{S} (x_{(1)})_{(3)} ) \otimes \mathcal{S} (x_{(1)})_{(2)} \otimes \beta ( \mathcal{S} (x_{(1)})_{(1)} )\right)  \otimes x_{(2)} \\
		& \stackrel{(c)}{=} & (\epsilon_{\widetilde{C}} \otimes \mbox{Id} \otimes \mbox{Id}) \left( \widetilde{\alpha} (x_{(1)}) \otimes \mathcal{S} (x_{(2)}) \otimes \alpha (x_{(3)}) \right) \otimes x_{(4)}  \\
		& = & \mathcal{S} (x_{(1)}) \otimes \alpha (x_{(2)}) \otimes x_{(3)} .
		\end{eqnarray*}
		Therefore, $\mathcal{S} (x_{(1)}) \otimes x_{(2)} \in \mathcal{H} \square^{C} \mathcal{H}$. The second result is proved in the same way.
\end{enumerate}
\end{rmk}

	\begin{exmp} Any Hopf algebra $H$ over $k$ can be viewed as a Hopf-coalgebroid by taking $C=\widetilde C=k$ the trivial $k$-coalgebra, $\alpha =\beta =\widetilde{\alpha} =\widetilde{\beta} =\epsilon$, $\mu_L=\mu_R$ the multiplication of $H$, and $\eta_L=\eta_R$ the unit map of $H$.
	\end{exmp}
	
	\begin{exmp} Given a groupoid $\mathcal{G}$, denote by $k\mathcal{G}$ the $k$-coalgebra with the morphisms of $\Gg$ as a base of grouplike elements and $C=k\mathcal{G}^{(0)}$ the coalgebra with the objects of $\Gg$ as a base of grouplike elements.
The morphisms $\alpha$ and $\widetilde{\beta}$ are given by the linearization of the target map $t$, while $\widetilde{\alpha}$ and $\beta$ are given by the linearization of the source map $s$. One can easily verify that $k\mathcal{G} \square^C k\mathcal{G} = k\mathcal{G} \square^{\widetilde{C}} k\mathcal{G} =k(\mathcal{G} {}_s \times_t \mathcal{G})$. The left and the right multiplication maps are the same and are induced by the multiplication in the groupoid and the left and right unit maps are given by the inclusion of the coalgebra $k\mathcal{G}^{(0)}$ into the coalgebra $k\mathcal{G}$. Finally, the antipode map is given by the inversion on the groupoid, that is $\mathcal{S}(\delta_g)=\delta_{g^{-1}}$. The verification of the axioms is straightforward. Remark that in contrast to the construction of a Hopf algebroid structure on a groupoid algebra, in the coalgebroid case no finiteness on the number of objects of $\Gg$ is needed.
	\end{exmp}
	
	\begin{exmp}
	It is well-known that weak Hopf algebras lead to examples of Hopf algebroids.  
Already in \cite{BM} it was observed that examples of bicoalgebroids can be constructed from weak bialgebras. The same construction allows to view weak Hopf algebras as Hopf coalgebroids. One can notice that the base coalgebra of the Hopf coalgebroid associated to a weak Hopf algebra $H$, and the base algebra of the Hopf algebroid structure associated to the same weak Hopf algebra $H$ are dual to each other, which is in correspondence with the fact that this base (co)algebra is separable Frobenius. The previous example is a special case of this construction in case the groupoid has a finite number of objects.
	\end{exmp}
	
	\begin{exmp} 
	Given a Hopf algebra $H$ and a coalgebra $C$, the coalgebra $\mathcal{H}=C\otimes H \otimes C^{cop}$ is a Hopf coalgebroid, with the usual coalgebra structure on the tensor product and with $\widetilde{C}=C^{cop}$. In this case, for any $c\otimes h\otimes d \in \mathcal{H}$ we have
		\[
		\alpha (c\otimes h\otimes d)=\widetilde{\beta} (c\otimes h\otimes d)= c\epsilon (h)\epsilon (d) ,
		\]
		and 
		\[
		\beta (c\otimes h\otimes d)=\widetilde{\alpha} (c\otimes h\otimes d)= d\epsilon (h)\epsilon (c) .
		\] 
		The left and right multiplication (which coincide) can defined globally on $\mathcal{H}\otimes \mathcal{H}$ by means of the following formula:
		\[
		\mu ((c\otimes h \otimes d)\otimes (c'\otimes h'\otimes d'))= c\epsilon (c')\otimes hh' \otimes d'\epsilon (d) .
		\]
		The left unit is given by $\eta_L (c)=c_{(1)} \otimes 1_H \otimes c_{(2)}$, and the right unit is given by $\eta_R (c)=c_{(2)} \otimes 1_H \otimes c_{(1)}$, for every $c\in C$. The antipode map is written as $\mathcal{S}(c\otimes h \otimes d)=d\otimes S(h) \otimes c$. We leave to the reader the verification of the axioms of Hopf-coalgebroid structure.
	\end{exmp}
	
\subsection{The Hopf-coalgebroid structure on the universal coalgebra $H^{par}$}

Let $H$ be a Hopf algebra over a field $k$ with bijective antipode. 
Recall from the proof of Proposition \ref{CisotildeC} the existence of a coalgebra morphism
$\sigma :(H^{par})^{cop} \rightarrow H^{par}$ making the following diagram commutative
\[
\xymatrix{
(H^{par})^{cop} \ar[rr]^-\sigma \ar[d]_{p} && H^{par} \ar[d]^p\\
H^{cop} \ar[rr]^-{S^{-1}} && H
}
\]

Then consider the coalgebra map $\alpha =\overline{E} :H^{par} \rightarrow C$, and the anti-coalgebra map $\beta =\overline{E}\circ \sigma :H^{par} \rightarrow C$.  As explained in the previous section, these maps endow $H^{par}$ with a left and a right $C$-comodule structures, given by the maps
		\[
		\begin{array}{rrclrrcl} \lambda :& H^{par} & \rightarrow & C\otimes H^{par} \qquad\qquad & \rho :& H^{par} & \rightarrow & H^{par} \otimes C\\
		\, & x & \mapsto & \ove (x_{(1)}) \otimes x_{(2)} & & x & \mapsto & x_{(2)}  \otimes  \ove \circ \sigma (x_{(1)})
		\end{array} .
		\]
On the other hand, we have the coalgebra map $\widetilde{\alpha} =\overline{\widetilde{E}} :H^{par} \rightarrow \widetilde{C}$, and the anti-coalgebra map $\widetilde{\beta} =\overline{\widetilde{E}}\circ \sigma :H^{par} \rightarrow \widetilde{C}$. These induce a left and right $\widetilde{C}$-comodule structure on $H^{par}$ by means of the maps
		\[
		\begin{array}{rrclrrcl} \widetilde{\lambda} :& H^{par} & \rightarrow & \widetilde{C}\otimes H^{par} \qquad 
		\qquad &  \widetilde{\rho} :& H^{par} & \rightarrow & H^{par} \otimes \widetilde{C}\\
		\, & x & \mapsto & \overline{\widetilde{E}}\circ \sigma (x_{(2)}) \otimes x_{(1)} & & x & \mapsto & x_{(1)}  \otimes  \overline{\widetilde{E}} (x_{(2)})
		\end{array} .
		\]

\begin{lemma}
With notation and coactions as above, the maps
\[
\begin{array}{rccl} \widehat{\mu}_L : & H^{par} \boxtimes^{C}_l H^{par} & \rightarrow & H \\
\, & \sum_i x^i \boxtimes y^i & \mapsto & \sum_i p(x^i) p(y^i)
\end{array}
\]
and 
\[
\begin{array}{rccl} \widehat{\mu}_R : & H^{par} \boxtimes^{\widetilde{C}}_r H^{par} & \rightarrow & H \\
\, & \sum_i x^i \boxtimes y^i & \mapsto & \sum_i p(x^i) p(y^i)
\end{array}
\]
are partial corepresentations.		
\end{lemma}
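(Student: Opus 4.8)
The plan is to verify the three defining axioms \eqref{def:PC1}--\eqref{def:PC3} for $\widehat{\mu}_L$ directly; the argument for $\widehat{\mu}_R$ is identical after replacing $C,\alpha,\beta,E$ by $\widetilde{C},\widetilde{\alpha},\widetilde{\beta},\widetilde{E}$, so I would only treat $\widehat{\mu}_L$. Before checking the axioms I would first confirm that the formula $\sum_i x^i\boxtimes y^i\mapsto\sum_i p(x^i)p(y^i)$ genuinely descends from $\Hh\square^C\Hh$ to the B\'alint product $\Hh\boxtimes^C_l\Hh$, i.e.\ that $m_H\circ(p\otimes p)$ coequalizes $\Phi_1$ and $\Phi_2$. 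Concretely this amounts to the identity
\[
\sum_i p(x^i_{(1)})p(y^i)\otimes\beta(x^i_{(2)})=\sum_i p(x^i)p(y^i_{(1)})\otimes\alpha(y^i_{(2)})
\]
in $H\otimes C$ for every $\sum_i x^i\otimes y^i\in\Hh\square^C\Hh$. Since $C$ is cogenerated by $H$, it suffices to test this after applying $\mathrm{Id}_H\otimes p_n$ for all $n$, turning it into a family of identities in $H^{\otimes(n+1)}$; these I would derive from the cotensor defining relation $\sum_i x^i_{(2)}\otimes\beta(x^i_{(1)})\otimes y^i=\sum_i x^i\otimes\alpha(y^i_{(1)})\otimes y^i_{(2)}$, together with the formulas $p\circ\alpha=E$ and $p\circ\beta=S^{-1}\circ\widetilde{E}$ (read off from the proof of Proposition~\ref{CisotildeC}) and the identities of Lemma~\ref{lemadosE}.

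Axiom \eqref{def:PC1} is immediate: using that $p$ itself satisfies \eqref{def:PC1},
\[
\epsilon_H\big(\widehat{\mu}_L(\textstyle\sum_i x^i\boxtimes y^i)\big)=\sum_i\epsilon_H(p(x^i))\epsilon_H(p(y^i))=\sum_i\epsilon_\Hh(x^i)\epsilon_\Hh(y^i)=\underline{\epsilon}(\textstyle\sum_i x^i\boxtimes y^i).
\]
For \eqref{def:PC2} and \eqref{def:PC3} I would reduce by linearity to a single representative $x\boxtimes y$ and expand $\widehat{\mu}_L$ on each leg of $\Delta(x\boxtimes y)=\sum x_{(1)}\boxtimes y_{(1)}\otimes x_{(2)}\boxtimes y_{(2)}$, using that $\Delta_H$ is multiplicative and $S$ is anti-multiplicative. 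After this expansion the ``$y$-part'' of each side is in exactly the shape to which the axiom \eqref{def:PC2} (resp.\ \eqref{def:PC3}) for the partial corepresentation $p$ applies, and I would use it to collapse that part. The remaining ``$x$-part'' does \emph{not} close under the corresponding axiom for $p$ on its own, because a $y$-dependent factor sits between $p(x_{(1)})_{(2)}$ and $S(p(x_{(2)}))$; here the membership of $x\otimes y$ in the cotensor product is essential, since it links $\beta(x)$ to $\alpha(y)$ (equivalently, the $\widetilde{E}$-data of $x$ to the $E$-data of $y$), allowing me to transfer the obstructing factor between the two legs and then close the computation with the identities \eqref{ISI}, \eqref{SIS} and Lemma~\ref{lemadosE}.

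The main obstacle is precisely this last step. In the global situation---$p$ a coalgebra map and $C=k$---the map $\widehat{\mu}_L$ is even a coalgebra morphism (and satisfies \eqref{PC6}) by a one-line computation that needs no cotensor condition; it is exactly the partiality that entangles the two tensor factors and forces the weak axioms \eqref{def:PC2}--\eqref{def:PC3} to be pushed through by repeated use of the cotensor relation and the coactions $\lambda,\rho$. I therefore expect the bulk of the work---both for well-definedness and for \eqref{def:PC2}--\eqref{def:PC3}---to be careful Sweedler bookkeeping anchored on Lemma~\ref{lemadosE}.

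As a conceptual cross-check and alternative route, one may dualize. Under the nondegenerate pairings of Proposition~\ref{pairingHpar} and Proposition~\ref{pairingApar}, a short computation gives
\[
\big(\widehat{\mu}_L(\textstyle\sum_i x^i\boxtimes y^i),\,h^*\big)=\sum_i\big(x^i,[h^*_{(1)}]\big)\big(y^i,[h^*_{(2)}]\big),
\]
which exhibits $\widehat{\mu}_L$ as the adjoint of the bialgebroid comultiplication $\underline{\Delta}\colon(H^\circ)_{par}\to(H^\circ)_{par}\times_A(H^\circ)_{par}$ precomposed with the universal partial representation $[\underline{\;}]$. Being the composite of a partial representation with an algebra morphism, this map is again a partial representation, so by the duality principle of the Dualities subsection $\widehat{\mu}_L$ is a partial corepresentation. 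The cost of this approach is to first set up and prove nondegeneracy of the induced pairing between the B\'alint product and the Takeuchi product $(H^\circ)_{par}\times_A(H^\circ)_{par}$, using the density of $C=\overline{E}(H^{par})$ in $\mathcal{C}_H$; for that reason I would present the direct verification as the primary proof.
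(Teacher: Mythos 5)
Your proposal matches the paper's proof in all essentials: well-definedness is checked exactly as you describe, by testing the identity $\sum_i p(x^i_{(1)})p(y^i)\otimes\beta(x^i_{(2)})=\sum_i p(x^i)p(y^i_{(1)})\otimes\alpha(y^i_{(2)})$ against the jointly monic maps $\mathrm{Id}\otimes p_n$ using the cotensor relation, $p\circ\alpha=E$, $p\circ\beta=S^{-1}\circ\widetilde{E}$ and Lemma~\ref{lemadosE}, and axioms \eqref{def:PC2}--\eqref{def:PC3} are then verified by precisely the Sweedler bookkeeping you outline, with the cotensor membership used (twice) to shuttle the obstructing factor between the two legs before closing with the partial corepresentation axioms for $p$. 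Your alternative dual route via adjointness to $\Delta_L$ on $(H^\circ)_{par}$ is also sound and is in fact the mechanism the paper uses later for the remaining Hopf-coalgebroid axioms, but for this lemma the paper, like you, gives the direct verification.
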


\begin{proof}
We only prove the statement for $\widehat\mu_L$, the proof for $\widehat\mu_R$ follows by left-right duality.

Let us first verify that the map $\widehat\mu_L$ is well-defined. Therefore, we need to verify the equality 
\[
\sum_i p(x^i)p(y^i_{(1)}) \otimes \overline{E}(y^i_{(2)}) = \sum_i p(x^i_{(1)})p(y^i) \otimes \overline{E} \circ \sigma (x^i_{(2)}) 
\]
in $H\ot C$. Using the fact that the maps $p_n:C\to H^{\ot n}, p_n(x)=p(x_{(1)})\ot \cdots \ot p(x_{(n)})$ are jointly injective (see \seref{cofree}), it suffices to prove that this identity holds after composition with $Id\ot p_n$ for all $n$.
\begin{eqnarray*}
& &\hspace*{-2cm} \sum_i p(x^i)p(y^i_{(1)}) \ot p_n(\overline{E}(y^i_{(2)}))=\\
& = & \sum_i p(x^i) p(y^i_{(1)}) \ot E(y^i_{(2)}) \ot \cdots \ot E(y^i_{(n+1)})\\
& \stackrel{\ref{lemadosE}(iii)}= & \sum_i p(x^i) E(y^i_{(1)})_{(1)} p (y^i_{(2)})\ot E(y^i_{(1)})_{(2)} \ot E(y^i_{(3)}) \ot \cdots \ot E(y_{(n+1)}) \\
& \stackrel{\square}{=} & \sum _i p(x^i_{(2)}) S^{-1}(\widetilde{E}(x^i_{(1)}))_{(1)} p (y^i_{(1)}) \ot
 S^{-1}(\widetilde{E}(x^i_{(1)}))_{(2)} \ot E(y^i_{(2)})\ot \cdots \ot E(y^i_{(n)}) \\
& = & \sum_i p(x^i_{(2)}) S^{-1}(\widetilde{E}(x^i_{(1)})_{(2)}) p (y^i_{(1)}) \ot S^{-1}(\widetilde{E}(x^i_{(1)})_{(1)}) \ot E(y^i_{(2)}) \ot \cdots \ot E(y^i_{(n)})\\
& \stackrel{\eqref{lemadosE}(vi)}= & \sum_i p(x^i_{(1)}) p (y^i_{(1)}) \ot S^{-1}(\widetilde{E}(x^i_{(2)})) \ot E(y^i_{(2)}) \ot \cdots \ot E(y^i_{(n)}) \\
& = & \cdots \\
& = & \sum_i p(x^i_{(1)}) p (y^i) \ot S^{-1}(\widetilde{E}(x^i_{(n+1)}))  \ot S^{-1}(\widetilde E(x^i_{(n)})) \ot \cdots 
\ot S^{-1}(\widetilde E(x^i_{(2)})) \\
& = & \sum_i p(x^i_{(1)}) p (y^i) \ot p_n(\overline{E} \circ \sigma (x^i_{(2)})) 
\end{eqnarray*}
in which the symbol $\square$ on the top of the third equality means that the element $\sum_i x^i \otimes y^i$ belongs to the cotensor product $H^{par} \square^C H^{par}$, that is, the 
following identity holds:  
\[
\sum_i x^i_{(2)} \otimes \overline{E}\circ \sigma (x^i_{(1)}) \otimes y^i =\sum_i x^i\otimes \overline{E} (y^i_{(1)}) \otimes y^i_{(2)}.
\]
In the last equality, we used the identity $p\circ \ol E\circ \sigma=S^{-1} \circ \widetilde E$ (see the proof of Proposition \ref{CisotildeC})
		
Now, we have to prove that this map is a partial corepresentation of $H$ relative to the coalgebra $H^{par} \boxtimes^{C}_l H^{par}$. It is easy to see that axiom  (\ref{def:PC1}) is satisfied, so let us verify axiom (\ref{def:PC2}). Taking $\sum_i x^i \boxtimes y^i \in H^{par} \boxtimes^{C}_l H^{par}$,
\begin{eqnarray*}
&& \hspace*{-2cm} \sum_i \widehat{\mu}_L ((x^i \boxtimes y^i)_{(1)})_{(1)} \otimes \widehat{\mu}_L ((x^i \otimes y^i)_{(1)})_{(2)} S(\widehat{\mu}_L ((x^i \otimes y^i)_{(2)})) =\\
& = & \sum_i p(x^i_{(1)})_{(1)} p(y^i_{(1)})_{(1)} \otimes p(x^i_{(1)})_{(2)} p(y^i_{(1)})_{(2)} S(p(y^i_{(2)})) S(p(x^i_{(2)}))\\
& \stackrel{\eqref{def:PC2}}= & \sum_i p(x^i_{(1)})_{(1)} p(y^i_{(1)}) \otimes p(x^i_{(1)})_{(2)} p(y^i_{(2)}) S(p(y^i_{(3)})) S(p(x^i_{(2)}))\\
& = & \sum_i p(x^i_{(1)})_{(1)} E(y^i_{(1)})p(y^i_{(2)}) \otimes p(x^i_{(1)})_{(2)} p(y^i_{(3)}) S(p(y^i_{(4)})) S(p(x^i_{(2)}))\\
& \stackrel{\square}= & \sum_i p(x^i_{(2)})_{(1)} S^{-1}(\widetilde{E}(x^i_{(1)})) p(y^i_{(1)}) \otimes p(x^i_{(2)})_{(2)} p(y^i_{(2)}) S(p(y^i_{(3)})) S(p(x^i_{(3)}))\\
& = & \sum_i p(x^i_{(3)})_{(1)} S^{-1}(p(x^i_{(2)})) p(x^i_{(1)}) p(y^i_{(1)}) \otimes p(x^i_{(3)})_{(2)} p(y^i_{(2)}) S(p(y^i_{(3)})) S(p(x^i_{(4)}))\\
& \stackrel{\eqref{def:PC3}}= & \sum_i S^{-1}(p(x^i_{(2)})S(p(x^i_{(3)}))) p(x^i_{(1)}) p(y^i_{(1)}) \otimes p(x^i_{(4)}) p(y^i_{(2)}) S(p(y^i_{(3)})) S(p(x^i_{(5)}))\\
& = & \sum_i p(x^i_{(3)}) S^{-1}(p(x^i_{(2)})) p(x^i_{(1)}) p(y^i_{(1)}) \otimes p(x^i_{(4)}) p(y^i_{(2)}) S(p(y^i_{(3)})) S(p(x^i_{(5)}))\\
& = & \sum_i p(x^i_{(2)}) S^{-1}(\widetilde{E}(x^i_{(1)})) p(y^i_{(1)}) \otimes p(x^i_{(3)}) p(y^i_{(2)}) S(p(y^i_{(3)})) S(p(x^i_{(4)}))\\
& \stackrel{\square}= & \sum_i p(x^i_{(1)}) E(y^i_{(1)}) p(y^i_{(2)}) \otimes p(x^i_{(2)}) p(y^i_{(3)}) S(p(y^i_{(4)})) S(p(x^i_{(3)}))\\
& = & \sum_i p(x^i_{(1)}) p(y^i_{(1)}) \otimes p(x^i_{(2)}) p(y^i_{(2)}) S(p(x^i_{(3)})p(y^i_{(3)}))\\
& = & \sum_i \widehat{\mu}_L ((x^i \boxtimes y^i)_{(1)}) \otimes \widehat{\mu}_L ((x^i \boxtimes y^i)_{(2)}) S(\widehat{\mu}_L 
((x^i \boxtimes y^i)_{(3)})) .
\end{eqnarray*}
Again, the symbol $\square$ on the top of equalities 4 and 9 means that $\sum_i x^i \otimes y^i \in H^{par} \square^C H^{par}$. 
Axiom  (\ref{def:PC3}) can be checked by a similar computation. 
\end{proof}
		
From the previous lemma, it follows that there exist well-defined coalgebra maps
$$\overline{\mu}_L :H^{par} \boxtimes^{C}_l H^{par}  \rightarrow  H^{par} , 
\qquad \qquad 
\overline{\mu}_R : H^{par} \boxtimes^{\widetilde{C}}_r H^{par}  \rightarrow  H^{par},$$
such that $p\circ \overline{\mu}_L =\widehat{\mu}_L$ and $p\circ \ol\mu_R=\widehat\mu_R$.

Now, let us define the left unit map $\eta_L :C\rightarrow H^{par}$ given by
		\[
		\eta_L (\ove (x)) =\ovomega (\ove (x) \cosmash 1_H ) ,
		\]
		in which $\ovomega : C\cosmash H \rightarrow H^{par}$ is the coalgebra isomorphism defined in Theorem~\ref{isoHparCosmash}. Similarly, the right unit, $\eta_R :\widetilde{C} \rightarrow H^{par}$, is given by
		\[
		\eta_R (\overline{\widetilde{E}} (x)) =\overline{\widetilde{\omega}}_0 (1_H \rcosmash \overline{\widetilde{E}} (x)) .
		\]
		The coalgebra isomorphism map $\overline{\widetilde{\omega}}_0 :H\rcosmash \widetilde{C} \rightarrow H^{par}$ was defined at the end of \seref{universals}.
		
Finally, let us define the antipode on $H^{par}$. The linear map $S\circ p:\left( H^{par} \right)^{cop} \rightarrow H$, being a composition of a coalgebra map and a partial corepresentation, is again a partial corepresentation. Hence it induces a morphism of coalgebras $\mathcal{S} : \left( H^{par} \right)^{cop}\rightarrow H^{par}$ such that $p\circ \mathcal{S} =S\circ p$.
		
With the above notations, we have the following theorem:
\begin{thm} Consider a Hopf algebra $H$ with invertible antipode, then there is a Hopf coalgebroid structure $(H^{par}, C,\alpha ,\beta , \mu_L ,\eta_L , \widetilde{C} , \widetilde{\alpha}, \widetilde{\beta} , \mu_R ,\eta_R , \mathcal{S})$ on the universal coalgebra $H^{par}$.

Consequently, the category of $\Mod^{H^{par}}$ of $H^{par}$-comodules (which is isomorphic to the category of regular partial $H$-comodules) is closed monoidal and allows a closed monoidal forgetful functor to $({^C\Mod^C},\square^C,C)$. 
\end{thm}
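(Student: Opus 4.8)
The plan is to verify the axioms of Definition~\ref{defHcoid} one at a time, all the structural morphisms being already in place: $\alpha=\ove$ and $\widetilde\alpha=\overline{\widetilde{E}}$ are coalgebra maps, $\beta=\ove\circ\sigma$ and $\widetilde\beta=\overline{\widetilde{E}}\circ\sigma$ are anti-coalgebra maps (as $\sigma$ is one), $\ol\mu_L$ and $\ol\mu_R$ are coalgebra maps by the preceding lemma, the units $\eta_L,\eta_R$ are built from the cosmash isomorphisms of Theorem~\ref{isoHparCosmash}, and the antipode $\mathcal{S}$ is determined by $p\circ\mathcal S=S\circ p$. The single recurring tool will be that $H^{par}$, $C$ and $\widetilde C$ are cogenerated by $H$ (\seref{cofree}): the family $p_n=p^{\ot n}\circ\Delta^n$ is jointly monic, so every identity between $k$-linear maps with codomain $H^{par}$ (or a cotensor/B\'alint power thereof) is decided by composing with the $p_n$ and thereby reduced to an identity in a tensor power of $H$. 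In that setting the only inputs are that $p$ is a partial corepresentation, the relations $E=p\circ\ove$, $\widetilde E=p\circ\overline{\widetilde{E}}$ and $p\circ\ove\circ\sigma=S^{-1}\circ\widetilde E$ (from the proof of Proposition~\ref{CisotildeC}), and the identities of Lemma~\ref{lemadosE}; these computations are guided by, and dual to, the Hopf-algebroid axioms of $(H^\circ)_{par}$ from \cite{ABV}.

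First I would establish the two bicoalgebroid structures. For (BC1) the only nontrivial point is the symmetry $\alpha(x_{(1)})\ot\beta(x_{(2)})=\alpha(x_{(2)})\ot\beta(x_{(1)})$ and its $\widetilde C$-analogue, which follow from the cocommutativity-type identities Lemma~\ref{lemadosE}(viii)--(xii) together with the anti-coalgebra property of $\sigma$; this installs the $C$- and $\widetilde C$-bicomodule structures of \eqref{coactionsH}. For (BC2) the coalgebra-map property of $\ol\mu_L$ is the content of the preceding lemma, and associativity of $\mu_L$ reduces to associativity of the product of $H$. For (BC3) the unit conditions \eqref{unitHcoid} are where the cosmash description is essential: since $\eta_L$ is defined through $\ovomega$ and $p\circ\mu_L$ is the product of $H$, \eqref{unitHcoid} reduces to $E(x_{(1)})p(x_{(2)})=p(x)$ and its mirror image, both immediate from \eqref{ISI}; the counit and comultiplication conditions on $\eta_L$ are handled the same way. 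The right bicoalgebroid axioms then follow by the left--right symmetry furnished by the anti-isomorphism $C\cong\widetilde C^{cop}$ of Proposition~\ref{CisotildeC}.

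Next come the genuinely Hopf conditions (a)--(d). Condition (a) reduces, after composition with the separating family, to relations among $E,\widetilde E,S,S^{-1}$ that are immediate from Lemma~\ref{lemadosE}; condition (b) collapses to associativity in $H$, with the cotensor conditions (marked $\square$ in the preceding lemma) invoked to keep all expressions inside the B\'alint products. For (c) one applies $p$ and uses $p\circ\mathcal S=S\circ p$ with items (viii)--(xii) of Lemma~\ref{lemadosE}. For the antipode axiom (d), composing $\mu_L\circ(\mathcal S\ot\mathrm{Id})\circ\Delta$ with $p$ yields $S(p(x_{(1)}))p(x_{(2)})=\widetilde E(x)=p\circ\eta_R\circ\widetilde\alpha(x)$, and symmetrically the second identity reduces to $p(x_{(1)})S(p(x_{(2)}))=E(x)=p\circ\eta_L\circ\alpha(x)$. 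I expect (c) and (d) to be the main obstacle: they are the only axioms that entangle the left and right structures and that force the use of the invertibility of $S$ (entering through $\sigma$ and the relation $p\circ\ove\circ\sigma=S^{-1}\circ\widetilde E$). The delicate point in (d) is that its two sides are identities of \emph{linear} maps while the units $\eta_L,\eta_R$ are not coalgebra morphisms, so one cannot reduce to $p$ alone; one must compose with every $p_n$, using that $\mathcal S(x_{(1)})\ot x_{(2)}$ and $x_{(1)}\ot\mathcal S(x_{(2)})$ lie in the respective cotensor products (the remark following Definition~\ref{defHcoid}) to keep the B\'alint multiplications defined, and then apply Lemma~\ref{lemadosE} repeatedly.

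For the closing assertion I would invoke the general comodule theory of Hopf coalgebroids, dual to the Hopf-algebroid statements of \cite{ABV,Bohm}: a bicoalgebroid structure over $C$ makes $\Mod^{\mathcal H}$ monoidal under the cotensor (B\'alint) product over $C$ with a strict monoidal forgetful functor to $({}^C\Mod^C,\square^C,C)$, and the antipode supplies internal homs, making $\Mod^{\mathcal H}$ closed. Applying this with $\mathcal H=H^{par}$ and using the identification of $\Mod^{H^{par}}$ with the category of regular partial $H$-comodules (\thref{partcomodarecomod1}) yields the stated closed monoidal structure together with its closed monoidal forgetful functor to $C$-bicomodules.
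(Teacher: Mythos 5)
Your plan is a correct route to the theorem, but it is not the route the paper takes: it is precisely the ``direct verification'' that the authors explicitly decline to carry out on the grounds that it is very long and technical. The paper instead proves the theorem by duality: it shows that every structure map of $H^{par}$ ($\alpha$, $\beta$, $\widetilde\alpha$, $\widetilde\beta$, $\eta_L$, $\eta_R$, $\mu_L$, $\mu_R$, $\mathcal S$) is adjoint, under the nondegenerate pairings $H^{par}\ot (H^\circ)_{par}\to k$ and $C\ot A_{par}(H^\circ)\to k$, to the corresponding structure map of the Hopf algebroid $(H^\circ)_{par}$ (source, target, counits, comultiplications, antipode), and then transfers all the axioms from $(H^\circ)_{par}$ to $H^{par}$ using the faithfulness of the forgetful functor on the category of left dual pairings, since every axiom is a commutative diagram. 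The trade-offs are real in both directions. The paper's argument is much shorter and makes the duality with \cite{ABV} transparent, but---as the authors themselves note---it is only valid when $H^\circ$ is dense in $H^*$, so strictly speaking it does not cover the full generality of the statement (an arbitrary $H$ with bijective antipode). Your direct approach, by contrast, works in that full generality and relies only on the cogeneration of $H^{par}$, $C$, $\widetilde C$ by $H$, the partial corepresentation axioms for $p$, and Lemma~\ref{lemadosE}; the cost is that each axiom becomes a lengthy Sweedler computation. Two small cautions if you carry it out: (i) already for the unit axiom \eqref{unitHcoid} (not only for the antipode axiom) the composites are merely linear maps into $H^{par}$, so you must compose with the whole family $p_n$ and will need the explicit formulas for $\Delta^{n-1}(\ove(x)\cosmash 1_H)$ and $\Delta^{n-1}(1_H\rcosmash\overline{\widetilde E}(x))$ that the paper records in a separate lemma; (ii) the symmetry in (BC1) is not literally Lemma~\ref{lemadosE}(xii) but follows from it after transporting along the anti-isomorphism $\widehat\sigma$ of Proposition~\ref{CisotildeC}, using $\ove\circ\sigma=\widehat\sigma\circ\overline{\widetilde E}$. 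Neither point is a gap in the strategy, only in the level of detail of the sketch.
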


It can be verified directly that the maps defined above indeed satisfy all axioms of a Hopf coalgebroid. However this proof is very long and technical, therefore we provide here an alternative approach, which also explains better the duality with the Hopf algebroid structure on $H_{par}$, but which is however only valid in the case that $H^\circ$ is dense in $H^*$.

	In order to provide a proof of the above claims, we will need the following lemma.
	
	\begin{lemma} Let $H$ be a Hopf algebra, for each $x\in H^{par}$ we have
		\begin{eqnarray}\label{deltancosmashum}
		\Delta^{n-1} (\ove (x) \cosmash 1_H ) & = & (\ove (x_{(1)}) \cosmash p(x_{(2)}) S(p(x_{(3n-2)}))) \otimes \cdots \otimes (\ove (x_{(2n-3)}) \cosmash p(x_{(2n-2)}) S(p(x_{(2n)}))) \otimes \nonumber\\
		& \, & \qquad \otimes (\ove (x_{(2n-1)}) \cosmash 1_H)
		\end{eqnarray}
		and	
		\begin{eqnarray}\label{deltanrcosmashum}
		\Delta^{n-1} (1_H \rcosmash \overline{\widetilde{E}} (x) ) & = & (1_H \rcosmash \overline{\widetilde{E}}(x_{(n)})) \otimes (S(p(x_{(n-1)})) p(x_{(n+1)}) \rcosmash \overline{\widetilde{E}}(x_{(n+2)})) \otimes \cdots \otimes \nonumber \\
		& \, & \qquad \otimes (S(p(x_{(1)})) p(x_{(3n-3)}) \rcosmash \overline{\widetilde{E}}(x_{(3n-2)})).
		\end{eqnarray}
	\end{lemma}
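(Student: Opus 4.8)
The plan is to prove both identities by induction on $n$, using the explicit formula for the comultiplication of the (left, resp.\ right) partial cosmash coproduct together with the coaction described in Theorem~\ref{coaction} (resp.\ its right-handed analogue), and coassociativity of $H^{par}$.

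First I would record the single basic computation that drives the whole induction. For any $y\in H^{par}$, writing $c=\ove(y)$ and using that $1_H$ is grouplike, the definition of $\underline{\Delta}$ on $\underline{C\cosmash H}$ together with the coaction $\lambda(\ove(y))=p(y_{(1)})S(p(y_{(3)}))\otimes \ove(y_{(2)})$ from Theorem~\ref{coaction} gives
\[
\underline{\Delta}\big(\ove(y)\cosmash 1_H\big)=\ove(y_{(1)})\cosmash p(y_{(2)})S(p(y_{(4)}))\,\otimes\,\ove(y_{(3)})\cosmash 1_H .
\]
This already establishes the case $n=2$ of \eqref{deltancosmashum} (the case $n=1$ being $\Delta^0=\mathrm{Id}$), and it is the only place where the structure maps enter.

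For the inductive step I would use coassociativity in the form $\Delta^{n}=(\mathrm{Id}^{\otimes(n-1)}\otimes\underline{\Delta})\circ\Delta^{n-1}$ and apply $\underline{\Delta}$ to the last tensorand $\ove(x_{(2n-1)})\cosmash 1_H$ of the inductive hypothesis, which is exactly of the basic form above. Expanding this factor and relabelling the iterated coproduct of $x$ via coassociativity (the map $\ove$ being a coalgebra morphism), the single leg $x_{(2n-1)}$ is split into four consecutive legs, so that every antipode leg $x_{(3n-1-k)}$ occurring in the factors $k=1,\dots,n-1$ (whose antipode index equals $3n-1-k$) is shifted upward by three to $x_{(3n+2-k)}$, while the forward legs $x_{(2k-1)},x_{(2k)}$, having indices $<2n-1$, are untouched. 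I would then check that the resulting indices are precisely those prescribed by \eqref{deltancosmashum} for $n+1$: the $k$-th factor becomes $\ove(x_{(2k-1)})\cosmash p(x_{(2k)})S(p(x_{(3n+2-k)}))$ for $1\le k\le n$, and the final factor becomes $\ove(x_{(2n+1)})\cosmash 1_H$, as required.

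Finally, I would obtain \eqref{deltanrcosmashum} by the same argument with left and right interchanged, now working in $\underline{H\rcosmash\widetilde{C}}$ with its comultiplication $\underline{\Delta}(h\rcosmash c)=h_{(1)}\rcosmash c_{(1)}^{[0]}\otimes h_{(2)}c_{(1)}^{[1]}\rcosmash c_{(2)}$ and the right coaction $\rho(\overline{\widetilde{E}}(x))=\overline{\widetilde{E}}(x_{(2)})\otimes S(p(x_{(1)}))p(x_{(3)})$; here the basic computation reads $\underline{\Delta}(1_H\rcosmash\overline{\widetilde{E}}(x))=(1_H\rcosmash\overline{\widetilde{E}}(x_{(2)}))\otimes(S(p(x_{(1)}))p(x_{(3)})\rcosmash\overline{\widetilde{E}}(x_{(4)}))$, and the induction is applied to the \emph{first} tensorand $1_H\rcosmash\overline{\widetilde{E}}(x_{(n)})$, which carries the grouplike $1_H$. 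I expect the only real difficulty to be the purely combinatorial bookkeeping of Sweedler indices in the inductive step: one must verify that splitting one leg into four and shifting all antipode legs reproduces exactly the claimed index pattern. The algebra itself is immediate once the one-step formula is in hand.
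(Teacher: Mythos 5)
Your proposal is correct and follows essentially the same route as the paper: induction on $n$, with the base case given by a single application of the cosmash comultiplication $\underline{\Delta}(\ove(x)\cosmash 1_H)=\ove(x_{(1)})\cosmash p(x_{(2)})S(p(x_{(4)}))\otimes\ove(x_{(3)})\cosmash 1_H$ and the inductive step obtained by applying $\underline{\Delta}$ to the tensorand carrying the grouplike $1_H$ and reindexing via coassociativity. The paper treats the second identity only with the word ``analogously,'' so your explicit right-handed one-step formula and index bookkeeping are, if anything, slightly more detailed than the original.
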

	
	\begin{proof} The proof goes by induction over $n$. For $n=1$ we have
		\[
		\Delta (\ove (x) \cosmash 1_H ) =(\ove (x_{(1)}) \cosmash p(x_{2}) S(p(x_{(4)}))) \otimes (\ove (x_{(3)})) .
		\]
		Now, suppose (\ref{deltancosmashum}) valid for $n$ , then 
		\begin{eqnarray*}
		&&\Delta^{n} (\ove (x) \cosmash 1_H ) =\\
		& = & (\ove (x_{(1)}) \cosmash p(x_{(2)}) S(p(x_{(3n-2)}))) \otimes \cdots \otimes (\ove (x_{(2n-3)}) \cosmash p(x_{(2n-2)}) S(p(x_{(2n)}))) \otimes \Delta (\ove (x_{(2n-1)}) \cosmash 1_H) \\
		& = & (\ove (x_{(1)}) \cosmash p(x_{(2)}) S(p(x_{(3n+1)}))) \otimes \cdots \otimes (\ove (x_{(2n-3)}) \cosmash p(x_{(2n-2)}) S(p(x_{(2n+3)}))) \otimes\\
		&& \qquad \otimes (\ove (x_{(2n-1)}) \cosmash p(x_{(2n)}) S(p(x_{(2n+2)}))) \otimes (\ove (x_{(2n+1)}) \cosmash 1_H) \\
		& = & (\ove (x_{(1)}) \cosmash p(x_{(2)}) S(p(x_{(3(n+1)-2}))) \otimes \cdots  \otimes (\ove (x_{(2(n+1)-3)}) \cosmash p(x_{(2(n+1)-2)}) S(p(x_{(2(n+1)}))) \otimes \\
		&& \qquad \otimes (\ove (x_{(2(n+1)-1)}) \cosmash 1_H) .
		\end{eqnarray*}
		Then (\ref{deltancosmashum}) is valid for all $n\geq 1$. 
		
		Analogously, we have the identity (\ref{deltanrcosmashum}).
	\end{proof}

\begin{proposition}
Let $H$ be a Hopf algebra with bijective antipode such that $H^\circ$ is dense in $H^*$. Then all the structure maps on $H^{par}$ defined above are adjoint to the corresponding structure maps on $(H^\circ)_{par}$ with respect to the dual pairings between $H^{par}$ and $(H^\circ)_{par}$ and between $C=\ol E(H^{par})$ and $A_{par}(H^\circ)$. 

Consequently, $H^{par}$ is indeed a Hopf coalgebroid.
\end{proposition}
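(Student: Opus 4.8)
The plan is to derive the Hopf coalgebroid axioms for $H^{par}$ by dualizing the Hopf algebroid axioms of $(H^\circ)_{par}$, rather than verifying them by hand. The two ingredients are the non-degenerate dual pairings $(-,-):H^{par}\otimes(H^\circ)_{par}\to k$ of Proposition~\ref{pairingHpar} and $\langle\!\langle-,-\rangle\!\rangle:C\otimes A_{par}(H^\circ)\to k$ of Proposition~\ref{pairingApar} --- both non-degenerate because $H^\circ$ is dense in $H^*$ --- together with the fact, proved in \cite{ABV}, that $(H^\circ)_{par}$ is a Hopf algebroid over $A_{par}(H^\circ)$. I would first show that each structure map on $H^{par}$ is the adjoint, with respect to these pairings, of the corresponding structure map on $(H^\circ)_{par}$; once this dictionary is in place, the duality principle of Remark~\ref{re:dualpairing} (which rests on \leref{pairings} and Proposition~\ref{dualpairing}) applies: since the pairings are non-degenerate, a diagram of dualizable maps commutes exactly when the dual diagram does, so every Hopf coalgebroid identity for $H^{par}$ is equivalent to the corresponding Hopf algebroid identity for $(H^\circ)_{par}$.

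Assembling the dictionary, the coalgebra structure of $H^{par}$ is adjoint to the algebra structure of $(H^\circ)_{par}$ by the very definition \eqref{defpairingHpar} of the pairing (this is the content of being a dual pairing between a coalgebra and an algebra, cf. Remark~\ref{re:dualpairing}). The coalgebra map $\alpha=\ove$ is adjoint to the source map $s$ by identity \eqref{sourcedual}; since $p\circ\sigma=S^{-1}\circ p$ and $\sigma$ is anti-comultiplicative, a short pairing computation identifies $\beta=\ove\circ\sigma$ as the adjoint of the target map of the left bialgebroid. Symmetrically, through the pairing between $\widetilde C$ and $\widetilde A(H^\circ)$, the maps $\widetilde\alpha=\ol{\widetilde E}$ and $\widetilde\beta=\ol{\widetilde E}\circ\sigma$ are adjoint to the source and target of the right bialgebroid. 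The units $\eta_L,\eta_R$ are adjoint to the left and right counits, and the antipode $\mathcal S$, defined by $p\circ\mathcal S=S\circ p$, is adjoint to the antipode of $(H^\circ)_{par}$: unwinding the pairing gives $\mathcal S^*([k^{1*}]\cdots[k^{n*}])=[S(k^{n*})]\cdots[S(k^{1*})]$, which is exactly the Hopf algebroid antipode. In each case the verification is a direct computation unwinding the explicit pairing formula --- which evaluates $p$ on an iterated comultiplication --- into the defining formula of the relevant map, just as in the preceding remark.

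The technical heart is the adjointness of the multiplications $\mu_L,\mu_R$ with the comultiplications $\Delta_L,\Delta_R$ of the bialgebroid. Here I would use that the cotensor product $\square^C$ is a kernel and the tensor product $\otimes_{A_{par}(H^\circ)}$ a cokernel, which are mutually dual by the kernel/cokernel-creating properties of $R$ and $L$ in Proposition~\ref{dualpairing}(i)--(ii); since $k$ is a field, the tensor product of non-degenerate pairings is again non-degenerate (Proposition~\ref{dualpairing}(iv)), yielding a non-degenerate pairing $H^{par}\square^C H^{par}\leftrightarrow(H^\circ)_{par}\otimes_{A_{par}(H^\circ)}(H^\circ)_{par}$. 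The B\'alint product $\boxtimes^C_l$, a coequalizer built on $\square^C$, dualizes to the Takeuchi product (an equalizer inside $\otimes_{A_{par}(H^\circ)}$), which is precisely where $\Delta_L$ takes values; with respect to the induced pairing, the preceding lemma (that $\widehat\mu_L$ is a partial corepresentation) and the universal property of $H^{par}$ identify $\mu_L$ as the adjoint of $\Delta_L$, and likewise $\mu_R$ with $\Delta_R$ over $\widetilde A(H^\circ)$.

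With the full dictionary established, the conclusion is immediate: each defining identity of a Hopf coalgebroid (Definitions~\ref{definitioncoalgebroid} and~\ref{defHcoid}) is a commuting diagram assembled from the structure maps of $H^{par}$, whose adjoint is exactly the corresponding Hopf algebroid axiom of $(H^\circ)_{par}$; since the latter holds and all pairings are non-degenerate, the former holds as well. I expect the main obstacle to be the bookkeeping in the multiplication step --- checking that the pairing really descends to the cotensor and B\'alint products on the $H^{par}$ side and to the tensor and Takeuchi products on the $(H^\circ)_{par}$ side, compatibly with the comodule and module structures and with the coequalizer/equalizer duality. This is, however, precisely what the hypothesis that $k$ is a field and the kernel/cokernel-creating properties of $L$ and $R$ supply.
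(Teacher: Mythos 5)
Your proposal follows essentially the same route as the paper: establish that each structure map of $H^{par}$ (source/target, units, multiplications, antipode) is adjoint to the corresponding structure map of the Hopf algebroid $(H^\circ)_{par}$ with respect to the non-degenerate pairings, handle the multiplications via the kernel/cokernel duality between $\square^C$ and $\otimes_{A_{par}(H^\circ)}$ supplied by Proposition~\ref{dualpairing}, and then transfer the axioms using the faithfulness of the forgetful functors on $\LDP_k$. The only minor imprecision is that the induced pairing $H^{par}\square^C H^{par}\leftrightarrow (H^\circ)_{par}\otimes_{A_{par}(H^\circ)}(H^\circ)_{par}$ is in general only \emph{left} non-degenerate (as the paper notes), but that is all the transfer argument requires.
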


\begin{proof}
(1). By (\ref{sourcedual}) the map $\alpha =\overline{E} :H^{par} \rightarrow C$ is adjoint to the left source map $s_L :A_{par} (H^{\circ}) \rightarrow H^{\circ}_{par}$, that is
			\[
			\langle \! \langle \overline{E}(x) , a \rangle \! \rangle =\left( x, s_L (a) \right ) . 
			\]		
(2). The anti-coalgebra map $\beta =\overline{E} \circ \sigma :H^{par} \rightarrow C$ is adjoint to the left target map $t_L :A_{par} (H^{\circ}) \rightarrow H^{\circ}_{par}$. Indeed, taking $x\in H^{par}$ and $a=\varepsilon_{h_{1*}} \cdots \varepsilon_{h_{n*}}$, we have
			\begin{eqnarray*}
			\langle \! \langle \overline{E} \sigma (x) , a \rangle \! \rangle & = & \langle \! \langle \overline{E} \sigma (x) , \varepsilon_{h^{1*}} \cdots \varepsilon_{h^{n*}} \rangle \! \rangle 
			 =  {h^{1*}} (p(\overline{E} \sigma (x_{(n)}))) \cdots {h^{n*}} (p(\overline{E} \sigma (x_{(1)})))  \\
			& = & {h^{1*}} (S^{-1}(\widetilde{E}(x_{(n)}))) \cdots {h^{n*}} (S^{-1}(\widetilde{E}(x_{(1)})))  \\
			& = & {h^{1*}} (S^{-1} (p(x_{(2n)})) p(x_{(2n-1)})) \cdots {h^{n*}} (S^{-1} (p(x_{(2)})) p(x_{(1)}))  \\
			& = & \left( x, [h^{n*}_{(2)}][S^{-1}(h^{n*}_{(1)})] \cdots [h^{1*}_{(2)}][S^{-1}(h^{1*}_{(1)})] \right)  \\
			& = & \left( x, \widetilde{\varepsilon}_{h^{n*}} \cdots \widetilde{\varepsilon}_{h^{1*}} \right)  
			 =  \left( x, t_L (a) \right) .
			\end{eqnarray*}
(3). By a similar reasoning, we have that $\widetilde\alpha$ and $\widetilde\beta$ are the respective adjoints of the right source and target maps. More precisely,
			\[
			\bkk{\overline{\widetilde{E}} (x) , a} =\left( x, s_R (a) \right),
			\qquad
			\bkk{ \overline{\widetilde{E}} \circ \sigma (x) , a} =\left( x, t_R (a) \right) 
			\]
			for every $x\in H^{par}$ and for every $a=\widetilde{\varepsilon}_{h^{1*}} \cdots \widetilde{\varepsilon}_{h^{n*}} \in \widetilde{A}_{par} (H^{\circ})$. 			\\
(4). The left unit $\eta_L :C\rightarrow H^{par}$, defined as $\eta_L (\overline{E}(x))=\ovomega (\ove (x) \cosmash 1_H)$ is adjoint to the left counit $\epsilon_L :H^{\circ}_{par} \rightarrow A_{par}(H^{\circ})$, defined as $\epsilon_L ([h^{1*}]\cdots [h^{n*}])=\varepsilon_{h^{1*}_{(1)}} \varepsilon_{h^{1*}_{(2)} h^{2*}_{(1)}} \cdots \varepsilon_{h^{1*}_{(n)} h^{2*}_{(n-1)}\cdots h^{n*}}$. Indeed, on one hand, we have
			\newcommand{\llangle}{\left\langle\!\left\langle}
			\newcommand{\rrangle}{\right\rangle\!\right\rangle}
			\begin{eqnarray*}
			&&\hspace{-1cm}\left(\eta\left(\overline{E}(x)\right), \left[h^{1*}\right] \cdots \left[h^{n*}\right]\right) 
			 =  \left(\ol{\omega_0}\left(\overline{E}(x) \cosmash 1\right), \left[h^{1*}\right] \cdots \left[h^{n*}\right]\right)\\
			& = & \left(\ol{\omega_0}\left(\overline{E}(x) \cosmash 1\right)_{(1)}, \left[h^{1*}\right]\right) \cdots \left(\ol{\omega_0}\left(\overline{E}(x) \cosmash 1\right)_{(n)}, \left[h^{n*}\right]\right)\\
			& = & \left(\ol{\omega_0}\left(\left(\overline{E}(x) \cosmash 1\right)_{(1)}\right), \left[h^{1*}\right]\right) \cdots \left(\ol{\omega_0}\left(\left(\overline{E}(x) \cosmash 1\right)_{(n)}\right), \left[h^{n*}\right]\right)\\
			& = & h^{1*}\left({\omega_0}\left(\left(\overline{E}(x) \cosmash 1\right)_{(1)}\right)\right) \cdots 
			h^{n*}\left({\omega_0}\left(\left(\overline{E}(x) \cosmash 1\right)_{(n)}\right)\right)\\
			& \stackrel{\eqref{deltancosmashum}}{=} &
			h^{1*}(\omega_0 (\ove (x_{(1)}) \cosmash p(x_{(2)}) S(p(x_{(3n-2)}))) ) \cdots h^{(n-1)*}(\omega_0 (\ove (x_{(2n-3)}) \cosmash p(x_{(2n-2)}) S(p(x_{(2n)}))) ) \nonumber \\
			& \,  & \qquad  h^{n*} (\omega_0 (\ove (x_{(2n-1)}) \cosmash 1_H)) \\
			& = & h^{1*}\left(p(x_{(1)}) S(p(x_{(2n-1)}))\right) \cdots
			h^{(n-1)*}\left(p(x_{(n-1)}) S(p(x_{(n+1)}))\right)
			h^{n*}\left(E(x_{(n)})\right) .
			\end{eqnarray*}
			On the other hand
			\begin{eqnarray*}
			&&\hspace{-1cm}\llangle \overline{E}(x), \ul{\epsilon}\left( \left[h^{1*}\right] \cdots \left[h^{n*}\right] \right) \rrangle = 
			 \llangle \overline{E}(x), \varepsilon_{{h^{1*}}_{{(1)}}} \varepsilon_{{h^{1*}}_{(2)}{h^{2*}}_{(1)}} \cdots \varepsilon_{{h^{{1*}}}_{(n)}{h^{2*}}_{(n-1)} \cdots {h^{(n-1)*}}_{(2)}{h^{n*}}} \rrangle\\
			& = &
			\llangle \overline{E}(x)_{(1)}, \varepsilon_{{h^{1*}}_{{(1)}}}\rrangle
			\llangle \overline{E}(x)_{(2)}, \varepsilon_{{h^{1*}}_{(2)}{h^{2*}}_{(1)}}\rrangle \cdots
			\llangle \overline{E}(x)_{(n)}, \varepsilon_{{h^{{1*}}}_{(n)}{h^{2*}}_{(n-1)} \cdots {h^{(n-1)*}}_{(2)}{h^{n*}}} \rrangle\\
			& = &
			{{h^{ 1*}}_{{(1)}}} \left( E (x_{(1)}) \right)
			{h^{ 1*}}_{(2)}{h^{2*}}_{(1)} \left( E (x_{(2)}) \right) \cdots
			{h^{{1*}}}_{(n)}{h^{2*}}_{(n-1)} \cdots {h^{(n-1)*}}_{(2)}{h^{n*}} \left( E (x_{(n)}) \right)\\
			& = &
			h^{1*} \left( E (x_{(1)}) E (x_{(2)})_{(1)} \cdots E (x_{(n)})_{(1)} \right)
			{h^{2*}}_{(1)} \left( E (x_{(2)})_{(2)} \right) \cdots
			{h^{2*}}_{(n-1)} \cdots {h^{(n-1)*}}_{(2)}{h^{n*}} \left( E (x_{(n)})_{(2)} \right)\\
			& = &
			h^{1*} \left( p (x_{(1)}) S(p(x_{(2)})) E (x_{(3)})_{(1)} \cdots E (x_{(n+1)})_{(1)} \right)
			{h^{2*}}_{(1)} \left( E (x_{(3)})_{(2)} \right) \cdots \nonumber \\
			& \, & \qquad \cdots {h^{2*}}_{(n-1)} \cdots {h^{(n-1)*}}_{(2)}{h^{n*}} \left( E (x_{(n+1)})_{(2)} \right)\\
			& \stackrel{\ref{lemadosE}(xiv)}{=} &
			h^{1*} \left( p(x_{(1)}) S(p(x_{(n+1)})) \right)
			{h^{2*}}_{(1)} \left( E (x_{(2)}) \right)
			{h^{2*}}_{(2)} {h^{3*}}_{(1)} \left( E (x_{(3)}) \right) \cdots\\ 
			&& \qquad \cdots
			{h^{2*}}_{(n-1)} \cdots {h^{(n-1)*}}_{(2)}{h^{n*}} \left( E (x_{(n)}) \right)\\
			& = &
			h^{1*} \left( p(x_{(1)}) S(p(x_{(n+1)})) \right)
			{h^{2*}}\left( E (x_{(2)}) E (x_{(3)})_{(1)} \cdots E (x_{(n)})_{(1)} \right)
			{h^{3*}}_{(1)} \left( E (x_{(3)})_{(2)} \right) \cdots\\
			&& \qquad \cdots
			{h^{3*}}_{(n-2)} \cdots {h^{(n-1)*}}_{(2)}{h^{n*}} \left( E (x_{(n)})_{(2)} \right)\\
			& = &
			h^{1*} \left( p(x_{(1)}) S(p(x_{(n+1)})) \right)
			h^{2*} \left( p(x_{(2)}) S(p(x_{(3)})) E (x_{(4)})_{(1)} \cdots E (x_{(n+1)})_{(1)} \right)
			{h^{3*}}_{(1)} \left( E (x_{(4)})_{(2)} \right) \cdots\\
			&& \qquad \cdots
			{h^{3*}}_{(n-2)} \cdots {h^{(n-1)*}}_{(2)}{h^{n*}} \left( E (x_{(n+1)})_{(2)} \right)\\
			& \stackrel{\ref{lemadosE}(xiv)}{=} &
			h^{1*} \left( p(x_{(1)}) S(p(x_{(n+2)})) \right)
			h^{2*} \left( p(x_{(2)}) S(p(x_{(n+1)})) \right)
			{h^{*3}}_{(1)} \left( E (x_{(3)}) \right) \cdots\\
			&& \qquad \cdots
			{h^{3*}}_{(n-2)} \cdots {h^{(n-1)*}}_{(2)}{h^{n*}} \left( E (x_{(n)}) \right)\\
			& \vdots &\\
			& = &
			h^{1*}\left(p(x_{(1)}) S(p(x_{(2n-1)}))\right)
			h^{2*} \left( p(x_{(2)}) S(p(x_{(2n-2)})) \right) \cdots
			h^{(n-1)*}\left(p(x_{(n-1)}) S(p(x_{(n+1)}))\right)
			h^{n*}\left(E(x_{(n)})\right).
			\end{eqnarray*}
(5). By a similar argument, we show that the right unit $\eta_R :\widetilde{C} \rightarrow H^{par}$, given by 
			$\eta_R (\overline{\widetilde{E}} (x)) =\overline{\widetilde{\omega}}_0 (1_H \rcosmash \overline{\widetilde{E}} (x))$ is adjoint to the right counit $\epsilon_R :H^{\circ}_{par} \rightarrow \widetilde{A}_{par}(H^{\circ})$, defined as $\epsilon_L ([h^{1*}]\cdots [h^{n*}])=\widetilde{\varepsilon}_{h^{1*} h^{2*}_{(1)} \cdots h^{n*}_{(1)}} \widetilde{\varepsilon}_{h^{2*}_{(2)} \cdots  h^{n*}_{(1)}} \cdots \widetilde{\varepsilon}_{h^{n*}_{(n)}}$.\\
(6). For the multiplication map, in Proposition \ref{dualpairing} we have that the category $\LDP_k$ is monoidal and the forgeful functors are strictly monoidal. Moreover, in the same proposition it is shown that the forgetful functor $R:\LDP_k\to \Vect_k^{op}$ creates kernels. Hence, the pairing $(H^{par}\square^C H^{par} , H^{\circ}_{par} \otimes_A H^{\circ}_{par} , \left( \! \left( \; ,\; \right) \!\right) )$, in which 
			\[
			\left( \! \left( \sum_i x_i \otimes y_i , \zeta \otimes \xi \right) \!\right) =\sum_i \left( x_i , \xi \right) \left( y_i , \zeta \right) ,
			\]
			is left non-degenerate and it is the equalizer for the pair of morphisms $(\rho \square^C \mbox{Id} , \triangleleft \otimes_A \mbox{Id})$ and $(\mbox{Id} \square^C \lambda , \mbox{Id} \otimes_A \triangleright)$, both from $(H^{par}\otimes H^{par} , H^{\circ}_{par} \otimes H^{\circ}_{par} , \left( \! \left( \; ,\; \right)\! \right) )$ to $(H^{par}\otimes C \otimes  H^{par} , H^{\circ}_{par} \otimes A_{par}(H^{\circ}) \otimes H^{\circ}_{par} , \left( \! \left( \! \left( \; ,\; \right)\! \right) \! \right) )$, in which
			\[
			\left( \! \left( \! \left( x\otimes \overline{E}(y) \otimes z , \zeta \otimes a \otimes \xi \right)\! \right) \! \right) =\left( x, \xi \right) \langle \! \langle \overline{E}(y) , a \rangle \! \rangle \left( z , \zeta \right) .
			\]
			We will now show that the left multiplication $\mu_L :H^{par} \square^C H^{par} \rightarrow H^{par}$ is adjoint to the left comultiplication $\Delta_L : H^{\circ}_{par} \rightarrow H^{\circ}_{par} \otimes_A H^{\circ}_{par}$. Consider $\sum_i x_i \otimes y_i \in H^{par} \square^C H^{par}$ and $[h^{1*}] \cdots [h^{n*}] \in H^{\circ}_{par}$, then
			\begin{eqnarray}
			\left( \mu_L \left( \sum_i x^i \otimes y^i \right) , [h^{1*}] \cdots [h^{n*}] \right)  
			& = & \sum_i h^{1*} \left(p\mu_L \left( \sum_i x^i_{(1)} \otimes y^i_{(1)} \right) \right) \cdots h^{n*} \left(p\mu_L \left( \sum_i x^i_{(n)} \otimes y^i_{(n)} \right)\right) \nonumber \\
			& = & \sum_i h^{1*} (p(x^i_{(1)} ) p(y^i_{(1)} )) \cdots h^{n*} (p(x^i_{(n)} ) p(y^i_{(n)} )) \nonumber \\
			& = & \sum_i h^{1*}_{(1)} (p(x^i_{(1)} )) h^{1*}_{(2)}(p(y^i_{(1)} )) \cdots h^{n*}_{(1)} (p(x^i_{(n)} )) h^{n*}_{(2)}( p(y^i_{(n)} )) \nonumber \\
			& = & \sum_i \left( x^i , [h^{1*}_{(1)}] \cdots [h^{n*}_{(1)}] \right) \left( y^i , [h^{1*}_{(2)}] \cdots [h^{n*}_{(2)}] \right) \nonumber \\
			& = & \left( \! \left( \sum_i x^i \otimes y^i , \Delta_L ([h^{1*}] \cdots [h^{n*}]) \right) \! \right) .\nonumber  
			\end{eqnarray}
In the same way, one shows that the right multiplication $\mu_R$ on $H^{par}$ is adjoint to the right comultiplication $\Delta_R$ on $(H^\circ)_{par}$.	
(7). We will now show that the antipodes of $H^{par}$ and $(H^\circ)_{par}$ are also adjoint. 
Consider an element $x\in H^{par}$ and $[h^{1*}]\cdots [h^{n*}]\in H^{\circ}_{par}$, then
		\begin{eqnarray}
		\left( \mathcal{S}(x),  [h^{1*}]\cdots [h^{n*}] \right) & = & h^{1*} (p(\mathcal{S}(x_{(n)}))) \cdots h^{n*} (p(\mathcal{S}(x_{(1)}))) \nonumber \\
		& = & h^{1*} (S(p(x_{(n)}))) \cdots h^{n*} (S(p(x_{(1)}))) \nonumber \\
		& = & S(h^{1*})(p(x_{(n)})) \cdots S(h^{n*})(p(x_{(1)})) \nonumber \\
		& = & \left( x, [S(h^{n*})] \cdots [S(h^{1*})] \right) \nonumber \\
		& = & \left( x , \mathcal{S}^{*}([h^{1*}]\cdots [h^{n*}])\right) ,
		\end{eqnarray}
		in which $\mathcal{S}^*$ is the antipode in the Hopf algebroid $H^{\circ}_{par}$.

From the above, it follows that we have the following morphisms in the category $\LDP_k$
\[\begin{array}{rrl}
(\overline{E} , s_L):&(H^{par}, H^{\circ}_{par}, \left( \; , \; \right) ) \rightarrow & (C, A_{par} (H^{\circ}), \langle \! \langle \; , \; \rangle \! \rangle ) ;\\
(\overline{E} \circ \sigma , t_L):&(H^{par}, H^{\circ}_{par}, \left( \; , \; \right) ) \rightarrow& (C, A_{par} (H^{\circ}), \langle \! \langle \; , \; \rangle \! \rangle );\\
(\overline{\widetilde{E}} , s_R) :&(H^{par}, H^{\circ}_{par}, \left( \; , \; \right) ) \rightarrow& (\widetilde{C} , \widetilde{A}_{par} (H^{\circ}) , \bkk{\; , \; } ) ;\\
(\overline{\widetilde{E}} \circ \sigma , t_R) :&(H^{par}, H^{\circ}_{par}, \left( \; , \; \right) ) \rightarrow& (\widetilde{C} , \widetilde{A}_{par} (H^{\circ}) , \bkk{\; , \; } ) ;\\
(\eta_L ,\epsilon_L ):& (C, A_{par}(H^{\circ} ), \langle \! \langle \; , \; \rangle \! \rangle )\rightarrow& (H^{par}, H^{\circ}_{par}, \left( \; , \; \right) ) ;\\
(\eta_R ,\epsilon_R ):& (\widetilde{C}, \widetilde{A}_{par}(H^{\circ} ), [ \! [ \; , \; ] \! ] ) \rightarrow& (H^{par}, H^{\circ}_{par}, \left( \; , \; \right) ) ;\\
(\mu_L,\Delta_L):& (H^{par}\square^C H^{par} , H^{\circ}_{par} \otimes_A H^{\circ}_{par} , \left( \! \left( \; ,\; \right) \!\right) ) \to & (H^{par}, H^{\circ}_{par}, \left( \; , \; \right) ) ;\\
(\mu_R,\Delta_R):& (H^{par}\square^C H^{par} , H^{\circ}_{par} \otimes_A H^{\circ}_{par} , \left( \! \left( \; ,\; \right) \!\right) ) \to & (H^{par}, H^{\circ}_{par}, \left( \; , \; \right) ) ;\\
(\mathcal{S} , \mathcal{S}^*) :&((H^{par})^{cop} , (H^{\circ}_{par})^{op} , \left( \; , \; \right) ) \rightarrow& (H^{par} , H^{\circ}_{par} , \left( \; , \; \right) ).
\end{array}\]
Moreover, we know that the forgetful functor $R:\LDP_k\to \Vect_k$ sends the pair $(H^{par},(H^\circ)_{par})$ to the Hopf algebroid $(H^\circ)_{par}$, which can be viewed as an internal Hopf coalgebroid in the dual category $\Vect_k^{op}$. Since the
structure maps of the Hopf algebroid $(H^\circ)_{par}$ all arise by applying the functor $R$ to morphisms in $\LDP_k$ listed above, and since the axioms these structure maps have to satisfy can all be expressed in terms of commutative diagrams, it follows from the fact that the functor $R$ is faithful (see Proposition~\ref{dualpairing}) that $(H^{par},(H^\circ)_{par})$ is an internal Hopf coalgebroid in the category $LDP_k$ and therefore applying the functor $L:\LDP_k\to {\Vect_k}^{op}$ leads to the conclusion that $H^{par}$ is a Hopf coalgebroid in $\Vect_k$.
\end{proof}

\begin{remark}
		As an example of our approach, let us verify that $\alpha (x_{(1)})\otimes \beta (x_{(2)}) =\alpha (x_{(2)})\otimes \beta (x_{(1)})$ for every $x\in H^{par}$. Indeed, for any $b\otimes a \in A_{par}(H^{\circ}) \otimes A_{par}(H^{\circ})$
		\begin{eqnarray*}
		\langle \! \langle \! \langle 	\alpha (x_{(1)})\otimes \beta (x_{(2)}) , b\otimes a \rangle \! \rangle \! \rangle & = & 
		\langle \! \langle \! \langle 	\overline{E} (x_{(1)})\otimes \overline{E}\circ \sigma (x_{(2)}) , b\otimes a \rangle \! \rangle \! \rangle 
		 =  \langle \! \langle \overline{E} (x_{(1)}) , a \rangle \! \rangle \langle \! \langle \overline{E} \circ \sigma (x_{(2)}) , b \rangle \! \rangle \nonumber \\
		& = & \left( x_{(1)} , s_L (a) \right) \left( x_{(2)} , t_L (b) \right) 
		 =  \left( x, s_l (a) t_L (b) \right) \nonumber \\
		& = & \left( x, t_L (b) s_l (a) \right) 
		 =  \left( x_{(2)} , s_L (a) \right) \left( x_{(1)} , t_L (b) \right) \nonumber \\
		& = & \langle \! \langle \overline{E} (x_{(2)}) , a \rangle \! \rangle \langle \! \langle \overline{E} \circ \sigma (x_{(1)}) , b \rangle \! \rangle \nonumber 
		=
		\langle \! \langle \! \langle 	\overline{E} (x_{(2)})\otimes \overline{E}\circ \sigma (x_{(1)}) , b\otimes a \rangle \! \rangle \! \rangle\\ & = & \langle \! \langle \! \langle 	\alpha (x_{(2)})\otimes \beta (x_{(1)}) , b\otimes a \rangle \! \rangle \! \rangle ,\nonumber
		\end{eqnarray*}	
		from which the result follows by the (left) non-degeneracy of the pairing.
\end{remark}

	\begin{exmp} As we have seen in example \ref{examplekc2}, the coalgebra $(kC_2)^{par}$ is generated by three grouplike elements, $x_1$, $x_2$ and 
		$x_3$. It is easy to see that $E(x_1)=\widetilde{E}(x_1)=E(x_2)=\widetilde{E}(x_2)=\delta_e$ and that $E(x_3)=\widetilde{E}(x_3)=\frac{1}{2}(\delta_e +\delta_g)$. Then 
		\[
		\ove (x_1)=\ovesigma (x_1)=\overline{\widetilde{E}}(x_1) =\overline{\widetilde{E}}(x_1)=\ove (x_2)=\ovesigma (x_2)=\overline{\widetilde{E}}(x_2) =\overline{\widetilde{E}}(x_2) , 
		\]
		while for $x_3$ all the operators coincide, but not with the others. This implies that $x_1 \otimes x_1$, $x_1 \otimes x_2$, $x_2 \otimes x_1$ $x_2 \otimes x_2$ and $x_3 \otimes x_3$ are in the domain of both, the left and right multiplication and the multiplication table for the left and the right multiplication is
		\[
		\begin{array}{c|ccc} \cdot & x_1 & x_2 & x_3 \\
		\hline 
		x_1 & x_1 & x_2 & - \\
		x_2 & x_2 & x_1 & - \\
		x_3 & - & - & x_3
		\end{array}
		\]
		This multiplication table reflects exactly the structure of the groupoid $\Gamma (C_2)$, as defined in \cite{dok0}, making the identification 
		\begin{eqnarray*}
		x_1 & \mapsto &  \delta_{(e, \{ e ,g \})}\\
		x_2 & \mapsto &  \delta_{(g, \{ e ,g \})}\\
		x_3 & \mapsto &  \delta_{(e, \{ e \})} .
		\end{eqnarray*}
		We conjecture that for a finite group $G$, the universal Hopf coalgebroid $kG^{par}$ is exactly the Hopf coalgebroid associated to the groupoid $\Gamma (G)$.
	\end{exmp}

\subsection*{Acknowledgements}
M.M.S.A.\ is partially supported by CNPq (projects 306583/2016-0 and 309469/2019-8).
For this research, J.V.\ would like to thank the FWB (F\'ed\'eration Wallonie-Bruxelles) for financial support via the ARC-project ``Hopf algebras and the symmetries of non-commutative spaces'' as well as the FNRS for the MIS-project ``Antipode'' (Grant F.4502.18). 

All authors wish to thank the referee for the useful comments that improved the presentation of this paper and William Hautekiet for pointing out a mistake in Proposition~\ref{dualpairing}.

\end{document}